\newtheorem{theorem}{Theorem}[section]
\newtheorem{proposition}[theorem]{Proposition}
\newtheorem{lemma}[theorem]{Lemma}
\newtheorem{corollary}[theorem]{Corollary}
\theoremstyle{definition}
\newtheorem{rem}[theorem]{Remark}
\newtheorem{definition}[theorem]{Definition}
\newtheorem{question}[theorem]{Question}
\def\R{\mathbb{R}}
\def\Z{\mathbb{Z}}
\def\N{\mathbb{N}}
\def\e{\epsilon}
\def\g{\gamma}
\def\G{\Gamma}
\def\a{\alpha}
\DeclareMathOperator{\wtan}{Tan}
\newcommand{\sq}{[0,1]^2}
\newcommand{\res}{\hbox{ {\vrule height .22cm}{\leaders\hrule\hskip.2cm} }}
\newcommand{\whocare}[1]{}
\newcommand{\eps}{\varepsilon}
\DeclareMathOperator{\dimh}{dim_H}
\DeclareMathOperator{\diam}{diam}
\DeclareMathOperator{\card}{card}
\DeclareMathOperator{\w}{{\bf w}}
\DeclareMathOperator{\dist}{dist}
\DeclareMathOperator{\Lip}{Lip}
\DeclareMathOperator{\exc}{excess}
\DeclareMathOperator{\tang}{Tan}
\DeclareMathOperator{\disth}{dist_H}
\numberwithin{equation}{section}
\title{H\"older curves with exotic tangent spaces}
\author{Eve Shaw}
\author{Vyron Vellis}
\thanks{E.S. and V.V. were partially supported by NSF DMS grant 2154918.}
\date{\today}
\subjclass[2020]{Primary 28A80; Secondary 26A16, 28A75, 53A04}
\keywords{H\"older curves, parameterization, iterated function systems, tangents}
\address{Department of Mathematics\\ The University of Illinois\\ Urbana, IL, 61801}
\email{emshaw@illinois.edu}
\address{Department of Mathematics\\ The University of Tennessee\\ Knoxville, TN 37966}
\email{vvellis@utk.edu}
\begin{document}

\begin{abstract}
An important implication of Rademacher's Differentiation Theorem is that every Lipschitz curve $\G$ infinitesimally looks like a line at almost all of its points in the sense that at $\mathcal{H}^1$-almost every point of $\G$, the only tangent to $\G$ is a straight line through the origin. In this article, we show that, in contrast, the infinitesimal structure of H\"older curves can be much more extreme. First, we show that for every $s>1$ there exists a $(1/s)$-H\"older curve $\G_s$ in a Euclidean space with $\mathcal{H}^s(\G_s)>0$ such that $\mathcal{H}^s$-almost every point of $\G_s$ admits infinitely many topologically distinct tangents. Second, we study the tangents of self-similar connected sets (which are canonical examples of H\"older curves) and prove that the curves $\G_s$ have the additional property that $\mathcal{H}^s$-almost every point of $\G_s$ admits infinitely many homeomorphically distinct tangents to $\G_s$ which are not admitted as (not even bi-Lipschitz to) tangents to any self-similar set at typical points. 
\end{abstract}

\maketitle

\section{Introduction}

Rademacher's Theorem, one of the most important theorems in geometric measure theory, states that every Lipschitz function defined on $[0,1]$ is differentiable at $\mathcal{H}^1$-almost every point of $[0,1]$. It is natural to ask whether a similar result may hold for more general functions. Calderon \cite{Calderon51} extended Rademacher's Theorem by proving that every function in the Sobolev class $\mathcal{W}^{1,p}$ with $p>1$ is $\mathcal{H}^1$-almost everywhere differentiable. However, any further generalization would be futile as for each $s>1$ there exists a Weierstrass function on $[0,1]$ which is $\frac1{s}$-H\"older but nowhere differentiable \cite{Zygmund}.

A major application of Rademacher's Theorem is towards the understanding of the ``infinitesimal structure'' of Lipschitz curves (i.e. Lipschitz images of $[0,1]$). To state this application, let us first define the notion of tangents. Following \cite{BL}, given a closed set $X\subset \R^n$ and a point $x\in X$, we say that a closed set $T$ is a \emph{tangent of $X$ at $x$} if there exists a sequence of positive scales $r_j$ that go to zero such that the blow-up sets $r_j^{-1}(X-x)$ converge to the set $T$ in the Attouch-Wets topology; see \textsection \ref{subsec:tangents} for the precise definition. Other notions of metric space convergences which produce similar tangents are known in the literature; see \cite{Gromov1,Gromov2,dreams,Furstenberg}. Another well-known notion of infinitesimal structure in geometric measure theory is that of the \emph{tangent cone} \cite[3.1.21]{Federer}. The tangent cone of $X$ at $x$ is the union of all tangents of $X$ at $x$, which means that some local information is lost. We denote by $\tang(X,x)$ the collection of all tangents of $X$ at $x$. It is well-known that $\tang(X,x)$ is nonempty and that if $T$ is in $\tang(X,x)$, then $\lambda T$ is in $\tang(X,x)$ for every $\lambda>0$. 

By Rademacher's Theorem and by a theorem of Besicovitch \cite{Besi} (see also \cite[Corollary 3.15]{falconerfracgeo}), every Lipschitz curve is infinitesimally a line at $\mathcal{H}^1$-almost every point. More precisely, the following is true, and we provide a proof in Section \ref{sec:liptan1}. 

\begin{theorem}\label{prop:liptan}
If $f : [0,1] \to \R^N$ is Lipschitz, then for $\mathcal{H}^1$-a.e. $x\in f([0,1])$, there exists a straight line $L\subset \R^N$ through the origin such that $\tang(f([0,1]),x) = \{L\}$.
\end{theorem}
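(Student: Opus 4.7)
The plan is to combine Rademacher's differentiation theorem applied to $f$ with Besicovitch's density and tangent theorem for 1-rectifiable subsets of $\R^N$ applied to $\Gamma := f([0,1])$. First, I would isolate a full-measure ``good'' subset of $\Gamma$. Let $E \subset [0,1]$ be the set where $f$ is differentiable, so $\mathcal{H}^1([0,1]\setminus E) = 0$ by Rademacher and $\mathcal{H}^1(f([0,1]\setminus E)) = 0$ since $f$ is Lipschitz. Split $E = E_+ \cup E_0$ according to whether $f'(t)\neq 0$ or $f'(t) = 0$. A Vitali covering argument that exploits the approximate constancy of $f$ near each point of $E_0$ yields $\mathcal{H}^1(f(E_0)) = 0$, while the Lipschitz area formula
\[
\int_{\R^N} \#f^{-1}(x)\,d\mathcal{H}^1(x) = \int_0^1 |f'(t)|\,dt < \infty
\]
forces $\#f^{-1}(x) < \infty$ for $\mathcal{H}^1$-a.e.\ $x$. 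Thus for $\mathcal{H}^1$-a.e.\ $x \in \Gamma$ the preimage $f^{-1}(x) = \{t_1,\ldots,t_k\}$ is finite and contained in $E_+$.

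At such a good $x$, set $v_i := f'(t_i) \neq 0$. Differentiability at $t_i$ gives $r^{-1}(f(t_i + rs) - x) \to s v_i$ uniformly for $s$ in any compact interval, so the image of a small neighborhood of $t_i$ contributes the line $L_i := \R v_i$ to any Attouch-Wets subsequential limit of $r_j^{-1}(\Gamma - x)$. A continuity and compactness argument, based on the fact that $f^{-1}(B(x,cr))$ can only accumulate on the $t_i$, confines the rest of $\Gamma$ outside balls of appropriate size at each scale. Consequently, if the lines $L_1,\ldots,L_k$ all coincide with a single line $L$ at $\mathcal{H}^1$-a.e.\ good $x$, then every blow-up sequence at $x$ converges to $L$ in the Attouch-Wets topology and $\tang(\Gamma, x) = \{L\}$.

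The main obstacle is establishing this coincidence of lines. To handle it, I would invoke Besicovitch's theorem: since $\Gamma$ is 1-rectifiable with finite $\mathcal{H}^1$-measure, at $\mathcal{H}^1$-a.e.\ $x \in \Gamma$ the 1-density $\lim_{r\to 0}\mathcal{H}^1(\Gamma\cap B(x,r))/(2r)$ equals $1$ and a unique approximate tangent line exists. If two of the $v_i$ were non-parallel at such an $x$, then near $x$ the set $\Gamma$ would contain two asymptotically transverse segments through $x$, forcing 1-density at least $2$ on a set of positive $\mathcal{H}^1$-measure, a contradiction. This yields $L_1 = \cdots = L_k$ almost everywhere. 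The residual bridge from the classical approximate-tangent statement to full Attouch-Wets convergence amounts to ruling out stray pieces of $\Gamma$ inside $B(x,r)$ that stay far from $x + rL$, and this is controlled by combining the density-one estimate with the uniform differentiability bounds at each preimage.
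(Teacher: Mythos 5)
Your proposal is correct in its key ideas and relies on the same two pillars as the paper---Rademacher's theorem plus the Besicovitch--Falconer theory of $1$-rectifiable sets---but the route through the argument, especially for the inclusion $T\subset L$, is genuinely different. The paper replaces $f$ by an essentially two-to-one constant-speed Lipschitz parametrization of $\Gamma=f([0,1])$ (via \cite[Theorem 4.4]{AO}), which removes the zero-derivative set and the multiplicity issue in one stroke, and then proves $T\subset L$ via the flatness Lemma~\ref{lem:emptycone}, which upgrades Falconer's approximate-tangent statement to a topological cone condition by exploiting the connectedness of $\Gamma$. You instead keep the original $f$, dispose of the zero-derivative set $E_0$ and of infinite preimages via the Lipschitz area formula, and obtain $T\subset L$ from a compactness argument on $f^{-1}(B(x,\delta))$ together with uniform differentiability at the finitely many preimages $t_i\in E_+$. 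Your route therefore needs the extra step of showing that the derivatives $v_i=f'(t_i)$ at distinct preimages all span the same line, which you handle with the density-one theorem; the paper's route never has to compare the $v_i$ because its $T\subset L$ argument does not see the parametrization at all. Two small points to tighten: nonparallel $v_i$ at $x$ force the $1$-density \emph{at the single point} $x$ to be at least $2$ (and then the set of such $x$ is $\mathcal{H}^1$-null by Besicovitch), so the phrase ``on a set of positive $\mathcal{H}^1$-measure'' is unnecessary and slightly misleading; and you should record that for $\mathcal{H}^1$-a.e.\ $x$ one may assume $f^{-1}(x)\subset(0,1)$ (since $\{f(0),f(1)\}$ is $\mathcal{H}^1$-null), which you implicitly need for the two-sided blow-up along each $t_i$ in the $L\subset T$ direction.
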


Here and for the rest of this paper, given $\a>0$, we denote by $\mathcal{H}^\a(X)$ the Hausdorff $\a$-dimensional measure of a metric space $X$. It is worthwhile to note that while typical points of a Lipschitz curve have a simple tangent space, exceptional points may exhibit extreme behaviors. In particular, there exists a Lipschitz curve $\Gamma \subset \R^2$ (or in any $\R^N$ with $N\geq 2$) and a point $x_0\in\Gamma$ such that $\tang(\Gamma,x_0)$ contains \emph{every possible tangent}; see Appendix \ref{sec:liptan2} for a precise statement and the proof.

The infinitesimal structure of Lipschitz curves plays an important role in the classification of 1-rectifiable sets by Jones \cite{jonesatsp} and Okikiolu \cite{oki}. A feature of the proofs is the use of Jones's \emph{beta numbers}, which roughly measure how well a given set $E$ can be approximated by lines at a given scale and location, and showing that if these values are small enough at all scales and locations then $E$ can be captured in a rectifiable curve. Roughly speaking, a set $E$ is contained in a Lipschitz curve if and only if at almost all points, tangent spaces are lines in a strong quantitative way. For a more complete discussion of the history of the Analyst's Traveling Salesperson Theorem, see \cite{raanan} and the citations therein.

Recent years have seen great interest in obtaining a H\"older version of the Analyst's Traveling Salesperson Theorem, that is, in characterizing all sets which are contained in a H\"older curve; see for example \cite{MM93,Remes,MM00,BV1,BNV,BZ,BV2,SV,BS24}. This notion of ``H\"older rectifiability'' is greatly motivated by the fact that many fractals in analysis on metric spaces admit a H\"older parameterization but not a Lipschitz one. For example, if the attractor of an iterated function system is connected (e.g. the von Koch curve, the Sierpi\'nski gasket, the Sierpi\'nski carpet), then it admits a H\"older parameterization by $[0,1]$; see \cite{Remes,BV2}. 

In lieu of the Lipschitz rectifiability results discussed above, a crucial step towards a comprehensive theory of H\"older rectifiable sets would be to understand the tangent spaces of H\"older curves at typical points. Unfortunately, unlike the Lipschitz case, it would be naive to expect that given $s>1$ there exists one single set $T_s$ which is the tangent of all $\frac1{s}$-H\"older curves at typical points. For example, let $\mathcal{S}$ be a self-similar Koch-like snowflake of dimension $s=\log{16}/\log{5}$ and let $\mathcal{C}$ be a Sierpi\'nski-type carpet obtained by dividing the unit square into 25 pieces and removing the 9 center squares; see Figure \ref{fig:IFS}.
Both $\mathcal{S}$ and $\mathcal{C}$ are connected self-similar sets which have Hausdorff dimension equal to $s=\log{16}/\log{5}$ and positive $\mathcal{H}^s$ measure, and by a theorem of Remes \cite{Remes}, both are $\frac1{s}$-H\"older curves. However, the tangents of $\mathcal{S}$ at $\mathcal{H}^s$-a.e. point are infinite snowflakes while the tangents of $\mathcal{C}$ at $\mathcal{H}^s$-a.e. point are infinite carpets; hence they are topologically different.

\begin{figure}[h]
    \centering
    \begin{minipage}{0.5\textwidth}
        \centering
        \includegraphics[width=0.8\textwidth]{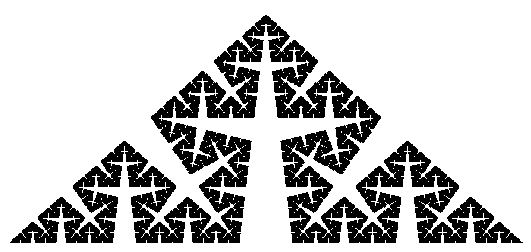} 
    \end{minipage}\hfill
    \begin{minipage}{0.4\textwidth}
        \centering
        \includegraphics[width=0.8\textwidth]{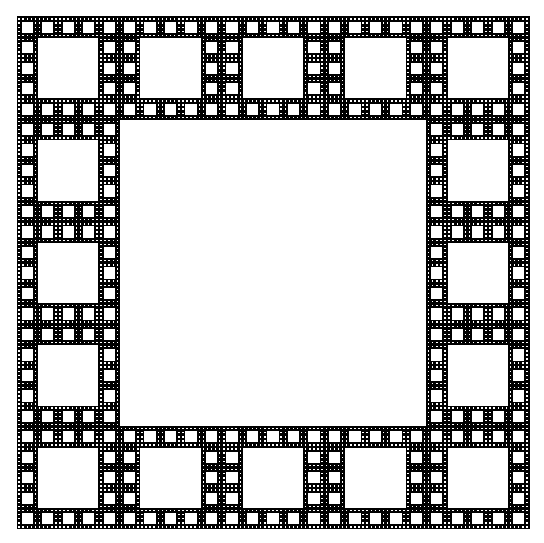} 
    \end{minipage}
    \caption{Snowflake $\mathcal{S}$ (left) and carpet $\mathcal{C}$ (right).}
    \label{fig:IFS}
\end{figure}

The previous two examples lead us to ask whether there are infinitely many H\"older curves $\G_1,\G_2,\dots$ and infinitely many sets $T_1,T_2,\dots$ such that $T_i$ are pairwise topologically different and each $T_i$ is a tangent of $\G_i$ at typical points. In our main result, we show that not only is this true but it is even worse: this situation can arise in one single H\"older curve. Furthermore, the H\"older exponent of such a curve can be chosen arbitrarily close to 1.

\begin{theorem}\label{thm:main}
For each $s>1$, there exists a $\frac{1}{s}$-H\"older curve $\Gamma_s$ in some Euclidean space such that $\mathcal{H}^s(\Gamma_s)>0$ and for $\mathcal{H}^s$-almost every $x\in \Gamma_s$, the space $\tang(\Gamma_s,x)$ contains infinitely many tangents which are pairwise not homeomorphic.
\end{theorem}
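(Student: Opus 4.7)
The plan is to build $\Gamma_s$ as an inhomogeneous Moran-type fractal that cycles through a countable family of self-similar templates at different scales. Fix integers $M\geq 2$ and $N>s$ and set $r=M^{-1/s}$, so that $Mr^s=1$. Since $N>s$, one can fit $M$ disjoint closed cubes of side $r$ inside $[0,1]^N$ in many combinatorially distinct configurations; use this freedom to choose a family $\{\Phi_k\}_{k\in\N}$ of IFSs on $[0,1]^N$ satisfying: (i) every $\Phi_k$ consists of $M$ similarities of contraction ratio $r$; (ii) every $\Phi_k$ satisfies the open set condition with common feasible set $(0,1)^N$; (iii) each attractor $K_k$ is connected, hence by \cite{Remes} a $(1/s)$-H\"older curve; (iv) all generators $\phi_{k,i}$ send a fixed ``entry/exit'' pair of corners of $[0,1]^N$ to a common pair of corners independent of $k$ and $i$, so that compositions across different templates preserve connectivity and a canonical traversal order; and (v) the $K_k$ are pairwise non-homeomorphic via an invariant that persists under blowup, for instance $K_k$ being a carpet-like set whose every non-empty open piece contains exactly $k$ topological ``holes'' of a prescribed type.

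Choose a sequence $(k(n))_{n\in\N}$ partitioned into consecutive blocks $B_1,B_2,\dots$ on which $k(n)$ is constant, with $|B_j|\to\infty$, such that every $k\in\N$ is attained on infinitely many blocks of unbounded lengths. Set $E_0=[0,1]^N$, recursively $E_n=\bigcup_{\phi\in\Phi_{k(n)}}\phi(E_{n-1})$, and $\Gamma_s=\bigcap_n E_n$. The uniform open set condition (ii) and the boundary-matching (iv) imply that $\Gamma_s$ is compact and connected, and the natural probability measure $\mu$ on $\Gamma_s$ distributing mass $M^{-n}$ to each level-$n$ cylinder satisfies $\mu(B(x,\rho))\leq C\rho^s$; by the mass distribution principle, $\mathcal{H}^s(\Gamma_s)>0$, and $\mu$ is comparable to $\mathcal{H}^s\res\Gamma_s$. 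A near-neighbor ordering of the $M^n$ level-$n$ cylinders consistent with (iv) at every level produces a $(1/s)$-H\"older parameterization $f\colon[0,1]\to\Gamma_s$ whose H\"older constant is independent of $(k(n))$; this is a routine adaptation of \cite{Remes,BV2}, where the uniform contraction ratio $r$ is what makes the estimate uniform.

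For the tangent analysis, fix $k\in\N$. The block structure guarantees that there exist levels $n_j\to\infty$ lying at the middle of blocks on which $k(n)=k$, with the distances $L_j$ from $n_j$ to both endpoints tending to infinity. For $\mathcal{H}^s$-a.e.\ $x\in\Gamma_s$, within the level-$(n_j-L_j)$ cylinder containing $x$ (which for large $j$ contains $x$ deeply in its interior), the next $2L_j$ levels of the construction all use $\Phi_k$, so $\Gamma_s$ inside this cylinder agrees with a rescaled copy of the $2L_j$-th level approximation of $K_k$. Rescaling by $r^{-n_j}$ and passing to an Attouch-Wets subsequential limit yields a tangent $T_k(x)\in\tang(\Gamma_s,x)$ that is an infinite blowup of $K_k$; by invariant (v), $T_k(x)$ retains the topological type of $K_k$, so $T_{k_1}(x)$ and $T_{k_2}(x)$ are non-homeomorphic whenever $k_1\neq k_2$. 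Taking the intersection of these $\mathcal{H}^s$-conull sets over the countable family $k\in\N$ gives the conclusion. The two main obstacles are (a) designing $\{\Phi_k\}$ so that the topological distinction between attractors is captured by an invariant that genuinely survives blowups (since blowups of carpet-like sets can, a priori, gain or lose holes), and (b) arranging the boundary-matching (iv) uniformly enough that the near-neighbor ordering extends through all levels and the H\"older constant of $f$ is independent of the switching sequence.
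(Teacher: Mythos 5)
Your high-level strategy---an inhomogeneous Moran construction that switches templates so a.e.\ point sees arbitrarily long runs of each template, then a topological invariant preserved under blowup to distinguish tangents---is close in spirit to the paper, but the switching mechanism you propose creates a genuine gap. You switch templates \emph{by level}: $E_n=\bigcup_{\phi\in\Phi_{k(n)}}\phi(E_{n-1})$ with $k(n)$ depending only on $n$. Consequently, every cylinder at level $n$ is subdivided by the same template. When you blow up at $x$ and look inside $\overline{B}(\mathbf{0},R)$ for large $R$, the tangent sees many parallel level-$n$ cylinders, all with the same internal structure. Any topological feature appearing in one cylinder (a cut-point, a hole) appears in all of them, so every tangent you produce has \emph{infinitely many} such features. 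The count is therefore useless as a distinguishing invariant, and this is precisely why your obstacle (a) is hard: unbounded blowups of planar carpet-like self-similar sets with infinitely many holes are, by Whyburn-type rigidity, typically pairwise homeomorphic. Your condition (v) cannot be satisfied with ``holes,'' and the finitely-many-cut-points invariant is unavailable because the cut-points are never localized.

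The paper circumvents this by switching \emph{by word}: a choice function $\eta:\mathcal{A}_n^*\to\{1,2\}$ picks the template for each cylinder separately. The random choice function property (R1)--(R2) ensures that for a.e.\ $x$ (with address $w$) and for each $k$, there is a scale $\ell$ at which the small subtree of depth $k$ based at $w(\ell)$ uses Model 1 (which produces a local cut-point) while the surrounding subtree based at $w(\ell-N)$ uses Model 2 (which produces none). Blowing up at scale $n^{-\ell}$ yields a tangent that is a copy of $K^{n,k}$ glued to blowups of the cut-point-free carpet $K^{n,0}$, hence with \emph{exactly} $\frac{1}{5n-7}((5n-6)^k-1)$ local cut-points (Lemma 7.2), a finite number that varies with $k$ and is a topological invariant by Lemma 2.7. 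The localization of the feature to the small subtree containing $x$ is only possible because the choice function depends on the full word. To repair your argument you would need to (i) replace $k(n)$ by a cylinder-dependent choice function, (ii) replace the vague ``hole count'' by local cut-points with a precise count, and (iii) prove the Borel--Cantelli-type statement that (R1)--(R2) hold at a.e.\ point for a suitable choice function; all three are substantive ingredients that are missing as written.
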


We note that little is currently known about the geometry of tangents at typical points. For example, are tangents at typical points connected? If so, are they H\"older images of $\R$? Theorem \ref{prop:liptan} answers the previous two questions in the affirmative for Lipschitz curves. 

\subsection{Tangents of self-similar sets}

Perhaps the richest source of H\"older curves with positive and finite Hausdorff measure is that of self-similar sets. In this paper, we say that a nonempty compact set $Q \subset\R^N$ is \emph{self-similar} if there exist contracting similarities $\phi_1,\dots,\phi_k$ on $\R^N$ and there exists a nonempty open set $U\subset \R^N$ such that $Q=\phi_1(Q)\cup\cdots\cup\phi_k(Q)$ and for all distinct $i,j$ we have $\phi_i(U)\subset U$ and $\phi_i(U)\cap \phi_j(U)=\emptyset$. A well-known theorem of Hutchinson \cite{Hutchinson} states that every self-similar set has finite and positive $\mathcal{H}^s$-measure for some $s>0$.

While there are examples of H\"older curves with positive and finite Hausdorff measure which are not self-similar sets (e.g. a bi-Lipschitz embedding of the snowflaked space $([0,1],|\cdot|^{\e})$ \cite{BH,JMW,RV}), these examples are all bi-Lipschitz equivalent to self-similar sets. Therefore, it is natural to ask if H\"older curves have tangents possessing some form of self-similarity at typical points. Note that tangents of Lipschitz curves at typical points are straight lines which do possess local self-similarity. This discussion motivates two questions. First, can we classify tangents of (connected) self-similar sets? Second, are tangents of general H\"older curves always bi-Lipschitz equivalent to tangents of connected self-similar sets?

Towards the first question, 
we show that if $Q\subset \R^N$ is a self-similar set, then at $\mathcal{H}^{\dimh(Q)}$-almost every point $x$ of $Q$, every tangent $T$ is locally made up of large copies of $Q$: there exists a controlled number of rescaled copies $Q_1,\dots,Q_n$ of $Q$, of diameters comparable to $R$, that are ``essentially disjoint'' and
\[\overline{B}({\bf 0},R)\cap T\subset Q_1\cup\cdots\cup Q_n \subset \overline{B}({\bf 0},2R)\cap T\]
where ${\bf 0}$ denotes the origin in $\R^N$. See Proposition \ref{lem:selfsimtan} for the precise statement.
Furthermore, for the special case of self-similar \emph{sponges} (including self-similar carpets such as the Sierpi\'nski carpet), we show in Section \ref{sec:selfsim} that at typical points of such a sponge, each tangent is \emph{locally self-similar}, which means roughly that every open ball contains a similar copy of every other open ball as an open subset.

Regarding the second question, following the construction in Theorem \ref{thm:main}, we show in Section \ref{sec:selfsim} that the curves $\G_s$ from Theorem \ref{thm:main}, infinitesimally, do not resemble self-similar sets.

\begin{theorem}\label{thm:ss}
For each $s>1$, at $\mathcal{H}^s$-a.e. $x\in \Gamma_s$ there exists $T\in \tang(\Gamma_s,x)$ with the following property. If $Q \subset \R^n$ is a self-similar set with $\dimh{Q} =s$, then the set of points $z\in Q$ for which $T$ is bi-Lipschitz homeomorphic to an element of $\tang(Q,z)$ is $\mathcal{H}^s$-null.
\end{theorem}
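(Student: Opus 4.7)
The plan is to contrast the rigid local structure of tangents of self-similar sets, codified by Proposition~\ref{lem:selfsimtan}, with the heterogeneous multi-scale structure of the tangent $T$ at typical points of $\Gamma_s$. The construction in Theorem~\ref{thm:main} blends infinitely many topologically distinct IFS pieces (generalizing the snowflake/carpet pair $\mathcal{S},\mathcal{C}$ from the introduction) at nested scales. From that construction I would extract, at $\mathcal{H}^s$-a.e.\ $x$, a specific tangent $T=T_x\in\tang(\Gamma_s,x)$ together with two distinguished sequences of radii $r_j,s_j\to 0$ such that $T\cap \overline{B}(\mathbf{0},r_j)$ contains bi-Lipschitz copies of a snowflake-type block (with many local cut points), while $T\cap \overline{B}(\mathbf{0},s_j)$ contains bi-Lipschitz copies of a carpet-type block (with no local cut points). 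Since the presence of local cut points is a bi-Lipschitz invariant, no metric space whose small balls are uniformly bi-Lipschitz to a single fixed model family can be bi-Lipschitz equivalent to $T$.

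Next, I would fix an arbitrary self-similar $Q\subset\R^n$ with $\dimh Q=s$ and let $E\subset Q$ be the full-measure set on which Proposition~\ref{lem:selfsimtan} applies. For any $z\in E$ and any $S\in\tang(Q,z)$, the proposition asserts that for every $R>0$ the ball $S\cap\overline{B}(\mathbf{0},R)$ is sandwiched between a union of at most $c_0$ similar copies of $Q$ of scale in $[c_0^{-1}R,R]$ and $S\cap\overline{B}(\mathbf{0},2R)$. Rescaling by $R^{-1}$ exhibits $S$ as a metric space each of whose unit balls is, up to a bi-Lipschitz distortion depending only on $c_0$, a union of at most $c_0$ bi-Lipschitz copies of $Q$ of bounded aspect ratio. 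Consequently, the family of local types of $S$ at all small scales is uniformly bi-Lipschitz to a single fixed model built out of $Q$; in particular, small balls of $S$ either universally have local cut points or universally have none, depending only on $Q$.

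Now suppose for contradiction that some $z\in E$ admitted a bi-Lipschitz homeomorphism $F\colon T\to S$ with $S\in\tang(Q,z)$. Applied to the radii $r_j\to 0$, $F$ would produce bi-Lipschitz snowflake-type blocks inside unit balls of rescalings of $S$; applied to $s_j$, it would produce bi-Lipschitz carpet-type blocks in unit balls of (possibly different) rescalings of $S$. By the previous paragraph, both types of blocks would in turn be bi-Lipschitz to the same fixed model built from $Q$, so the snowflake and carpet blocks would be bi-Lipschitz equivalent, contradicting the local cut point invariant. Therefore no such $z$ lies in $E$, and since $Q\setminus E$ is $\mathcal{H}^s$-null, the set of $z\in Q$ for which $T$ is bi-Lipschitz to an element of $\tang(Q,z)$ is $\mathcal{H}^s$-null, as required.

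The main obstacle is the first step: producing within the construction of $\Gamma_s$ a single tangent $T_x$ that, at arbitrarily small scales, oscillates between two bi-Lipschitz incompatible local types. Theorem~\ref{thm:main} already delivers infinitely many \emph{topologically distinct} tangents at typical points, but here one needs one tangent that by itself exhibits multiple distinct local geometries at arbitrarily small scales, together with a bi-Lipschitz invariant robust enough to survive the passage through tangents and bi-Lipschitz comparisons with $Q$. Local cut points are the natural candidate, though more refined invariants such as local Assouad dimension or the topological dimension of small spheres may be needed depending on the precise IFS pieces used in constructing $\Gamma_s$. Once such an invariant and such a tangent are in hand, the measure-theoretic and scaling parts of the argument reduce to clean applications of Proposition~\ref{lem:selfsimtan}.
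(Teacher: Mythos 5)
Your argument has two genuine gaps, one in the choice of tangent and one in the deduction from Proposition~\ref{lem:selfsimtan}.

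First, the tangent $T$ you propose --- one that at arbitrarily small scales oscillates between snowflake-type and carpet-type local pieces --- is not what the construction actually delivers, and you acknowledge you don't know how to build it. The tangents $T_{n,k}$ from Proposition~\ref{prop:tangents} (and hence the $T_k^*$ obtained from them via the bi-H\"older snowflaking) have a much more static structure: near the origin they look like the $k$-fold iterate $K^{n,k}$ and hence carry exactly $\frac{1}{5n-7}((5n-6)^k-1)$ local cut points, while away from the origin they are a disjoint union of copies of the sponge $K^{n,0}$, which has none. Nothing ``oscillates'' at arbitrarily small scales; the relevant invariant is simply that the total number of local cut points of $T$ is finite and positive (for $k\geq 1$). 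This already determines the choice of $T$ in the theorem.

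Second, the step ``the family of local types of $S$ at all small scales is uniformly bi-Lipschitz to a single fixed model built out of $Q$; in particular, small balls of $S$ either universally have local cut points or universally have none'' does not follow from Proposition~\ref{lem:selfsimtan}. That proposition only says $S\cap\overline{B}(\mathbf{0},R)$ is covered by boundedly many similar copies of $Q$ of comparable scale, but the number $M_R$ and the combinatorics of how those copies meet can change with $R$, so there is no single ``fixed model'' and no uniform local type. This is exactly why the paper does not argue directly about $S$: instead, assuming some $z\in Q$ has a tangent bi-Lipschitz to $T_k^*$, it pulls this back (via the bi-Lipschitz map, the bi-H\"older snowflaking $I^*_k$, and the similarities from Proposition~\ref{lem:selfsimtan}) to produce an open subset $B\subset Q\cap U$ of full $\mathcal{H}^s$-measure that is bi-H\"older to an open subset of the sponge $K^{n,0}$. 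Then Lemma~\ref{lem:selfsimsponge} kicks in: tangents of $K^{n,0}$ at interior points are \emph{locally self-similar}, hence have either $0$ or infinitely many local cut points, and by Lemma~\ref{lem:biliptan} and Lemma~\ref{lem:cuthom} the same dichotomy is inherited by every tangent of $Q$ at a.e.\ point. The contradiction with the finite positive count of local cut points of $T_k^*$ then finishes the proof. Both the sponge lemma and the pull-back of the open set condition through the chain of maps are essential ingredients that your outline omits.
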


To the best of our knowledge, the curves of Theorem \ref{thm:ss} are the first examples of H\"older curves that $\mathcal{H}^s$-almost everywhere possess tangents which are not realizable as a tangent of a self-similar set at $\mathcal{H}^s$-almost any point. However, it is worth noting that the H\"older curves of Theorem \ref{thm:ss} have the property that at typical points, \emph{some} tangent does arise as a tangent of a self-similar set at typical points. In light of this, we leave open a weakened version of the second question as a conjecture, originally due to Matthew Badger and the second named author in 2019. 

\begin{question}[Badger, Vellis]
If $\Gamma$ is a H\"older curve in Euclidean space, then is it true that at typical points, there is some tangent to $\Gamma$ which is bi-Lipschitz to a tangent of a connected self-similar set at typical points?
\end{question}

\subsection{Outline of the proofs of main results}

To simplify the exposition, we first describe the construction of $\Gamma_s$ in the special case that $s=\log{24}/\log{6}$. The set $\Gamma_s$ is constructed by applying two sets of iterated function systems on $\R^2$ in a somewhat random fashion. We start with the unit square $\mathcal{S}^0$. Assume now that for some $k\geq 0$ we have defined $\mathcal{S}^k$ which is the union of $24^k$ closed squares with disjoint interiors $\mathcal{S}^k_{i_1\dots i_k}$, corresponding to strings $i_1\dots i_k\in\{1,\dots,24\}^k$ of length $k$ in $24$ letters. Each such square is replaced by a copy of Model 1 or a copy of Model 2 in Figure \ref{fig:1}, rescaled by $6^{-k}$. After replacing each square, we obtain $\mathcal{S}^{k+1}$. We have that $\mathcal{S}^0\supset \mathcal{S}^1 \supset \cdots$ and the set $\Gamma_s$ is the Hausdorff limit of these sets. The choice  between Model 1 or Model 2 at every stage of the construction is encoded by a \emph{choice function} $\eta$ which is defined on all finite words made from characters $\{1,\dots,24\}$ and takes values in $\{1,2\}$. See Section \ref{sec:IFS} for the rigorous construction. The important detail is that Model 1 contains one \emph{local cut-point} (i.e. a point $x$ which if removed makes a neighborhood of $x$ disconnected) while Model 2 has no such point.

\begin{figure}[h]
    \centering
    \begin{minipage}{0.4\textwidth}
        \centering
        \includegraphics[width=0.8\textwidth]{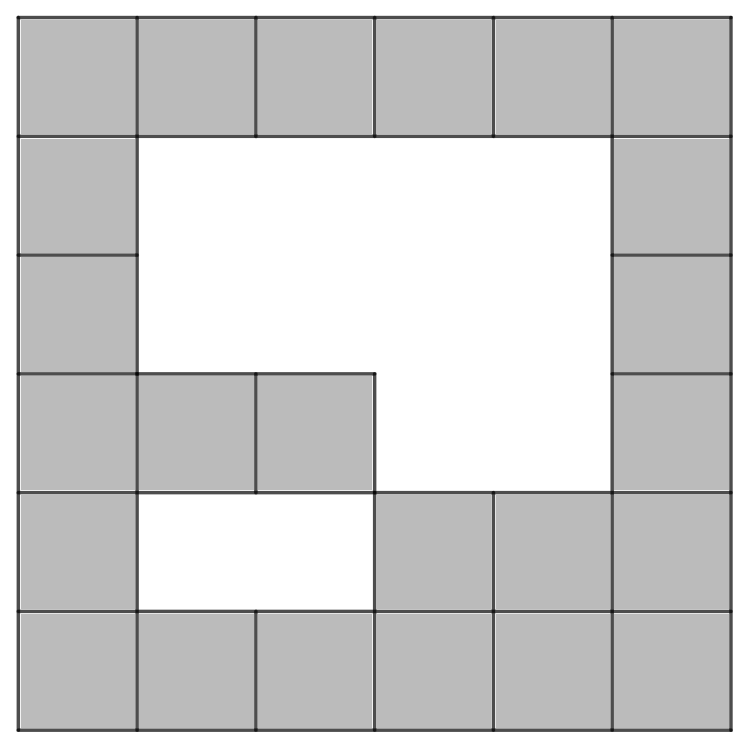} 
    \end{minipage}\hfill
    \begin{minipage}{0.4\textwidth}
        \centering
        \includegraphics[width=0.8\textwidth]{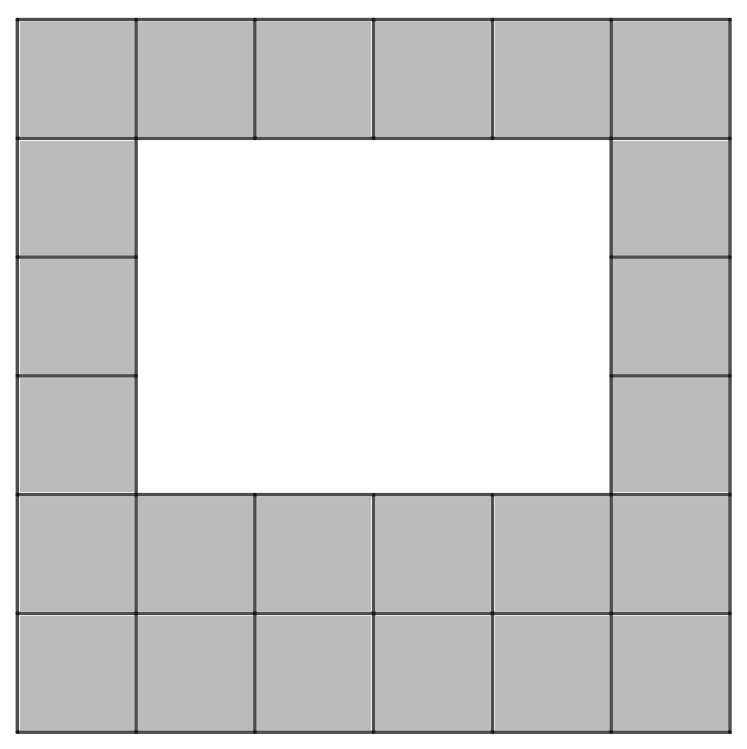} 
    \end{minipage}
    \caption{Model 1 (left) and Model 2 (right) for $s=\log{24}/\log{6}$.}
    \label{fig:1}
\end{figure}

In Section \ref{sec:measure} we show that, no matter what the choice function $\eta$ is, the resulting ``limit set'' $\Gamma_s$ will satisfy $0<\mathcal{H}^s(\Gamma_s) < \infty$. 

In Section \ref{sec:param}, perhaps the most technical part of this paper, drawing inspiration from the techniques in parameterization results of attractors of iterated function systems \cite{BV2}, we construct, for each choice function $\eta$, a sequence of Lipschitz curves which approximate $\Gamma_s$ and converge to a $\frac{1}{s}$-H\"older parameterization of $\Gamma_s$. We note that Remes's theorem \cite{Remes} asserts the existence of a H\"older parameterization for self-similar sets but does not give the parameterization itself. Constructions of such parameterizations exist in the literature \cite{RZ1,RZ2} for special cases of self-similar sets (including self-similar carpets). Our approach in Section \ref{sec:param} can be used to construct such parameterizations for statistically self-similar carpets and sponges.

Note that if a choice function takes only one value (say 1), then the resulting set $\Gamma_s$ is self-similar, and in fact, it is a self-similar \emph{sponge} which, as we prove, implies that it has very nice tangent spaces; see \textsection\ref{sec:sponge}. Thus in order to obtain a set where at typical points, the tangent space contains infinitely many topologically different elements which cannot be obtained from a self-similar set, the choice function necessarily must exhibit some form of randomness. In Section \ref{sec:choice}, using a measure-theoretic argument on the set of choice functions, we show that almost every choice function (in terms of a suitable probability measure) yields a continuum $\Gamma_s$ which ``sees'' both models in arbitrarily small neighborhoods at $\mathcal{H}^s$-almost every point. With such a choice function in hand, we show in Section \ref{sec:tangents} that at $\mathcal{H}^s$-almost every $x\in \Gamma_s$, and for each $k\in\N$, there exists a tangent $T\in \tang(\Gamma_s,x)$ that contains exactly $\frac{1}{23}(24^k-1)$ many \emph{local cut-points} (see \textsection\ref{sec:cut-points}). Therefore, there are infinitely many topologically different tangents. 

A construction similar to the one described above works for all values $s$ of the form $\a_n = \frac{\log(5n-6)}{\log(n)}$ where $n\geq 4$ is an even number. To obtain Theorem \ref{thm:main} for an arbitrary $s>1$, we first choose $n\geq 4$ even such that $s>\a_n$. Then in Section \ref{sec:proof} we appropriately ``snowflake'' the curve $\Gamma_{\a_n}$, apply Assouad's embedding theorem \cite{Assouad} to bi-Lipschitz embed the snowflaked $\Gamma_{\a_n}$ into some Euclidean space, and use a weak-tangent-snowflaking argument to show that the embedded image (which has dimension $s$) satisfies the conclusions of Theorem \ref{thm:main}.

Finally, in Section \ref{sec:selfsim} we prove that these tangents cannot be obtained as tangents of self-similar sets at generic points.

\subsection*{Acknowledgements} We thank the anonymous referees for their valuable comments which greatly improved the exposition of the paper.

\section{Preliminaries}\label{sec:prelim}

Given two points $a,b \in \R^N$, we denote by $|a-b|$ the Euclidean distance between these points. Given a point $a\in\R^N$ and a set $X\subset \R^N$ we write $\dist(a,X):= \inf_{x\in X}|a-x|$. Finally, given sets $X,Y \subset \R^N$, we write 
\[ \dist(X,Y):= \inf_{x\in X, y\in Y}|x-y| = \inf_{x\in X}\dist(x,Y) = \inf_{y\in Y}\dist(y,X).\]
Furthermore, given nonempty sets $A,B\subset \R^N$, we define the \emph{excess of A over B} as 
\[ \exc(A,B)=\sup_{a\in A}\inf_{b\in B} |a-b|;\] 
see \cite[\textsection 3.1]{Beer}, \cite{BL}, and \cite[Appendix A]{BET}. We define also the \emph{Hausdorff distance} between $A$ and $B$ by
\[\disth(A,B):=\max\left\{\exc(A,B),\exc(B,A)\right\}.\]
Unlike the Hausdorff distance, excess is not symmetric. Clearly, $\exc(A,B) \leq \disth(A,B)$.

In the next remark, we list six important properties of the excess which will be used heavily throughout this article. The first and fifth properties follow straight from the definition while the other four are given in \cite[Section 2]{BL}.

\begin{rem}\label{rem:excess}
The excess satisfies the following six properties.
\begin{enumerate}
\item \emph{Translation invariance.} For nonempty sets $A,B\subset \R^N$ and any point $x\in\R^N$, \[\exc(A,B)=\exc(A+x,B+x).\]
\item \emph{Triangle inequality.} For nonempty sets $A,B,C\subset\R^N$,
\[\exc(A,C)\leq\exc(A,B)+\exc(B,C).\]
\item \emph{Containment.} For nonempty sets $A,B\subset \R^N$, $\exc(A,B)=0$ if, and only if, $A\subset \overline{B}$.
\item \emph{Monotonicity.} If $A\subset A'$, $B'\subset B$ are all nonempty subsets of $\R^N$, then
\[\exc(A,B)\leq \exc(A',B').\]
\item \emph{Subadditivity.} If $A,B,C\subset \R^N$ are all nonempty, then
\[\exc(A\cup B,C)\leq \exc(A,C)+\exc(B,C).\]
\item \emph{Closure.} If $A,B\subset \R^N$ are nonempty, then
\[\exc(A,B)=\exc(\overline{A},\overline{B}).\]
\end{enumerate}
\end{rem}

\subsection{Tangents}\label{subsec:tangents}

Let $\mathfrak{C}(\R^N)$ be the collection of nonempty closed subsets of $\R^N$, and let $\mathfrak{C}(\R^N;{\bf 0})$ be the collection of nonempty closed subsets of $\R^N$ containing the origin ${\bf 0}$. We consider both of these spaces equipped with the \emph{Attouch-Wets} topology, which is defined in \cite[Definition 3.1.2]{Beer}. 

\begin{lemma}[{\cite[Theorem 3.1.7]{Beer}, \cite[Lemma 8.2]{dreams}}]
There exists a metrizable topology on $\mathfrak{C}(\R^N)$, called the Attouch-Wets topology, in which a sequence of sets $(X_m)_{m=1}^{\infty}\subset \mathfrak{C}(\R^N)$ converges to a set $X\in\mathfrak{C}(\R^N)$ if and only if for every $r>0$, 
\[ \lim_{m\to\infty} \exc(X_m\cap \overline{B}({\bf 0},r),X)=0 \quad\text{and}\quad \lim_{m\to\infty} \exc(X\cap \overline{B}({\bf 0},r),X_m)=0.\] 
Moreover, the subcollection $\mathfrak{C}(\R^N;{\bf 0})$ is sequentially compact; that is, for any sequence $(X_m)_{m=1}^{\infty}\subset \mathfrak{C}(\R^N;{\bf 0})$ there exists a subsequence $(X_{m_j})_{j=1}^{\infty}$ and a set $X \in \mathfrak{C}(\R^N;{\bf 0})$ such that $(X_{m_j})_{j=1}^{\infty}$ converges to $X$.
\end{lemma}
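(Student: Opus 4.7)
The plan is to exhibit an explicit metric on $\mathfrak{C}(\R^N)$ whose induced convergence is the one described, and then to deduce sequential compactness of $\mathfrak{C}(\R^N;{\bf 0})$ from a Blaschke-style diagonal argument.

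For the first part, I would define, for $A,B\in \mathfrak{C}(\R^N)$ and $k\in \N$,
\[
e_k(A,B)=\max\bigl(\exc(A\cap \overline{B}({\bf 0},k), B),\ \exc(B\cap \overline{B}({\bf 0},k), A)\bigr),
\]
with the convention $e_k(A,B)=0$ when one of the truncations is empty and $e_k(A,B)=\infty$ when exactly one side is empty (this case will not affect convergence for large $k$ once both sets are nonempty). Then set
\[
d(A,B)=\sum_{k=1}^{\infty} 2^{-k}\min\bigl(1,e_k(A,B)\bigr).
\]
Symmetry and the triangle inequality for $d$ follow from the corresponding properties of excess recorded in Remark \ref{rem:excess}(2). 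Definiteness uses Remark \ref{rem:excess}(3) together with closedness of $A,B$. This $d$ is the standard Attouch--Wets metric on closed subsets of $\R^N$.

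Next I would verify the convergence characterization. Assume $d(X_m,X)\to 0$; fix $r>0$ and an integer $k>r$. Then $2^{-k}\min(1,e_k(X_m,X))\le d(X_m,X)\to 0$, hence $e_k(X_m,X)\to 0$, which by monotonicity (Remark \ref{rem:excess}(4)) forces both $\exc(X_m\cap \overline{B}({\bf 0},r),X)\to 0$ and $\exc(X\cap \overline{B}({\bf 0},r),X_m)\to 0$. Conversely, the assumed convergence on every ball gives $e_k(X_m,X)\to 0$ for each fixed $k$, and a dominated-convergence argument on the series defining $d$ yields $d(X_m,X)\to 0$.

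For sequential compactness of $\mathfrak{C}(\R^N;{\bf 0})$, let $(X_i)\subset \mathfrak{C}(\R^N;{\bf 0})$. For each $k\in\N$, the collection of nonempty closed subsets of $\overline{B}({\bf 0},k)$ is compact in the Hausdorff distance by the Blaschke selection theorem, and each $X_i\cap \overline{B}({\bf 0},k)$ is nonempty (it contains ${\bf 0}$). By a standard diagonal extraction, I can pass to a subsequence (still called $X_i$) so that for every $k$, $X_i\cap \overline{B}({\bf 0},k)$ converges in Hausdorff distance to some closed set $Y_k\subset \overline{B}({\bf 0},k)$. The nested consistency $Y_k\cap \overline{B}({\bf 0},j)=Y_j$ for $j\le k$ is routine from the convergence on each scale and from properties (4)--(6) of excess. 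Setting $X:=\overline{\bigcup_k Y_k}$ gives a closed set containing ${\bf 0}$ such that on each $\overline{B}({\bf 0},r)$ both excess terms tend to zero, so $X_i\to X$ in the metric $d$. The main technical point worth attention is the mild care needed at scales where the relevant truncations are small or empty, and the verification that $Y_k$ assembles coherently into a single closed set; the rest is a direct transcription of the diagonal argument used to prove the Blaschke selection theorem itself.
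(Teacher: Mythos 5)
The paper does not prove this lemma: it is stated as a cited background fact from Beer's book, so there is no proof in the paper to compare against. That said, your proposal has two genuine gaps.

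The main gap is the claim that your $d$ is a metric because ``the triangle inequality for $d$ follows from the corresponding properties of excess.'' This is false. Remark \ref{rem:excess}(2) gives $\exc(A,C)\leq \exc(A,B)+\exc(B,C)$ for \emph{untruncated} sets, but to show $e_k(A,C)\leq e_k(A,B)+e_k(B,C)$ you would need $\exc(A\cap\overline{B}({\bf 0},k),C)\leq \exc(A\cap\overline{B}({\bf 0},k),B)+\exc(B\cap\overline{B}({\bf 0},k),C)$; the triangle inequality only delivers the untruncated $\exc(B,C)$ on the right, not the truncated one, and the difference matters. Concretely, take $N=1$, $A=\{0,10\}$, $B=\{0,10.1\}$, $C=\{0\}$: then $e_k(A,B)=0.1$ for all $k\geq 10$, $e_k(B,C)$ is $0$ for $k=10$ and $\geq 1$ (after the $\min$) only from $k=11$ onward, but $e_k(A,C)\geq 1$ from $k=10$ onward, so $d(A,C)=2^{-9}$ while $d(A,B)+d(B,C)=0.1\cdot 2^{-9}+2^{-10}=1.2\cdot 2^{-10}<2^{-9}$. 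Summing against $2^{-k}$ weights does not repair the pointwise failure. The standard fix (and what Beer actually does) is to metrize via the distance functionals: set $\rho_k(A,B)=\sup_{\|x\|\leq k}|\dist(x,A)-\dist(x,B)|$ and $d_{AW}(A,B)=\sum_k 2^{-k}\min(1,\rho_k(A,B))$; each $\rho_k$ is a genuine pseudometric, so $d_{AW}$ is a metric, and one then separately proves (by a routine but nontrivial two-sided comparison) that $d_{AW}(X_m,X)\to 0$ is equivalent to the excess conditions stated in the lemma.

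The secondary gap is in the compactness step: the asserted nested consistency $Y_k\cap\overline{B}({\bf 0},j)=Y_j$ for $j\leq k$ is false for closed balls. For instance, if $X_i=\{0\}\cup\{j+1/i\}$, then $X_i\cap\overline{B}({\bf 0},j)=\{0\}$ for all $i$ so $Y_j=\{0\}$, while $Y_k\supset\{0,j\}$ for $k>j$, hence $Y_k\cap\overline{B}({\bf 0},j)\neq Y_j$. What is true, and suffices, is that $Y_j\subset Y_k$ and $Y_k\cap B({\bf 0},j)\subset Y_j$ for $j\leq k$ (open ball); with this one shows $X\cap B({\bf 0},r)\subset Y_{\lceil r\rceil+1}$, and then both excess limits for $X:=\overline{\bigcup_k Y_k}$ follow. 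The Blaschke/diagonal framework you describe is the right idea, but the consistency you invoke needs to be stated with open balls to be correct.
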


In the following lemma, we record a useful property of Attouch-Wets convergence in $\mathfrak{C}(\R^N;{\bf 0})$, which roughly says that sequences in $\mathfrak{C}(\R^N;{\bf 0})$ that converge in the Attouch-Wets topology satisfy a type of Cauchy condition with respect to excess.
\begin{lemma}\label{lem:excauchy}
If $(A_j)_{j\in\N}\subset \mathfrak{C}(\R^N;{\bf 0})$ is a sequence converging to a set $A\in\mathfrak{C}(\R^N;{\bf 0})$ with respect to the Attouch-Wets topology, then for every $r>0$ and each $\e>0$, there exists an integer $j_0\in\N$ such that for every pair of integers $j_1,j_2\geq j_0$, $\exc(A_{j_1}\cap \overline{B}({\bf 0},r),A_{j_2})<\e$.
\end{lemma}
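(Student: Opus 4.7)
The plan is to prove the Cauchy-like property by combining the triangle inequality for excess (Remark \ref{rem:excess}(2)) with both halves of the defining condition for Attouch-Wets convergence, routed through an intermediate ball of radius slightly larger than $r$.

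Fix $r>0$ and $\eps>0$, and set $R=r+\eps/2$. My goal is to compare $A_{j_1}\cap \overline{B}({\bf 0},r)$ with $A_{j_2}$ by interpolating through the set $A \cap \overline{B}({\bf 0},R)$. By the triangle inequality for excess,
\[
\exc\!\bigl(A_{j_1}\cap \overline{B}({\bf 0},r),\, A_{j_2}\bigr) \;\le\; \exc\!\bigl(A_{j_1}\cap \overline{B}({\bf 0},r),\, A\cap \overline{B}({\bf 0},R)\bigr) \;+\; \exc\!\bigl(A\cap \overline{B}({\bf 0},R),\, A_{j_2}\bigr).
\]

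For the second summand, Attouch-Wets convergence applied with radius $R$ yields an integer $j_0^{(2)}\in\N$ such that $\exc(A\cap \overline{B}({\bf 0},R), A_{j_2})<\eps/2$ for all $j_2 \ge j_0^{(2)}$. For the first summand, Attouch-Wets convergence applied with radius $r$ produces $j_0^{(1)}\in\N$ such that $\exc(A_{j_1}\cap \overline{B}({\bf 0},r), A) < \eps/2$ for all $j_1\ge j_0^{(1)}$. Given any $x \in A_{j_1}\cap \overline{B}({\bf 0},r)$, this bound produces a point $a \in A$ with $|x-a|<\eps/2$, and by the triangle inequality in $\R^N$ this $a$ satisfies $|a| < r + \eps/2 = R$, so $a \in A\cap \overline{B}({\bf 0},R)$. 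Taking the supremum over $x$ yields the bound $\eps/2$ for the first summand as well. Putting $j_0=\max(j_0^{(1)},j_0^{(2)})$ then gives $\exc(A_{j_1}\cap \overline{B}({\bf 0},r),A_{j_2})<\eps$ for all $j_1,j_2\ge j_0$.

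The only genuinely subtle point, which I expect to be the main (mild) obstacle, is the choice of the intermediate radius $R$ strictly larger than $r$. The Attouch-Wets topology only controls intersections of $A$ with bounded balls, so one cannot simply route the triangle inequality through all of $A$: the excess $\exc(A,A_{j_2})$ need not be small when $A$ is unbounded. Enlarging the ball by just enough ($\eps/2$) to capture the approximating points of $A$ furnished by the first estimate is exactly what reconciles the two halves of the definition of Attouch-Wets convergence.
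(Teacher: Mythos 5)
Your proof is correct and takes essentially the same approach as the paper's: the triangle inequality for excess, routed through an intermediate set $A\cap\overline{B}({\bf 0},R)$ with $R$ slightly larger than $r$ so that the approximating points of $A$ land inside the bigger ball. The only cosmetic difference is your choice $R=r+\e/2$, where the paper uses $R=2r$ and compensates by demanding the excess be less than $\min\{\e/2,r\}$.
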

\begin{proof}
Let $\e>0$ and let $r>0$. Since $A_j\to A$ in the Attouch-Wets topology as $j\to\infty$, there exists an integer $j_0\in\N$ such that for every integer $j\geq j_0$
\[\exc(A_j\cap\overline{B}({\bf 0},r),A)<\min\{\e/2,r\}\quad\text{ and }\quad\exc(A\cap \overline{B}({\bf 0},2r),A_j)<\min\{\e/2,r\}.\]
Then for every pair of integers $j_1,j_2\geq j_0$, by the triangle inequality for excess (see Remark \ref{rem:excess}),
\[\exc(A_{j_1}\cap\overline{B}({\bf 0},r),A_{j_2})\leq\exc(A_{j_1}\cap\overline{B}({\bf 0},r),A\cap\overline{B}({\bf 0},2r))+\exc(A\cap\overline{B}({\bf 0},2r),A_{j_2}).\]
Now since $j_1\geq j_0$, $\exc(A_{j_1}\cap \overline{B}({\bf 0},r),A)<\min\{\e/2,r\}$, thus for every $x\in A_{j_1}\cap \overline{B}({\bf 0},r)$ there exists a point $y\in A$ such that $|x-y|<\min\{\e/2,r\}$. Then it must hold that such a point $y$ is contained in $\overline{B}({\bf 0},2r)\cap A$, and therefore $\exc(A_{j_1}\cap\overline{B}({\bf 0},r),A\cap\overline{B}({\bf 0},2r))<\e/2$. Additionally, since $j_2\geq j_0$, we have also that $\exc(A\cap\overline{B}({\bf 0},2r),A_{j_2})<\e/2$, which completes the proof.
\end{proof}

\begin{definition}[{\cite[Definition 3.1]{BL}}]
Let $X\in \mathfrak{C}(\R^N)$ and let $x\in X$. We say that a set $T\in\mathfrak{C}(\R^N;{\bf 0})$ is a \emph{tangent set of $X$ at $x$} if there exists a sequence of scales $(r_m)_{m\in\N}>0$ such that $r_m\to 0$ and 
$\frac{X-x}{r_m}\to T$ with respect to the Attouch-Wets topology. We denote by $\tang(X,x)$ the set of all tangent sets of $X$ at $x$. 
\end{definition}

Since $\mathfrak{C}(\R^N;{\bf 0})$ is sequentially compact, if $X\in\mathfrak{C}(\R^N)$ and if $x_0\in X$, then the sequence of sets $(n(X-x_0))_{n\in\N}$ has a subsequential limit in $\mathfrak{C}(\R^N;{\bf 0})$ and such a limit must be a tangent of $X$ at $x_0$. Therefore, for any $X\in\mathfrak{C}(\R^N)$ and any $x_0\in X$, we have that $\emptyset \neq \tang(X,x_0) \subset \mathfrak{C}(\R^N;{\bf 0})$.

For a notion of tangents with respect to Hausdorff metric (\emph{microsets}) see \cite{Furstenberg} and see \cite{Buczolich2003,Buczolich2006} for microsets of continuous functions. 

\begin{lemma}\label{lem:unbounded}
If $X\subset \R^N$ is a nondegenerate continuum, $x\in X$, and $T\in \tang(X,x)$, then each component of $T$ is unbounded.
\end{lemma}

\begin{proof}
Fix $T\in \tang(X,x)$. First, we prove that the component of $T$ which contains ${\bf 0}$ is unbounded. To this end, let $(r_j)>0$ be a sequence of scales converging to $0$ such that $(r_j)^{-1}(X-x)\to T$ as $j\to\infty$ in the Attouch-Wets topology. Given $j\in\N$, define the similarity $g_j:X\to \R^N$ by $g_j(y)=(r_j)^{-1}(y-x)$, and for $R>0$ let $Q_{j,R}$ be the component of $g_j(X)\cap\overline{B}({\bf 0},R)$ containing ${\bf 0}$. Since $X$ is connected and nondegenerate, for each $R>0$ there exists a $j_R\in\N$ such that $Q_{j,R}\cap\partial B({\bf 0},R)\neq\emptyset$ for every integer $j\geq j_R$. Note that for every pair of scales $R_2>R_1>0$ and for each $j\in\N$, $Q_{j,R_1}\subset Q_{j,R_2}$. By compactness of $\mathfrak{C}(\R^N;{\bf 0})$, for each $R>0$ the sequence $(Q_{j,R})_{j\in\N}$ has an Attouch-Wets subsequential limit $K_R$, and for $R_2>R_1>0$ we have $K_{R_1}\subset K_{R_2}$. Additionally, since $g_j(X)$ converges to $T$ in the Attouch-Wets topology, we have that $K_R\subset T$, ${\bf 0}\in K_R$, and $K_R\cap\partial B({\bf 0},R)\neq\emptyset$ for every $R>0$. Therefore, $\bigcup_{m\in \N} K_m\subset T$ and the former union is unbounded, connected, and contains ${\bf 0}$, hence the component of $T$ containing ${\bf 0}$ is unbounded, and thus $T$ has an unbounded component.

Assume now for a contradiction that $T$ has a bounded component $K$ and let $\mathbb{S}^N$ denote the 1--point compactification of $\R^N$. Let $L$ be the component of $T'=T\cup\{\infty\}$ in $\mathbb{S}^N$ that contains $\infty$. Then clearly $K,L$ are distinct connected components of $T'$. Regarding $K$ and $L$ as quasi-components of $T'$, one can choose two disjoint closed sets $Y, Z \subset T'$ satisfying $K\subset Y$, $L\subset Z$, and $T'=Y\cup Z$. Then both $Y$ and $Z$ are closed in $\mathbb{S}^N$, and thus by normality there exists an open set $U$ such that
\[ Y \subset U \subset \overline{U} \subset \mathbb{S}^N \setminus Z.\]
Since $\infty \notin \overline{U}$, we have that $\overline{U}$ is a compact subset of $\R^N$. 
Set $d=\dist(Z,Y)$.

Fix $R>0$ large enough that $Y\subset \overline{B}({\bf 0},R)$ and $Z\cap \overline{B}({\bf 0},R)\neq\emptyset$. By definition of Attouch-Wets convergence, there exists a $j_0\in\N$ such that for every integer $j\geq j_0$, 
\[ \exc(r_j^{-1}(X-x)\cap \overline{B}({\bf 0},R),T)+\exc(T\cap \overline{B}({\bf 0},R),r_j^{-1}(X-x))<d/3.\] 
Let $y,z\in r_j^{-1}(X-x)$ satisfy $\dist(y,Y)<d/3$ and $\dist(z,Z)<d/3$. Since $X$ is connected, there is another point $p\in r_j^{-1}(X-x)$ such that $p\in \overline{B}({\bf 0},R)$ and $\dist(p,Y)= d/3$; thus $\dist(p,Z)\geq 2d/3$.
However, it follows that $\dist(p,T)\geq d/3$, which is a contradiction.
\end{proof}

\subsection{Dendrites}
A \emph{dendrite} is a Peano continuum (that is, compact, connected, and locally connected at every point) which contains no simple closed curves. The \emph{leaves} of a dendrite $X$ are exactly those points $x\in X$ such that $X\setminus \{x\}$ is connected.

\begin{lemma}\label{lem:uniondendrites}
Let $T_1,T_2$ be two dendrites in $\R^n$ that intersect at a point. Then $T_1\cup T_2$ is a dendrite.
\end{lemma}

\begin{proof}
Recall that a metric space is a dendrite if and only if any two distinct points can be separated by a third point \cite[Theorem X.10.2]{Nad}. Suppose that $T_1\cap T_2 = \{x_0\}$. Fix distinct $x,y \in T_1\cup T_2$. If $x,y \in T_1$, since $T_1$ is a dendrite, there exists a point $z\in T_1$ that separates $x$ and $y$. Similarly if $x,y \in T_2$. If $x\in T_1\setminus T_2$ and $y\in T_2\setminus T_1$, then $x_0$ separates $x$ and $y$.
\end{proof}

\subsection{Words}\label{sec:words}

For each even integer $n\geq 4$ we denote
\[ \mathcal{A}_n = \{1,\dots,5n-6\}.\]
Given $n$ as above and an integer $m\geq 0$, let $\mathcal{A}_n^m$ be the set of words of length $m$ formed by characters in $\mathcal{A}_n$, with the convention that $\mathcal{A}_n^0=\{\eps\}$, where $\eps$ is the empty word. Define $\mathcal{A}_n^*:=\bigcup_{m=0}^\infty \mathcal{A}_n^m$ and $\mathcal{A}_n^{\N}$ be the set of infinitely countable words. 

Given a word $w\in \mathcal{A}_n^*$, we denote by $|w|$ the length of $w$. Also, given $w = i_1\cdots i_m \in \mathcal{A}_n^m$ and $j\leq m$, we write $w(j) = i_1\cdots i_j$, and similarly given an integer $j\geq 0$ and an infinite word $\tau=i_1 i_2 \dots\in\mathcal{A}_n^\N$ we write $\tau(j)=i_1 i_2\dots i_j$. For any $w\in \mathcal{A}_n^*$, define the \emph{cylinder set} $\mathcal{A}_{n,w}^\N:=\{\tau\in \mathcal{A}_n^\N:\tau(m)=w\}$, that is, the set of infinite words which agree with $w$ for the first $m$ characters. Similarly, for $j,m\in \N$ with $m\geq j$ and for each $w\in \mathcal{A}_n^j$, we define $\mathcal{A}_{n,w}^m:=\{v\in \mathcal{A}_n^m: v(j)=w\}$.

Denote by $\Sigma_n$ the $\sigma$-algebra generated by the cylinders $\mathcal{A}^{\N}_{n,w}$ where $w\in \mathcal{A}_n^{*}$. Then there exists a unique probability measure $\nu_n:\Sigma_n \to [0,1]$ such that $\nu_n(\mathcal{A}^{\N}_{n,w})= (5n-6)^{-|w|}$ for all $w\in \mathcal{A}_{n}^{*}$; see for example \cite[\textsection 3.1]{Stroock}.

\subsection{Combinatorial graphs and trees}
A \emph{combinatorial graph} is a pair $G=(V,E)$ of a finite or countable vertex set $V$ and an edge set 
\[E \subset \{\{v,v'\} : v,v' \in V\text{ and }v\neq v'\}.\]
If $\{v,v'\}\in E$, we say that the vertices $v$ and $v'$ are \textit{adjacent} in $G$.

If $V\subset \R^n$ for some $n\in\N$, then we define the \emph{image of $G$} to be the set
\[ \text{Im}(G) := \bigcup_{\{v,v'\}\in E}[v,v']\]
where $[v,v']$ denotes the line segment from $v$ to $v'$. Recall that if $v\in V$ is a vertex, then the \emph{valence} of $v$ in $G$ is the number of vertices $u\in V\setminus\{v\}$ so that $\{u,v\}\in E$.

A \emph{simple path} in $G$ is a set $ \g = \{\{v_1,v_2\}, \{v_2,v_3\}, \dots, \{v_{n-1},v_n\}\} \subset E$ such that for all distinct $i,j \in \{1,\dots,n\}$ we have $v_i\neq v_j$; in this case we say that $\g$ joins $v_1$, $v_n$. 
A graph $T = (V,E)$ is a \emph{combinatorial tree} if for any distinct $v,v'$ there exists a unique simple path $\g$ that joins $v$ with $v'$.

\subsection{Local cut-points}\label{sec:cut-points}

Recall if $X\subset \R^N$, then a point $x\in X$ is called a \emph{cut-point} if $C\setminus\{x\}$ is not connected, where $C\subset X$ is the component of $X$ containing $x$. For a nondegenerate closed set $X\subset \R^N$ in a Euclidean space, following Whyburn \cite{whyburn}, we say that a point $x_0\in X$ is a \emph{local cut-point of $X$} if there exists some $r>0$ such that $C_X(x_0,r)\setminus\{x_0\}$ is not connected, where $C_X(x_0,r)$ is the component of $\overline{B}(x_0,r)\cap X$ containing $x_0$. That is, $x_0$ is a local cut-point if $x_0$ is a cut-point in sufficiently small neighborhoods of itself.

\begin{lemma}\label{lem:cuthom}
Let $X\subset \R^N$ and $Y\subset \R^M$ be nondegenerate closed subsets of Euclidean spaces and let $f:X\to Y$ be a homeomorphism. If $p$ is a local cut-point of $X$, then $f(p)$ is a local cut-point of $Y$.
\end{lemma}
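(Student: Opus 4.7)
The plan is to transport the separation witnessing the local cut-point at $p$ across $f$ and then, using continuity of $f^{-1}$ at $q := f(p)$ together with the boundary bumping lemma for continua, to realize that separation on the correct metric scale around $q$.

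First I would fix $r > 0$ with $C_X(p,r) \setminus \{p\} = A \sqcup B$ for nonempty separated subsets $A, B$ of $X$. Since $C_X(p,r)$ is connected, a short argument (if $p \notin \overline{A}$, then $A$ and $B \cup \{p\}$ form a separation of $C_X(p,r)$) shows $p \in \overline{A} \cap \overline{B}$ and that both $A'' := A \cup \{p\}$ and $B'' := B \cup \{p\}$ are connected. Moreover, since $X$ is closed in $\R^N$ and $C_X(p,r) \subset X \cap \overline{B}(p,r)$ is compact, one has $A'' = \overline{A}$ and $B'' = \overline{B}$, so $A''$ and $B''$ are nondegenerate continua. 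Applying $f$, the sets $f(A'')$ and $f(B'')$ are continua in $Y$ meeting only at $q$, and their union is the continuum $C := f(C_X(p,r))$ with $C \setminus \{q\} = f(A) \sqcup f(B)$ a separation in $Y$.

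Next I would choose $r' > 0$ small enough for two conditions. Using continuity of $f^{-1}$ at $q$, for $r'$ sufficiently small one has $f^{-1}(Y \cap \overline{B}(q,r')) \subset X \cap B(p,r/2)$, so that the connected set $f^{-1}(C_Y(q,r'))$ containing $p$ lies in $C_X(p,r/2) \subset C_X(p,r)$, and hence $C_Y(q,r') \subset C$. At the same time I require $r'$ to be strictly less than $\sup_{a \in f(A'')} |a - q|$ and $\sup_{b \in f(B'')} |b - q|$, both of which are positive since $f(A), f(B)$ are nonempty. The boundary bumping lemma applied to the continua $f(A''), f(B'')$ then yields nondegenerate components $K_A, K_B$ of $f(A'') \cap \overline{B}(q,r')$ and $f(B'') \cap \overline{B}(q,r')$ containing $q$; each is a connected subset of $Y \cap \overline{B}(q,r')$ containing $q$, hence lies in $C_Y(q,r')$. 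Therefore $C_Y(q,r') \cap f(A) \supset K_A \setminus \{q\}$ and $C_Y(q,r') \cap f(B) \supset K_B \setminus \{q\}$ are both nonempty, and since $C_Y(q,r') \setminus \{q\} \subset C \setminus \{q\} = f(A) \sqcup f(B)$, it follows that $C_Y(q,r') \setminus \{q\} = (C_Y(q,r') \cap f(A)) \sqcup (C_Y(q,r') \cap f(B))$ is a nontrivial separation, making $q$ a local cut-point of $Y$.

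The main obstacle I expect is exactly the scale matching: since $f$ need not respect metric balls, the separation of $C \setminus \{q\}$ does not automatically restrict to a separation of $C_Y(q,r') \setminus \{q\}$, as one needs \emph{both} pieces $f(A)$ and $f(B)$ to meet the small-ball component $C_Y(q,r')$. The boundary bumping lemma is precisely the tool that bridges this mismatch, by ensuring that each of the continua $f(A''), f(B'')$ extends from $q$ all the way out to distance $r'$ inside $\overline{B}(q,r')$.
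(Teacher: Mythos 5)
Your proof is correct and takes essentially the same approach as the paper: both identify $A\cup\{p\}$ and $B\cup\{p\}$ as continua and then use a boundary-bumping argument to guarantee that each of the two separated pieces actually reaches the small-scale component around $f(p)$. The only difference is that you apply boundary bumping on the $Y$-side, to the continua $f(A\cup\{p\})$ and $f(B\cup\{p\})$ intersected with $\overline{B}(f(p),r')$, whereas the paper applies it on the $X$-side by intersecting $A\cup\{p\}$ with $\overline{B}_X(p,\delta)$ before pushing forward under $f$ --- a mirror image of the same idea that dispenses with the auxiliary radius $\delta$.
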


\begin{proof}
Let $p$ be a local cut-point of $X$ and let $r>0$ such that $C_X(p,r)\setminus\{p\}$ is disconnected. As a matter of notation, throughout this proof when we write $B_X(p,r)$ we mean $B(p,r)\cap X$, and similarly when we write $B_Y(f(p),r)$ we mean $B(f(p),r)\cap Y$. Let $\e>0$ such that $\overline{B}_Y(f(p),\e)\subset f(\overline{B}_X(p,r))$, and let $\delta\in (0,r)$ satisfy
\[f(\overline{B}_X(p,\delta))\subset\overline{B}_Y(f(p),\e)\subset f(\overline{B}_X(p,r)).\]
Then we have that
\[f(C_X(p,\delta))\subset C_Y(f(p),\e)\subset f(C_X(p,r)).\]
Let $A,B\subset C_X(p,r)\setminus\{p\}$ be a disjoint pair of nonempty closed subsets of $C_X(p,r)\setminus\{p\}$ such that $A\cup B=C_X(p,r)\setminus\{p\}$. Note that $A\cup\{p\}$ and $B\cup\{p\}$ are both closed sets in $X$. We claim that
$A\cap C_X(p,\delta)\neq\emptyset$.

First, we show that $A\cup\{p\}$ is connected. If $A\cup\{p\}$ is not connected, then let $C,D$ be a disjoint pair of nonempty closed subsets of $A\cup\{p\}$ with $C\cup D=A\cup\{p\}$.
Then since $A\cup\{p\}$ is closed in $X$, we have that $C$ and $D$ are both closed in $X$. Without loss of generality, assume that $p\in C$. Then the pair $B\cup C$ and $D$ is a disjoint pair of nonempty closed subsets of $C_X(p,r)$ with $(B\cup C)\cup D=C_X(p,r)$, which is a contradiction, hence $A\cup\{p\}$ is connected.

Let $C_A$ be the component of $(A\cup\{p\})\cap \overline{B}_X(p,\delta) $ which contains $p$. Then $C_A=A\cup\{p\}$ or $C_A\cap \partial B_X(p,\delta)\neq\emptyset$,
and in either case we have that $C_A\neq \{p\}$. Furthermore, since $C_A$ is connected, we have that $C_A\setminus\{p\}\subset C_X(p,\delta)\setminus\{p\}$, 
so since $C_A\setminus\{p\}\subset A$, we have that $A\cap C_X(p,\delta)\neq\emptyset$. Similarly, $B\cap C_X(p,\delta)\neq\emptyset$.

Finally, we have 
\[C_Y(f(p),\e)\setminus\{f(p)\}\subset f(A)\cup f(B),\]
therefore
\[C_Y(f(p),\e)\setminus\{f(p)\}=(f(A)\cap C_Y(f(p),\e))\cup(f(B)\cap C_Y(f(p),\e)).\]
Furthermore, the sets on the right hand side form a disjoint pair of nonempty closed subsets of $C_Y(f(p),\e)\setminus\{f(p)\}$ with $(f(A)\cap C_Y(f(p),\e))\cup(f(B)\cap C_Y(f(p),\e))=C_Y(f(p),\e)\setminus\{f(p)\}$, therefore $f(p)$ is a local cut-point
of $Y$.
\end{proof}

\subsection{Self-similarity}\label{subsec:selfsim}

If a function $f:\R^N\to \R^M$ is Lipschitz continuous, then we denote by $\Lip(f)$ (the \emph{Lipschitz norm} of $f$) the smallest $L\geq 0$ such that for every pair of points $x,y\in \R^N$, $|f(x)-f(y)|\leq L|x-y|$. If $\Lip(f) < 1$, then $f$ is a \emph{contraction}. A map $f:\R^N\to \R^M$ is \emph{affine} if there exists a linear map $L:\R^N\to\R^M$ such that for every $x\in\R^N$, $f(x)=L(x)+f({\bf 0})$. A map $f:\R^N\to\R^N$ is called a \emph{similarity} if there exists $\lambda>0$ such that for every pair of points $x,y\in\R^N$, $|f(x)-f(y)|=\lambda|x-y|$. Every similarity is a composition of an orthogonal transformation, a scalar multiplication, and a translation; in particular similarity maps are affine. If $f:\R^N\to\R^N$ is a rotation-free and reflection-free similarity, then for every $x\in\R^N$, $f(x)=\Lip(f)x+f({\bf 0})$.

\begin{rem}\label{rem:affine}
A map $f:\R^N\to\R^M$ is affine if and only if there exists a linear transformation $L:\R^N\to\R^M$ such that for every $x,y\in \R^N$, $f(x)-f(y)=L(x-y)$. In particular, if $f:\R^N\to\R^N$ is a rotation-free and reflection-free similarity, then for every $x,y\in\R^N$, $f(x)-f(y)=\Lip(f)(x-y)$.
\end{rem}

An \emph{iterated function system} (IFS for short) is a finite collection $\mathcal{F}$ of contractions on $\R^N$. By a theorem of Hutchinson \cite{Hutchinson}, for each IFS $\mathcal{F}$ there exists a unique nonempty compact set $K$ (called the \emph{attractor} of $\mathcal{F}$) such that $K = \bigcup_{\phi\in\mathcal{F}}\phi(K)$. 

We say that an IFS $\mathcal{F}$ satisfies the \emph{open set condition} (OSC for short) if there exists a nonempty open set $U\subset \R^N$ such that for any distinct $\phi,\psi \in \mathcal{F}$,
\[ \phi(U) \subset U, \qquad \phi(U) \cap \psi(U) = \emptyset.\]
By a theorem of Schief \cite[Theorem 2.2]{Schief}, the OSC is equivalent to the \emph{strong open set condition} (SOSC): if $K$ is the attractor of $\mathcal{F}$, then there exists a nonempty open set $U\subset\R^N$ for which the OSC holds such that $U\cap K \neq \emptyset$.

It is well-known that if $K$ is the attractor of an IFS $\mathcal{F}$ of similarities with the OSC, then the Hausdorff dimension, the Minkowski dimension, and the Assouad dimension are all equal to $s$, where $s$ is the unique solution of the equation
\[ \sum_{\phi \in \mathcal{F}} (\text{Lip}(\phi))^s =1.\]
Moreover, $0<\mathcal{H}^{\dimh(K)}(K) < \infty$ where $\dimh(K)$ is the Hausdorff dimension of $K$ \cite{Hutchinson}. 

Henceforth, we say that a compact set $K \subset\R^N$ is \emph{self-similar} if there exists an IFS $\mathcal{F}$ of similarities with the OSC such that $K$ is the attractor of $\mathcal{F}$.

\subsection{Self-similar sponges}\label{subsec:selfsimsponge}
We say that a set $K\subset [0,1]^N$ is a \emph{self-similar sponge} if $K$ is the attractor of a system 
\begin{equation}\label{eq:defnsponge}
\{S_i(y) = k^{-1}y+p_i :[0,1]^N \to [0,1]^N\}_{i=1}^m,
\end{equation}
where $k\in \{2,3,\dots\}$, and points $\{p_1,\dots,p_m\}$ are mutually distinct and contained in the set $\{0,\frac{1}{k},\dots,\frac{k-1}{k}\}^N$. It is easy to see that the system $\{S_i\}_{i=1}^m$ satisfies the OSC with $U=(0,1)^N$. 

We say that a subset $C\subset [0,1]^N$ is a \emph{face} of $[0,1]^N$ if $C$ is of the form $C=I_1\times \dots\times I_N$, where each $I_j$ is either equal to $[0,1]$, to $\{0\}$, or to $\{1\}$. Additionally, we call $C=I_1\times \dots\times I_N$ an $M$-face of $[0,1]^N$ if exactly $M$-many of the $I_1,\dots,I_N$ are nondegenerate.

\begin{lemma}\label{lem:capface}
Let $\{S_1,\dots,S_m\}$ be a system of similarities as in \eqref{eq:defnsponge}. If $i_1,\dots,i_l\in\{1,\dots,m\}$ are distinct indices such that $\bigcap_{j=1}^l S_{i_j}([0,1]^N)\neq\emptyset$, then this intersection is equal to $S_{i_1}(C)$, where $C$ is a face of $[0,1]^N$.
\end{lemma}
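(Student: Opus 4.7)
The plan is to reduce the claim to a coordinate-by-coordinate analysis of intervals. Since each $S_i$ has the form $S_i(y) = k^{-1}y + p_i$, the image $S_i([0,1]^N)$ is the axis-aligned cube $\prod_{d=1}^N [p_{i,d}, p_{i,d} + 1/k]$, where $p_i = (p_{i,1}, \ldots, p_{i,N})$. Consequently,
\[
\bigcap_{j=1}^l S_{i_j}([0,1]^N) \;=\; \prod_{d=1}^N \Bigl[\, \max_{1\le j\le l} p_{i_j,d},\ \min_{1\le j\le l} p_{i_j,d} + 1/k \,\Bigr],
\]
and I would first observe that nonemptiness of the intersection is equivalent to nonemptiness of each coordinate factor, so the analysis genuinely reduces to one coordinate at a time.

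For each fixed coordinate $d$, the key input is that every $p_{i_j,d}$ lies in the arithmetic progression $\{0, 1/k, \ldots, (k-1)/k\}$. Hence $\max_j p_{i_j,d} - \min_j p_{i_j,d}$ is a nonnegative integer multiple of $1/k$, and the nonemptiness condition forces this difference to be either $0$ or exactly $1/k$. In the first case, all $p_{i_j,d}$ coincide with $p_{i_1,d}$, so the $d$-th factor equals the full interval $[p_{i_1,d},\, p_{i_1,d}+1/k]$. In the second case, the $d$-th factor collapses to the singleton $\{\max_j p_{i_j,d}\}$, which is either $\{p_{i_1,d}\}$ or $\{p_{i_1,d}+1/k\}$ depending on whether $p_{i_1,d}$ realizes the max or the min of the $p_{i_j,d}$.

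Finally, I would reassemble these one-dimensional conclusions into a face of $[0,1]^N$. Define $C = I_1 \times \cdots \times I_N$ by setting $I_d = [0,1]$ in the first case above, $I_d = \{0\}$ when the $d$-th factor is $\{p_{i_1,d}\}$, and $I_d = \{1\}$ when it is $\{p_{i_1,d}+1/k\}$. Then $C$ is a face of $[0,1]^N$ by definition, and since $S_{i_1}$ acts coordinatewise as an affine map with slope $1/k$, we have $S_{i_1}(C) = \prod_d S_{i_1}(I_d)$, which by construction matches the product computed above. This yields $\bigcap_{j=1}^l S_{i_j}([0,1]^N) = S_{i_1}(C)$, as desired.

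The argument is a direct elementary computation, and I do not anticipate a genuine obstacle; the only subtle point to flag is the grid-spacing dichotomy forcing the $d$-th factor to be either a full edge of length $1/k$ or a single endpoint, which is precisely what encodes the face structure in terms of $S_{i_1}$.
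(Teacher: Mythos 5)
Your proof is correct. The reduction to a coordinate-by-coordinate computation is sound: the intersection of axis-aligned boxes is the product of the intervals $[\max_j p_{i_j,d},\ \min_j p_{i_j,d}+1/k]$, nonemptiness of the total intersection is equivalent to nonemptiness of each factor, and the grid constraint $p_{i_j,d}\in\{0,1/k,\dots,(k-1)/k\}$ forces $\max_j p_{i_j,d}-\min_j p_{i_j,d}\in\{0,1/k\}$. The case analysis and reassembly into $S_{i_1}(C)$ are all exactly right.

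Your route differs somewhat from the paper's. The paper first proves the $l=2$ case by examining the difference vector $e=S_{i_1}(\mathbf{0})-S_{i_2}(\mathbf{0})\in\{-k^{-1},0,k^{-1}\}^N$ and reading off a face $C$ from the signs of its coordinates; then, for $l\geq 3$, it rewrites $\bigcap_{j=1}^l S_{i_j}([0,1]^N)=\bigcap_{j=2}^l\bigl(S_{i_1}([0,1]^N)\cap S_{i_j}([0,1]^N)\bigr)$ and invokes the fact that a nonempty intersection of faces of the cube is again a face. You instead treat all $l$ at once: computing the intersection directly as a product of $\max$/$\min$ intervals and running the dichotomy per coordinate. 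Your version is arguably slightly more streamlined, since it bypasses both the pairwise reduction and the auxiliary observation about intersections of faces, at the cost of carrying the $\max/\min$ bookkeeping. Both are elementary and equally rigorous; the key structural insight (the $1/k$-grid spacing collapsing each factor to either a full edge or a single endpoint) is the same in both.
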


\begin{proof}
For $z\in\R^N$ and $m\in\{1,\dots,N\}$, denote by $z(m)$ the $m$-th coordinate of $z$.

We begin by proving the result in the case $l=2$. For simplicity, assume that $i_1=1$ and $i_2=2$. Since $S_{1}([0,1]^N)\cap S_{2}([0,1]^N) \neq \emptyset$ and since $S_1((0,1)^N)\cap S_2((0,1)^N)=\emptyset$, we have that 
\[e:= S_1({\bf 0})-S_2({\bf 0})\in\{-k^{-1},0,k^{-1}\}^N.\] Write $e=(e(1),\dots,e(N))$. For $m\in\{1,\dots,N\}$, set $I_m=[0,1]$ if $e(m)=0$, $I_m=\{0\}$ if $e(m)>0$, and $I_m=\{1\}$ if $e(m)<0$. We claim that 
\[ S_1([0,1]^N)\cap S_2([0,1]^N)=S_1(I_1\times\dots\times I_N). \]
To see this, fix $z_1,z_2\in [0,1]^N$ such that $S_1(z_1)=S_2(z_2)$. Then $z_2-z_1=k e$ and there are three cases to consider for $m\in\{1,\dots,N\}$. If $e(m)=0$, then $z_2(m)-z_1(m)=0$ which means that $z_1(m)$ can take any value in $[0,1]$. If $e(m)>0$, then $e(m)=k^{-1}$, and $z_2(m)-z_1(m)=1$ which implies that $z_1(m)=0$. If $e(m)<0$, then $e(m)=-k^{-1}$, and $z_2(m)-z_1(m)=-1$ which implies that $z_1(m)=1$. In either of the three cases, $z_1\in I_1\times \dots\times I_N$. This proves the claim and since $I_1\times\dots\times I_N$ is a face of $[0,1]^N$, the lemma for $l=2$.

For the case $l\geq 3$, we have that
\[\bigcap_{j=1}^l S_{i_j}([0,1]^N)=\bigcap_{j=2}^l(S_{i_1}([0,1]^N)\cap S_{i_j}([0,1]^N)).\]
By the previous case, each $S_{i_1}([0,1]^N)\cap S_{i_j}([0,1]^N)=S_{i_1}(C_j)$, where $C_j$ is a face of $[0,1]^N$. The nonempty intersection of faces of $[0,1]^N$ is also a face of $[0,1]^N$, and the result follows. 
\end{proof}

\section{Tangents of Lipschitz curves at typical points}\label{sec:liptan1}

The goal of this section is to prove Proposition \ref{prop:liptan}. The proof is based on two results. The first is Rademacher's Theorem.

\begin{lemma}[{Rademacher's Theorem}]\label{lem:Rademacher}
If $f:\R^N \to \R^M$ is a locally Lipschitz continuous function, then $f$ is differentiable $\mathcal{L}^N$-a.e., where $\mathcal{L}^N$ is the $N$-dimensional Lebesgue measure on $\R^N$.
\end{lemma}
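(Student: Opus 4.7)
The plan is to carry out the classical four-step proof of Rademacher's theorem. First, since $f = (f_1,\dots,f_M)$ is differentiable at $x$ if and only if each scalar component $f_i$ is, and each $f_i$ inherits local Lipschitz continuity, I may assume $M = 1$. By exhausting $\R^N$ with a countable sequence of bounded open sets on which $f$ is Lipschitz (and using that differentiability is a local property), I further reduce to the case of a globally Lipschitz function $f : \R^N \to \R$ with constant $L = \Lip(f)$.

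Second, for each unit vector $v \in \S^{N-1}$ I would show that the directional derivative
\[ \partial_v f(x) = \lim_{t \to 0}\frac{f(x+tv)-f(x)}{t} \]
exists at $\mathcal{L}^N$-a.e.\ $x$. After an orthogonal change of coordinates aligning $v$ with the first axis, the set of $x$ where the limit fails is written by Fubini as a union over hyperplane slices, and on each slice the restriction of $f$ to the corresponding line is Lipschitz in one real variable, hence absolutely continuous, hence differentiable a.e.\ on that line. Applied to the coordinate directions $e_1,\dots,e_N$, this step produces a full-measure set on which the gradient $\nabla f(x)=(\partial_{e_1}f(x),\dots,\partial_{e_N}f(x))$ exists classically.

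Third, I would prove the linear identity $\partial_v f(x) = \nabla f(x) \cdot v$ for $\mathcal{L}^N$-a.e.\ $x$, for each fixed $v$. For a test function $\varphi \in C_c^\infty(\R^N)$, a translation change of variables in the numerator gives
\[ \int_{\R^N} \frac{f(x+tv)-f(x)}{t}\,\varphi(x)\,dx = -\int_{\R^N} f(x)\,\frac{\varphi(x)-\varphi(x-tv)}{t}\,dx. \]
Both integrands are dominated by integrable envelopes independent of small $t$ (using $\Lip(f)$ and the compact support of $\varphi$), so dominated convergence passes $t \to 0$ inside and yields $\int \partial_v f \cdot \varphi = -\int f\, \partial_v \varphi$. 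Slicewise integration by parts, justified by absolute continuity of Lipschitz functions along coordinate lines, rewrites the right-hand side as $\int (\nabla f \cdot v)\varphi$. Since $\varphi$ is arbitrary, the claim follows.

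The fourth step, which I expect to be the main obstacle, is to upgrade a.e.\ existence of directional derivatives along a fixed countable dense set $\{v_k\}\subset \S^{N-1}$, together with the formula $\partial_{v_k}f(x) = \nabla f(x)\cdot v_k$, to full Fr\'echet differentiability a.e. Let
\[ E = \bigcap_k \{x \in \R^N : \partial_{v_k}f(x)\text{ exists and equals }\nabla f(x)\cdot v_k\}, \]
which has full measure by the first three steps. For fixed $x \in E$ and $\eps > 0$, choose finitely many $v_{k_1},\dots,v_{k_m}$ forming an $\eps$-net in $\S^{N-1}$. For any unit $v$ and the nearest $v_{k_j}$, the Lipschitz bound gives
\[ \Bigl|\tfrac{f(x+hv)-f(x+hv_{k_j})}{|h|}\Bigr| \le L\eps, \qquad |\nabla f(x)\cdot(v-v_{k_j})| \le |\nabla f(x)|\,\eps. \]
Combining these with $\partial_{v_{k_j}}f(x) = \nabla f(x)\cdot v_{k_j}$ produces the uniform-in-$v$ estimate
\[ \Bigl|\tfrac{f(x+hv)-f(x)-\nabla f(x)\cdot (hv)}{|h|}\Bigr| \le (L+|\nabla f(x)|)\eps + \max_{1\le j\le m}\Bigl|\tfrac{f(x+hv_{k_j})-f(x)-\nabla f(x)\cdot(hv_{k_j})}{|h|}\Bigr|, \]
and the maximum on the right tends to $0$ as $h\to 0$ because only finitely many directions are involved. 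Since $\eps > 0$ is arbitrary, $f$ is Fr\'echet differentiable at $x$ with derivative $\nabla f(x)$. The delicate point is precisely this uniformization over $\S^{N-1}$, which relies essentially on the Lipschitz hypothesis; as the introduction emphasizes via the Weierstrass example, this is exactly what breaks for merely H\"older maps.
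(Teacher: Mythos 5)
Your proposal is correct and is precisely the classical argument the paper points to: the paper does not prove this lemma itself but cites \cite[Theorem 3.2]{EvansG}, and your four steps (reduction to a scalar globally Lipschitz function, a.e.\ existence of directional derivatives via Fubini and one-dimensional absolute continuity, the identity $\partial_v f = \nabla f\cdot v$ via weak derivatives, and the uniformization over an $\eps$-net of directions using the Lipschitz bound) reproduce that proof faithfully. No gaps.
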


For a proof see for example \cite[Theorem 3.2]{EvansG}. The second ingredient is a result of Falconer which roughly says that Lipschitz curves look flat around typical points. Following \cite[\textsection 3.2]{falconerfracgeo}, for a point $x_0\in \R^N$, for a line $L\subset \R^N$ through the origin, and for a number $\a>0$, define 
\[ C(x_0,L,\a) :=\{x\in\R^N:d(x,L+x_0)<\a|x-x_0|\}\] 
to be the open cone centered at $x_0$ in the direction of $L$ with aperture $\a$. Given a closed set $K \subset \R^N$, we say that a point $x_0\in K$ is \emph{flat in $K$} if there exists a line $L\subset \R^N$ through the origin such that for every $\a>0$, there exists $r_{\a}>0$ with $B(x_0,r_{\a})\cap X\subset C(x_0,L,\a)$.

\begin{lemma}\label{lem:emptycone}
If $X\subset \R^N$ is a continuum with $\mathcal{H}^1(X)<\infty$, then $\mathcal{H}^1$-a.e. $x\in X$ is flat in $X$.
\end{lemma}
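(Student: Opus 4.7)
The plan is to combine a Lipschitz parametrization of $X$, coming from the classical structure theory of continua of finite length, with Rademacher's Theorem. By the result of Besicovitch cited in the introduction (\cite{Besi}; see also \cite[Corollary 3.15]{falconerfracgeo}), every continuum $X\subset\R^N$ with $\mathcal{H}^1(X)<\infty$ is a Lipschitz image of $[0,1]$; concretely, there is a Lipschitz surjection $\gamma : [0,1] \to X$, and the parametrization can moreover be chosen to have finite multiplicity at $\mathcal{H}^1$-a.e. point of $X$. The cone condition in the definition of ``flat in $X$'' is precisely the classical one-sided tangent condition, and my task is to translate Rademacher differentiability at a parameter $t_0$ into the cone inclusion at $x_0 = \gamma(t_0)$.

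I would carry out the argument in three stages. First, I would discard $\mathcal{H}^1$-null subsets of $X$: by Rademacher's Theorem (Lemma \ref{lem:Rademacher}), $\gamma$ is differentiable at $\mathcal{L}^1$-a.e. $t \in [0,1]$, and since $\gamma$ is Lipschitz, $\mathcal{H}^1(\gamma(E))=0$ whenever $\mathcal{L}^1(E)=0$. A standard Vitali covering argument further shows that $\gamma(\{t : \gamma'(t) = {\bf 0}\})$ is $\mathcal{H}^1$-null, since near such $t$ one has $|\gamma(s)-\gamma(t)|=o(|s-t|)$ and $\gamma$ compresses length sublinearly. Second, for a differentiability point $t_0$ with $\gamma'(t_0)\neq {\bf 0}$, set $L_{t_0} := \R\gamma'(t_0)$ and $x_0 := \gamma(t_0)$; the definition of differentiability immediately yields, for every $\a>0$, a $\d>0$ such that
\[ \gamma\bigl((t_0-\d,t_0+\d)\cap[0,1]\bigr) \,\subset\, C(x_0, L_{t_0}, \a). \]
Third, I would upgrade this inclusion to a statement about $B(x_0,r)\cap X$ itself. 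Finite multiplicity guarantees $\gamma^{-1}(x_0) = \{t_1,\dots,t_k\}$ at $\mathcal{H}^1$-a.e. $x_0$, and by continuity of $\gamma$ together with compactness of $\gamma^{-1}(x_0)$, for $r$ sufficiently small $\gamma^{-1}(B(x_0,r))$ is contained in a prescribed small neighborhood of $\{t_1,\dots,t_k\}$; applying step two at each $t_i$ then confines $B(x_0,r)\cap X$ to a finite union of narrow cones through $x_0$.

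The principal obstacle is aligning the tangent directions $L_{t_i} = \R\gamma'(t_i)$ at the multiple preimages of a typical $x_0$, so that the local picture of $X$ at $x_0$ fits inside a \emph{single} narrow cone rather than a union of several. This alignment is the substantive content of Besicovitch's regular-set theorem for continua of finite length: at $\mathcal{H}^1$-a.e. point of such a set the infinitesimal geometry is that of a single straight line, admitting a unique classical tangent. I would invoke this regularity result to conclude, or alternatively argue directly by a density comparison — if two genuinely distinct tangent directions occurred at $x_0$, then the lower $1$-density of $X$ at $x_0$ would strictly exceed the baseline value $1$ forced at regular points, confining the set of such bad $x_0$ to an $\mathcal{H}^1$-null subset of $X$.
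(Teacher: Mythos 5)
Your proposal takes a genuinely different route from the paper. The paper proves the lemma without any parametrization of $X$: it invokes \cite[Corollary 3.15]{falconerfracgeo}, which gives, at $\mathcal{H}^1$-a.e.\ $x_0$, a line $L$ with $\lim_{r\to 0}r^{-1}\mathcal{H}^1\bigl((X\cap\overline{B}(x_0,r))\setminus C(x_0,L,\a)\bigr)=0$, and then uses connectivity directly: if a point $y_j\in X$ escaped $C(x_0,L,\a)$ at a scale $r_j\to 0$, the component of $(X\cap\overline{B}(x_0,\tfrac32|y_j-x_0|))\setminus C(x_0,L,\a/2)$ containing $y_j$ would have length $\gtrsim |y_j-x_0|$, violating the vanishing relative measure. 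No parametrization, no Rademacher. Your route instead pushes the Lipschitz curve parametrization and Rademacher to the front, which makes the cone containment along each $\gamma$-arc immediate (your step two is fine), but concentrates all difficulty in the alignment step.

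On the alignment step there is a real gap, and the way you phrase the first proposed remedy risks circularity. ``Besicovitch's regular-set theorem for continua of finite length'' gives uniqueness of the approximate tangent line in the weak sense above (vanishing $\mathcal{H}^1$-density outside cones), not in the strong cone-containment sense; the strong version \emph{is} the lemma. If you invoke the weak version, you still owe an argument that the directions $L_{t_1},\dots,L_{t_k}$ all coincide with the a.e.\ unique approximate tangent, and you would have to bring in a density comparison anyway. Your alternative --- the density comparison directly: if $L_{t_1}\neq L_{t_2}$ then the two arcs force the upper $1$-density of $X$ at $x_0$ to be at least $2$, contradicting the standard fact that $\mathcal{H}^1$-a.e.\ the upper density is $\leq 1$ --- is the right idea and would close the gap, but as written it is a one-sentence sketch. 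In particular you would need to make precise that (i) the arcs through $t_1$ and $t_2$ each contribute length $\geq 2(1-\eps)r$ inside $B(x_0,r)$ by $\gamma'(t_i)\neq 0$, and (ii) for small $\alpha$ the narrow cones $C(x_0,L_{t_1},\a)$ and $C(x_0,L_{t_2},\a)$ meet only near $x_0$, so the contributions are essentially disjoint. One should also check, as you begin to, that a full-measure set of $x_0$ can be fixed for which \emph{every} preimage under the (a.e.\ two-to-one) parametrization is a differentiability point with nonvanishing derivative. So: correct in outline, but the decisive step is left as a sketch, and one of your two suggested fixes is dangerously close to assuming the conclusion; the paper's connectivity bootstrap from the weak cone condition is both shorter and avoids the multiplicity issue entirely.
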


\begin{proof}
The claim is trivially true if $\mathcal{H}^1(X) =0$, so we may assume that $\mathcal{H}^1(X)>0$.

By \cite[Corollary 3.15]{falconerfracgeo}, for $\mathcal{H}^1$-almost every point $x\in X$, there exists a unique line $L$ passing through ${\bf 0}$ such that for every $\a>0$,
\begin{equation}\label{eq:cone}
\lim_{r\to 0}\frac{\mathcal{H}^1((X\cap \overline{B}(x,r))\setminus C(x,L,\a))}{r}=0.
\end{equation}
Let $x_0\in X$ be such a point, let $L$ be the line given above.

Assume for a contradiction that there exists $\a\in (0,1)$, there exists a sequence of positive scales $(r_j)_{j\in\N}$ going to zero, and there exists a sequence $(y_j)_{j\in\N}$ of points in $X$ such that 
\[ y_j \in (\overline{B}(x_0,r_j)\cap X)\setminus C(x_0,L,\a).\]
Since $X$ is a continuum, the component of $(\overline{B}(x_0,\tfrac{3}{2}|y_j-x_0|)\cap X)\setminus C(x_0,L,\a/2)$ which contains $y_j$ must intersect at least one of $\partial B(x_0,\tfrac{3}{2}|y_j-x_0|)$ and $\overline{C(x_0,L,\a/2)}\cap \overline{B}(x_0,\tfrac{3}{2}|y_j-x_0|)$. Therefore, 
\begin{align*}
&\mathcal{H}^1\left((\overline{B}(x_0,\tfrac{3}{2}|y_j-x_0|)\cap X)\setminus C(x_0,L,\a/2)\right)\\
&\qquad \geq \min\left\{\dist\left(y_j,\partial B(x_0,\tfrac{3}{2}|y_j-x_0|)\right),\dist\left(y_j,\overline{C(x_0,L,\a/2)}\cap \overline{B}(x_0,\tfrac{3}{2}|y_j-x_0|)\right) \right\}\\
&\qquad \geq \min\left\{\tfrac{1}{2}|y_j-x_0|,\dist\left(y_j,\overline{C(x_0,L,\a/2)}\cap \overline{B}(x_0,\tfrac{3}{2}|y_j-x_0|)\right) \right\}.
\end{align*}

To estimate the second distance, fix $z \in \overline{C(x_0,L,\a/2)}\cap \overline{B}(x_0,\frac{3}{2}|y_j-x_0|)$. Since $y_j\notin C(x_0,L,\a)$, we have $\dist(y_j,L+x_0)\geq \a|y_j-x_0|$. Moreover, $\dist(z,L+x_0)\leq (\a/2)|z-x_0|$. Fix a point $p\in L+x_0$ satisfying $|z-p|\leq \tfrac{\alpha}{2}|z-x_0|$, hence $|z-p|\leq \tfrac{3}{4}\alpha|y_j-x_0|$. Note that $|y_j-p|\geq \a|y_j-x_0|$. Therefore, $|y_j-z|\geq \tfrac{\a}{4} |y_j-x_0|$. Consequently, for each $j\in\N$,
\[\frac{\mathcal{H}^1((\overline{B}(x_0,\tfrac{3}{2}|y_j-x_0|)\cap X)\setminus C(x_0,L,\a/2))}{\tfrac{3}{2}|y_j-x_0|}\geq \frac{\a}{6}\]
which contradicts \eqref{eq:cone} as $j\to \infty$ and $\frac{3}{2}|y_j-x_0|\to 0$.
\end{proof}

We are now ready to prove Proposition \ref{prop:liptan}.

\begin{proof}[Proof of Proposition \ref{prop:liptan}]
Let $f:[0,1] \to \R^N$ be Lipschitz and let $X=f([0,1])$. By \cite[Thoerem 4.4]{AO}, there exists $g:[0,1]\to X$, an essentially two-to-one Lipschitz parameterization with constant speed equal to $2\mathcal{H}^1(X)$. By Rademacher's Theorem, for $\mathcal{H}^1$-a.e. $x\in X$, there exists $t\in g^{-1}(\{x\})$ such that $g'(t)$ exists. Furthermore, by Lemma \ref{lem:emptycone}, $\mathcal{H}^1$-a.e. point $x\in X$ is flat in $X$. Let $x_0\in X$ be a point such that $x_0$ is flat in $X$ and there exists $t\in g^{-1}(\{x_0\})$ such that $g'(t)$ exists. Let $T\in\tang(X,x_0)$, and let $L\subset \R^N$ be the line through the origin in the direction of $g'(t)$.

First, we show that $T\subset L$. To this end, let $r_j\to 0$ be a sequence of positive scales satisfying $\frac{1}{r_j}(X-x_0)\to T$ in the Attouch-Wets topology. Fix $\e>0$ and $R>0$. By Lemma \ref{lem:emptycone}, there is $j_0\in \N$, such that for every $j\geq j_0$, 
\[ B(x_0,r_j(R+\e))\cap X\subset C(x_0,L,\tfrac{\e}{2(R+\e)})\] and 
\[\exc(T\cap \overline{B}({\bf 0},R),\tfrac{1}{r_j}(X-x_0))<\e/2.\]
Note that 
\[\exc(T\cap \overline{B}({\bf 0},R),(\tfrac{1}{r_j}(X-x_0))\cap \overline{B}({\bf 0},R+\e))<\e/2\] 
as well. Then by the triangle inequality for excess from Remark \ref{rem:excess} and since $x_0$ is flat in $X$,
\begin{align*}
&\exc(T\cap \overline{B}({\bf 0},R),L)\\
&\leq\exc(T\cap \overline{B}({\bf 0},R),\tfrac{1}{r_j}(X-x_0)\cap \overline{B}({\bf 0},R+\e))+\exc(\tfrac{1}{r_j}(X-x_0)\cap \overline{B}({\bf 0},R+\e),L)\\
&<\e.
\end{align*}
Since the latter is true for all $\e>0$ and $R>0$, it follows from Remark \ref{rem:excess} that $T\subset L$.

Next, we show that $L\subset T$. Fix $\e>0$ and $R>0$. There exists $h>0$ such that for every $s\in (-hR,hR)$, 
\[\frac{1}{h}|g(t+s)-x_0-g'(t)s|<\e/2.\] 
Moreover, there exists $j_0\in\N$ such that for every $j\geq j_0$ and every $s\in (-r_j R,r_j R)$ we have
\[\frac{1}{r_j}|g(t+s)-x_0-g'(t)s|<\e.\]

Fix $y\in L\cap \overline{B}({\bf 0},\frac{R}{2\mathcal{H}^1(X)})$. For each $j\geq j_0$ there exists $s_j\in (-r_j R,r_j R)$ such that $y=g'(t)\frac{s_j}{r_j}$. Hence,
\[\left|\frac{g(t+s_j)-x_0}{r_j}-y\right| = \left |\frac{g(t+s_j)-x_0}{r_j}-g'(t)\frac{s_j}{r_j}\right| <\e.\]
Noting that $\frac{g(t+s)-x_0}{r_j}\in \frac{1}{r_j}(X-x_0)$, we obtain 
\[ \exc(L\cap \overline{B}({\bf 0},(2\mathcal{H}^1(X))^{-1}R),r_j^{-1}(X-x_0))<\e.\] 
Therefore, for every $R>0$ and every $\e>0$, there exists some $j_0\in\N$ such that for all $j\geq j_0$,
\[ \exc(L\cap \overline{B}({\bf 0},R),r_j^{-1}(X-x_0))<\e.\] 
Proceeding to the conclusion as in the previous paragraph, we have that $L\subset T$, and therefore $L=T$ as desired.
\end{proof}

\section{A ``statistically self-similar" carpet}\label{sec:IFS}

Fix for the rest of this section an even integer $n\geq 4$. Recall the alphabets $\mathcal{A}_n$ and associated word spaces from \textsection\ref{sec:words}. Divide the unit square $ [0,1]^2$ into $n^2$-many closed squares of side-length $1/n$ that have disjoint interiors and let $S_1,\dots,S_{4n-4}$ be those squares that intersect with $\partial [0,1]^2$. 

We define two iterated function systems 
\[ \mathcal{F}_n^1 =\{\psi^1_{n,j}:[0,1]^2 \to [0,1]^2\}_{j\in \mathcal{A}_n}\quad \text{and}\quad \mathcal{F}_n^2 =\{\psi^2_{n,j}:[0,1]^2 \to [0,1]^2\}_{j\in \mathcal{A}_n}\] 
as follows. 
\begin{enumerate}
\item For $j \in \{1, \dots,4n-4\}$, $\psi_{n,j}^1=\psi_{n,j}^2$ and $\psi_{n,j}^1$ is the unique composition of a translation and a dilation that maps $\sq$ onto $S_j$. 
\item For $j \in \{4n-3,\dots,4n+\frac{n}2-4\}$ define
\[ \psi_{n,j}^1(x) = \psi_{n,j}^2(x) = \tfrac1{n}x + \left( \tfrac12+\tfrac{j-1}{n},\tfrac1{n}\right).\]
\item For $j\in \{4n+\frac{n}2-3, 5n-6\}$ define
\begin{align*} 
\psi_{n,j}^1(x) = \tfrac1{n}x + \left( \tfrac{j-4n+\frac{n}2+3}{n},\tfrac2{n}\right), \qquad \psi_{n,j}^2(x) = \tfrac1{n}x + \left( \tfrac{j-4n+\frac{n}2+3}{n},\tfrac1{n}\right).
\end{align*}
\end{enumerate}
See Figure \ref{fig:1} for the first iterations of $\mathcal{F}_n^1$ and $\mathcal{F}_n^2$ in the case $n=6$.

For $j\in\{1,2\}$ we let $\psi_{n,\eps}^j$ be the identity map and for $w=i_1  \dots i_m\in \mathcal{A}_n^m$ with $m\geq 1$ we let 
\[\psi_{n,w}^j:=\psi_{n,i_1}^j\circ\cdots\circ\psi_{n,i_m}^j.\]

We set
\[ \mathscr{C}_n =\{\eta:\mathcal{A}_n^*\to\{1,2\}\}\]
and each function $\eta \in \mathscr{C}_n$ will henceforth be called a \emph{choice function}. 

Fix now a choice function $\eta\in \mathscr{C}_n$.  Set $\phi_\eps^{\eta}$ to be the identity on $\sq$, and for any $w=i_1\cdots i_m\in \mathcal{A}_n^m$ with $m\geq 1$, define 
\[ \phi_w^{\eta}:=\psi_{n,i_1}^{\eta(\varepsilon)}\circ \cdots \circ\psi_{n,i_m}^{\eta(i_1\cdots i_{m-1})}.\] 
Define now the ``attractor'' associated with the choice function $\eta$ to be
\[K^\eta:=\bigcap_{m=0}^\infty\bigcup_{w\in \mathcal{A}_n^m} \phi_w^{\eta}(\sq).\]

We list a couple of elementary facts about the set $K^{\eta}$.

\begin{lemma}
For every $\eta\in \mathscr{C}_n$, the set $K^{\eta}$ is compact.
\end{lemma}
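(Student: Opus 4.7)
The proof should be essentially a one-liner, so the plan is mostly to record which elementary facts are being invoked. The key observation is that each of the finite unions
\[U_m^\eta := \bigcup_{w\in \mathcal{A}_n^m} \phi_w^{\eta}(\sq)\]
is compact. Indeed, $\sq$ is compact and each $\phi_w^{\eta}$ is a composition of affine maps (each $\psi_{n,j}^{k}$ is an affine contraction, being a composition of a translation and a dilation by $1/n$), hence continuous, so $\phi_w^\eta(\sq)$ is compact, and a finite union of compacts is compact. Then $K^\eta=\bigcap_{m=0}^\infty U_m^\eta$ is an intersection of compact subsets of $\R^2$, which is closed and bounded (contained in $U_0^\eta = \sq$), hence compact.

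To round out the argument (and to justify that $K^\eta$ is actually a reasonable object, i.e.\ nonempty), I would also verify that the sequence $(U_m^\eta)_{m\geq 0}$ is nested decreasing. This follows from the identity
\[\phi_{wi}^\eta = \phi_w^\eta \circ \psi_{n,i}^{\eta(w)}\qquad (w\in\mathcal{A}_n^m,\ i\in\mathcal{A}_n),\]
together with the fact that every $\psi_{n,j}^{k}$ maps $\sq$ into $\sq$ (visible from the explicit formulas in (1)--(3)). Thus $\phi_{wi}^\eta(\sq)\subset \phi_w^\eta(\sq)$ for each extension $wi$ of $w$, which gives $U_{m+1}^\eta \subset U_m^\eta$. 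By the finite intersection property of compact sets, $K^\eta$ is then a nested intersection of nonempty compact sets and is therefore nonempty and compact.

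There is really no obstacle here; the entire content is bookkeeping of the definitions and invoking that continuous images of compact sets are compact. The only minor thing worth being careful about is the indexing: since the ``choice'' at step $k$ in the composition defining $\phi_w^\eta$ depends on the prefix $i_1\cdots i_{k-1}$ rather than on the whole word $w$, the compositional identity $\phi_{wi}^\eta = \phi_w^\eta \circ \psi_{n,i}^{\eta(w)}$ has to be read off carefully from the definition. Once that is done, nestedness and hence nonempty compactness of $K^\eta$ follow immediately.
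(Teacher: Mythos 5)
Your argument is correct and is essentially the same as the paper's one-line proof: each finite union $\bigcup_{w\in\mathcal{A}_n^m}\phi_w^\eta(\sq)$ is compact, and $K^\eta$ is a countable intersection of compact subsets of $\R^2$, hence compact. The additional verification of nestedness (and hence nonemptiness) is a nice bonus not included in the paper's proof, but is not needed for the stated lemma.
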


\begin{proof}
This is immediate noting that each set $\bigcup_{w\in \mathcal{A}_n^m} \phi_w^{\eta}(\sq)$ is compact as a finite union of compact sets, and that $K^{\eta}$ is the countable intersection of compact sets.
\end{proof}

Given $w\in \mathcal{A}_n^*$, we define
\[ K_w^{\eta}:=\phi_w^{\eta}(\sq)\cap K^{\eta}.\] 

The following version of the open set condition is satisfied. 

\begin{lemma}\label{lem:OSC}
For every $\eta\in \mathscr{C}_n$, every $m\in \N$, and every pair of distinct words $w,v\in \mathcal{A}_n^m$, 
\[\phi_w^{\eta}((0,1)^2)\cap \phi_v^{\eta}((0,1)^2)=\emptyset\quad\text{and}\quad \phi_w^{\eta}((0,1)^2)\subset (0,1)^2.\] 
\end{lemma}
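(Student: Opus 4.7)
The plan is to prove both assertions simultaneously by induction on $m$, with the base case $m=1$ reducing to a direct inspection of the explicit formulas defining the maps $\psi_{n,j}^k$.

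For the base case, I would observe that each map $\psi_{n,j}^k$ (for $k \in \{1,2\}$ and $j \in \mathcal{A}_n$) has the form $x \mapsto \tfrac{1}{n}x + t_{j,k}$ for some translation vector $t_{j,k}$, so its image $\psi_{n,j}^k([0,1]^2)$ is a closed axis-aligned square of side length $\tfrac{1}{n}$ whose vertices lie in $\tfrac{1}{n}\mathbb{Z}^2$. Going through the three cases in the construction (the $4n-4$ boundary maps, the intermediate range, and the remaining indices), I would verify that for each fixed $k$ the translation vectors $\{t_{j,k}\}_{j\in\mathcal{A}_n}$ are pairwise distinct elements of $\tfrac{1}{n}\mathbb{Z}^2 \cap [0,\tfrac{n-1}{n}]^2$. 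Since two distinct lattice-aligned subsquares of $[0,1]^2$ of side $\tfrac{1}{n}$ share at most a common edge, their open interiors are disjoint and contained in $(0,1)^2$. This handles both claims at level $m=1$.

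For the inductive step, suppose the lemma holds at level $m$. Given $i \in \mathcal{A}_n$ and $\eta \in \mathscr{C}_n$, I would introduce the shifted choice function $\sigma_i\eta \in \mathscr{C}_n$ defined by $(\sigma_i\eta)(u) := \eta(iu)$ and verify directly from the definition of $\phi_w^\eta$ the factorization
\[\phi_{i\, i_2 \cdots i_{m+1}}^{\eta} \;=\; \psi_{n,i}^{\eta(\varepsilon)} \circ \phi_{i_2 \cdots i_{m+1}}^{\sigma_i \eta}.\]
Now let $w = i_1 \cdots i_{m+1}$ and $v = j_1 \cdots j_{m+1}$ be distinct words in $\mathcal{A}_n^{m+1}$. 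If $i_1 \neq j_1$, the base case gives disjoint open images $\psi_{n,i_1}^{\eta(\varepsilon)}((0,1)^2)$ and $\psi_{n,j_1}^{\eta(\varepsilon)}((0,1)^2)$, both contained in $(0,1)^2$; applying the inductive hypothesis to $\sigma_{i_1}\eta$ on the suffix $i_2 \cdots i_{m+1}$ (of length $m$) yields $\phi_{i_2 \cdots i_{m+1}}^{\sigma_{i_1}\eta}((0,1)^2) \subset (0,1)^2$, so $\phi_w^\eta((0,1)^2) \subset \psi_{n,i_1}^{\eta(\varepsilon)}((0,1)^2)$, and symmetrically for $v$, forcing disjointness. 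If instead $i_1 = j_1$, then the suffixes are distinct words in $\mathcal{A}_n^m$; the inductive hypothesis applied to $\sigma_{i_1}\eta$ produces disjointness and containment in $(0,1)^2$ for the suffix images, and applying the injective affine map $\psi_{n,i_1}^{\eta(\varepsilon)}$ (together with the base case containment $\psi_{n,i_1}^{\eta(\varepsilon)}((0,1)^2) \subset (0,1)^2$) transfers both properties to $\phi_w^\eta((0,1)^2)$ and $\phi_v^\eta((0,1)^2)$.

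The main obstacle I anticipate is the base case: it is not conceptually hard but requires a careful combinatorial check of the three-case construction to confirm that the prescribed translations $t_{j,k}$ produce $5n-6$ distinct lattice-aligned subsquares of $[0,1]^2$ for each $k \in \{1,2\}$. Once this is verified, the inductive step is essentially formal, resting only on the shifted-choice-function factorization and the injectivity of each $\psi_{n,i}^{\eta(\varepsilon)}$.
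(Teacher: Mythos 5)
Your proof is correct, and it takes a genuinely different inductive decomposition from the paper. Both arguments induct on word length and share the same base case (inspecting the $5n-6$ lattice-aligned subsquares of $\sq$ of side $1/n$), but they peel off letters from opposite ends. The paper strips the \emph{last} letter, writing $\phi_u^\eta = \phi_{u(m)}^\eta \circ \psi_{n,j}^{\eta(u(m))}$ for $u \in \mathcal{A}_n^{m+1}$ with last character $j$, and splits cases according to whether $u(m) = u'(m)$. This factorization comes for free from the definition of $\phi_w^\eta$: the superscript on the last map depends only on the known prefix $u(m)$, so no auxiliary machinery is needed. You strip the \emph{first} letter instead, which forces you to introduce the shifted choice function $\sigma_i\eta$ so that the recursion $\phi_{i\,i_2\cdots i_{m+1}}^\eta = \psi_{n,i}^{\eta(\eps)}\circ\phi_{i_2\cdots i_{m+1}}^{\sigma_i\eta}$ closes up; your cases then split on whether the first characters agree. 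This costs a small amount of extra notation, but in exchange it exposes the IFS-with-shift structure of the $K^\eta$ construction more explicitly, which could be convenient if one wanted to reuse the shift operator elsewhere (for instance in describing blow-ups of $K^\eta$ along a word). For the lemma at hand the paper's version is the leaner one, but your argument is sound: the factorization identity checks out, injectivity of $\psi_{n,i_1}^{\eta(\eps)}$ transfers disjointness of the suffix images, and the base-case containment gives $\phi_w^\eta((0,1)^2) \subset \psi_{n,i_1}^{\eta(\eps)}((0,1)^2) \subset (0,1)^2$ in both of your cases.
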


\begin{proof}
We proceed by induction on $m$. For $m=1$, if $i,j\in \mathcal{A}_n$ are distinct characters, then by definition we have that $\phi_j^\eta(\sq)$ and $\phi_i^\eta(\sq)$ are distinct squares of side length $1/n$ with disjoint interiors contained in $\sq$. Thus, $\phi_i^\eta((0,1)^2)\subset (0,1)^2$ and $\phi_i^\eta((0,1)^2)\cap\phi_j^\eta((0,1)^2)=\emptyset$.

Fix now an $m\in\N$ and assume that for all distinct words $v,w\in \mathcal{A}^m_n$, we have $\phi_w^\eta((0,1)^2)\cap\phi_v^\eta((0,1)^2)=\emptyset$ and $\phi_w^\eta((0,1)^2)\subset (0,1)^2$. Let $u,u'\in \mathcal{A}^{m+1}_n$ be distinct words. Then by the base case we have $\phi_u^\eta((0,1)^2)\subset \phi_{u(m)}^\eta((0,1)^2)$, which is contained in $(0,1)^2$ by the induction hypothesis.

The remainder of the proof falls to a case study.

\emph{Case 1.} If $u(m)\neq u'(m)$, then we have that $\phi_u^\eta((0,1)^2)\subset \phi_{u(m)}^\eta((0,1)^2)$ and $\phi_{u'}^\eta((0,1)^2)\subset \phi_{u'(m)}^\eta((0,1)^2)$ by the base case. Then, since $u(m)\neq u'(m)$, we have by the induction hypothesis that $\phi_{u(m)}^\eta((0,1)^2)\cap \phi_{u'(m)}^\eta((0,1)^2)=\emptyset$, therefore $\phi_u^\eta((0,1)^2)\cap \phi_{u'}^\eta((0,1)^2)=\emptyset$.

\emph{Case 2.} If $u(m)=u'(m)$, then there exist distinct characters $i,j\in \mathcal{A}_n$ such that $u=u(m)j$ and $u'=u'(m)i$. By definition of $\phi_u^\eta$ we then have
\[\phi_u^\eta((0,1)^2)=\phi_{u(m)}^\eta\circ\psi_j^{\eta(u(m))}((0,1)^2)\]
and
\[\phi_{u'}^\eta((0,1)^2)=\phi_{u(m)}^\eta\circ\psi_i^{\eta(u(m))}((0,1)^2).\]
Thus we can see that
\[\phi_{u'}^\eta((0,1)^2)\cap\phi_u^\eta((0,1)^2)=\phi_{u(m)}^\eta(\psi_j^{\eta(u(m))}((0,1)^2)\cap\psi_i^{\eta(u(m))}((0,1)^2)),\]
and the latter set is empty by the base case.
\end{proof}

\begin{lemma}\label{lem:fact}
Let $\eta\in \mathscr{C}_n$, $w,v\in \mathcal{A}_n^m$, $x\in K_w^{\eta}$, and $y\in K_v^{\eta}$. If $|x-y|>2\sqrt{2}\cdot n^{-m} $, then $K_w^{\eta}\cap K^{\eta}_v=\emptyset$. If $|x-y|<n^{-m}$, then $K_w^\eta\cap K_v^\eta\neq\emptyset$. Furthermore, for each $\eta\in\mathscr{C}_n$ and every $w\in\mathcal{A}_n^*$, we have that $\phi^\eta_w(\partial\sq)\subset K^\eta$. In particular, $\partial\sq\subset K^\eta$.
\end{lemma}

\begin{proof}
Let $X_0=\sq$, and for $m\in \N$, define $X_m=\bigcup_{w\in \mathcal{A}_n^m}\phi_w^{\eta}(\sq)$. Recall that for $j\in \{1,\dots,4n-4\}$ we have $\psi_{n,j}^1 = \psi_{n,j}^2$. Therefore, for every integer $m\geq 0$, we have that 
\[ \bigcup_{j=1}^{4n-4}\psi_{n,j}^1(X_m)\subset X_{m+1} \]
and
\[ \partial\sq\subset \bigcup_{j=1}^{4n-4}\psi_{n,j}^1(\partial\sq).\] 
Thus, since $\partial\sq\subset X_0$, we have that $\partial\sq\subset X_m$ for every integer $m\geq 0$, and since $K^{\eta}=\bigcap_{m=1}^\infty X_m$, we obtain $\partial\sq\subset K^{\eta}$. Similarly, for every $w\in\mathcal{A}_n^*$ we have that $\phi_w^\eta(\partial\sq)\subset K_w^\eta$. Let $m\geq 1$ and let $v,w\in\mathcal{A}^m_n$. If $K_v^\eta\cap K_w^\eta\neq\emptyset$, then $\diam(K_v^\eta\cup K_w^\eta)\leq 2\sqrt{2}n^{-m}$. Furthermore, if $K_v^\eta\cap K_w^\eta=\emptyset$, then $\phi_v^\eta(\partial\sq)\cap \phi_w^\eta(\partial\sq)=\emptyset$, thus $\dist(\phi_v^\eta(\sq),\phi_w^\eta(\sq))\geq n^{-m}$, and the result follows as $K_v^\eta\subset \phi_v^\eta(\sq)$.
\end{proof}

\section{Ahlfors regularity of carpets $K^{\eta}$}\label{sec:measure}

Fix for the rest of this section an even integer $n\geq 4$ and set $\a_n:=\frac{\log(5n-6)}{\log(n)}$. We show that for every choice function $\eta\in\mathscr{C}_n$, we have $0<\mathcal{H}^{\a_n}(K^{\eta})<\infty$. In fact, we show the following stronger statement.

\begin{proposition}\label{prop:reg}
There exists $C_n>1$ such that for every $\eta\in \mathscr{C}_n$, every $x\in K^{\eta}$, and every $r\in (0,1/n)$ we have
\[ C_n^{-1}r^{\a_n} \leq \mathcal{H}^{\a_n}(K^{\eta}\cap B(x,r)) \leq C_n r^{\a_n}.\]
\end{proposition}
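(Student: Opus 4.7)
The plan is to build a natural probability measure $\mu_\eta$ on each $K^\eta$, to establish two-sided estimates of the form $\mu_\eta(K_w^\eta)\asymp (5n-6)^{-|w|}$ with constants independent of the choice function $\eta$, and then to extract Proposition \ref{prop:reg} via the mass distribution principle. I will define $\mu_\eta:=(\pi_\eta)_*\nu_n$, where $\pi_\eta\colon\mathcal{A}_n^{\N}\to K^\eta$ is the coding map sending $\tau$ to the unique point of $\bigcap_{m\geq 0}\phi_{\tau(m)}^\eta(\sq)$ (well defined since the squares are nested with diameters $\sqrt{2}\,n^{-m}\to 0$) and $\nu_n$ is the Bernoulli measure from Section \ref{sec:words}. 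The cylinder inclusion $\mathcal{A}^{\N}_{n,w}\subseteq\pi_\eta^{-1}(K_w^\eta)$ immediately yields the easy half $\mu_\eta(K_w^\eta)\geq(5n-6)^{-|w|}$.

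The main obstacle will be the matching upper bound $\mu_\eta(K_w^\eta)\leq c_0(5n-6)^{-|w|}$ with $c_0=c_0(n)$ independent of $\eta$ and $w$. I will derive it from the following uniform counting fact: for every $m$ and every $w\in\mathcal{A}_n^m$, the set $V_w:=\{v\in\mathcal{A}_n^m:\phi_v^\eta(\sq)\cap\phi_w^\eta(\sq)\neq\emptyset\}$ has cardinality at most $c_0$. This uses the explicit form of the generators $\psi_{n,j}^i$ from Section \ref{sec:IFS}: they are all compositions of dilations by $1/n$ and translations, with no rotations or reflections. Hence every $\phi_v^\eta(\sq)$ is an axis-aligned square of side length $n^{-m}$, the family at level $m$ has pairwise disjoint interiors by Lemma \ref{lem:OSC}, and a volume-packing argument produces $c_0$. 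Since every $\tau\in\pi_\eta^{-1}(K_w^\eta)$ forces $\tau(m)\in V_w$, we conclude $\pi_\eta^{-1}(K_w^\eta)\subseteq\bigcup_{v\in V_w}\mathcal{A}^{\N}_{n,v}$ and hence $\mu_\eta(K_w^\eta)\leq c_0(5n-6)^{-|w|}$.

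Granting this estimate, the remainder is routine. The same counting argument applied at scale $n^{-m}\approx\rho$ gives $\mu_\eta(B(y,\rho))\leq C_1\rho^{\alpha_n}$ for every $y\in\R^2$ and $\rho\in(0,1/n)$, with $C_1=C_1(n)$ independent of $\eta$, so the mass distribution principle yields $\mathcal{H}^{\alpha_n}(E)\geq C_1^{-1}\mu_\eta(E)$ for every Borel $E\subseteq\R^2$. For the lower bound in Proposition \ref{prop:reg}, given $x\in K^\eta$ and $r\in(0,1/n)$, I will choose the smallest $m$ with $\sqrt{2}\,n^{-m}\leq r$ and any $w\in\mathcal{A}_n^m$ with $x\in\phi_w^\eta(\sq)$; then $\phi_w^\eta(\sq)\subseteq B(x,r)$, so $\mathcal{H}^{\alpha_n}(K^\eta\cap B(x,r))\geq C_1^{-1}\mu_\eta(K_w^\eta)\geq C_1^{-1}(5n-6)^{-m}\geq c_n r^{\alpha_n}$. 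For the upper bound, I pick the largest $m$ with $n^{-m}\leq r$, observe that $B(x,r)$ meets at most $c_0$ of the squares $\phi_w^\eta(\sq)$ at level $m$, and use the non-autonomous self-similarity identity $K_w^\eta=\phi_w^\eta(K^{\eta_w})$ (where $\eta_w(v):=\eta(wv)$) together with the trivial cover estimate $\mathcal{H}^{\alpha_n}(K^{\eta'})\leq(\sqrt{2})^{\alpha_n}$ (valid for every choice function $\eta'$) to conclude $\mathcal{H}^{\alpha_n}(K^\eta\cap B(x,r))\leq c_0(\sqrt{2})^{\alpha_n}n^{-m\alpha_n}\leq C_n r^{\alpha_n}$.
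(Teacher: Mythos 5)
Your proof is correct, and it takes a genuinely different route from the one in the paper. Both arguments start from the push-forward measure $\mu_\eta=(\pi_\eta)_*\nu_n$, but from there they diverge. The paper establishes the \emph{exact} equality $\mu_\eta(K_w^\eta)=(5n-6)^{-|w|}$, which requires Lemma~\ref{lem:injfull} (that the set of non-injective words is $\nu_n$-null), proves two-sided Ahlfors regularity for $\mu_\eta$ directly, and then invokes the general comparison theorem from Mackay--Tyson to transfer the regularity from $\mu_\eta$ to $\mathcal{H}^{\alpha_n}\res K^\eta$. You avoid both of these external ingredients: the easy inclusion $\mathcal{A}^\N_{n,w}\subseteq\pi_\eta^{-1}(K_w^\eta)$ gives $\mu_\eta(K_w^\eta)\geq(5n-6)^{-|w|}$ and a packing count of axis-aligned grid squares (a fixed constant, independent of $\eta$) gives the matching upper bound --- no injectivity needed. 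You then pass to $\mathcal{H}^{\alpha_n}$ by hand: the Frostman/mass-distribution principle converts the ball estimate for $\mu_\eta$ into the lower bound on $\mathcal{H}^{\alpha_n}(K^\eta\cap B(x,r))$, and the upper bound comes from a finite cover by cylinder pieces $K_w^\eta$. Your identity $K_w^\eta=\phi_w^\eta(K^{\eta_w})$ (with the shifted choice function $\eta_w$) is a nice structural observation, though for the upper bound you could have used the even cheaper $\diam(K_w^\eta)\leq\sqrt 2\,n^{-|w|}$, hence $\mathcal{H}^{\alpha_n}(K_w^\eta)\leq(\sqrt 2)^{\alpha_n}n^{-|w|\alpha_n}$, directly. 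Net effect: your proof is more elementary and self-contained, at the cost of not producing the exact value $\mu_\eta(K_w^\eta)=(5n-6)^{-|w|}$, which the paper re-uses later (for instance in Lemma~\ref{lem:sizeofEm} and in the null-set comparison following Proposition~\ref{prop:randomchoice}); if you adopted your route you would still need that exact equality elsewhere.

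One small slip: in the upper-bound step you write ``the largest $m$ with $n^{-m}\leq r$,'' but $n^{-m}\to 0$ so every large $m$ satisfies this; you mean the unique $m$ with $n^{-m}\leq r<n^{-m+1}$ (equivalently, the smallest such $m$). With that reading the count of level-$m$ squares hit by $B(x,r)$ is $O(n^2)$, which still depends only on $n$, so the argument goes through unchanged.
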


For a choice function $\eta\in \mathscr{C}_n$ define $\pi_\eta:\mathcal{A}_n^\N\to K^\eta$ by 
\begin{equation}\label{eq:pi}
\pi_\eta(\tau):=\lim_{m\to\infty} \phi_{\tau(m)}^\eta({\bf 0}).  
\end{equation} 
We start with an elementary topological fact.

\begin{lemma}\label{lem:borel}
The $\sigma$-algebra generated by the collection $\{K_w^\eta:w\in\mathcal{A}_n^*\}$ is equal to the Borel $\sigma$-algebra on $K^{\eta}$.
\end{lemma}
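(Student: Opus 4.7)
The plan is to establish two inclusions. For the easier direction, observe that for each $w \in \mathcal{A}_n^*$, the set $\phi_w^\eta([0,1]^2)$ is closed in $\R^2$, so $K_w^\eta = \phi_w^\eta([0,1]^2) \cap K^\eta$ is closed in $K^\eta$. Closed sets are Borel, so $\sigma(\{K_w^\eta : w \in \mathcal{A}_n^*\}) \subset \mathcal{B}(K^\eta)$ immediately.

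For the reverse inclusion, I would use the fact that $K^\eta$ is a separable metric space so its Borel $\sigma$-algebra is generated by its open sets; hence it suffices to show every relatively open $U \subset K^\eta$ lies in $\sigma(\{K_w^\eta\})$. The key geometric input is the diameter bound $\diam(K_w^\eta) \leq \sqrt{2}\, n^{-|w|}$, which is immediate from the definition of $\phi_w^\eta$ as a composition of similarities of ratio $1/n$ applied to $[0,1]^2$ (and is implicit in Lemma \ref{lem:fact}). Combined with the identity $K^\eta = \bigcup_{w \in \mathcal{A}_n^m} K_w^\eta$ for every $m \geq 0$ (which follows directly from the definition $K^\eta = \bigcap_m \bigcup_{w \in \mathcal{A}_n^m} \phi_w^\eta([0,1]^2)$), this says that $\{K_w^\eta : w \in \mathcal{A}_n^m\}$ is a finite cover of $K^\eta$ by sets of arbitrarily small diameter as $m \to \infty$.

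Given such $U$, I would set
\[
U_m := \bigcup \{K_w^\eta : w \in \mathcal{A}_n^m,\ K_w^\eta \subset U\}
\]
for each $m \in \N$, and prove $U = \bigcup_{m \in \N} U_m$. The inclusion $\bigcup_m U_m \subset U$ is tautological. For the reverse inclusion, given $x \in U$, choose $r > 0$ so that $B(x,r) \cap K^\eta \subset U$, then choose $m$ large enough that $\sqrt{2}\, n^{-m} < r$. Since $K^\eta = \bigcup_{w \in \mathcal{A}_n^m} K_w^\eta$, there is some $w \in \mathcal{A}_n^m$ with $x \in K_w^\eta$; the diameter bound then forces $K_w^\eta \subset B(x,r) \cap K^\eta \subset U$, so $x \in U_m$. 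Each $U_m$ is a finite union of generators and hence lies in $\sigma(\{K_w^\eta\})$, and $U = \bigcup_m U_m$ is therefore in $\sigma(\{K_w^\eta\})$ as well.

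There is no real obstacle here; the only subtlety is remembering that we need $K^\eta$ itself to be covered by the cylinder sets at each scale (so that $x$ lies in \emph{some} $K_w^\eta$ with $|w| = m$), and that the cover has mesh tending to zero. Both facts are baked into the construction, so the argument is essentially a standard approximation by a decreasing net of closed pieces.
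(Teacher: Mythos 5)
Your proof is correct and follows essentially the same strategy as the paper's: both directions hinge on the cylinders $K^\eta_w$ being closed (hence Borel) and on the cylinder cover at level $m$ having mesh $\leq \sqrt{2}\,n^{-m}\to 0$, so that every relatively open set is a countable union of cylinders. The paper organizes the second inclusion around balls $B(x,r)\cap K^\eta$ and the collection $W(x,r)$ of all words $w$ with $K^\eta_w\subset B(x,r)$, invoking the coding map $\pi_\eta$ to find a suitable $\tau(m)$; your version filters by word length and uses $K^\eta=\bigcup_{w\in\mathcal{A}_n^m}K^\eta_w$ instead, which is a cosmetic reorganization of the same idea.
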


\begin{proof}
Fix $\eta \in \mathscr{C}_n$. Let $\Sigma$ denote the $\sigma$-algebra generated by the collection $\{K^\eta_w:w\in\mathcal{A}_n^*\}$, and let $\mathcal{B}(K^{\eta})$ be the Borel $\sigma$-algebra on $K^{\eta}$. That $\Sigma\subset \mathcal{B}(K^{\eta})$ is clear, as each $K_w^{\eta}$ is a Borel set. For the reverse inclusion, we show that for any $x\in K^{\eta}$ and any $r>0$, $B(x,r)\cap K^{\eta}\in \Sigma$. To see this, let 
\[ W(x,r)=\{w\in \mathcal{A}_n^*: K^{\eta}_w\subset B(x,r)\}.\] 
Note that $W(x,r)$ is a countable set since $\mathcal{A}_n^*$ is countable, hence $\bigcup_{w\in W(x,r)} K^{\eta}_w\in \Sigma$. Furthermore, for each point $y\in B(x,r)\cap K^{\eta}$ there is a word $\tau\in \mathcal{A}_n^\N$ with $\pi_{\eta}(\tau)=y$. Then since $B(x,r)\cap K^{\eta}$ is an open subset of $K^\eta$ and $\lim_{j\to\infty}\diam(K^{\eta}_{\tau(j)})=0,$ there is some $m\in \N$ large enough that $\tau(m)\in W(x,r)$. Since $y\in K^\eta_{\tau(m)}$ and $\bigcup_{w\in W(x,r)} K^{\eta}_w=B(x,r)\cap K^{\eta}$, we have that $\Sigma=\mathcal{B}(K^{\eta})$.
\end{proof}

Recall from \textsection\ref{sec:words} the $\sigma$-algebra $\Sigma_n$ on $\mathcal{A}^{\N}_n$ and the probability measure $\nu_n: \Sigma_n \to [0,1]$. 
We say a word $\tau\in \mathcal{A}_n^\N$ is an \emph{injective word} if for every $\eta\in \mathscr{C}_n$, we have $\pi_\eta^{-1}(\{\pi_\eta(\tau)\})=\{\tau\}$. That is, $\tau$ is called an injective word if it uniquely defines a point in $K^{\eta}$.

\begin{lemma}\label{lem:injfull}
The set of non-injective words $\tau\in \mathcal{A}_n^\N$ is a $\nu_n$-null set.
\end{lemma}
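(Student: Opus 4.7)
\emph{Plan.} I will reduce non-injectivity to a boundary-hitting condition at some iterated shift of $\tau$, and then show that this boundary-hitting set is $\nu_n$-null by a counting argument. Throughout, let $\sigma:\mathcal{A}_n^\N\to\mathcal{A}_n^\N$ denote the left shift, and for $u\in\mathcal{A}_n^*$ and $\zeta\in\mathscr{C}_n$ let $\zeta_u(w):=\zeta(u\cdot w)$.

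Suppose $\tau$ is non-injective, witnessed by $\eta\in\mathscr{C}_n$ and $\tilde\tau\neq\tau$ with $\pi_\eta(\tau)=\pi_\eta(\tilde\tau)$. Let $m_0\geq 1$ be the first index of disagreement, set $v=\tau(m_0-1)=\tilde\tau(m_0-1)$, and let $i\neq\tilde i$ denote the $m_0$-th characters of $\tau,\tilde\tau$ respectively. The recursive structure of $\phi$ and $\pi$ gives
\[\pi_\eta(\tau)=\phi_v^\eta\Bigl(\psi_{n,i}^{\eta_v(\varepsilon)}\bigl(\pi_{\eta_{v\cdot i}}(\sigma^{m_0}\tau)\bigr)\Bigr),\]
and similarly for $\tilde\tau$ with $i$ replaced by $\tilde i$. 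Since $\phi_v^\eta$ is an injective similarity, the inner expressions are equal, and by Lemma \ref{lem:OSC} applied at level one to $\psi_{n,\cdot}^{\eta_v(\varepsilon)}$ the sets $\psi_{n,i}^{\eta_v(\varepsilon)}((0,1)^2)$ and $\psi_{n,\tilde i}^{\eta_v(\varepsilon)}((0,1)^2)$ are disjoint, so the shared value lies in $\psi_{n,i}^{\eta_v(\varepsilon)}(\partial\sq)$. Hence $\pi_{\eta_{v\cdot i}}(\sigma^{m_0}\tau)\in\partial\sq$. Setting $A^*:=\{\tau\in\mathcal{A}_n^\N:\exists\,\zeta\in\mathscr{C}_n,\ \pi_\zeta(\tau)\in\partial\sq\}$, the set $N$ of non-injective words therefore satisfies
\[N\subset\bigcup_{m\geq 1}\sigma^{-m}(A^*).\]

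\emph{Bounding $A^*$.} For each $m\geq 1$ set
\[W_m^*:=\{w\in\mathcal{A}_n^m:\exists\,\zeta\in\mathscr{C}_n,\ \phi_w^\zeta(\sq)\cap\partial\sq\neq\emptyset\},\]
so that $A^*$ lies in the measurable set $B_m:=\bigcup_{w\in W_m^*}\mathcal{A}_{n,w}^\N$ of $\nu_n$-measure $|W_m^*|(5n-6)^{-m}$. I claim that every character appearing in any $w\in W_m^*$ lies in $\{1,\ldots,4n-4\}$. The crucial geometric input is that for the ``interior'' characters $j\in\{4n-3,\ldots,5n-6\}$ both $\psi_{n,j}^1(\sq)$ and $\psi_{n,j}^2(\sq)$ are contained in $(0,1)^2$, whereas for the ``boundary'' characters $j\in\{1,\ldots,4n-4\}$ the two models coincide and $\psi_{n,j}^1(\sq)$ meets $\partial\sq$. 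Writing $\phi_w^\eta=\psi_{n,i_1}^{\eta(\varepsilon)}\circ\phi_{i_2\cdots i_m}^{\eta_{i_1}}$: if $i_1$ were interior then $\phi_w^\eta(\sq)\subset(0,1)^2$ would miss $\partial\sq$; if $i_1$ is boundary with $\psi_{n,i_1}^{\eta(\varepsilon)}(\sq)$ meeting $\partial\sq$ along a side $S$, then $\phi_{i_2\cdots i_m}^{\eta_{i_1}}(\sq)$ meets $(\psi_{n,i_1}^{\eta(\varepsilon)})^{-1}(S)\subset\partial\sq$, so $i_2\cdots i_m\in W_{m-1}^*$ and induction yields the claim. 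Hence $|W_m^*|\leq(4n-4)^m$ and
\[\nu_n(B_m)\leq\Bigl(\tfrac{4n-4}{5n-6}\Bigr)^m\xrightarrow{m\to\infty}0,\]
since $n\geq 4$ forces $4n-4<5n-6$. Therefore $A^*\subset\bigcap_m B_m$ is contained in a $\nu_n$-null set; by shift-invariance of the Bernoulli measure $\nu_n$, each $\sigma^{-m}(A^*)$ is likewise null, and so is the countable union $\bigcup_{m\geq 1}\sigma^{-m}(A^*)\supset N$.

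\emph{Main obstacle.} The subtle point is that injectivity is quantified over the uncountable family $\mathscr{C}_n$, so naive $\eta$-by-$\eta$ null-set arguments fail. The key that unlocks the proof is that $A^*$ can be controlled \emph{uniformly} in $\zeta$: whether a level-$m$ cell can meet $\partial\sq$ under some choice function depends only on which alphabet characters appear (boundary characters coincide between the two models, and interior characters are geometrically trapped inside $(0,1)^2$), replacing the naive count $(5n-6)^m$ by the strictly smaller $(4n-4)^m$, which is what drives the $\nu_n$-measure of $A^*$ to zero.
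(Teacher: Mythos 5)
Your proof is correct and rests on the same geometric observation as the paper's: for interior characters $j>4n-4$ both $\psi^1_{n,j}(\sq)$ and $\psi^2_{n,j}(\sq)$ sit inside $(0,1)^2$, so a level-$m$ cell $\phi_w^\zeta(\sq)$ can reach $\partial\sq$ only if $w$ uses exclusively the $4n-4$ boundary characters --- and this holds uniformly in $\zeta$ because the two models agree on those characters. The packaging differs slightly: you localize non-injectivity at the first disagreement index $m_0$, pass to a $\zeta$-uniform boundary-hitting set $A^*$ via $\sigma^{m_0}\tau\in A^*$, bound $\nu_n(A^*)$ by the cylinder count $|W_m^*|\leq(4n-4)^m$, and finish with shift-invariance of the Bernoulli measure; the paper instead shows that any word in which a character $j>4n-4$ occurs infinitely often is injective for every $\eta$, and then estimates the complementary event ``only finitely many interior characters'' by a Borel--Cantelli-type argument. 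Both routes hinge on the same exponentially small bound $\bigl(\tfrac{4n-4}{5n-6}\bigr)^m$ and reduce the non-injective set to the same tail event, so this is a stylistic rather than substantive variation; your framing has the modest advantage of making the uniformity over $\mathscr{C}_n$ explicit, whereas in the paper it is implicit (the paper fixes $\eta$ at the outset but lands in an $\eta$-independent exceptional set).
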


\begin{proof}
Fix $\eta\in \mathscr{C}_n$. Recall that if $j\in\mathcal{A}_n$ satisfies $j>4n-4$, then
\[(\psi^1_j(\sq)\cup\psi^2_j(\sq))\cap\partial\sq=\emptyset.\]
Thus if $\tau= i_1 i_2\dots \in \mathcal{A}_n^\N$ and $i_{m+1}> 4n-4$ for some $m\in\N$, then $\pi_{\eta}(\tau)\notin\bigcup_{v\in \mathcal{A}_n^m} \phi^{\eta}_v (\partial\sq)$. Furthermore, if $\tau,\zeta\in \mathcal{A}_n^\N$ are distinct words with $\pi_{\eta}(\tau)=x=\pi_{\eta}(\zeta)$, then there exists some $N\in\N$ such that $\tau(N)\neq \zeta(N)$. Thus for every integer $m\geq N$, we have $\tau(m)\neq \zeta(m)$ and $x\in\phi_{\tau(m)}^{\eta}(\sq)\cap\phi_{\zeta(m)}^{\eta}(\sq)$. By Lemma \ref{lem:OSC}, this implies that $x\in\phi_{\tau(m)}^{\eta}(\partial\sq)\cap\phi_{\zeta(m)}^{\eta}(\partial\sq)$, so
\[\{x\in K^{\eta} : \pi_{\eta}^{-1}(\{x\})\text{ is not a singleton}\}\subset \bigcup_{m=1}^\infty\bigcup_{v\in \mathcal{A}_n^m} \phi_v^{\eta}(\partial\sq).\]

Moreover, for every $m\in\N$, 
\[ \bigcup_{v\in \mathcal{A}_n^m}\phi_v^{\eta}(\partial\sq)\subset\bigcup_{v\in \mathcal{A}_n^{m+1}}\phi_v^{\eta}(\partial\sq).\] 
Thus if a character $j\in\{4n-3,\dots,5n-6\}$ appears infinitely often in $\tau\in \mathcal{A}_n^{\N}$, then $\tau$ is injective; therefore the set of non-injective words is contained in the set of words for which every character $j\in\{4n-3,\dots,5n-6\}$ appears only finitely often. To see that this set is $\nu_n$-null, for each $m\in\N$ let $U_m:=\{\tau=i_1i_2\dots\in \mathcal{A}_n^\N:i_m >4n-4\}$. We have $\nu_n(U_m)=\frac{n-2}{5n-6}$ for each $m$, and if $\{U_{m_1},\dots,U_{m_j}\}$ is a collection of these sets, then
\[\bigcap\limits_{i=1}^j U_{m_i}=\{\tau=i_1 i_2\dots \in \mathcal{A}^\N_n:i_{m_i}>4n-4\text{ for each }i\in\{1,\dots,j\}\}\]
and this intersection has $\nu_n(\bigcap_{i=1}^j U_{m_i})=\left(\frac{n-2}{5n-6}\right)^j$.
Thus, $V_m:=\bigcap_{j=m}^\infty U_j$ has $\nu_n(V_m)=0$ for every $m\in\N$. Noting further that the set $F$ of words for which every character $j\in\{4n-3,\dots,5n-6\}$ appears only finitely often is contained in $\bigcup_{m=0}^\infty V_m$, we have that $F$ is $\nu_n$-null.
\end{proof}

We are now ready to prove Proposition \ref{prop:reg}.

\begin{proof}[{Proof of Proposition \ref{prop:reg}}]
Fix $\eta\in \mathscr{C}_n$ and define the pushforward measure $\pi_{\eta}\#\nu_n:\mathcal{B}(K^{\eta})\to [0,1]$ by
\[ \pi_{\eta}\#\nu_n(A) := \nu_n(\pi_{\eta}^{-1}(A)).\]
We claim that there exists $C_n>1$ such that
\begin{equation}\label{eq:Ahlfors regularity}
C_n^{-1}r^{\a_n} \leq \pi_{\eta}\#\nu_n(B(x,r)\cap K^{\eta}) \leq C_n r^{\a_n}
\end{equation}
for all $x\in K^{\eta}$ and $r\in (0,1/n)$. Assuming \eqref{eq:Ahlfors regularity}, by \cite[\textsection 1.4.3]{Mackay}, we have that $\mathcal{H}^{\alpha_n}\res K^\eta$ satisfies \eqref{eq:Ahlfors regularity} (perhaps with a different constant $C_n$), since the measure $\pi_\eta\#\nu_n$ has the same null sets as the restricted Hausdorff measure $\mathcal{H}^{\alpha_n}\res K^\eta$.

For any $w\in \mathcal{A}_n^*$, $\mathcal{A}^\N_{n,w} \subset \pi_\eta^{-1}(K^{\eta}_w)\subset\mathcal{N}_n\cup \mathcal{A}^\N_{n,w}$, where $\mathcal{N}_n$ denotes the set of non-injective words in $\mathcal{A}^{\N}_n$. Hence by Lemma \ref{lem:injfull}, for any $w\in \mathcal{A}^*_n$ we have 
\[ \pi_{\eta}\#\nu_n(K^{\eta}_w)=(5n-6)^{-|w|}=n^{-|w|\alpha_n}.\]

Fix for the rest of the proof a point $x\in K^{\eta}$, a radius $r\in (0,1/n)$, and a word $\tau\in \mathcal{A}_n^\N$ such that $\pi_{\eta}(\tau)=x$.

For the upper bound of \eqref{eq:Ahlfors regularity}, let $m\in \N$ satisfy
\[n^{-m}\sqrt{2}>2r\geq n^{-m-1}\sqrt{2}.\]
Let $\{v_1,\dots,v_j\}\subset \mathcal{A}_n^m$ be those words with $K_{v_i}^{\eta}\cap K_{\tau(m)}^{\eta}\neq\emptyset$. By Lemma \ref{lem:OSC}, $j\leq 9$ and by Lemma \ref{lem:fact}, we have that $B(x,n^{-m})\cap K^{\eta}\subset \bigcup_{i=1}^j K^{\eta}_{v_i}$. We have also that $n^{-m}\geq r$, hence 
\[B(x,r)\cap K^{\eta}\subset B(x,n^{-m})\cap K^{\eta}\subset\bigcup\limits_{i=1}^j K^{\eta}_{v_i},\]
and therefore 
\begin{align*}
\pi_{\eta}\#\nu_n(B(x,r)\cap K^{\eta})\leq\sum_{i=1}^j\pi_{\eta}\#\nu_n(K^{\eta}_{v_i})\leq 9\pi_{\eta}\#\nu_n(K^{\eta}_{\tau(m)})&=9 n^{-m\alpha_n}\leq 9\left(\tfrac{8}{\sqrt{2}}\right)^{\alpha_n} r^{\alpha_n}.
\end{align*}

For the lower bound of \eqref{eq:Ahlfors regularity}, let $m\in\N$ satisfy
\[n^{-m}\sqrt{2}<\frac{r}{2}\leq n^{-m+1}\sqrt{2}.\]
Since $r<1/n$, we have that $m$ is at least $1$. If $y\in K^{\eta}_{\tau(m)}$, then 
\[|x-y|\leq n^{-m}\sqrt{2}<\frac{r}{2},\]
so $K^\eta_{\tau(m)}\subset B(x,r)\cap K^\eta$. Thus 
\[
\pi_{\eta}\#\nu_n(B(x,r)\cap K^{\eta}) \geq \pi_{\eta}\#\nu_n(K^{\eta}_{\tau(m)}) =  n^{-m\alpha_n} \geq (8\sqrt{2})^{-\alpha_n}r^{\alpha_n}. \qedhere
\]
\end{proof}

\section{H\"older parameterizations of carpets $K^{\eta}$}\label{sec:param}

In this section, we show that sets $K^{\eta}$ defined in Section \ref{sec:IFS} are H\"older curves. Recall the numbers $\a_n:=\frac{\log(5n-6)}{\log(n)}$ from Section \ref{sec:measure}.

\begin{proposition}\label{thm:holder}
For any even integer $n\geq 4$ and any $\eta\in \mathscr{C}_n$, there exists a $(1/\alpha_n)$-H\"older continuous surjection $f:[0,1]\to K^\eta$.
\end{proposition}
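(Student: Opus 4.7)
The plan is to adapt the standard technique for H\"older parametrization of connected IFS attractors (as in \cite{Hata} and \cite{BV2}) to the statistically self-similar carpets $K^\eta$: construct piecewise linear approximants $f_m:[0,1]\to\sq$ whose images nest down to $K^\eta$ and satisfy a uniform $(1/\alpha_n)$-H\"older estimate, then set $f=\lim_{m\to\infty} f_m$.

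The central combinatorial input is, for each $k\in\{1,2\}$, an ordering of the $5n-6$ subsquares in $\mathcal{F}_n^k$ together with designated ``entry'' and ``exit'' corners on $\partial\sq$, such that (i) consecutive subsquares in the ordering intersect, (ii) the first (resp.\ last) subsquare contains the entry (resp.\ exit) corner, and (iii) the entry and exit corners coincide across both $k=1$ and $k=2$. Since the $4n-4$ boundary subsquares are identical in both models and the interior subsquares all attach to the bottom edge of $\partial\sq$, one can verify by direct inspection of Figure~\ref{fig:1} that both models admit such compatible orderings.

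Given these orderings and $\eta$, I define $f_m$ inductively. Each word $w\in\mathcal{A}_n^m$ inherits an entry corner $p_w$ and an exit corner $q_w$ in $\phi_w^\eta(\sq)$ obtained by composing the orderings at each level. Partition $[0,1]$ into $(5n-6)^m$ equal subintervals indexed by $\mathcal{A}_n^m$ in the order determined by this composite traversal, and let $f_m$ map the $w$-subinterval affinely onto the segment from $p_w$ to $q_w$. Matching corners guarantees continuity of $f_m$, and $f_{m+1}$ restricted to the $w$-subinterval stays inside $\phi_w^\eta(\sq)$.

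Two standard estimates then close the argument. First, since $f_m$ and $f_{m+1}$ agree at the partition points of level $m$ and both map each $w$-subinterval into $\phi_w^\eta(\sq)$, a set of diameter $\sqrt{2}\,n^{-m}$, we get $\|f_m-f_{m+1}\|_\infty\le\sqrt{2}\,n^{-m}$; hence $(f_m)$ is uniformly Cauchy and converges uniformly to a continuous $f:[0,1]\to K^\eta$, which is surjective because $f([0,1])$ is compact and contains every $p_w$. Second, given $t,t'\in[0,1]$ pick the largest $m$ with $(5n-6)^{-m}\ge|t-t'|$; then $f_k(t)$ and $f_k(t')$ lie in at most two adjacent level-$m$ subsquares for all $k\ge m$, so $|f(t)-f(t')|\le 2\sqrt{2}\,n^{-m}\le 2\sqrt{2}\,n\,|t-t'|^{1/\alpha_n}$ using the identity $n=(5n-6)^{1/\alpha_n}$. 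The main obstacle is the combinatorial step: the local cut-point of Model 1 must be traversed without any revisits, since a revisit at any level would inflate the effective traversal length above $5n-6$ and degrade the H\"older exponent strictly below $1/\alpha_n$; this is where the specific geometry of the models in Figure~\ref{fig:1} is essential.
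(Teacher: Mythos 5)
Your proposal takes a genuinely different route from the paper, and that route has a gap at its central combinatorial step. The paper's proof (Section~\ref{sec:param}) does \emph{not} build a Hamiltonian ordering of the $5n-6$ subsquares. Instead, it constructs nested dendrites $T_m\subset K^\eta$ from the trees $T^1,T^2$, defines the approximants $f_m$ as \emph{two-to-one tours} of these dendrites (Lemma~\ref{lem:intermedparam}), and only afterwards recovers the exact exponent $1/\alpha_n$ by a nontrivial monotone reparametrization $\zeta$ whose increments are prescribed by the measure $\pi_\eta\#\nu_n$. That final reparametrization is not a cosmetic step: a two-to-one tour spends a nonuniform amount of time in each level-$m$ cell (proportional to the valence of the associated vertex in $T_m$), and without normalizing by the self-similar measure one does not obtain the sharp H\"older exponent. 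Your scheme has nothing playing the role of $\zeta$ because it presupposes a traversal that never revisits, which is exactly what a two-to-one tour is designed to avoid needing.

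The reason the paper avoids a one-to-one (Hamiltonian) traversal is that the models do not obviously admit one. The dendrite $T^1$ built in Section~\ref{sec:param} has four leaves (see the proof of Lemma~\ref{lem:leaf}); the branch $[0,\tfrac12-\tfrac1n]\times\{\tfrac2n\}$ corresponding to the second interior group of Model~1 terminates in an interior leaf. Your assertion that ``the interior subsquares all attach to the bottom edge of $\partial\sq$'' is false precisely for Model~1: the second-group subsquares of $\mathcal{F}_n^1$ sit at height $2/n$, not $1/n$, and this shifted row is what produces the local cut-point at $(1/2,2/n)$ that the entire paper revolves around. A one-to-one tour from a boundary corner to a boundary corner must enter and exit this branch, but (in contrast with a carpet where all interior cells share edges with the boundary row) the branch does not return to $\partial\sq$, so a revisit of at least one cell is forced. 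You flag this as ``the main obstacle'' and then assert without proof that it can be overcome; that is the gap.

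A second, related, issue is iteration consistency. When passing from level $m$ to level $m+1$, the word $w$ transmits to its children an entry corner $p_w$ and exit corner $q_w$ that are determined by how the ordering at level $m$ threads $\phi_w^\eta(\sq)$. These inherited (entry, exit) pairs range over many distinct corner pairs of $\sq$ (adjacent corners, opposite corners, even equal corners if the parent is reached only through a single corner contact), so one would need a whole family of mutually compatible Hamiltonian orderings, one for each such pair and each model, not the single ordering per model that you posit. The paper's dendrite construction never faces this issue because the tours $\tau_i,\tilde\tau_i$ are fixed once and for all, and the families $\mathscr{E}_m,\mathscr{N}_m,\mathscr{F}_m$ book-keep the varying local situations automatically; the exponent is then repaired at the very end by $\zeta$.

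Your convergence and H\"older estimates in the last paragraph are the correct ones \emph{given} the combinatorial input (and they match the spirit of Corollary~\ref{cor:limit}), but as written the proof is conditional on an unproved, and at least for Model~1 quite dubious, Hamiltonicity claim.
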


For the rest of this section we fix an even integer $n\geq 4$ and a choice function $\eta\in \mathscr{C}_n$. The dependence of sets and functions on $n$ and $\eta$ in this section is omitted.

Define the set $T\subset \sq$ by 
\begin{align*}
T &:= \left(\partial [0,1-\tfrac1{n}]^2 \setminus \left(\{1-\tfrac1{n}\} \times[1-\tfrac2{n},1-\tfrac1{n}]\right)\right) \cup \left(\{\tfrac{1}{2}\}\times[0,\tfrac{1}{n}]\right)\cup\left([\tfrac{1}{2},1-\tfrac{2}{n}]\times\{\tfrac{1}{n}\}\right)
\end{align*}
and the sets
\[T^1:=T\cup\left([0,\tfrac{1}{2}-\tfrac{1}{n}]\times\{\tfrac{2}{n}\}\right)\quad \text{and}\quad T^2:=T\cup\left([0,\tfrac{1}{2}-\tfrac{1}{n}]\times\{\tfrac{1}{n}\}\right).\]
See Figure \ref{fig2} for $n=6$. Some elementary properties of the sets $T^1,T^2$ are given in the next lemma and its proof is left to the reader.

\begin{lemma}\label{lem:T^i}
Both $T^1$ and $T^2$ are dendrites and are contained in $[0,1)^2$.
\end{lemma}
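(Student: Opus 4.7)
The plan is to verify both conclusions separately: containment in $[0,1)^2$ is purely a bookkeeping check on coordinates, while the dendrite claim follows by writing $T^1$ and $T^2$ as iterated one-point unions of simple arcs and invoking Lemma \ref{lem:uniondendrites}.

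For the containment in $[0,1)^2$, I would simply observe that every point appearing in the definition of $T$, $T^1$, or $T^2$ has $x$-coordinate in $[0,1-1/n]$ and $y$-coordinate in $[0,1-1/n]$ (the extra segments have $y$-coordinate $1/n$ or $2/n$, both less than $1-1/n$ since $n\geq 4$). As $1-1/n<1$, the inclusion $T^i\subset [0,1-1/n]^2\subset [0,1)^2$ follows at once.

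For the dendrite claim, I would decompose each set as a union of simple arcs glued one at a time. First, the set
\[ A := \partial [0,1-\tfrac1n]^2 \setminus \bigl(\{1-\tfrac1n\}\times[1-\tfrac2n,1-\tfrac1n]\bigr) \]
is a simple arc (the boundary of a square with one short subarc removed), hence a dendrite. Next, let $V:=\{\tfrac12\}\times[0,\tfrac1n]$ and $H:=[\tfrac12,1-\tfrac2n]\times\{\tfrac1n\}$ (which degenerates to a point when $n=4$, but this causes no issue). A direct coordinate check shows $V\cap A=\{(\tfrac12,0)\}$ (the vertical segment meets the boundary only on the bottom edge) and $(V\cup A)\cap H=\{(\tfrac12,\tfrac1n)\}$ (the horizontal segment meets $V$ at its upper endpoint, and its other points have $y=\tfrac1n$ and $x\in[\tfrac12,1-\tfrac2n]$, none of which lie on $A$, since $A$ at height $\tfrac1n$ consists only of the two points $(0,\tfrac1n)$ and $(1-\tfrac1n,\tfrac1n)$). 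Applying Lemma \ref{lem:uniondendrites} twice gives that $T=A\cup V\cup H$ is a dendrite.

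Finally, I would add the last segment and apply Lemma \ref{lem:uniondendrites} once more. For $T^1$, the new segment $J^1:=[0,\tfrac12-\tfrac1n]\times\{\tfrac2n\}$ meets $T$ only at $(0,\tfrac2n)$: its left endpoint lies on the left edge of $A$, while at height $\tfrac2n$ the set $T$ contains only the two left/right-edge points $(0,\tfrac2n)$ and $(1-\tfrac1n,\tfrac2n)$, and $\tfrac12-\tfrac1n<1-\tfrac1n$. An identical check works for $T^2$ with $J^2:=[0,\tfrac12-\tfrac1n]\times\{\tfrac1n\}$, meeting $T$ only at $(0,\tfrac1n)$, using that $H$ lives in $x\geq\tfrac12$ while $J^2$ lives in $x\leq\tfrac12-\tfrac1n$. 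One more application of Lemma \ref{lem:uniondendrites} concludes that each $T^i$ is a dendrite. The only substantive obstacle is the case-by-case verification of these single-point intersections, and even that is purely elementary coordinate arithmetic (with care taken that the $n=4$ degeneracy of $H$ does not corrupt the argument, since a single point is trivially a dendrite).
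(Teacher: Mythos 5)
Your decomposition into the arc $A$, the two segments $V$ and $H$, and the extra segment $J^i$, with iterated applications of Lemma \ref{lem:uniondendrites}, is the natural approach, and your single-point intersection checks are all correct (including the degenerate $n=4$ case where $H$ collapses to a point). However, your very first step --- that $A := \partial[0,1-\tfrac1n]^2 \setminus (\{1-\tfrac1n\}\times[1-\tfrac2n,1-\tfrac1n])$ is a simple arc, hence a dendrite --- does not hold under the literal reading. The removed set is a \emph{closed} subarc of the simple closed curve $\partial[0,1-\tfrac1n]^2$, so $A$ is a relatively open subset of that curve: a half-open arc whose two endpoints $(1-\tfrac1n,1-\tfrac2n)$ and $(1-\tfrac1n,1-\tfrac1n)$ are limit points of $A$ that $A$ fails to contain. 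In particular $A$ is not compact, hence not a Peano continuum, hence not a dendrite in the sense of Section \ref{sec:prelim}, so Lemma \ref{lem:uniondendrites} cannot be applied to it.

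Since $V$, $H$, $J^1$, and $J^2$ all avoid the right edge $\{1-\tfrac1n\}\times\R$, the sets $T$, $T^1$, and $T^2$ inherit this non-compactness, so under the literal definition the lemma itself would be false. This is surely a typo in the paper's definition of $T$ (the removed segment should be the relatively open arc $\{1-\tfrac1n\}\times(1-\tfrac2n,1-\tfrac1n)$, which leaves the two endpoints in place and makes $A$ a genuine compact simple arc), but a careful proof must notice and address the point rather than assert without comment that $A$ is a dendrite. With that correction in hand, the remainder of your argument is sound and complete.
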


\begin{figure}[h]
    \centering
    \begin{minipage}{0.45\textwidth}
        \centering
        \includegraphics[width=0.7\textwidth]{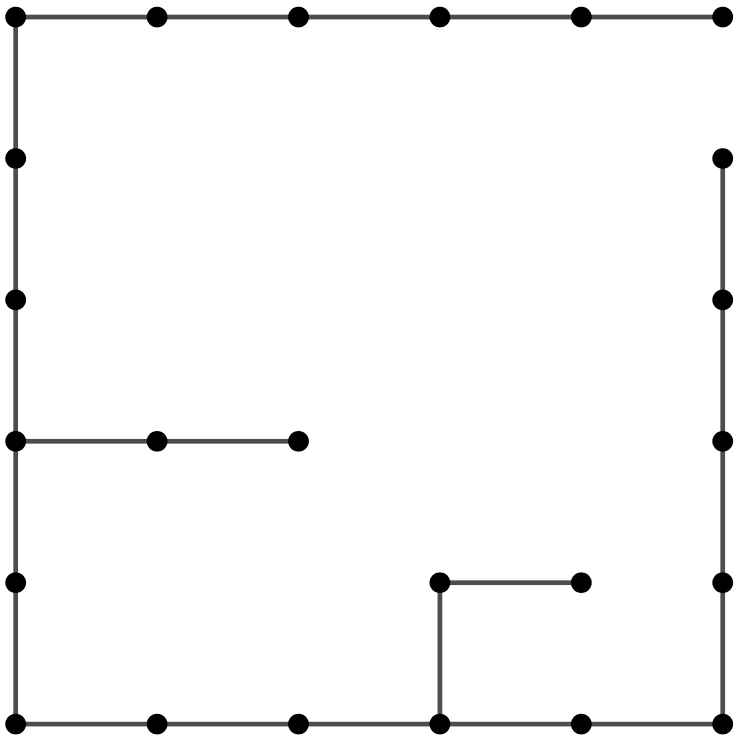} 
    \end{minipage}\hfill
    \begin{minipage}{0.45\textwidth}
        \centering
        \includegraphics[width=0.7\textwidth]{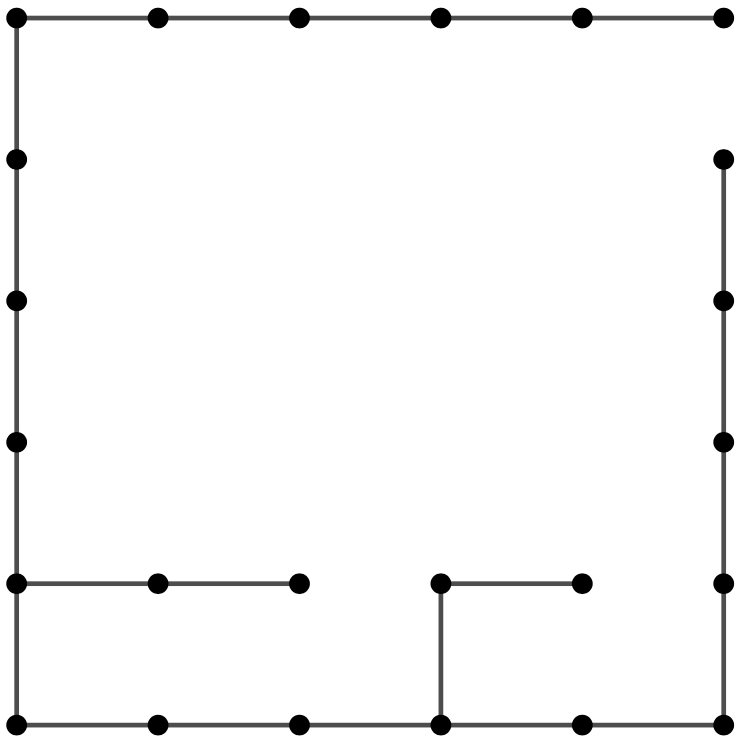} 
    \end{minipage}
    \caption{Dendrites $T^1$ (left) and $T^2$ (right) for $n=6$.}
    \label{fig2}
\end{figure}

For each $i\in\{1,2\}$ define combinatorial graphs $G^i=(V^i,E^i)$ by 
\[V^i:=\{\psi_j^i({\bf 0}):j\in \mathcal{A}_n\} \qquad\text{and}\qquad E^i:=\{\{p,q\}:p,q\in V^i,[p,q]\subset T^i,|q-p|=1/n\}.\] 
Next, we define a sequence of sets $(T_m)_{m\geq 0}$ by $T_0=\{{\bf 0}\}$, and for any integer $m\geq 0$, 
\[ T_{m+1}=T_m\cup\left(\bigcup_{w\in \mathcal{A}_n^m}\phi_w^{\eta}(T^{\eta(w)})\right).\] 
Define also for $m\geq 0$ the combinatorial graph $G_m=(V_m,E_m)$ via 
\[ V_m=\{\phi_w^{\eta}({\bf 0}):w\in \mathcal{A}_n^m\} \qquad\text{and}\qquad E_m=\{\{p,q\}:p,q\in V_m,[p,q]\subset T_m,|p-q|=n^{-m}\}.\]

\begin{lemma}\label{lem:leaf}
The sequence of sets $(T_m)_{m=0}^\infty$ is a nested sequence of dendrites contained in $K^{\eta}$. Moreover, for every $m\in \N$, a vertex $u\in V_m$ is a leaf of $T_m$ if and only if $u$ has no edges emanating to the right or upward in $T_m$.
\end{lemma}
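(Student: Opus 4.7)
Nestedness is immediate from the recursion $T_{m+1}=T_m\cup\bigcup_{w\in\mathcal{A}_n^m}\phi_w^\eta(T^{\eta(w)})$. The plan for the remaining two claims is induction on $m$. The base case $m=0$ is trivial: $T_0=\{\mathbf{0}\}$ is a singleton dendrite, $\mathbf{0}\in\partial\sq\subset K^\eta$ by the argument inside the proof of Lemma~\ref{lem:fact}, and the leaf condition holds vacuously.

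For the inductive step I will strengthen the hypothesis to include the statement $T_m\subset \bigcup_{v\in\mathcal{A}_n^m}\phi_v^\eta(\partial\sq)$. This is useful because (i) each $\phi_v^\eta(\partial\sq)$ is already shown to lie in $K^\eta$ inside the proof of Lemma~\ref{lem:fact}, and (ii) one can verify directly from the definition that every segment constituting $T^i$ is an edge of a level-$1$ sub-square $\psi_j^i(\sq)$, hence $T^i\subset \bigcup_{j\in\mathcal{A}_n}\psi_j^i(\partial\sq)$. Applying $\phi_w^\eta$, and using the self-similarity observation $\partial\sq\subset\bigcup_{j=1}^{4n-4}\psi_j^1(\partial\sq)$ from the proof of Lemma~\ref{lem:fact} to propagate the inductive containment upward one level, will give $T_{m+1}\subset\bigcup_{v\in\mathcal{A}_n^{m+1}}\phi_v^\eta(\partial\sq)\subset K^\eta$. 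For the dendrite conclusion, each $\phi_w^\eta(T^{\eta(w)})$ is a dendrite via Lemma~\ref{lem:T^i}, and since $T^i\subset[0,1)^2$, Lemma~\ref{lem:OSC} implies that for distinct $w,w'\in\mathcal{A}_n^m$, the pieces $\phi_w^\eta(T^{\eta(w)})$ and $\phi_{w'}^\eta(T^{\eta(w')})$ can only meet at shared vertices of the level-$m$ grid. I will order the level-$m$ words so that each new attachment to the running union occurs at the single vertex $\phi_w^\eta(\mathbf{0})\in V_m$, which lets iterated application of Lemma~\ref{lem:uniondendrites} assemble $T_{m+1}$ as a dendrite.

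The leaf characterization should reduce, using that each $\phi_w^\eta$ is a scaling-translation (Remark~\ref{rem:affine}) that preserves cardinal directions, to verifying the base claim on the two models: $v\in V^i$ is a leaf of $T^i$ if and only if $v$ has no rightward and no upward edge in $G^i$, with the crucial normalization that $\mathbf{0}\in V^i$ itself has both a rightward and an upward edge in $G^i$. The direct verification on $T^1$ and $T^2$ will be the main obstacle, as it demands an explicit enumeration of the $5n-6$ vertices of $V^i$ together with their incident edges in $G^i$, paying particular attention to the gap near the top-right corner of $\partial[0,1-1/n]^2$, the central branch segments, and the distinguishing arm at height $2/n$ (for $T^1$) or $1/n$ (for $T^2$). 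Once this base case is settled, the inductive step becomes a bookkeeping argument: a vertex $u\in V_{m+1}$ has a rightward or upward edge in $G_{m+1}$ exactly when either some attached copy $\phi_w^\eta(T^{\eta(w)})$ with $u\in\phi_w^\eta(V^{\eta(w)})$ supplies one, or $u$ lies on an inherited horizontal or vertical segment of $T_m$ that extends rightward or upward beyond $u$, and this will turn out to be equivalent to $u$ failing to be a leaf of $T_{m+1}$.
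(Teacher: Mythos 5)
Your overall route mirrors the paper's: nestedness is immediate, containment in $K^\eta$ goes through $T_m\subset\bigcup_{w\in\mathcal{A}_n^m}\phi_w^\eta(\partial\sq)\subset K^\eta$, the dendrite property is to be obtained by iterated application of Lemma~\ref{lem:uniondendrites}, and the leaf characterization reduces via direction-preserving maps to a direct check on $T^1,T^2$. Two remarks on the details, one minor and one substantive.

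Minor: by Lemma~\ref{lem:T^i} and Lemma~\ref{lem:OSC}, for distinct $w,w'\in\mathcal{A}_n^m$ the sets $\phi_w^\eta(T^{\eta(w)})$ and $\phi_{w'}^\eta(T^{\eta(w')})$ are in fact \emph{disjoint} (each lies in $\phi_w^\eta([0,1)^2)$, and these half-open squares do not meet), not merely meeting at grid vertices as you assert. This is stronger than what you use and is in your favor, but it also means the new pieces contribute nothing to connecting each other; all connectivity must come through $T_m$.

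Substantive: the claim that ``each new attachment to the running union occurs at the single vertex $\phi_w^\eta(\mathbf{0})\in V_m$'' is not correct, and the ordering of words does not repair it. The intersection $\phi_w^\eta(T^{\eta(w)})\cap T_m$ can be a nondegenerate arc. Concretely, if $\phi_w^\eta(\mathbf{0})$ is not a leaf of $T_m$ then $T_m$ already contains $\phi_w^\eta(e_b)$ or $\phi_w^\eta(e_l)$ (or both), and $T^{\eta(w)}$ meets $e_b\cup e_l$ in the two closed segments $[0,1-\tfrac1n]\times\{0\}$ and $\{0\}\times[0,1-\tfrac1n]$, so the overlap $\phi_w^\eta(T^{\eta(w)})\cap T_m$ is one of $\phi_w^\eta([0,1-\tfrac1n]\times\{0\})$, $\phi_w^\eta(\{0\}\times[0,1-\tfrac1n])$, or their union. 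Lemma~\ref{lem:uniondendrites} as stated only applies to dendrites meeting in a single point, so you cannot attach $\phi_w^\eta(T^{\eta(w)})$ to $T_m$ directly. (Note also that $T_m$ is \emph{not} contained in $\bigcup_{w\in\mathcal{A}_n^m}\phi_w^\eta(T^{\eta(w)})$: an edge $\phi_w^\eta(e_b)\subset T_m$ extends beyond $\phi_w^\eta([0,1-\tfrac1n]\times\{0\})$, and the missing sliver lies in $\phi_w^\eta([0,1)^2)$ where only $\phi_w^\eta(T^{\eta(w)})$ lives, so $T_m$ genuinely contributes.) The paper sidesteps this by writing $T_{m+1}=T_m\cup\bigcup_k X_k$ where the $X_k$ are the components of $\overline{T_{m+1}\setminus T_m}$; because $T^{\eta(w)}$ is a dendrite and the overlap with $T_m$ is a sub-arc, each such component meets $T_m$ in exactly one point, and \emph{then} Lemma~\ref{lem:uniondendrites} applies iteratively. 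You either need this closure-of-difference decomposition, or a strengthened version of Lemma~\ref{lem:uniondendrites} allowing the intersection to be any nonempty connected set. As written your plan has a real gap here; the rest of the outline agrees with the paper.
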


\begin{proof}
By Lemma \ref{lem:fact}, for each $w\in \mathcal{A}_n^*$ we have that $\phi_w^{\eta}(\partial\sq)\subset K^{\eta}$. Therefore, 
\[ T_m\subset\bigcup_{w\in \mathcal{A}_n^m}\phi_w^{\eta}(\partial\sq) \subset K^{\eta}.\]

Next, we show that each $T_m$ is a dendrite by induction on $m$. The set $T_0=\{{\bf 0}\}$ is clearly a dendrite. Now assume that $T_m$ is a dendrite for some $m\geq 0$. We make three observations. First, by Lemma \ref{lem:T^i}, $\phi_w^{\eta}(T^{\eta(w)})$ is a dendrite for any $w\in \mathcal{A}_n^{m+1}$. Second, by Lemma \ref{lem:OSC} and by Lemma \ref{lem:T^i}, if $v,w\in \mathcal{A}_n^{m+1}$ are distinct, then
\[ \phi_w^{\eta}(T^{\eta(w)})\cap\phi_v^{\eta}(T^{\eta(v)}) \subset \phi_w^\eta([0,1)^2)\cap\phi_v^\eta([0,1)^2) = \emptyset.\]
Third, for any $w\in \mathcal{A}_n^{m+1}$ we have that $\phi_w^{\eta}(T^{\eta(w)}) \cap T_{m+1}$ is either $\phi_w^{\eta}(\{0\}\times[0,1-\tfrac1{n}])$, or $\phi_w^{\eta}([0,1-\tfrac1{n}]\times\{0\})$, or the union of the latter two sets. In either case, $\phi_w^{\eta}(T^{\eta(w)}) \cap T_{m+1}$ is an arc. Write now $T_{m+1} = T_m\cup\bigcup_{k=1}^N X_k$ where $X_1,\dots,X_N$ are the components of the closure of $T_{m+1}\setminus T_m$. Note the sets $X_k$ are pairwise disjoint, by the first and second observation each $X_k$ is a dendrite, and by the first and third observation, $X_i\cap T_m$ is a point. Applying Lemma \ref{lem:uniondendrites}, we conclude that $T_{m+1}$ is a dendrite.

Finally, we prove the claim about the leaves of $T_m$ by induction on $m$. For $m=1$, we note that the leaves of $T^1$ are exactly the points $(1-\frac1n,1-\frac1n)$, $(1-\frac1n,1-\frac2n)$, $(\frac2n,\frac2n)$, $(1-\frac2n,\frac1n)$ for which it is easy to check the claim. Similarly for $T^2$.

Suppose now that the claim holds for some $m\in \N$ and let $u\in V_{m+1}$ be a vertex. Then there is a unique word $w\in \mathcal{A}_n^m$ such that $u\in \phi_w^\eta(T^{\eta(w)})$, and $u$ is a leaf in $T_{m+1}$ if and only if $(\phi_w^{\eta})^{-1}(u)$ is a leaf in $T^{\eta(w)}$, which is a leaf if and only if it has no edges in $T^{\eta(w)}$ emanating to the right or upward. This holds if and only if $u$ has no edges emanating upward or to the right in $T_{m+1}$.
\end{proof}

Note that for each integer $m\geq 0$, 
\[T_m=\text{Im}(G_m) = V_m\cup(\bigcup\limits_{\{p,q\}\in E_m} (p,q)),\]
so when we refer to a ``vertex'' of $T_m$ we mean a point $p\in V_m$, and when we refer to an ``edge $(p,q)$ of $T_m$'' we mean the open segment $(p,q)$ in $\R^2$ associated with the edge $\{p,q\}\in E_m$. Set 
\[e_b = (0,1)\times\{0\}\qquad\text{and}\qquad e_l = \{0\}\times(0,1), \]
the bottom and left, respectively, open edges of $(0,1)^2$. Note that if $e\subset T_m$ is an edge in $T_m$, then $e$ is parallel to either the $x$-axis or to the $y$-axis, as this is true for $T^1$ and $T^1$ and the maps $\psi_j^i$ are all rotation-free and reflection-free similarities.

\begin{lemma}\label{lem:edge}
For each $m\in\N$, if $(p,q)$ is an edge of $T_m$, then there exists $w\in \mathcal{A}_n^m$ such that either $(p,q) = \phi_w^{\eta}(e_b)$ or $(p,q)=\phi_w^{\eta}(e_l)$.
\end{lemma}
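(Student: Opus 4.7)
We argue by induction on $m$; the case $m = 0$ is vacuous since $T_0 = \{{\bf 0}\}$ has no edges.

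\emph{Base case $(m = 1)$.} Observing that ${\bf 0} \in T^{\eta(\varepsilon)}$, we have $T_1 = T^{\eta(\varepsilon)}$. From the explicit definition, each of $T^1$ and $T^2$ is a finite union of axis-parallel segments lying on the grid $\tfrac{1}{n}\Z^2$ inside $[0, 1-1/n]^2$ (this uses that $n$ is even so that $1/2 \in \tfrac{1}{n}\Z$), and the vertex set $V^i = \{\psi_j^i({\bf 0})\}_{j\in\mathcal{A}_n}$ places a vertex at the lower-left (for horizontal segments) or bottom-left (for vertical segments) endpoint of every length-$1/n$ sub-segment. A direct inspection of the pieces defining $T^1$ and $T^2$ confirms that each edge $(p, q) \in E_1$ equals $\psi_j^{\eta(\varepsilon)}(e_b) = \phi_j^\eta(e_b)$ when horizontal, or $\psi_j^{\eta(\varepsilon)}(e_l) = \phi_j^\eta(e_l)$ when vertical, for the unique $j \in \mathcal{A}_n$ such that $\psi_j^{\eta(\varepsilon)}({\bf 0})$ is the appropriate endpoint of the edge.

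\emph{Inductive step.} Assume the claim for $m$ and let $(p, q)$ be an edge of $T_{m+1}$, so $p, q \in V_{m+1}$, $|p - q| = n^{-(m+1)}$, and $[p, q] \subset T_{m+1} = T_m \cup \bigcup_{v \in \mathcal{A}_n^m} \phi_v^\eta(T^{\eta(v)})$. Suppose first that some interior point of $[p, q]$ lies in $\phi_v^\eta(T^{\eta(v)}) \setminus T_m$ for some $v \in \mathcal{A}_n^m$. By Lemma \ref{lem:OSC} the open squares $\phi_v^\eta((0,1)^2)$ are pairwise disjoint, and $\phi_v^\eta(T^{\eta(v)}) \subset \phi_v^\eta([0, 1-1/n]^2) \subset \phi_v^\eta([0,1)^2)$, so such an interior point lies in $\phi_v^\eta((0,1)^2)$ for a \emph{unique} $v$; a local analysis (using the dendrite structure and the fact, from the base case applied to $T^{\eta(v)}$, that $\phi_v^\eta(T^{\eta(v)})$ is locally a union of straight length-$n^{-(m+1)}$ edges of the desired form) forces $[p, q] \subset \phi_v^\eta(T^{\eta(v)})$. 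Pulling back through $(\phi_v^\eta)^{-1}$ gives a length-$(1/n)$ axis-parallel segment in $T^{\eta(v)}$ with endpoints in $V^{\eta(v)}$; by the base case this segment is $\psi_j^{\eta(v)}(e_b)$ or $\psi_j^{\eta(v)}(e_l)$ for some $j \in \mathcal{A}_n$, so $(p, q) = \phi_{vj}^\eta(e_b)$ or $\phi_{vj}^\eta(e_l)$ with $vj \in \mathcal{A}_n^{m+1}$.

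Otherwise every interior point of $[p, q]$ lies in $T_m$, hence by closedness $[p, q] \subset T_m$. By the inductive hypothesis $[p, q]$ sits inside some $T_m$-edge $\phi_w^\eta(\overline{e_b})$ or $\phi_w^\eta(\overline{e_l})$ with $w \in \mathcal{A}_n^m$. The $n$ boundary cells of $\sq$ along $\overline{e_b}$ (respectively $\overline{e_l}$), indexed by certain $j' \in \{1,\dots,4n-4\}$, provide the identifications $\psi_{j'}^{\eta(w)}(\overline{e_b})$ (respectively $\psi_{j'}^{\eta(w)}(\overline{e_l})$) that subdivide $\overline{e_b}$ (respectively $\overline{e_l}$) into $n$ length-$(1/n)$ pieces with endpoints on $\tfrac{1}{n}\Z^2$. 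The subsegment $[p,q]$ coincides with exactly one of these pieces after applying $\phi_w^\eta$, so $(p, q) = \phi_w^\eta \circ \psi_{j'}^{\eta(w)}(e_b) = \phi_{wj'}^\eta(e_b)$ (or analogously with $e_l$), with $wj' \in \mathcal{A}_n^{m+1}$.

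\emph{Main obstacle.} The principal technical difficulty is the local argument in the first case that justifies $[p, q] \subset \phi_v^\eta(T^{\eta(v)})$ from a single interior intersection in $\phi_v^\eta(T^{\eta(v)}) \setminus T_m$; one must rule out the possibility that the straight segment bifurcates or leaves the piece by crossing $\phi_v^\eta(\partial\sq)$, and delicately handle how $T_m$ may overlap with $\phi_v^\eta(T^{\eta(v)})$ along cell boundaries (where the exceptional missing segment $\{1-1/n\}\times[1-2/n,1-1/n]$ in $T$ requires coverage by adjacent pieces). This is ultimately a consequence of the dendrite property of each $T^i$ combined with the OSC.
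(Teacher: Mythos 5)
Your proposal is correct and follows essentially the same route as the paper's proof: induction on $m$ with a base case by inspection of $T^1,T^2$, and an inductive step split into two cases according to whether the edge $(p,q)$ lies inside a newly-added piece $\phi_v^{\eta}(T^{\eta(v)})$ or inside $T_m$. The exhaustiveness of that dichotomy (the ``local analysis'' you flag as the main obstacle, which is genuinely the crux) is exactly the step the paper also relies on, stating the contrapositive ``if no $w$ has $(p,q)\subset\phi_w^{\eta}(T^{\eta(w)})$, then $(p,q)\subset T_m$'' without elaboration; one small imprecision is that an interior point of $(p,q)$ escaping $T_m$ lies in $\phi_v^{\eta}([0,1)^2)$ for a unique $v$, not necessarily in the open square $\phi_v^{\eta}((0,1)^2)$, but this does not affect the argument.
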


\begin{proof}
We proceed by induction on $m$. For $m=1$, the result follows by simple inspection, as either $T_1=T^1$ or $T_1=T^2$. Now assume the result holds for some $m\in\N$. Let $(p,q)\subset T_{m+1}$ be an edge. Then there exist words $u,v\in \mathcal{A}_n^{m+1}$ satisfying $p=\phi_u^\eta({\bf 0})$ and $q=\phi_v^\eta({\bf 0})$. If there is a word $w\in \mathcal{A}_n^m$ such that $(p,q)\subset \phi_w^\eta(T^{\eta(w)})$, then there is some edge $e\subset T^{\eta(w)}$ with $(p,q)=\phi_w^\eta(e)$, and the result follows from the $m=1$ case and from the definition of $\phi_w^\eta$.

If there is no such word $w$, then we have that $(p,q)\subset T_m$. Since the only nondegenerate line segments contained in $T_m$ are contained in its edges, there is some edge $e\subset T_m$ such that $(p,q)\subset e$, and since $(p,q)$ is not a subset of $\phi_v^\eta(T^{\eta(v)})$ for any $v\in \mathcal{A}_n^m$, we have that $[p,q]$ is not a subset of $\phi_v^\eta([0,1)^2)$ for any $v\in\mathcal{A}_n^m$. Then by the induction hypothesis there is a word $w\in \mathcal{A}^m_n$ satisfying either $e=\phi_w^\eta(e_b)$ or $e=\phi_w^\eta(e_l)$. Note that the length of $(p,q)$ is $n^{-m-1}$, that $[p,q]\subset \phi_w^\eta(\sq)$, and that $(p,q)\subset \phi_w^\eta([0,1)^2)$. Then either $p$ or $q$ is $\phi_w^\eta((0,1))$ or $\phi_w^\eta((1,0))$ since the edge $e$ is equal to either $\phi_w^\eta(e_b)$ or to $\phi_w^\eta(e_l)$. Then there is a $j\in\mathcal{A}_n$ such that the other of $p$ and $q$ is equal to $\phi_{wj}^\eta({\bf 0})$, therefore $\phi_{wj}^\eta(e_b)=(p,q)$ or $\phi_{wj}^\eta(e_l)=(p,q)$. \end{proof}

In the next lemma, which is the crux of the proof of Proposition \ref{thm:holder}, we construct intermediate parameterizations $f_m:[0,1]\to T_m$.

\begin{lemma}\label{lem:intermedparam}
There exist a sequence of piecewise linear continuous surjections $(f_m:[0,1]\to T_m)_{m\geq0}$, a sequence $(\mathscr{N}_m)_{m\geq0}$ of families of nondegenerate closed intervals contained in $[0,1]$, a sequence $(\mathscr{E}_m)_{m\geq0}$ of families of open intervals in $[0,1]$, another sequence $(\mathscr{F}_m)_{m\geq0}$ of families of nondegenerate closed intervals in $[0,1]$, and a sequence of functions $(\w_m:\mathscr{N}_m\to \mathcal{A}_n^m)_{m\geq 0}$ satisfying the following properties.
\begin{enumerate}
\item[(P1)] For each $m\geq 0$, $\w_m$ is a bijection between $\mathscr{N}_m$ and $\mathcal{A}^m_n$.
\item[(P2)] For each $m\geq 0$, the families $\mathscr{E}_m,\mathscr{N}_m,\mathscr{F}_m$ are pairwise disjoint and the elements of $\mathscr{E}_m\cup\mathscr{N}_m\cup\mathscr{F}_m$ are pairwise disjoint. Moreover, $[0,1]=\bigcup(\mathscr{E}_m\cup\mathscr{N}_m\cup\mathscr{F}_m)$.
\item[(P3)] For each $m\geq 0$ and $J\in\mathscr{E}_m$, there exists $w\in \mathcal{A}_n^m$ such that $f_m|J$ is a linear bijection onto either $\phi^{\eta}_w(e_b)$ or $\phi^{\eta}_w(e_l)$, and there exists a unique interval $J'\in\mathscr{E}_m\setminus \{J\}$ such that $f_m|J'=(f_m|J) \circ \zeta_{J'}$, where $\zeta_{J'} :J' \to J$ is the unique linear orientation reversing map. Furthermore, $f_{m+1}(J)\subset \phi^\eta_w([0,1)^2)$. Conversely, if $(p,q)$ is an edge of $T_m$, then there exist exactly two intervals $J,J' \in \mathscr{E}_m$ such that $f_m(J)=f_m(J') = (p,q)$.
\item[(P4)] For each $m\geq 0$ and $I\in\mathscr{N}_m$, $f_m|I$ is constant equal to $\phi^\eta_{\w_m(I)}({\bf 0})$. Conversely,
\newline $f_m(\bigcup\mathscr{N}_m)=V_m$. Moreover, $f_{m+1}(I)\subset \phi^\eta_{\w_m(I)}([0,1)^2)$.
\item[(P5)] For each $m\geq 0$ and $I\in\mathscr{F}_m$, $f_m|I$ is constant and $f_m(I)\in V_m$. Furthermore, $f_{m+1}|I = f_m|I$.
\item[(P6)] For each $m\geq 0$ and $J\in\mathscr{E}_{m+1}\cup\mathscr{N}_{m+1}$, there exists $I\in\mathscr{E}_m\cup\mathscr{N}_m$ such that $J\subset I$. Moreover, if $J\in\mathscr{N}_{m+1}$ and $I\in\mathscr{N}_m$ with $J\subset I$, then $\w_m(I) =\w_{m+1}(J)(m)$. 
\item[(P7)] For each $m\geq 0$ and $J\in\mathscr{E}_m$, there exists $I\in\mathscr{N}_{m+1}$ such that $I\subset J$ and $f_{m+1}(I)$ is a leaf in $T_{m+1}$.
\end{enumerate}
\end{lemma}

\begin{proof}
We start by defining two pairs of preliminaries maps.

First, fix for each $i\in\{1,2\}$ a continuous surjection $\tau_i:[0,1]\to T^i$ with the following properties.
\begin{enumerate}
\item The map $\tau_i$ is a 2--to--1 tour of the edges of $T^i$ and is linear along edges with $\tau_i(0)=\tau_i(1)={\bf 0}$.
\item For every vertex $v\in V^i$ other than ${\bf 0}$, the preimage $\tau_i^{-1}(\{v\})$ has a number of components equal to the valence of the vertex $v$ and each component is a nondegenerate closed interval.
\item The preimage $\tau_i^{-1}(\{{\bf 0}\})$ is made up of three disjoint nondegenerate closed intervals.
\item There exists $t_i$ in the component of $\tau_i^{-1}(\{{\bf 0}\})$ which does not contain $0$ or $1$ such that $\tau_i^{-1}(\{(0,1-1/n)\})$ lies to the left of $t_i$ and $\tau_i^{-1}(\{(1-1/n,0)\})$ lies to the right of $t_i$.
\end{enumerate} 

Next, for each $i\in\{1,2\}$, define the combinatorial tree $\tilde{G}^{i}=(\tilde{V}^{i},\tilde{E}^{i})$ via 
\[ \tilde{V}^{i}:=V^i\cup\{(1,0),(0,1)\}, \quad \tilde{E}^{i}=E^i\cup\{\{(1-1/n,0),(1,0)\},\{(0,1-1/n),(0,1)\}\},\] 
and let $\tilde{T}^{i}:=\text{Im}(\tilde{G}^{i})$. Fix for each $i\in\{1,2\}$ a continuous surjection $\tilde{\tau}_{i}:[0,1]\to \tilde{T}^{i}$ with the following properties. 
\begin{enumerate}
\item The map $\tilde{\tau}_{i}$ is a 2--to--1 tour of the edges of $\tilde{T}^{i}$ and is linear along edges with $\tilde{\tau}_{i}(0)=\tilde{\tau}_{i}(1)={\bf 0}$.
\item For every vertex $v\in \tilde{V}^{i}$ other than ${\bf 0}$, $(0,1)$, and $(1,0)$, the preimage $\tilde{\tau}_{i}^{-1}(\{v\})$ has a number of components equal to the valence of the vertex $v$ and each component is a nondegenerate closed interval.
\item The preimage $\tilde{\tau}_{i}^{-1}(\{{\bf 0}\})$ is made up of three disjoint nondegenerate closed intervals.
\item The preimages $\tilde{\tau}_{i}^{-1}(\{(1,0)\})$ and $\tilde{\tau}_{i}^{-1}(\{(0,1)\})$ are both singletons, denoted by $s_{i,U}$ and $s_{i,L}$, respectively.
\item There exists $\tilde{t}_{i}$ in the component of $\tilde{\tau}_{i}^{-1}(\{{\bf 0}\})$ which does not contain $0$ or $1$ such that $s_{i,U}$ lies to the left of $\tilde{t}_{i}$ and $s_{i,L}$ lies to the right of $\tilde{t}_{i}$.
\item There are disjoint nondegenerate closed intervals $I_{i,1},I_{i,2},I_{i,3},I_{i,4}\subset [0,1]$, denoted by $I_{i,j}=[a_{i,j},b_{i,j}]$, $j=1,\dots,4$ equal to the preimages of the four leaves of $T^i$, such that
\[0<a_{i,1}<s_{i,U}<a_{i,2}<\tilde{t}_{i}<a_{i,3}<s_{i,L}<a_{i,4}<b_{i,4}<1.\]
\end{enumerate}

Let $T$ be one of the trees $T^i$ or $\tilde{T}^i$ above, let $V$ and $E$ be the corresponding vertex and edge sets, respectively, and let $\tau:[0,1]\to T$ be the map defined above corresponding to $T$.
\begin{itemize}
\item Define $\mathcal{E}(\tau)$ to be the set of components of preimages of open edges in $E$.
\item If $\tau\in\{\tau_1,\tau_2\}$, then for each $v\in V$ choose a nondegenerate component $I_v$ of the preimage $\tau^{-1}(\{v\})$ and let $\mathcal{N}(\tau)=\{I_v:v\in V\}$.
\item If $\tau\in\{\tilde{\tau}_1,\tilde{\tau}_2\}$, then for a vertex $v\in V$ that is not a leaf, choose a nondegenerate component $I_v$ of the preimage $\tau^{-1}(\{v\})$, and for each vertex $v\in V$ that is a leaf other than $(1,0)$ or $(0,1)$, let $I_v=I_{i,j}$ where $j\in\{1,2,3,4\}$ is chosen to satisfy $\tau(I_{i,j})=v$. We then define $\mathcal{N}(\tau)=\{I_v:v\in V\setminus\{(1,0),(0,1)\}\}$.

\item Define $\mathcal{F}(\tau)$ to be the set of nondegenerate components of preimages under $\tau$ of vertices in $T$ which are not already in $\mathcal{N}(\tau)$.
\end{itemize} 

The construction of maps $f_m$ and families $\mathscr{E}_m,\mathscr{F}_m,\mathscr{N}_m$ is done in an inductive fashion. 

Set $f_0:[0,1]\to T_0$ to be the constant map $f_0\equiv {\bf 0}$, set $\mathscr{N}_0=\{[0,1]\}$, $\mathscr{E}_0=\emptyset$, $\mathscr{F}_0=\emptyset$, and set $\w_0([0,1])=\eps$. Properties (P1)--(P7) are all either clear or vacuous for $m=0$.

Assume now that for some $m\geq 0$ we have constructed the map $f_m$, families $\mathscr{E}_m,\mathscr{F}_m$, and $\mathscr{N}_m$, and bijection $\w_m:\mathscr{N}_m\to \mathcal{A}_n^m$ satisfying (P1)--(P7). For elements $I\in\mathscr{E}_m\cup\mathscr{N}_m$, we will define the collections $\mathscr{E}_{m+1}(I),\mathscr{N}_{m+1}(I)$, and $\mathscr{F}_{m+1}(I)$. We then set 
\begin{align*}
\mathscr{E}_{m+1}&=\bigcup_{I\in\mathscr{E}_m\cup\mathscr{N}_m}\mathscr{E}_{m+1}(I),\\
\mathscr{N}_{m+1}&=\bigcup_{I\in\mathscr{E}_m\cup\mathscr{N}_m}\mathscr{N}_{m+1}(I),\\
\mathscr{F}_{m+1}&=\mathscr{F}_m \cup \bigcup_{I\in\mathscr{E}_m\cup\mathscr{N}_m}\mathscr{F}_{m+1}(I).
\end{align*} 

\medskip

\emph{Intervals in $\mathscr{F}_m$.} If $I \in \mathscr{F}_m$, then set $f_{m+1}|I = f_m$.

\medskip

\emph{Intervals in $\mathscr{E}_m$.} Let $I\in \mathscr{E}_m$ and write $I=(a,b)$. By (P3) there exists $I'=(a',b')\in\mathscr{E}_m \setminus \{I\}$ such that $f_m|I'=(f_m|I) \circ \zeta_{I'}$ where $\zeta_{I'} :I' \to I$ is the unique linear orientation reversing map and there exists a word $v\in\mathcal{A}_n^m$ such that $f_m(I)=\phi_v^\eta(e_b)$ or $f_m(I)=\phi_v^\eta(e_l)$. If $f_m(I) = \phi^{\eta}_v(e_l)$, then, without loss of generality, we may assume that the $y$-coordinate of $f_m$ is increasing inside $I$ and decreasing inside $I'$. Let $\tilde{\theta}_I:I\to (0,s_{\eta(v),U})$ be linear, bijective, and increasing, and let $\tilde{\xi}_{I'}:I'\to (s_{\eta(v),U},\tilde{t}_{\eta(v)})$ be linear, bijective, and decreasing. We define 
\[ f_{m+1}|I=\phi_v^{\eta}\circ\tilde{\tau}_{\eta(v)}\circ\tilde{\theta}_I \qquad \text{and}\qquad f_{m+1}|I'=\phi_v^{\eta}\circ\tilde{\tau}_{\eta(v)}\circ\tilde{\xi}_{I'}.\] 
Define $\mathscr{E}_{m+1}(I)=\{\tilde{\theta}_I^{-1}(J):J\in\mathcal{E}(\tilde{\tau}_{\eta(v)})\}$ and $\mathscr{E}_{m+1}(I')=\{\tilde{\xi}_{I'}^{-1}(J) : J\in\mathcal{E}(\tilde{\tau}_{\eta(v)})\}$. We define $\mathscr{N}_{m+1}(I)$, $\mathscr{N}_{m+1}(I')$, $\mathscr{F}_{m+1}(I)$, and $\mathscr{F}_{m+1}(I')$ in a similar manner. If $f_m(I) = \phi^{\eta}_v(e_b)$, then we proceed in a similar manner, interchanging $s_{\eta(v),U}$ with $s_{\eta(v),L}$ above. 

If $J\in \mathscr{N}_{m+1}(I)$, then there is a character $j\in\mathcal{A}_n$ such that $f_{m+1}(J)=\phi_{vj}^\eta({\bf 0})$, and we define $\w_{m+1}(I)=vj$. Note that if $J,Q\in\mathscr{N}_{m+1}(I)\cup\mathscr{N}_{m+1}(I')$ are distinct, then $\w_{m+1}(J)\neq \w_{m+1}(Q)$ as $\tilde{\theta}_I(J)\neq\tilde{\theta}_I(Q)$ are distinct intervals in, for example, $\mathcal{N}(\tilde{\tau}_{\eta(v)})$ ($\tilde{\theta}_I$ may be $\tilde{\xi}_{I'}$ instead).

\medskip

\emph{Intervals in $\mathscr{N}_m$.} Let $I\in\mathscr{N}_m$ and write $I=[a,b]\subset [0,1]$. We consider four cases. 

\emph{Case 1.} If the vertex $f_m(I)$ in $T_m$ has an edge in $T_m$ emanating to the right and no edge emanating upward, then define $\zeta_I:I\to [0,t_{\eta(\w_m(I))}]$ to be the unique linear increasing bijection, and let 
\[ f_{m+1}|I=\phi^\eta_{\w_m(I)}\circ\tau_{\eta(\w_m(I))}\circ\zeta_I.\] 
Define $\mathscr{F}_{m+1}(I),\mathscr{E}_{m+1}(I),\mathscr{N}_{m+1}(I)$ in a manner similar to those for $I\in\mathscr{E}_m$ above.

\emph{Case 2.} If the vertex $f_m(I)$ in $V_m$ has an edge in $T_m$ emanating upward and no edge emanating to the right, then define $\xi_I:I\to[t_{\eta(\w(I))},1]$ to be the unique increasing linear bijection, and let 
\[ f_{m+1}|I=\phi^\eta_{\w_m(I)}\circ\tau_{\eta(\w_m(I))}\circ\xi_I.\]
Define $\mathscr{F}_{m+1}(I),\mathscr{E}_{m+1}(I),\mathscr{N}_{m+1}(I)$ in a manner similar to those for $I\in\mathscr{E}_m$ above.

\emph{Case 3.} If the vertex $f_m(I)$ is a leaf in $T_m$, then let $\sigma_I:I\to[0,1]$ be the unique increasing linear bijection, and let
\[ f_{m+1}|I=\phi^\eta_{\w_m(I)}\circ\tau_{\eta(\w_m(I))}\circ\sigma_I.\] 
Define $\mathscr{F}_{m+1}(I),\mathscr{E}_{m+1}(I),\mathscr{N}_{m+1}(I)$ in a manner similar to those for $I\in\mathscr{E}_m$ above.

\emph{Case 4.} If the vertex $f_m(I)$ has an edge in $T_m$ emanating upward and another edge emanating to the right, then simply set $f_{m+1}|I=f_m|I$ and let $\mathscr{N}_{m+1}(I)=\{I\}$, $\mathscr{E}_{m+1}(I)=\emptyset$, and $\mathscr{F}_{m+1}(I)=\emptyset$. 

By definition of $\tau_{\eta(\w_m(I))}$, for any $Q\in\mathscr{N}_{m+1}(I)$, there is a character $j\in \mathcal{A}_n$ satisfying $f_{m+1}(Q)=\{\phi^\eta_{\w_m(I)j}({\bf 0})\}$, and we set $\w_{m+1}(Q)=\w_m(I)j$.

\medskip

We now prove (P1)--(P7) for $f_{m+1}$, $\mathscr{E}_{m+1}$, $\mathscr{N}_{m+1}$, $\mathscr{F}_{m+1}$, and $\w_{m+1}$.

We start with (P2). Since (P2) holds for $\mathscr{E}_m,\mathscr{N}_m$, and $\mathscr{F}_m$, and since intervals in $\mathscr{F}_{m+1}$ are not partitioned, it suffices to show that for every $I\in\mathscr{E}_m\cup\mathscr{N}_m$, the families $\mathscr{E}_{m+1}(I),\mathscr{N}_{m+1}(I)$, and $\mathscr{F}_{m+1}(I)$ are pairwise disjoint with disjoint elements covering $I$. If $I\in\mathscr{E}_m$, then this holds by definitions of $\mathcal{E}(\tilde{\tau}_i)$, $\mathcal{N}(\tilde{\tau}_i)$, and $\mathcal{F}(\tilde{\tau}_i)$. If $I\in\mathscr{N}_m$, then this follows from the definitions of $\mathcal{E}(\tau_i)$, $\mathcal{N}(\tau_i)$, and $\mathcal{F}(\tau_i)$.

The first part of (P3) follows from the analogous properties defining $\tilde{\tau}_i$. The converse part of (P3) follows from (P3) and (P6) at $m$ and by the definitions of $\tilde{\tau}_i$ and $\tau_i$.

The first part of (P4) follows from design of $\mathcal{N}(\tau_i)$ and $\mathcal{N}(\tilde{\tau}_i)$ along with (P1) and (P4) at $m$. The converse part follows from (P1), (P3), (P4), and (P6) at $m$ along with design of $\tau_i$ and $\tilde{\tau}_i$.

Property (P5) follows immediately from the definitions of $\mathcal{F}(\tau_i)$ and $\mathcal{F}(\tilde{\tau}_i)$.

The first part of (P6) follows from (P2) at $m$, as for every $I\in\mathscr{E}_m\cup \mathscr{N}_m$ and every $J\in\mathscr{E}_{m+1}(I)\cup\mathscr{N}_{m+1}(I)$, $J\subset I$, while every $J\in\mathscr{E}_{m+1}\cup\mathscr{N}_{m+1}$ is in $\mathscr{E}_{m+1}(I)\cup\mathscr{N}_{m+1}(I)$ for some $I\in\mathscr{E}_m\cup\mathscr{N}_m$. The converse part of (P6) is immediate from the definition of $\w_{m+1}(J)$ for $J\in \mathscr{N}_{m+1}(I)$ with $I\in\mathscr{N}_m$.

Property (P7) follows from properties (4), (5), and (6) in the definition of $\tilde{\tau}_i$.

Finally, we verify (P1). We first show that $\w_{m+1}$ is injective. Let $I,J\in\mathscr{N}_{m+1}$ be distinct intervals. Then by (P6), let $\tilde{I},\tilde{J}\in\mathscr{E}_m\cup\mathscr{N}_m$ such that $I\in\mathscr{N}_{m+1}(\tilde{I}),J\in\mathscr{N}_{m+1}(\tilde{J})$. If $\tilde{I}=\tilde{J}$ are the same element of $\mathscr{E}_m$, then $\w_{m+1}(I)\neq \w_{m+1}(J)$ by the definitions of $\mathcal{N}(\tilde{\tau}_i)$. If $\tilde{I}=\tilde{J}$ are the same element of $\mathscr{N}_m$, then by (P6) there are characters $i,j\in\mathcal{A}_n$ with $\w_{m+1}(I)=\w_m(\tilde{I})i$, $\w_{m+1}(J)=\w_m(\tilde{I})j$. Then by the definitions of $\mathcal{N}(\tau_i)$, we have that $i\neq j$. If $\tilde{I}\neq \tilde{J}$ are distinct intervals, both in $\mathscr{N}_m$, then by (P6) at $m+1$ and (P1) at $m$, we have that $\w_{m+1}(I)(m)\neq \w_{m+1}(J)(m)$, hence $\w_{m+1}(I)\neq \w_{m+1}(J)$. If $\tilde{I}\neq \tilde{J}$ with $\tilde{J}\in \mathscr{E}_m$ and $\tilde{I}\in\mathscr{N}_m$, then by definitions of $\tau_i$ and $\tilde{\tau}_i$, $f_{m+1}(\tilde{I})\cap f_{m+1}(\tilde{J})=\emptyset$, thus by (P3) and (P4), $\w_{m+1}(I)\neq \w_{m+1}(J)$. Similarly $\w_{m+1}(I)\neq \w_{m+1}(J)$ if $\tilde{I}\neq\tilde{J}$ with $\tilde{J}\in \mathscr{E}_m$ and $\tilde{I}\in\mathscr{E}_m\setminus\{\tilde{J},\tilde{J}'\}$. If $\tilde{J}\in\mathscr{E}_m$ and $\tilde{I}=\tilde{J}'$, then $\w_{m+1}(I)\neq \w_{m+1}(J)$ by definition of $\mathcal{N}(\tilde{\tau}_i)$.

To finish the proof, we show that $\w_{m+1}:\mathscr{N}_{m+1}\to \mathcal{A}_n^{m+1}$ is surjective. Let $u\in\mathcal{A}_n^{m+1}$ be a word. By (P1) at $m$, there exists a unique interval $I\in\mathscr{N}_m$ such that $\w_m(I)=u(m)$. If $f_m(I)$ is a leaf in $T_m$, then by definition of $\mathcal{N}(\tau_i)$, for each $j\in\mathcal{A}_n$ there is an interval $J\in\mathscr{N}_{m+1}(I)$ satisfying $\w_{m+1}(J)=\w_{m+1}(I)j$, in particular one of them has $\w_{m+1}(J)=u$. If $f_m(I)$ is not a leaf in $T_m$, then the result follows by the definitions of $\mathcal{N}(\tilde{\tau}_i),\mathcal{N}(\tau_i)$ as well as (P3) at $m$.
\end{proof}

Throughout the remainder of this section, when we write any of (P1)--(P7) we mean the appropriate property (P1)--(P7) from Lemma \ref{lem:intermedparam}.

\begin{rem}\label{rem:w}
 It is possible that $\mathscr{N}_{m+1}\cap\mathscr{N}_m\neq \emptyset$ for some $m \in \N$. Define a map $\w:\bigcup_{m\geq 0}\mathscr{N}_m\to \mathcal{A}_n^*$ as follows. Set $\w([0,1])=\varepsilon$. If $m\in\N$ and $I\in\mathscr{N}_m\setminus\mathscr{N}_{m-1}$, set $\w(I)=\w_m(I)$. If $m\in\N$ and $I\in\mathscr{N}_m\cap\mathscr{N}_{m-1}$, set $\w(I)=\w_{m-k}(I)$, where $k\in\N$ is the greatest integer such that $I\in\mathscr{N}_{m-k}$. Note that $\w$ is injective and $V_m=\{\phi_{\w(I)}^\eta({\bf 0}):I\in\mathscr{N}_m\}$ for every $m\in\N$.
\end{rem}

\begin{corollary}\label{cor:limit}
The maps $(f_m)_{m\geq 0}$ of Lemma \ref{lem:intermedparam} converge uniformly to a continuous surjection $f: [0,1] \to K^{\eta}$.
\end{corollary}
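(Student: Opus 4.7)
The plan is to first establish that the sequence $(f_m)$ is uniformly Cauchy by locating $f_m(t)$ and $f_{m+1}(t)$ inside a common small square, then identify the uniform limit with a continuous surjection onto $K^{\eta}$.

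\textbf{Step 1: A uniform bound on $\|f_{m+1}-f_m\|_\infty$.} Fix $m\geq 0$ and $t\in[0,1]$. By (P2), $t$ lies in exactly one interval $I\in\mathscr{E}_m\cup\mathscr{N}_m\cup\mathscr{F}_m$. If $I\in\mathscr{F}_m$ then $f_{m+1}(t)=f_m(t)$ by (P5). If $I\in\mathscr{N}_m$, then by (P4) the value $f_m(t)=\phi_{\w_m(I)}^\eta({\bf 0})$ lies in $\phi_{\w_m(I)}^\eta(\overline{\sq})$, and $f_{m+1}(I)\subset \phi_{\w_m(I)}^\eta([0,1)^2)\subset \phi_{\w_m(I)}^\eta(\overline{\sq})$ as well. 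If $I\in\mathscr{E}_m$, then by (P3) there is $w\in\mathcal{A}_n^m$ with $f_m(I)\subset\phi_w^\eta(\overline{\sq})$ and $f_{m+1}(I)\subset\phi_w^\eta(\overline{\sq})$. In every case $f_m(t)$ and $f_{m+1}(t)$ lie in a common closed square of side-length $n^{-m}$, so
\[\|f_{m+1}-f_m\|_\infty\leq \sqrt{2}\,n^{-m}.\]

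\textbf{Step 2: Uniform convergence and containment in $K^{\eta}$.} Since $\sum_m \sqrt{2}\,n^{-m}<\infty$, the sequence $(f_m)$ is uniformly Cauchy and so converges uniformly to a continuous map $f:[0,1]\to\R^2$. By Lemma \ref{lem:intermedparam} and Lemma \ref{lem:leaf}, each $f_m$ maps $[0,1]$ into $T_m\subset K^\eta$, and since $K^\eta$ is closed (being compact), the uniform limit $f$ takes values in $K^\eta$.

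\textbf{Step 3: Surjectivity.} Fix $x\in K^\eta$ and choose $\tau\in\mathcal{A}_n^\N$ with $\pi_\eta(\tau)=x$. For each $m\geq 0$, property (P1) yields a unique interval $I_m\in\mathscr{N}_m$ with $\w_m(I_m)=\tau(m)$; pick $t_m\in I_m$. By (P4) we have $f_m(t_m)=\phi_{\tau(m)}^\eta({\bf 0})$, which lies in $\phi_{\tau(m)}^\eta(\overline{\sq})$ alongside $x$ (by definition of $\pi_\eta$), so $|f_m(t_m)-x|\leq \sqrt{2}\,n^{-m}$. By compactness of $[0,1]$, pass to a subsequence $t_{m_k}\to t^*$. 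Combining the estimates
\[|f(t^*)-x|\leq |f(t^*)-f(t_{m_k})|+\|f-f_{m_k}\|_\infty+|f_{m_k}(t_{m_k})-x|,\]
the continuity of $f$, and the uniform convergence $f_m\to f$, we conclude $f(t^*)=x$. Hence $f([0,1])=K^\eta$, completing the proof.

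The argument is essentially bookkeeping: the main care required is matching up the three types of intervals in (P2) with the appropriate statements (P3)--(P5) to ensure $f_m(t)$ and $f_{m+1}(t)$ always lie in a common $\phi_w^\eta(\overline{\sq})$. Once that is done, both uniform convergence and surjectivity follow from the standard density-of-vertices argument together with the bijection in (P1).
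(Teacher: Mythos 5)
Your proof is correct and follows essentially the same route as the paper: you establish $\|f_{m+1}-f_m\|_\infty\leq\sqrt{2}\,n^{-m}$ by sorting $t$ into the three interval types via (P2) and applying (P3), (P4), (P5) to locate $f_m(t)$ and $f_{m+1}(t)$ in a common square $\phi_w^\eta(\overline{\sq})$, and then deduce surjectivity from the density of $\bigcup_m T_m$ in $K^\eta$. The only cosmetic difference is that for surjectivity you trace the vertex $\phi_{\tau(m)}^\eta({\bf 0})$ via the coding (P1)/(P4), whereas the paper more tersely invokes nestedness of the $T_m$ together with surjectivity of $f_m$; your version is a bit more explicit but amounts to the same argument.
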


\begin{proof}
Since $T_m\subset K^\eta$, the Hausdorff distance
\[ \dist_H(T_m,K^\eta) \leq \dist_H(V_m,K^{\eta}) \leq \max_{w\in \mathcal{A}^m_n}\diam{K_w^{\eta}} \leq n^{-m}\sqrt{2}. \]
Therefore, $\overline{\bigcup_{m\in\N}T_m}=K^\eta$, as $K^\eta$ is compact. 

Next we claim that for every $m\in\N$, 
\begin{equation}\label{eqn:inftydist}
\|f_m-f_{m+1}\|_\infty\leq n^{-m}\sqrt{2}.
\end{equation}
Fix $m\in\N$ and $x\in [0,1]$. By (P1), there exists $I\in\mathscr{N}_m\cup\mathscr{E}_m\cup\mathscr{F}_m$ such that $x\in I$. If $I\in\mathscr{F}_m$, then by (P5) we have that $f_m(x)=f_{m+1}(x)$. If $I\in\mathscr{N}_m$, then by (P4) we have that $f_m(x),f_{m+1}(x)\in\phi^\eta_{\w_m(I)}(\sq)$. Finally, if $I\in\mathscr{E}_m$, then by (P3) we have that there is some word $v\in \mathcal{A}_n^m$ such that $f_m(x),f_{m+1}(x)\in\phi^\eta_v(\sq)$. In any case, we have the claimed estimate. 

Hence, the functions $f_m$ converge uniformly to a continuous map $f:[0,1]\to \sq$. Furthermore, because the dendrites $T_m\subset K^\eta$ are nested and $f_m:[0,1]\to T_m$ are surjective, we have that the uniform limit $f$ is a continuous parameterization of $K^\eta$, as desired.
\end{proof}

\begin{lemma}\label{lem:selfish}
For each $m\in\N$ and every pair of distinct intervals $I,J\in\mathscr{E}_m\cup\mathscr{N}_m$, we have that $\mathcal{H}^{\alpha_n}(f(I)\cap f(J))=0$.
\end{lemma}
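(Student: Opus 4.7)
The plan is to verify first that $f(I)\subset \phi^\eta_{w_I}(\sq)$, where $w_I\in\mathcal{A}_n^m$ denotes the length-$m$ word associated to $I$ (namely $\w_m(I)$ if $I\in\mathscr{N}_m$ and the word $w$ supplied by (P3) if $I\in\mathscr{E}_m$). This I would prove by an induction on $k\geq m$ showing $f_k(I)\subset \phi^\eta_{w_I}(\sq)$: the base case $k=m$ is immediate from (P3) or (P4); in the inductive step, (P6) together with the explicit construction of $\w_{k+1}$ in the proof of Lemma \ref{lem:intermedparam} ensures that every sub-interval of $I$ in $\mathscr{E}_{k+1}\cup\mathscr{N}_{k+1}$ has an associated word extending $w_I$, while any sub-interval of $I$ lying in $\mathscr{F}_{k+1}$ has $f_{k+1}$-image $\{\phi^\eta_{w'}({\bf 0})\}$ with $w'$ extending $w_I$. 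Passing $k\to\infty$ and using that $\phi^\eta_{w_I}(\sq)$ is closed gives $f(I)\subset \phi^\eta_{w_I}(\sq)$.

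With this containment in hand, if $w_I\neq w_J$, then by Lemma \ref{lem:OSC} the open squares $\phi^\eta_{w_I}((0,1)^2)$ and $\phi^\eta_{w_J}((0,1)^2)$ are disjoint, so $\phi^\eta_{w_I}(\sq)\cap \phi^\eta_{w_J}(\sq)$ lies in a finite union of line segments. Since $\alpha_n>1$, this intersection has vanishing $\mathcal{H}^{\alpha_n}$-measure, and so does $f(I)\cap f(J)$.

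The delicate case is $w_I=w_J=w$, which can occur for instance when $I,J$ form a sibling pair in $\mathscr{E}_m$ mapping to the same edge of $T_m$, or when $I\in\mathscr{N}_m$ and $J\in\mathscr{E}_m$ share a corner. I would handle it by iterating: for each $k\geq 1$, partition $I,J$ into level-$(m+k)$ sub-intervals. The $\mathscr{F}_{m+k}$-pieces map to points and contribute nothing, and by the preceding paragraph only sub-interval pairs whose level-$(m+k)$ associated words coincide contribute to $f(I)\cap f(J)$. Tracking the construction of the 2-to-1 tours $\tilde\tau_{\eta(\cdot)}$, each matching pair at level $\ell$ produces at most $n$ matching pairs at level $\ell+1$, namely the sibling sub-interval pairs corresponding to the at most $n$ edges on the spine (the path from ${\bf 0}$ to $(1,0)$) of $\tilde T^{\eta(v)}$, where $v$ is the common length-$\ell$ word. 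Hence if $M_k$ denotes the number of matching pairs at level $m+k$, then $M_k\leq M_0\cdot n^k$. Combined with Proposition \ref{prop:reg}, which gives $\mathcal{H}^{\alpha_n}(K^\eta_v)\lesssim (5n-6)^{-(m+k)}$ for each $v\in\mathcal{A}_n^{m+k}$, this yields
\[\mathcal{H}^{\alpha_n}(f(I)\cap f(J))\ \lesssim\ M_k\,(5n-6)^{-(m+k)}\ \lesssim\ \left(\tfrac{n}{5n-6}\right)^k\ \longrightarrow\ 0\]
as $k\to\infty$, since $n<5n-6$ whenever $n\geq 4$.

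The main obstacle is justifying the recursion $M_{k+1}\leq n\,M_k$, which requires a careful combinatorial analysis of how sibling pairs of sub-intervals interact with the 2-to-1 tour structure at each refinement step; the non-sibling matching configurations (such as an $\mathscr{N}_m$--$\mathscr{E}_m$ pair sharing a corner) produce a fortiori smaller overlap counts and are absorbed into the same bound.
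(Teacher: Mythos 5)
Your overall strategy is genuinely different from the paper's. You decompose $f(I)$ and $f(J)$ into level-$(m+k)$ pieces, observe that pieces with mismatched associated words intersect only inside the rectifiable sets $\phi^\eta_v(\sq)\cap\phi^\eta_{v'}(\sq)$ (which are $\mathcal{H}^{\alpha_n}$-null since $\alpha_n>1$), and drive the measure of the remaining overlap to zero by bounding the number $M_k$ of \emph{matching} pairs so that $M_k(5n-6)^{-(m+k)}\to 0$. The paper instead shows that $f(I)\cap f(J)$ is \emph{porous} in $K^\eta$: given any ball $B(x,r)$, it locates (by a short case analysis on whether $I,J$ lie in $\mathscr{N}_m$ or $\mathscr{E}_m$) a sub-cylinder $K^\eta_{wji}\subset B(x,r)$ that avoids $f(I)\cap f(J)$, and then invokes Proposition~\ref{prop:reg} together with the fact that porous subsets of an Ahlfors $\alpha_n$-regular space have Assouad dimension strictly below $\alpha_n$. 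The porosity route is an existence statement (one avoided sub-square at every scale) and requires no counting; it is robust where your argument needs precise bookkeeping. Your first two steps are sound: $f(I)\subset\phi^\eta_{w_I}(\sq)$ does follow from (P3)--(P6) by the induction you outline, and the $w_I\neq w_J$ case is immediate.

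The gap is exactly where you flag it: the recursion $M_{k+1}\leq n\,M_k$ is asserted, not proved, and the supporting picture is incomplete. Your bound counts only sibling $\mathscr{E}$--$\mathscr{E}$ pairs traversing one of the $\leq n$ spine edges (the path from ${\bf 0}$ to $(1,0)$). But two sub-intervals share the associated word $vj$ without mapping to the same edge: $\phi^\eta_{vj}(e_b)$ and $\phi^\eta_{vj}(e_l)$ can both be edges of $T_{\ell+1}$ with the same word, the $\mathscr{N}_{\ell+1}$-preimage of the vertex $\phi^\eta_{vj}({\bf 0})$ carries the same word, and Case~4 of the $\mathscr{N}$-refinement (where an $\mathscr{N}_\ell$ interval persists unchanged) introduces another source of repeated words across levels. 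So $\mathscr{E}$--$\mathscr{N}$ and non-sibling matches contribute, and it is not clear that they are absorbed into the same bound ``a fortiori.'' What must be established is that the geometric factor is strictly below $5n-6$; for small $n$ (e.g. $n=4$, where $5n-6=14$) there is not as much slack as the inequality $n<5n-6$ might suggest once the extra configurations are included. Until that combinatorial count is carried out, the central estimate of the proof is missing, whereas the paper's porosity argument closes the lemma without it.
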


Before proving the lemma we recall the well-known definition of porosity. Given a metric space $X$, we say that a set $Y\subset X$ is \emph{porous} in $X$ if there exists $c>1$ such that for any $r<\diam{X}$ and any $x\in X$, there exists $y \in B(x,r)$ such that $B(y,c^{-1}r) \subset B(x,r)\setminus Y$.

\begin{proof}[Proof of Lemma \ref{lem:selfish}]
We claim that $f(I)\cap f(J)$ is porous in $K^\eta$. Assuming the claim, since $K^\eta$ is Ahlfors $\alpha_n$-regular by Proposition \ref{prop:reg}, it follows from \cite[Lemma 3.12]{BHR} that the Hausdorff dimension (in fact, the Assouad dimension) of $f(I)\cap f(J)$ is less than $\a_n$ which gives the lemma.

To prove porosity, note first that by (P3), (P4), and (P6) there exists a word $v\in \mathcal{A}_n^m$ such that $f(I)\cap f(J)\subset K^\eta_v$. Fix $x\in K^\eta$ and $r\in (0,n^{-m}]$. There exists an integer $k_0\in \N$ such that $n^{-k_0}\leq r< n^{-k_0+1}$, thus there exists $w\in \mathcal{A}_n^{k_0+1}$ with $K^\eta_w\subset \overline{B}(x,r)\cap K^\eta$.

We now prove that there are characters $i,j\in \mathcal{A}_n$ such that $f(I)\cap f(J)\cap K^\eta_{wji}=\emptyset$. This falls to a case study.

\emph{Case 1.} Suppose that $I,J\in\mathscr{N}_m$. Then $f(I)\cap\phi^\eta_{\w_m(J)}((0,1)^2)=\emptyset$ and $f(J)\cap\phi^\eta_{\w_m(I)}((0,1)^2)=\emptyset$ by Lemma \ref{lem:OSC}, (P4), and the fact that $\w_m(I), \w_m(J)$ are distinct words of length $m$. Moreover, at least one of $\phi^\eta_w((0,1)^2)\cap \phi^\eta_{\w_m(I)}(\sq)$ and $\phi^\eta_w((0,1)^2)\cap \phi^\eta_{\w_m(J)}(\sq)$ is empty. Since there are $i,j\in \mathcal{A}_n$ for which $K^\eta_{wji}\subset \phi^\eta_w((0,1)^2)$, we have $f(I)\cap f(J)\cap K^\eta_{wji}=\emptyset$.

\emph{Case 2.} Suppose that at least one of $I$, $J$ is in $\mathscr{E}_m$. By (P3), (P4), and (P6), there exist words $u,v\in \mathcal{A}_n^m$ such that $f(J)\subset K^\eta_u$ and $f(I)\subset K^\eta_v$. 

If $u\neq v$, then we proceed as in Case 1 with $u$ playing the role of $\w_m(J)$ and $v$ playing the role of $\w_m(I)$. If $f(I)$ or $f(J)$ is a singleton, then the existence of such characters $i,j\in \mathcal{A}_n$ is clear. 

Therefore, we may assume for the rest of Case 2 that $u=v$ and that neither of $f(I),f(J)$ is a singleton. If $f_{k_0+1}(J)\cap K^\eta_w=\emptyset$, then $f(J)\cap \phi^\eta_w((0,1)^2)=\emptyset$ by Lemma \ref{lem:OSC} and by (P3), (P4), and (P5), hence the existence of such characters $i,j\in \mathcal{A}_n$ is clear. Similarly if $f_{k_0+1}(I)\cap K^\eta_w=\emptyset$. Now assume that $f_{k_0+1}(I)\cap K^\eta_w\neq\emptyset$ and $f_{k_0+1}(J)\cap K^\eta_w\neq \emptyset$. Then there are distinct leaves $p_1,p_2\in T_{k_0+2}$ with $p_1,p_2\in K^\eta_w$ and distinct intervals $Q_1,Q_2\in\mathscr{N}_{k_0+2}$ such that $Q_1\subset J$ and $Q_2\subset I$ satisfy $f_{k_0+2}|Q_1\equiv p_1$ and $f_{k_0+2}|Q_2\equiv p_2$. Then $f([0,1]\setminus Q_1)\cap \phi^\eta_{\w_{k_0+2}(Q_1)}((0,1)^2)=\emptyset$ and $\w(Q_1)(k_0+1)=w$, therefore there is a pair of characters $i,j\in \mathcal{A}_n$ such that $wj=\w_{k_0+2}(Q_1)$ and $K^\eta_{wji}\subset \phi^\eta_{wj}((0,1)^2)$. Since $I,J$ are distinct, and thus disjoint, we have that $f(I)\cap \phi^\eta_{wj}((0,1)^2)=\emptyset$, concluding Case 2.

Additionally, there exists a point $y\in K^\eta_{wji}$ with $\overline{B}(y,n^{-k_0-5})\cap K^\eta\subset K^\eta_{wji}$. Thus, for every $r\in (0,n^{-m}]$ and each $x\in K^\eta$, there exists $y\in K^\eta$ such that 
\[\overline{B}(y,n^{-5}r)\cap K^\eta\subset K^\eta\cap (\overline{B}(x,r)\setminus(f(I)\cap f(J))). \qedhere\]
\end{proof}

For the rest of this section, $f$ is the uniform limit of the maps $f_m$ from Corollary \ref{cor:limit}, $\nu_n$ is the probability measure from \textsection\ref{sec:words}, and $\pi: \mathcal{A}_n^{\N} \to K^{\eta}$ is the map from \eqref{eq:pi}.

\begin{lemma}\label{lem:sizeofEm}
For every $m\in\N$ and every $J\in\mathscr{E}_m$, $\pi_\eta\#\nu_n(f(J))\geq (5n-6)^{-m-1}$.
\end{lemma}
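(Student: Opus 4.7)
The plan is to use property (P7) of Lemma \ref{lem:intermedparam} to locate within $J$ a subinterval $I\in\mathscr{N}_{m+1}$ whose $f_{m+1}$-image is a leaf of $T_{m+1}$, and then to show that $\pi_\eta(\mathcal{A}^{\mathbb{N}}_{n,\w_{m+1}(I)})\subset f(I)$. Since $\nu_n(\mathcal{A}^{\mathbb{N}}_{n,u})=(5n-6)^{-(m+1)}$ and $\pi_\eta^{-1}(\pi_\eta(\mathcal{A}^{\mathbb{N}}_{n,u}))\supset\mathcal{A}^{\mathbb{N}}_{n,u}$, this will yield
\[\pi_\eta\#\nu_n(f(J))\geq\pi_\eta\#\nu_n(\pi_\eta(\mathcal{A}_{n,u}^{\mathbb{N}}))=\nu_n(\pi_\eta^{-1}(\pi_\eta(\mathcal{A}_{n,u}^{\mathbb{N}})))\geq(5n-6)^{-(m+1)}.\]

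First, apply (P7) to $J$ to obtain $I\in\mathscr{N}_{m+1}$ with $I\subset J$ and $f_{m+1}(I)$ a leaf of $T_{m+1}$. Set $u=\w_{m+1}(I)$, so by (P4) $f_{m+1}|_I\equiv\phi^\eta_u(\mathbf{0})$. The reduction I would use is: for any $\tau\in\mathcal{A}^{\mathbb{N}}_{n,u}$, if we can produce for each $k\geq m+1$ a point $t_k\in I$ with $f_k(t_k)=\phi^\eta_{\tau(k)}(\mathbf{0})$, then compactness of $I$ together with the uniform convergence $f_k\to f$ from Corollary \ref{cor:limit} yields a subsequential limit $t\in I$ with $f(t)=\pi_\eta(\tau)$, establishing $\pi_\eta(\tau)\in f(I)\subset f(J)$.

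The crux is the claim: for every $k\geq m+1$ and $v\in\mathcal{A}^k_{n,u}$, the unique $I_v\in\mathscr{N}_k$ with $\w_k(I_v)=v$ (uniqueness from (P1)) satisfies $I_v\subset I$. I would prove this by induction on $k$; the base case $k=m+1$ is clear since $I_u=I$. For the inductive step at level $k+1$, write $v=v'j$ with $v'\in\mathcal{A}^k_{n,u}$. By (P6), $I_v\subset I_0$ for some $I_0\in\mathscr{N}_k\cup\mathscr{E}_k$. If $I_0\in\mathscr{N}_k$, then (P6) and (P1) give $I_0=I_{v'}\subset I$ by the inductive hypothesis. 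If instead $I_0\in\mathscr{E}_k$, the explicit subdivision formulas from the proof of Lemma \ref{lem:intermedparam} force $\w_{k+1}(I_v)=wj'$ for some $j'\in\mathcal{A}_n$ and $w\in\mathcal{A}_n^k$ with $f_k(I_0)=\phi_w^\eta(e_b)$ or $\phi_w^\eta(e_l)$; matching this with $\w_{k+1}(I_v)=v'j$ forces $w=v'\in\mathcal{A}^k_{n,u}$.

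The hard part is then showing that this $I_0\subset I$. I would handle it via a parallel induction establishing that, for each $k\geq m+2$, every $\mathscr{E}_k$ interval whose $f_k$-image is a bottom- or left-edge $\phi^\eta_w(e_b)$ or $\phi^\eta_w(e_l)$ with $w\in\mathcal{A}^k_{n,u}$ is contained in $I$. At level $m+2$ this holds because $\phi^\eta_u(\mathbf{0})$ being a leaf of $T_{m+1}$ forces $T_{m+1}\cap\phi^\eta_u(\sq)=\{\phi^\eta_u(\mathbf{0})\}$, so every new edge of $T_{m+2}$ within $\phi^\eta_u(\sq)$ arises from the Case 3 processing of $I$ and hence is parameterized by $\mathscr{E}_{m+2}$ intervals within $I$. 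The inductive step uses Lemma \ref{lem:T^i} crucially: since each local tree $T^{\eta(w)}$ sits inside $[0,1)^2$, the top and right boundaries of a cell $\phi^\eta_w(\sq)$ are never populated by structure from neighboring cells, so an edge lying on the bottom or left of a cell $\phi^\eta_{v'}(\sq)$ with $v'\in\mathcal{A}^k_{n,u}$ can only arise through the processing of intervals that the main and parallel inductive hypotheses already place within $I$, closing the induction.
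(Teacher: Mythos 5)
Your strategy is correct, but it differs from the paper's and is considerably more laborious. The paper argues by complement: it shows that $f_k([0,1]\setminus I)\cap\phi^\eta_{\w(I)}((0,1)^2)=\emptyset$ for every $k\geq m+1$ via a single case study at level $m+1$ (splitting on whether the interval $Q\ni t$ lies in $\mathscr{F}_{m+1}$, $\mathscr{E}_{m+1}$, or $\mathscr{N}_{m+1}\setminus\{I\}$ and invoking (P3)--(P5) to keep $f_k(Q)$ inside the right level-$(m+1)$ cell), then concludes $K^\eta_{\w(I)}\subset f(I)$ from surjectivity of $f$ and closedness of $f(I)$. You instead build explicit witnesses in $I$ for each $\pi_\eta(\tau)$ with $\tau\in\mathcal{A}^{\N}_{n,u}$, which requires proving $I_{\tau(k)}\subset I$ for all $k\geq m+1$; since the Case 2 branch of that induction (when the parent $I_0$ is an $\mathscr{E}_k$ interval) cannot be resolved from the main inductive hypothesis alone, you correctly identify the need for a parallel statement about $\mathscr{E}_k$ intervals, and the resulting mutual induction can be closed (the parallel claim at level $k+1$ reduces to the main claim at $k$ when $Q'\in\mathscr{N}_k$ and to itself at $k$ when $Q'\in\mathscr{E}_k$). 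Two caveats: the base-case assertion $T_{m+1}\cap\phi^\eta_u(\sq)=\{\phi^\eta_u({\bf 0})\}$ is not correct as stated, since the top and right faces of $\phi^\eta_u(\sq)$ can carry edges of $T_{m+1}$ belonging to neighboring cells; what is true and what the argument actually needs is $T_{m+1}\cap\phi^\eta_u([0,1)^2)=\{\phi^\eta_u({\bf 0})\}$, which follows because $\phi^\eta_u(e_b),\phi^\eta_u(e_l)$ are the only candidate edges of $T_{m+1}$ in that half-open square and both emanate rightward or upward from a leaf, hence do not exist by Lemma \ref{lem:leaf}. Second, the inductive step of your parallel claim is phrased loosely and would need to be spelled out along the lines just indicated. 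The one thing your route buys is that it sidesteps the density step hidden in the paper's last line (that $\phi^\eta_{\w(I)}((0,1)^2)\cap K^\eta$ is dense in $K^\eta_{\w(I)}$), since you approximate each point of $K^\eta_u$ directly; on balance, though, the paper's complementary argument is shorter and cleaner.
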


\begin{proof}
Note that by (P7), there is some $I\in\mathscr{N}_{m+1}$ such that $f_{m+1}(I)$ is a leaf in $T_{m+1}$ and $I\subset J$. Then since $f_{m+1}(I)=\phi^\eta_{\w(I)}({\bf 0})$ is a leaf in $T_{m+1}$, we have that 
\[ f_{m+1}([0,1]\setminus I)\cap \phi^\eta_{\w(I)}(T^{\eta(\w(I))})=\emptyset.\]

We claim that $f_k([0,1]\setminus I)\cap \phi_{\w(I)}^\eta((0,1)^2)=\emptyset$ for every integer $k\geq m+1$. To this end, fix an integer $k\geq m+1$ and let $t\in [0,1]\setminus I$. By Lemma \ref{lem:OSC}, it is sufficient to show that there exists a word $u\in\mathcal{A}_n^{m+1}$ different from $\w(I)$ such that $f_k(t)\in \phi_u^\eta(\sq)$. By (P2), there exists a unique interval $Q\in(\mathscr{E}_{m+1}\cup\mathscr{F}_{m+1}\cup\mathscr{N}_{m+1})\setminus\{I\}$ with $t\in Q$.

If $Q\in \mathscr{F}_{m+1}$, then by (P5) there exists a word $u\in\mathcal{A}_n^{m+1}$ satisfying
\[f_k(t)=f_{m+1}(t)=\phi_u^\eta({\bf 0}).\]
Then since 
\[ f_{m+1}([0,1]\setminus I)\cap \phi^\eta_{\w(I)}(T^{\eta(\w(I))})=\emptyset,\] 
we have that $u\neq \w(I)$.

If $Q\in \mathscr{E}_{m+1}$, then by (P3) and Lemma \ref{lem:edge}, there exists a word $u\in\mathcal{A}_n^{m+1}$ such that either $f_{m+1}(Q)=\phi_u^\eta(e_b)$ or $f_{m+1}(Q)=\phi_u^\eta(e_l)$. In either case, by (P3), (P4), and (P5) we have that $f_k(Q)\subset \phi_u^\eta([0,1)^2)$. Note that $\phi_{\w(I)}^\eta(e_l)$ emanates from $\phi_{\w(I)}^\eta({\bf 0})$ upward and $\phi_{\w(I)}^\eta(e_b)$ emanates from $\phi_{\w(I)}^\eta({\bf 0})$ to the right. Since $\phi_{\w(I)}^\eta({\bf 0})$ is a leaf in $T_{m+1}$, by Lemma \ref{lem:leaf} we must have that $u\neq \w(I)$.

Finally, if $Q\in\mathscr{N}_{m+1}\setminus\{I\}$, then by (P1), $\w_{m+1}(Q)\neq \w(I)$. By (P3), (P4), and (P5), $f_k(Q)\subset \phi_{\w_{m+1}(Q)}^\eta([0,1)^2)$. Therefore, $f_k(t)\notin \phi_{\w(I)}^\eta((0,1)^2)$, completing the proof of the claim.

Thus, if $J\in\mathscr{E}_m$, then there is an interval $I\in\mathscr{N}_{m+1}$ with $I\subset J$ and this $I$ satisfies 
\[f([0,1]\setminus I)\cap \phi^\eta_{\w(I)}((0,1)^2)=\emptyset,\]
so $\phi^\eta_{\w(I)}((0,1)^2)\cap K^\eta\subset f(I)$. Since $I$ is compact, we have $K^\eta_{\w(I)}\subset f(I)$, hence $K^\eta_{\w(I)}\subset f(J)$ and $|\w(I)|=m+1$. Therefore, $\pi_\eta\#\nu_n(f(J))\geq  (5n-6)^{-m-1}$.
\end{proof}

We are now ready to prove Proposition \ref{thm:holder}.

\begin{proof}[Proof of Proposition \ref{thm:holder}]
Let $f_m$, $\mathscr{E}_m,\mathscr{N}_m,\mathscr{F}_m$, and $\w$ be as in Lemma \ref{lem:intermedparam} and Remark \ref{rem:w}. There exists a non-decreasing surjection $\zeta:[0,1]\to [0,1]$ such that 
\[ |\zeta(J)|=\pi_\eta\#\nu_n(f(J)), \quad \text{for every integer $m\geq 0$ and each $J\in\mathscr{E}_m\cup\mathscr{N}_m\cup\mathscr{F}_m$}.\]
Let $\mathscr{F}_\infty=\bigcup_{m\geq 0}\mathscr{F}_m$ and let $\mathscr{L}_\infty$ be the collection of intervals $I\in\bigcup_{m\geq 0}\mathscr{N}_m$ such that $\phi^\eta_{\w(I)}({\bf 0})$ has an edge emanating upward and an edge emanating to the right in $T_{|\w(I)|}$. Note that $\mathscr{L}_\infty=\bigcup_{m\geq 0}\{I\in\mathscr{N}_m:|\w(I)|<m\}$. By (P7) and by design of the families $\mathscr{F}_m$, $\mathscr{E}_m$, and $\mathscr{N}_m$ we have that the restriction of $\zeta$ on $[0,1]\setminus\bigcup(\mathscr{F}_\infty\cup\mathscr{L}_\infty)$ is strictly increasing and injective.

For any interval $J\in\mathscr{F}_\infty\cup\mathscr{L}_\infty$, let $p_J$ be the unique element of $\zeta(J)$ and let $q_J$ be the unique element of $f(J)$. Define the function $F:[0,1]\to K^\eta$ by 
\[ F(t) =
\begin{cases}
f\circ\zeta^{-1}(t), &\text{if $t\in \zeta([0,1]\setminus\bigcup(\mathscr{F}_\infty\cup\mathscr{L}_\infty))$},\\
q_J, &\text{if $t\in \zeta(J)$ where $J\in\mathscr{F}_\infty\cup\mathscr{L}_\infty$}.
\end{cases} \]

To complete the proof, it remains to show that $F$ is $(1/\a_n)$-H\"older continuous. To this end, fix distinct points $x,y\in [0,1]$ and assume that $x<y$, and let $m\geq 0$ be the maximal integer for which there exists an interval $J\in\mathscr{E}_m\cup\mathscr{N}_m$ with $x,y\in\overline{\zeta(J)}$. The remainder of the proof falls to a case study.

\emph{Case 1.} If there is an interval $I\in\mathscr{E}_{m+1}(J)$ such that $x$ and $y$ are separated by $\zeta(I)$, then by (P3) and (P4), $|F(x)-F(y)|\leq n^{-m}\sqrt{2}$ and $|x-y|\geq (5n-6)^{-m-2}$, thus $|F(x)-F(y)|\leq n^2\sqrt{2}|x-y|^{\a_n}$.

\emph{Case 2.} If there is an interval $I\in\mathscr{N}_{m+1}(J)\setminus\mathscr{L}_\infty$ such that $\zeta(I)$ separates $x$ and $y$, then proceeding similarly as in Case 1. we obtain $|F(x)-F(y)|\leq n^3 \sqrt{2}|x-y|^{\a_n}$.

\emph{Case 3.} If neither Case 1 nor Case 2 holds, then there exist intervals $I_1,I_2\in\mathscr{E}_{m+1}(J)\cup(\mathscr{N}_{m+1}(J)\setminus\mathscr{L}_\infty)$ such that $x\in\overline{\zeta(I_1)}$, $y\in\overline{\zeta(I_2)}$. Then the intersection $\overline{\zeta(I_1)}\cap\overline{\zeta(I_2)} =\{p\}$ for some $p$, thus we may apply Case 1 to the pairs $x,p$ and $p,y$ to obtain $|F(x)-F(y)|\leq n^3\sqrt{2}|x-y|^{\a_n}$.
\end{proof}

\section{Random choice functions}\label{sec:choice}

Fix an even integer $n\geq 4$. Note that the set $\mathscr{C}_n$ of choice functions $\eta:\mathcal{A}_n^* \to \{1,2\}$ can be identified with $\{1,2\}^{\mathcal{A}_n^*}$ which is a countable product of finite sets. Then by \cite[\textsection 3.1]{Stroock}, there exists a probability measure $\mu_n$ on $\mathscr{C}_n$ such that for any $w \in \mathcal{A}_n^*$
\[ \mu_n(\{\eta \in \mathscr{C}_n : \eta(w)=1\}) = \mu_n(\{\eta \in \mathscr{C}_n : \eta(w)=2\}) = \frac12.\]

We say that a choice function $\eta \in \mathscr{C}_n$ is a \emph{random choice function} if $\nu_n$-a.e. word $w\in \mathcal{A}^{\N}_n$ satisfies the following property: for each $N \in \N$ and every integer $k\geq 0$, there exists $\ell \in \N$ depending on $w,k,N$ such that
\begin{enumerate}
\item[(R1)] the restrictions
\[ \eta\left|\bigcup_{j=0}^{k-1}\mathcal{A}^{\ell+j}_{n,w(\ell)} = 1 \right. \qquad \text{and} \qquad \eta\left| \bigcup_{j=0}^{2N+k-1}\mathcal{A}^{\ell-N+j}_{n,w(\ell-N)} \setminus\bigcup_{j=0}^{k-1} \mathcal{A}^{\ell+j}_{n,w(\ell)}\right. =2, \]
\item[(R2)] if $w=i_1 i_2\cdots$, then $i_{\ell-N+1}>4n-4$.
\end{enumerate} 
For completeness, if $k=0$, then the union $\bigcup_{j=0}^{k-1}\mathcal{A}^{\ell+j}_{n,w(\ell)}$ is taken over an empty set thus we regard the union itself to be empty, and so we consider only the second restriction of (R1) in this case. See Figure \ref{fig:choice} for a schematic representation of property (R1).

\begin{figure}[h]
\centering
\includegraphics[width=0.5\textwidth]{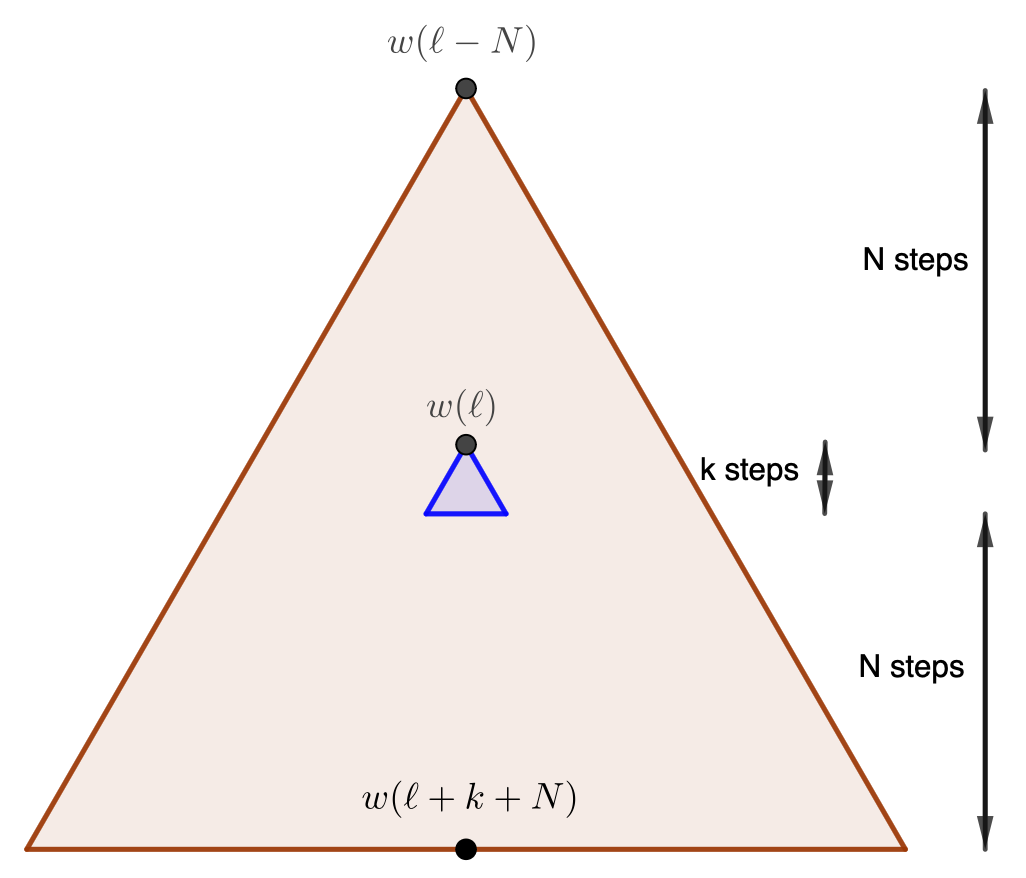}
\caption{The big triangle represents all words in $\mathcal{A}_n^*$ that start with $w(\ell-N)$ and have at most $\ell+k+N$ letters. Words at the same altitude have the same number of letters. The part of this set where $\eta$ takes the value 2 is colored by red, and the part of the set where $\eta$ takes the value 1 is colored by blue.}
\label{fig:choice}
\end{figure}

To simplify our notation, for a word $w\in \mathcal{A}_n^\N$, a function $\eta\in \mathscr{C}_n$, and two integers $N\geq 1,k\geq 0$, we say $(\eta,w,N,k)$ satisfies (R1), (R2) if there exists $\ell\in \N$ for which (R1) and (R2) hold. For a word $w\in \mathcal{A}^{\N}_n$ and a function $\eta \in \mathscr{C}_n$, we say that $(\eta,w)$ satisfies (R1) and (R2) if for every pair of integers $N\geq 1,k\geq 0$ , we have that $(\eta,w,N,k)$ satisfies (R1) and (R2). Thus, a choice function $\eta$ is random if for $\nu_n$-a.e. $w\in \mathcal{A}^{\N}_n$, $(\eta,w)$ satisfies (R1) and (R2).

The goal of this section is to show that random choice functions exist. In fact, we prove the following stronger statement.

\begin{proposition}\label{prop:randomchoice}
For every even integer $n\geq 4$, $\mu_n$-a.e. $\eta \in \mathscr{C}_n$ is random.
\end{proposition}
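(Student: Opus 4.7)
The plan is to use a two-layer Borel--Cantelli and Fubini argument on the product space $(\mathscr{C}_n \times \mathcal{A}_n^\N, \mu_n \times \nu_n)$. For each pair of integers $N \geq 1$ and $k \geq 0$, I would let $A_{N,k} \subset \mathscr{C}_n \times \mathcal{A}_n^\N$ denote the set of pairs $(\eta, w)$ such that no $\ell \in \N$ satisfies (R1) and (R2). The goal reduces to showing $(\mu_n \times \nu_n)(A_{N,k}) = 0$ for every such pair; then $A := \bigcup_{N,k} A_{N,k}$ is $(\mu_n \times \nu_n)$-null, and Fubini delivers: for $\mu_n$-a.e.\ $\eta$, the $w$-slice $\{w : (\eta,w) \in A\}$ is $\nu_n$-null, hence $\eta$ is random.

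To handle a fixed $(N,k)$, I would first note that the events $\{i_m > 4n-4\}$, $m \in \N$, are $\nu_n$-independent with probability $(n-2)/(5n-6) > 0$, so by Borel--Cantelli the set $W$ of $w \in \mathcal{A}_n^\N$ for which $L(w) := \{\ell : i_{\ell - N + 1} > 4n-4\}$ is infinite has $\nu_n$-measure $1$. Fix any $w \in W$ and extract a subsequence $\ell_1 < \ell_2 < \cdots$ from $L(w)$ with $\ell_{j+1} - \ell_j \geq 2N + k$. For each $\ell$, write $F_\ell(w) := \bigcup_{j=0}^{2N+k-1} \mathcal{A}_{n, w(\ell - N)}^{\ell - N + j}$ for the finite set of words whose $\eta$-values are prescribed by (R1). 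Every word in $F_\ell(w)$ has length in $[\ell - N, \ell + N + k - 1]$, so the spacing condition forces these length intervals to be disjoint across the subsequence, whence the sets $F_{\ell_j}(w)$ themselves are pairwise disjoint.

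Next, let $B_\ell(w) \subset \mathscr{C}_n$ be the event (depending on $w$) that the $\eta$-assignment on $F_\ell(w)$ matches the pattern dictated by (R1). Since $\mu_n$ is the product of uniform measures on $\{1,2\}$ indexed by $\mathcal{A}_n^*$, one has $\mu_n(B_\ell(w)) = 2^{-M}$, where $M = \sum_{j=0}^{2N+k-1}(5n-6)^j$ depends only on $n, N, k$. The disjointness of the $F_{\ell_j}(w)$ implies that the events $B_{\ell_j}(w)$ are $\mu_n$-independent. As $\sum_{j} \mu_n(B_{\ell_j}(w)) = \infty$, the second Borel--Cantelli lemma yields that $\mu_n$-a.e.\ $\eta$ lies in infinitely many $B_{\ell_j}(w)$; in particular at least one $\ell = \ell_j$ witnesses both (R1) and (R2). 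So for every $w \in W$, the slice $\{\eta : (\eta, w) \in A_{N,k}\}$ is $\mu_n$-null, and Fubini gives $(\mu_n \times \nu_n)(A_{N,k}) = 0$.

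I expect the main obstacle to be purely bookkeeping rather than conceptual: one must verify carefully that the word sets $F_{\ell_j}(w)$ are actually pairwise disjoint (so that the $B_{\ell_j}(w)$ are independent events in the product space $\mathscr{C}_n$), and one must juggle the order of quantifiers between $\eta$ and $w$ via two applications of Fubini. The disjointness follows cleanly from the level-separation $\ell_{j+1} - \ell_j \geq 2N + k$, but it is essential to check that words of different lengths cannot coincide and that the sets $\mathcal{A}_{n,w(\ell - N)}^{\ell - N + j}$ truly contain the (first, smaller) union appearing in (R1). Once these are established, the argument is standard probability theory.
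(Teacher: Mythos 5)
Your proposal is correct and follows essentially the same route as the paper's proof: restrict to the full-measure set of $w$ where the relevant character appears infinitely often, extract a subsequence of levels spaced $\geq 2N+k$ apart so that the prescribed word sets are disjoint, invoke independence in the product measure $\mu_n$, apply the second Borel--Cantelli lemma, and finish with Fubini and a countable union over $(N,k)$. The only cosmetic difference is that the paper formulates the conclusion through the set $\Delta_n = \{(\eta,w) : w\in C_\eta\}$ and directly computes $(\mu_n\times\nu_n)(\Delta_n)=1$ rather than through the complementary bad set $A_{N,k}$, but the two are logically the same argument.
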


Since $\pi_\eta\#\nu_n$ and $\mathcal{H}^{\alpha_n}\res K^\eta$ have the same null sets in $K^\eta$ for every $\eta\in \mathscr{C}_n$, Proposition \ref{prop:randomchoice} yields immediately the following corollary.

\begin{corollary}\label{thm:pointchoice}
For every even integer $n\geq 4$, there exists $\eta \in \mathscr{C}_n$ such that $\mathcal{H}^{\alpha_n}$-almost every point $x\in K^\eta$ has $\pi_\eta^{-1}(\{x\})=\{w\}$ and $(\eta,w)$ satisfies (R1) and (R2). 
\end{corollary}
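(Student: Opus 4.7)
The plan is to deduce this corollary directly from Proposition \ref{prop:randomchoice} together with Lemma \ref{lem:injfull} and the equivalence of null sets for $\pi_\eta\#\nu_n$ and $\mathcal{H}^{\alpha_n}\res K^\eta$ (which was established in the proof of Proposition \ref{prop:reg}). The main content of the corollary has already been extracted in Proposition \ref{prop:randomchoice}; what remains is a short measure-theoretic transfer from the symbolic space $\mathcal{A}_n^{\mathbb{N}}$ to the geometric set $K^\eta$.

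First, by Proposition \ref{prop:randomchoice}, $\mu_n$-a.e.\ $\eta\in\mathscr{C}_n$ is random; pick any such $\eta$ and set
\[ W_\eta := \{w\in \mathcal{A}_n^{\mathbb{N}} : (\eta,w) \text{ satisfies (R1) and (R2)}\}, \]
so that $\nu_n(W_\eta)=1$ by definition of randomness. By Lemma \ref{lem:injfull}, the set $N_n\subset \mathcal{A}_n^{\mathbb{N}}$ of non-injective words is $\nu_n$-null, so $W:=W_\eta\setminus N_n$ still satisfies $\nu_n(W)=1$.

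Next, I would compare the pushforward $\pi_\eta\#\nu_n$ with $\mathcal{H}^{\alpha_n}\res K^\eta$. Let $E := K^\eta\setminus \pi_\eta(W)$. For any $x\in E$, every preimage $\tau\in \pi_\eta^{-1}(\{x\})$ must lie outside $W$, since otherwise $x\in\pi_\eta(W)$. Hence $\pi_\eta^{-1}(E)\subset \mathcal{A}_n^{\mathbb{N}}\setminus W$, and therefore
\[ \pi_\eta\#\nu_n(E) = \nu_n(\pi_\eta^{-1}(E)) \leq \nu_n(\mathcal{A}_n^{\mathbb{N}}\setminus W) = 0. \]
By the remark immediately preceding the corollary (which follows from the proof of Proposition \ref{prop:reg} via \cite[\textsection 1.4.3]{Mackay}), the measures $\pi_\eta\#\nu_n$ and $\mathcal{H}^{\alpha_n}\res K^\eta$ have exactly the same null sets on $K^\eta$; this forces $\mathcal{H}^{\alpha_n}(E)=0$.

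Finally, for each $x\in K^\eta\setminus E = \pi_\eta(W)$ choose $w\in W$ with $\pi_\eta(w)=x$. Since $w\notin N_n$, we have $\pi_\eta^{-1}(\{x\})=\{w\}$ by definition of injective word, and since $w\in W_\eta$, the pair $(\eta,w)$ satisfies (R1) and (R2). This is exactly the desired conclusion at $\mathcal{H}^{\alpha_n}$-a.e.\ point of $K^\eta$.

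Because all of the delicate combinatorial and probabilistic work has been absorbed into Proposition \ref{prop:randomchoice}, there is no real obstacle at this level; the only subtle point worth flagging is the set-theoretic inclusion $\pi_\eta^{-1}(E)\subset \mathcal{A}_n^{\mathbb{N}}\setminus W$, which relies crucially on removing the non-injective words up front so that ``some preimage lies in $W$'' is equivalent to ``the (unique) preimage lies in $W$'' on the image side.
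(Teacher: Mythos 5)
Your proof is correct and follows exactly the route the paper intends (the paper states the corollary as an immediate consequence of Proposition \ref{prop:randomchoice}, Lemma \ref{lem:injfull}, and the equality of null sets for $\pi_\eta\#\nu_n$ and $\mathcal{H}^{\alpha_n}\res K^\eta$): pick a random $\eta$, intersect the full-measure set of words with the injective ones, and push forward. The only step worth a word of care is writing $\pi_\eta\#\nu_n(E)=\nu_n(\pi_\eta^{-1}(E))$ for $E=K^\eta\setminus\pi_\eta(W)$, since $\pi_\eta(W)$ is a priori only analytic rather than Borel; this is harmless because analytic sets are universally measurable (or because the bad set is directly contained in the Borel null set $\bigcup_m\bigcup_{v\in\mathcal{A}_n^m}\phi_v^\eta(\partial\sq)\cup\pi_\eta(\mathcal{A}_n^\N\setminus W_\eta)$ handled as in Lemma \ref{lem:injfull}).
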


Before proving Proposition \ref{prop:randomchoice}, we establish some notation. For $w\in \mathcal{A}_n^\N$ define 
\[ D^w :=\{\eta\in \mathscr{C}_n: (\eta,w)\text{ satisfies (R1) and (R2)}\}\]
and for $\eta\in \mathscr{C}_n$ define 
\[C_\eta := \{w\in \mathcal{A}_n^\N :(\eta,w)\text{ satisfies (R1) and (R2)}\}.\]
Define also
\[\Delta_n:=\{(\eta,w)\in \mathscr{C}_n\times \mathcal{A}_n^\N:w\in C_\eta\}=\{(\eta,w)\in \mathscr{C}_n\times \mathcal{A}_n^\N: \eta\in D^w\}.\]

\begin{proof}[{Proof of Proposition \ref{prop:randomchoice}}]
Using the notation above, we claim that 
\begin{equation}\label{eq:randomchoice}
 \mu_n(D^w)=1, \quad \text{for $\nu_n$-almost every word $w\in \mathcal{A}_n^\N$.}  
\end{equation}
Assuming \eqref{eq:randomchoice}, by Fubini's theorem we obtain
\[\int_{\mathscr{C}_n} \nu_n(C_\eta)d\mu_n(\eta)=
(\mu_n\times\nu_n)(\Delta_n) = 
\int_{\mathcal{A}_n^\N}\mu_n(D^w)d\nu_n(w) =1.\]
Therefore, $\nu_n(C_\eta)=1$ for $\mu_n$-a.e. $\eta \in \mathscr{C}_n$ which implies that $\mu_n$-a.e. $\eta \in \mathscr{C}_n$ is random.

For the proof of \eqref{eq:randomchoice}, recall that
\[ \nu_n(\{i_1i_2\cdots \in \mathcal{A}_n^\N :i_j>4n-4\text{ infinitely often}\})=1,\] 
so it suffices to show \eqref{eq:randomchoice} for every word $w$ in the above set. 

Fix $w=i_1i_2\cdots\in \mathcal{A}_n^\N$ such that $i_j>4n-4$ infinitely often. Fix also integers $N\geq 1$ and $k\geq 0$. We construct a sequence $(M_m)_{m\in\N}\subset \N$ in an inductive fashion. Let $M_1\in \N$ be such that $i_{M_1+1}>4n-4$. Assuming that we have defined $M_m$ for some $m\in\N$, let $M_{m+1} \in \N$ such that $M_{m+1} > M_m + 2N+k+1$ and $i_{M_{m+1}+1} >4n-4$.

For each $m\in\N$ let $E_{m,w}$ be the set of all choice functions $\eta\in\mathscr{C}_n$ such that $(\eta,w,N,k)$ satisfies (R1) and (R2) with $\ell=M_m+N$. Since $i_{M_m+1}>4n-4$ for each $m\in\N$, (R2) is satisfied for $\ell=M_m+N$. For each $m\in\N$, let 
\[B_m=\bigcup_{j=0}^{2N+k-1}\mathcal{A}_{n,w(M_m)}^{M_m+j},\] 
that is, the set of of all finite words $u$ starting with $w(M_m)$ and having length at most $M_m+2N+k-1$. Define also for each $m\in\N$ the function $\xi_m:B_m\to \{1,2\}$ via 
\[ \xi_m \left|  \bigcup_{j=0}^{k-1}\mathcal{A}_{n,w(M_m)}^{M_m+j}=1 \right. \qquad\text{and}\qquad \xi_m\left| B_m\setminus \bigcup_{j=0}^{k-1}\mathcal{A}_{n,w(M_m)}^{M_m+j}=2. \right.\] 
Then $E_{m,w}=\{\eta\in \mathscr{C}_n:\eta|_{B_m}=\xi_m\}$ and $|B_m|=\tfrac{(5n-6)^{2N+k}-1}{5n-7}$ for every $m\in\N$. By uniformity of the measure $\mu_n$, we have that for every $m\in\N$, $\mu_n(E_{m,w})=2^{-|B_m|}$, in particular each set $E_{m,w}$ has the same measure independent of $m$, and that measure is positive. 

Next we claim that the sets $(E_{m,w})_{m\in\N}$ are independent. To see this, let $\{E_{m_1,w},\cdots,E_{m_j,w}\}$ be a finite collection of the sets $(E_{m,w})_{m\in\N}$, and note that
\[\bigcap\limits_{i=1}^j E_{m_i,w}=\{\eta\in\mathscr{C}_n:\eta|_{B_{m_i}}=\xi_{m_i}\text{ for }i\in\{1,\cdots,j\}\}.\]
Since the sets $\{B_m\}_{m\in\N}$ are pairwise disjoint as $M_{m+1}-M_m>2N+k+1$ for each $m\in\N$, we have that $|\bigcup_{i=1}^j B_{m_i}|=\sum_{i=1}^j|B_{m_i}|$ and $\mu_n(\bigcap_{i=1}^j E_{m_i,w})=2^{-|\bigcup_{i=1}^j B_{m_i}|}$. Thus, 
\[ \mu_n(\bigcap_{i=1}^j E_{m_i,w})=\prod_{i=1}^j\mu_n(E_{m_i,w}),\] hence the sets $(E_{m,w})_{m\in\N}$ are independent. 

Furthermore, each set $E_{m,w}$ has the same positive measure, therefore 
\[ \sum_{m=1}^\infty \mu_n(E_{m,w})=\infty.\] 
Thus, by the second Borel-Cantelli Lemma, 
\[ \mu_n(\bigcap_{j=1}^\infty\bigcup_{m=j}^\infty E_{m,w})=1,\] 
or equivalently, for $\mu_n$-almost every choice function $\eta\in\mathscr{C}_n$, $(\eta,w,N,k)$ satisfies (R1) and (R2) infinitely often. In particular, for every pair of integers $N\geq 1,k\geq 0$ we have that for $\mu_n$-almost every $\eta\in\mathscr{C}_n$, $(\eta,w,N,k)$ satisfies (R1) and (R2). Since there are countably many choices of $N$ and $k$, we have immediately that $\mu_n(D^w)=1$ for $\nu_n$-almost every word $w\in\mathcal{A}^\N_n$.
\end{proof}

\section{Tangents of carpets $K^{\eta}$ at typical points}\label{sec:tangents}

Fix for the rest of this section an even integer $n\geq 4$. Recall the definitions of choice functions $\eta\in\mathscr{C}_n$ and carpets $K^{\eta}$ from Section \ref{sec:IFS}, the number $\a_n = \frac{\log(5n-6)}{\log(n)}$ from Section \ref{sec:measure}, and the definition of random choice functions from Section \ref{sec:choice}.

In this section we prove the following result about tangents of carpets $K^{\eta}$ at typical points when $\eta$ is random.

\begin{proposition}\label{prop:tangents}
If $\eta\in \mathscr{C}_n$ is a random choice function, then for $\mathcal{H}^{\alpha_n}$-a.e. $x\in K^\eta$ there exist $T_{n,0},T_{n,1},T_{n,2},\dots\in \tang(K^\eta,x)$ such that $T_{n,k}$ has exactly $\frac1{5n-7}((5n-6)^k-1)$ many cut points.
\end{proposition}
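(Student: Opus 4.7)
The plan is to apply Corollary~\ref{thm:pointchoice} to restrict attention to those $x\in K^\eta$ for which $\pi_\eta^{-1}(\{x\})=\{w\}$ is a singleton and $(\eta,w)$ satisfies (R1)--(R2); this is a full-measure subset. Fix such an $x$ and $k\geq 0$. For each integer $N\geq 1$, the randomness condition provides $\ell_N\geq N$ (so $\ell_N\to\infty$) for which (R1)--(R2) hold with parameters $(w,N,k)$ at $\ell=\ell_N$. The tangent $T_{n,k}$ will be produced as an Attouch--Wets subsequential limit of the blow-ups $n^{\ell_N}(K^\eta-x)$ as $N\to\infty$, extracted via sequential compactness of $\mathfrak{C}(\R^2;{\bf 0})$.

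The point of (R1)--(R2) is that, in coordinates obtained by centering at $x$ and dilating by $n^{\ell_N}$, the blow-up is pinned down combinatorially. The image of the large box $\phi^\eta_{w(\ell_N-N)}(\sq)$ is an $n^N$-sized box containing ${\bf 0}$, the image of the small box $\phi^\eta_{w(\ell_N)}(\sq)$ is a unit central box containing ${\bf 0}$, and (R1) forces every IFS subdivision inside the $n^N$-box at scales between $n^{N}$ and $n^{-(N+k-1)}$ to use Model~2, \emph{except} for the $k$ nested subdivisions inside the central unit box at scales $1, n^{-1}, \ldots, n^{-(k-1)}$, which use Model~1. The structure below scale $n^{-(N+k-1)}$ is uncontrolled but confined to sub-squares of diameter $\leq n^{-(N+k-1)}\to 0$, so it cannot affect the Attouch--Wets limit on any fixed ball. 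Using Lemma~\ref{lem:excauchy} and a diagonal argument to stabilize the picture on each $\overline{B}({\bf 0},R)$, a subsequence of the blow-ups converges to $T_{n,k}\in\tang(K^\eta,x)$ with the following explicit structure: outside the central unit box, $T_{n,k}$ is an infinite inverse-iterate of the Model~2 attractor extending through all of $\R^2$; inside the central unit box, it consists of $k$ nested Model~1 structures, with each of the $(5n-6)^k$ innermost sub-tiles of side $n^{-k}$ filled by a rescaled copy of the infinitely-iterated Model~2 attractor.

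The cut-point count for $T_{n,k}$ then follows by topological accounting. Because Model~2 is designed so that its attractor has no local cut-points (a Sierpi\'nski-carpet-like continuum, where every point has sufficiently connected neighborhoods), neither the Model~2 far-field nor the innermost Model~2 sub-tiles of $T_{n,k}$ contribute any cut-points. On the other hand, each application of Model~1 inside the central unit box introduces one ``neck'' cut-point per sub-tile of the preceding level, yielding exactly $1+(5n-6)+(5n-6)^2+\cdots+(5n-6)^{k-1}=\frac{(5n-6)^k-1}{5n-7}$ cut-points from the $k$ nested Model~1 layers. The main obstacle will be to verify rigorously that each Model~1 neck remains a genuine cut-point of $T_{n,k}$ once the surrounding Model~2 material is glued along the boundary of the Model~1 box, i.e.\ that no path in the enveloping Model~2 structure bypasses a Model~1 neck. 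This reduces to a combinatorial study of the way Model~1 and Model~2 tiles meet along their common boundaries in $\R^2$, combined with the local stability of cut-points under Attouch--Wets convergence (Lemma~\ref{lem:cuthom}), to guarantee that the counted cut-points are neither created nor destroyed in the limit.
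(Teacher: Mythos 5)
Your overall architecture matches the paper's: restrict to the full-measure set from Corollary~\ref{thm:pointchoice}, blow up at scales $n^{\ell_N}$ dictated by (R1)--(R2), extract an Attouch--Wets subsequential limit, and account for the local cut-points by counting $1+(5n-6)+\cdots+(5n-6)^{k-1}$ Model~1 necks with the Model~2 material contributing none (this is Lemma~\ref{lem:cut}, and your description of where the count comes from is accurate). The key gap, however, is in the final step you flag as ``the main obstacle.'' You appeal to ``local stability of cut-points under Attouch--Wets convergence (Lemma~\ref{lem:cuthom})'' to transfer the cut-point count to the limit, but Lemma~\ref{lem:cuthom} is about homeomorphisms, not about limits of sets, and local cut-points are genuinely \emph{not} stable under Attouch--Wets convergence: a sequence of sets, each with finitely many local cut-points, can converge to a set with none, and vice versa. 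So the step as written does not close.

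The mechanism the paper actually uses is stronger than convergence: it shows the comparison sets eventually \emph{equal} the limit on every fixed ball. Concretely, one replaces the blow-ups $X_N=n^{\ell_N}(K^\eta-x)$ by idealized sets $Y_N$ built from translates of $K^{n,0}$ on an integer lattice together with one translate of $K^{n,k}$ (Lemmas~\ref{lem:lowexc} and~\ref{lem:samelim} quantify how close $X_N$ and $Y_N$ are). Because the lattice is $\Z^2$, any change in which lattice cells appear shifts the excess by at least $n^{-1}$, so Lemma~\ref{lem:excauchy} forces the lattice pattern $D_{r,j}$ to stabilize, after which $(Y_{N_j}+y_{N_j})\cap\overline{B}({\bf 0},r)=L\cap\overline{B}({\bf 0},r)$ exactly (Lemma~\ref{lem:tanY}). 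Only then is the cut-point count read off the limit directly via Lemma~\ref{lem:cut}, Corollary~\ref{cor:cutedge}, and Remark~\ref{rem:edges}; Lemma~\ref{lem:exacttan} then transfers the conclusion from $Y_{N_j}$ back to $X_{N_j}$. Your proposal gestures at ``stabilize the picture on each $\overline{B}({\bf 0},R)$'' but does not make it the load-bearing step of the cut-point argument, and without that the count does not survive the limit. You also still owe the combinatorial gluing lemmas (Lemma~\ref{lem:claim1}, Lemma~\ref{lem:claim2}, Corollary~\ref{cor:cutedge}), which you correctly identify as necessary but do not carry out.
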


In \textsection\ref{sec:K^n,k} we study the local cut-points of a certain class of ``almost self-similar'' carpets and in \textsection\ref{sec:tangentsofK^eta}, we relate the tangents of $K^{\eta}$ with these carpets.

\subsection{Local cut-points in a class of carpets}\label{sec:K^n,k} 

Define, for an integer $k\geq 0$, the set 
\begin{equation}\label{eq:setK}
K^{n,k}:=\bigcup\limits_{w\in \mathcal{A}_n^k}\psi_{n,w}^1\left(\bigcap\limits_{m=1}^\infty\bigcup\limits_{v\in \mathcal{A}_n^m} \psi_{n,v}^2(\sq)\right)
\end{equation}
where maps $\psi_{n,v}^i$ and $\phi_v^\eta$ for $v\in\mathcal{A}_n^*$ and $i\in\{1,2\}$ are as in Section \ref{sec:IFS}.

Roughly speaking, $K^{n,k}$ is obtained by first iterating $k$ times the maps from the system $\mathcal{F}_n^1$, and then applying these iterates to the self-similar attractor of the IFS of similarities $\mathcal{F}_n^2$ (recall the systems $\mathcal{F}_n^1,\mathcal{F}_n^2$ from Section \ref{sec:IFS}).

The next lemma is the main result of this subsection.

\begin{lemma}\label{lem:cut}
For each integer $k\geq 0$, $K^{n,k}$ has exactly $\frac{1}{5n-7}((5n-6)^k-1)$ many local cut-points.
\end{lemma}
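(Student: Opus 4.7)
The plan is to induct on $k$, proving the stronger statement that $K^{n,k}$ has exactly $L_k := \tfrac{(5n-6)^k-1}{5n-7}$ local cut-points and that each of them lies in the open square $(0,1)^2$. Since $L_k$ satisfies the recursion $L_k = (5n-6)L_{k-1}+1$ with $L_0=0$, the induction reduces to (i) a base case verifying that the self-similar attractor of Model 2 has no local cut-points, and (ii) an inductive step showing that the decomposition
\[ K^{n,k}=\bigcup_{j\in\mathcal{A}_n}\psi^1_{n,j}(K^{n,k-1}) \]
produces $(5n-6)L_{k-1}$ local cut-points inherited from the sub-pieces plus exactly one ``new'' local cut-point coming from the Model 1 arrangement.

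For the base case $k=0$, I would use that Model 2 places its $n-2$ inner cells in a single row adjacent to the perimeter, producing a 2-edge-connected adjacency pattern at each scale. Fixing $x\in K^{n,0}$ and $r>0$, one chooses $m$ large with $n^{-m}\ll r$ and examines the finite collection of cells $\psi^2_{n,w}(\sq)$, $w\in\mathcal{A}_n^m$, meeting $\overline{B}(x,r)$. Direct inspection of Model 2's first iterate shows that after removing any single point of $K^{n,0}$, one can still trace a connected path through the remaining cell perimeters within the ball; hence no point of $K^{n,0}$ is a local cut-point.

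For the inductive step, I would first transport the cut-points of $K^{n,k-1}$ via each $\psi^1_{n,j}$. By Lemma \ref{lem:cuthom}, each image $\psi^1_{n,j}(K^{n,k-1})$ has exactly $L_{k-1}$ local cut-points, and these lie in $\psi^1_{n,j}((0,1)^2)$ by the strengthened induction hypothesis. Lemma \ref{lem:OSC} guarantees that the open cells $\psi^1_{n,j}((0,1)^2)$ are pairwise disjoint and contained in $(0,1)^2$, so for any such point $x$ there is a ball $B(x,\rho)$ whose intersection with $K^{n,k}$ lies entirely inside $\psi^1_{n,j}(K^{n,k-1})$. Hence $x$ remains a local cut-point of $K^{n,k}$, and the $5n-6$ sub-pieces contribute $(5n-6)L_{k-1}$ distinct cut-points, all in $(0,1)^2$.

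To account for the additional ``$+1$'', I would identify the \emph{pinch point} $p^*\in(0,1)^2$ of Model 1's first iterate: the unique point lying on the shared boundary between the second-row and third-row inner cells where $\bigcup_j\psi^1_{n,j}(\sq)$ becomes locally disconnected upon removal of $p^*$. Since each sub-piece $\psi^1_{n,j}(K^{n,k-1})$ contains the entire perimeter $\psi^1_{n,j}(\partial\sq)$ of its cell, the local topology of $K^{n,k}$ at $p^*$ agrees with that of $\bigcup_j \psi^1_{n,j}(\sq)$ in a small neighborhood, so $p^*$ is a new local cut-point of $K^{n,k}$ lying in $(0,1)^2$. The principal obstacle is to verify that \emph{no other} shared boundary point produces a new local cut-point: every other cell adjacency in Model 1 lies on the perimeter loop of $\sq$, where neighboring cells on both sides furnish alternative connecting arcs, so removing a single point there does not disconnect any small neighborhood. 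A finite case check on the adjacency graph of $\bigcup_j\psi^1_{n,j}(\sq)$ completes this verification, and the total count $(5n-6)L_{k-1}+1=L_k$ closes the induction.
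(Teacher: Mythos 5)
Your proposal is correct and follows essentially the same route as the paper: an induction based on the recursion $L_k = (5n-6)L_{k-1}+1$, with the base case being that $K^{n,0}$ has no local cut-points (the paper's Lemma \ref{lem:cutzero}), the inductive step using the decomposition $K^{n,k}=\bigcup_{j\in\mathcal{A}_n}\psi^1_{n,j}(K^{n,k-1})$, the identification of the single new cut-point at the Model-1 pinch $(1/2,2/n)$, and a finite check on cell adjacencies to rule out other cut-points (the paper packages this as Remark \ref{rem:edges} together with Corollary \ref{cor:cutedge}).

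One small improvement in your write-up worth noting: you carry the extra clause ``all local cut-points lie in $(0,1)^2$'' explicitly through the induction, which is needed to conclude that each $\psi^1_{n,j}$ transports cut-points into pairwise-disjoint open cells and that they survive as cut-points of $K^{n,k}$; the paper uses this fact but leaves it implicit. Two points where your outline is a bit loose and would need the paper's machinery to make rigorous: (a) the base case claim that one can ``trace a connected path through the remaining cell perimeters'' needs the fact that distinct cells in $K^{n,0}$ always meet along edges rather than only at corners — this is precisely Lemma \ref{lem:claim2} — together with the cited theorem from Whyburn-style theory (the paper invokes a result of DLRW) to pass from ``no cut-points'' to ``no local cut-points''; and (b) your phrase ``the local topology of $K^{n,k}$ at $p^*$ agrees with that of $\bigcup_j\psi^1_{n,j}(\sq)$'' should be softened to saying that near $p^*$ the set $K^{n,k}$ is contained in exactly two sub-cells touching only at $p^*$ and contains their perimeters, so removing $p^*$ disconnects a small neighborhood. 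Neither of these is a gap in the strategy, just places where a full proof must invoke the same supporting lemmas the paper does.
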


The proof of Lemma \ref{lem:cut} is by induction on $k$. The base case $k=0$ is given in Lemma \ref{lem:cutzero}. In the proof, we make use of the following definition.

Let $\{f_j(x)=Lx + b_j\}_{j=1}^l$ with $b_j\in\R^2$ be similarities on $\R^2$ where $L>0$ 
and $l\geq 2$ such that 
\begin{enumerate}
\item if $i,j\in\{1,\dots,l\}$ are distinct, then, $f_{i}((0,1)^2)\cap f_{j}((0,1)^2)=\emptyset$, and
\item $\bigcap_{i=1}^l f_i(\sq)\neq\emptyset$.
\end{enumerate}
We say that $f_1,\dots,f_l$ \emph{meet at edges of $\sq$} if for every $i\in\{1,\dots,l\}$, there exists $j\in\{1,\dots,l\}\setminus\{i\}$ such that $f_{i}(\sq)\cap f_{j}(\sq)$ is an edge of $f_{i}(\sq)$. Note that necessarily $l\in\{2,3,4\}$.

\begin{rem}\label{rem:edges}
By Lemma \ref{lem:fact} and by definition of $K^{n,k}$, we have that for any $k\in\N$,
\begin{align*}
\bigcup_{j\in\mathcal{A}_n}\psi_{n,j}^2(\partial\sq)\subset &K^{n,0} \subset \bigcup_{j\in\mathcal{A}_n}\psi_{n,j}^2(\sq)\\ 
\bigcup_{j\in\mathcal{A}_n}\psi_{n,j}^1(\partial\sq)\subset &K^{n,k}\subset \bigcup_{j\in\mathcal{A}_n}\psi_{n,j}^1(\sq).
\end{align*}
Note also that if there exist distinct $i,j\in\mathcal{A}_n$ and $x\in\sq$ with $x\in\psi_{n,j}^2(\sq)\cap\psi_{n,i}^2(\sq)$, then the maps in $\{\psi_{n,l}^2:x\in\psi_{n,l}^2(\sq),l\in\mathcal{A}_n\}$ meet at edges of $\sq$. Note further that if there exist distinct $i,j\in\mathcal{A}_n$ and $x\in\sq$ with $x\in\psi_{n,j}^2(\sq)\cap\psi_{n,i}^2(\sq)$, then either $x=(1/2,2/n)$ or the functions in the set $\{\psi_{n,l}^1:x\in\psi_{n,l}^1(\sq),l\in\mathcal{A}_n\}$ meet at edges of $\sq$. See for example Figure \ref{fig:1} for the case $n=6$. 
\end{rem}

\begin{lemma}\label{lem:claim1}
Let $\eta\in\mathscr{C}_n$, let $m\in\N$, let $u,v,w\in\mathcal{A}_n^m$ be distinct, and let
\[x\in\phi_u^\eta(\sq)\cap\phi_v^\eta(\sq)\cap\phi_w^\eta(\sq).\]
Then the maps $\{\phi_\beta^\eta:x\in\phi_\beta^\eta(\sq),\beta\in\mathcal{A}_n^m\}$ meet at edges of $\sq$.
\end{lemma}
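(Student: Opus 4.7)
The plan is to reduce the claim to a purely combinatorial fact about axis-aligned squares of equal size meeting at a common point on a grid, bypassing any need for induction on $m$ or for a direct appeal to Remark \ref{rem:edges} beyond its first-level geometric content.

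First I would verify a lattice property: every cell $\phi_\beta^\eta(\sq)$ with $\beta\in\mathcal{A}_n^m$ is an axis-aligned closed square of side length $n^{-m}$ whose four corners lie in $n^{-m}\mathbb{Z}^2$. Each level-1 map $\psi_{n,j}^i$ has the form $x\mapsto \tfrac{1}{n}x+t$ with $t\in\tfrac{1}{n}\mathbb{Z}^2$; the $\tfrac{1}{2}$ appearing in some translation vectors causes no problem because $n$ is even, so $\tfrac{1}{2}=\tfrac{n/2}{n}\in\tfrac{1}{n}\mathbb{Z}$. A short induction on $m$, using that compositions of such maps are scalar contractions by $n^{-m}$ composed with translations in $n^{-m}\mathbb{Z}^2$, gives the lattice property. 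Combining this with Lemma \ref{lem:OSC}, any two distinct level-$m$ cells have disjoint interiors and, being same-sized axis-aligned $n^{-m}$-lattice squares, their closed intersection can only be empty, a single common corner, or a single shared edge.

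Set $\Lambda:=\{\beta\in\mathcal{A}_n^m:x\in\phi_\beta^\eta(\sq)\}$, so $\{u,v,w\}\subset\Lambda$ and $|\Lambda|\geq 3$. I would then show $x$ must be a corner of each $\phi_\beta^\eta(\sq)$ with $\beta\in\Lambda$: if $x$ lay in the relative interior of an edge of some such cell, the only other $n^{-m}$-lattice cell containing $x$ would be the lattice neighbor across that edge, forcing $|\Lambda|\leq 2$, a contradiction. Since at most four cells of the $n^{-m}$-grid meet at any interior lattice point of $\sq$ (and only one or two meet on $\partial\sq$), we conclude $x$ lies in the interior of $\sq$ and $|\Lambda|\in\{3,4\}$.

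The edge-meeting condition is now a direct case analysis around the corner $x$. If $|\Lambda|=4$ all four quadrants around $x$ are occupied and each cell shares a full edge with each of its two cyclic neighbors. If $|\Lambda|=3$ the three cells form an L-shape around $x$: the ``middle'' cell shares an edge with each of the other two, and each ``outer'' cell shares an edge with the middle cell. Either way, for every $\beta\in\Lambda$ there is $\beta'\in\Lambda\setminus\{\beta\}$ with $\phi_\beta^\eta(\sq)\cap\phi_{\beta'}^\eta(\sq)$ an entire edge of $\phi_\beta^\eta(\sq)$, which is exactly the definition of meeting at edges of $\sq$. The main obstacle is the lattice verification, since one must carefully parse the translation vectors defining $\psi_{n,j}^i$ (especially those involving the $\tfrac{1}{2}$ offset) and exploit the evenness of $n$; once lattice alignment is established the geometric enumeration is immediate.
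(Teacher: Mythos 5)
Your argument is correct, and it rests on the same basic observation as the paper's proof: every level-$m$ cell $\phi_\beta^\eta(\sq)$ is an axis-parallel square of side $n^{-m}$ whose corners lie in the lattice $n^{-m}\Z^2$ (and you are right that evenness of $n$ is exactly what makes the $\tfrac12$-offset in some of the $\psi_{n,j}^i$ lattice-compatible; the paper records the lattice fact without comment). From there the two proofs diverge in style. You first make explicit that $x$ must be a common corner of all cells in $\Lambda$, deduce that $x$ is interior to $\sq$ and that $|\Lambda|\in\{3,4\}$, and then finish by an enumeration of the possible configurations (an L-shape or a full cross of four quadrant cells). The paper's proof is shorter and more algebraic: it works directly with the displacement vectors $\phi_a^\eta({\bf 0})-\phi_b^\eta({\bf 0})\in\{-n^{-m},0,n^{-m}\}^2$, supposes a pair $a,b\in E$ meets only at a corner (displacement in $\{-n^{-m},n^{-m}\}^2$), and shows any third $c\in E$ must then share a full edge with both $a$ and $b$, since otherwise $\phi_c^\eta({\bf 0})=\phi_b^\eta({\bf 0})$. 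Both routes are complete and of comparable length once spelled out; yours is perhaps easier to visualize, the paper's avoids the separate step of showing $x$ is a corner and the case split on $|\Lambda|$.
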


\begin{proof}
Recall that for all $\beta\in \mathcal{A}_n^m$, $\phi_\beta^\eta(x)=n^{-m}y+\phi_\beta^\eta({\bf 0})$ with $\phi_\beta^\eta({\bf 0})\in\{0,\frac{1}{n^m},\dots,\frac{n^m-1}{n^m}\}^2$.

Let $E=\{\beta\in\mathcal{A}_n^m:x\in\phi_\beta^\eta(\sq)\}$ which, by assumption, contains at least three words. Clearly the intersection $\bigcap_{\beta\in E}\phi_\beta^\eta(\sq)$ is nonempty as it contains $x$. 

Suppose that $a,b\in E$ are distinct words such that $\phi_a^\eta,\phi_b^\eta$ do not meet at edges of $\sq$. Then  $\phi_a^\eta({\bf 0})-\phi_b^\eta({\bf 0})\in\{-n^{-m},n^{-m}\}^2$. Since $\card(E)\geq 3$, there exists $c\in E\setminus\{a,b\}$ with
\[\phi_a^\eta({\bf 0})-\phi_c^\eta({\bf 0})\in\{-n^{-m},0,n^{-m}\}^2\quad\text{and}\quad \phi_b^\eta({\bf 0})-\phi_c^\eta({\bf 0})\in\{-n^{-m},0,n^{-m}\}^2.\]
If, for example, $\phi_a^\eta({\bf 0})-\phi_c^\eta({\bf 0})\in\{-n^{-m},n^{-m}\}^2$, then 
\[\phi_c^\eta({\bf 0})-\phi_b^\eta({\bf 0})=(\phi_c^\eta({\bf 0})-\phi_a^\eta({\bf 0}))+(\phi_a^\eta({\bf 0})-\phi_b^\eta({\bf 0}))\in\{-2n^{-m},0,2n^{-m}\}^2\cap\{-n^{-m},0,n^{-m}\}^2.\]
But then $\phi_c^\eta({\bf 0})=\phi_b^\eta({\bf 0})$, and this is a contradiction. Hence, $\phi_a^\eta({\bf 0})-\phi_c^\eta({\bf 0})\notin\{-n^{-m},n^{-m}\}^2$, and by a simple calculation we get $\phi_a^\eta(\sq)\cap\phi_c^\eta(\sq)$ is equal to an edge of $\phi_a^\eta(\sq)$ containing $x$, and similarly for $\phi_b^\eta(\sq)\cap \phi_c^\eta(\sq)$. From this we may conclude that the maps in $\{\phi_\beta^\eta:\beta\in E\}$ meet at edges of $\sq$.
\end{proof}

\begin{lemma}\label{lem:claim2}
Let $m\in\N$, let $u,v\in\mathcal{A}_n^m$ be distinct, and let $x\in K^{n,0}$ with $x\in\psi_{n,u}^2(K^{n,0}) \cap \psi_{n,v}^2(K^{n,0})$. Then the maps in $\{\psi_{n,w}^2 :x\in\psi_{n,w}^2(K^{n,0}),w\in\mathcal{A}_n^m\}$ meet at edges of $\sq$.
\end{lemma}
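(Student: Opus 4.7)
The plan is to mimic the strategy of Lemma \ref{lem:claim1}, reducing to the level-$1$ intersection analysis supplied by Remark \ref{rem:edges}. The first step is to replace attractors by squares: I would verify that the set
\[ E := \{w\in\mathcal{A}_n^m : x\in\psi_{n,w}^2(K^{n,0})\} \]
coincides with $\{w\in\mathcal{A}_n^m : x\in\psi_{n,w}^2(\sq)\}$. The inclusion ``$\subset$'' is trivial, and the reverse uses two facts: $\partial\sq\subset K^{n,0}$ (as in the proof of Lemma \ref{lem:fact} with constant choice function $\eta_2\equiv 2$) and that Lemma \ref{lem:OSC} applied to $\eta_2$ forbids any other $\psi_{n,v}^2(\sq)$ from meeting $\psi_{n,w}^2((0,1)^2)$. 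A short case check then shows $\psi_{n,w}^2(\sq)\cap K^{n,0}=\psi_{n,w}^2(K^{n,0})$, and since $x\in K^{n,0}$, the two sets agree. In particular $u,v\in E$, and if $|E|\geq 3$ the conclusion is immediate from Lemma \ref{lem:claim1} applied to $\eta_2$ (noting $\phi_w^{\eta_2}=\psi_{n,w}^2$).

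The substantive case is $|E|=2$, where $E=\{u,v\}$ and one must produce a full shared edge of length $n^{-m}$. Let $k\in\{1,\dots,m\}$ be the smallest index with $u(k)\neq v(k)$, write $u=\sigma iu''$ and $v=\sigma jv''$ with $i\neq j$ and $|\sigma|=k-1$, and set $y:=(\psi_{n,\sigma}^2)^{-1}(x)\in\psi_{n,i}^2(\sq)\cap\psi_{n,j}^2(\sq)$. Remark \ref{rem:edges} says the level-$1$ maps in $E^*:=\{l\in\mathcal{A}_n : y\in\psi_{n,l}^2(\sq)\}$ meet at edges of $\sq$. A brief argument rules out $|E^*|\geq 3$: for each $l\in E^*$ the point $(\psi_{n,\sigma l}^2)^{-1}(x)$ lies in $K^{n,0}=\bigcup_{w'\in\mathcal{A}_n^{m-k}}\psi_{n,w'}^2(K^{n,0})$, yielding some $\sigma lw'\in E$, and distinct $l$'s give distinct elements of $E$, forcing $|E|\geq|E^*|\geq 3$, a contradiction. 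Hence $E^*=\{i,j\}$, so $\psi_{n,\sigma i}^2(\sq)\cap\psi_{n,\sigma j}^2(\sq)$ is a full edge of each level-$k$ parent.

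The last step propagates this shared edge from level $k$ down to level $m$. Because $\mathcal{F}_n^2$ has a complete row of sub-squares on every side of $\sq$, an inductive unwinding shows that at every subsequent iteration the sub-squares of $\psi_{n,\sigma i}^2(\sq)$ along its right edge and those of $\psi_{n,\sigma j}^2(\sq)$ along its left edge remain in bijective correspondence, each pair sharing a full edge. Consequently $\psi_{n,u}^2(\sq)$ admits a unique partner $\psi_{n,u^*}^2(\sq)\subset\psi_{n,\sigma j}^2(\sq)$ sharing an edge with it. If $v\neq u^*$, then $x\in\psi_{n,u}^2(\sq)\cap\psi_{n,v}^2(\sq)$ would force $x$ to sit at a corner of $\psi_{n,u^*}^2(\sq)$, placing $u^*$ in $E$ alongside $u$ and $v$ and contradicting $|E|=2$. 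Therefore $v=u^*$ and the two squares share a full edge, completing the proof. The main obstacle I expect is this final alignment step: the conclusion relies crucially on the fact that each boundary row and column of $\mathcal{F}_n^2$ is completely filled, so that the matching of sub-squares across a shared edge is preserved at every scale, ruling out a ``corner-only'' intersection at level $m$.
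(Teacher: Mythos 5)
Your proposal is correct, but it is organized quite differently from the paper's proof. The paper argues by induction on $m$: for the inductive step at length $m+1$ with $E=\{u,v\}$, if $a_1=b_1$ it applies $(\psi_{n,a_1}^2)^{-1}$ and invokes the claim at $m$; if $a_1\neq b_1$ it separates the case where $x$ is not a vertex of $\psi_{n,u}^2(\sq)$ (handled by Lemma \ref{lem:capface}) from the vertex case, where a direct coordinate computation produces a third word $v(m)j\in E$ and hence a contradiction. Your proof replaces the induction with a single-pass argument: pull all the way back to the level $k$ of first disagreement, show $E^*=\{i,j\}$ so that Remark \ref{rem:edges} yields a full shared edge $e$ at level $k$, then propagate that alignment from level $k$ down to level $m$ via the complete boundary row of $\mathcal{F}_n^2$, concluding either that $\psi_{n,u}^2(\sq)$ and $\psi_{n,v}^2(\sq)$ share a full side or that a third element $u^*$ of $E$ exists. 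Both routes hinge on the same two mechanisms — reduction to $|E|=2$ via Lemma \ref{lem:claim1} and the ``exhibit a third element of $E$'' contradiction — but yours avoids Lemma \ref{lem:capface} entirely, trading the paper's one-level-at-a-time inductive bookkeeping for an explicit multi-scale alignment lemma along $e$. The paper's version is more streamlined because that alignment is absorbed automatically into the inductive hypothesis; your version is more self-contained and arguably more transparent about why the filled boundary rows are essential, though as you note yourself, the ``inductive unwinding'' that shows $\psi_{n,u}^2(\sq)$ is actually along $e$ (rather than touching it at a corner only) and that opposite boundary rows stay in bijection at every scale is a genuine lemma that would need to be written out; it is true, and your identification of the complete boundary row as the crucial ingredient is exactly right.
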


\begin{proof}
The proof is by induction on $m$. The base case $m=1$ holds by Remark \ref{rem:edges}. 

Assume now that the claim holds for some $m\in\N$. Let $x\in K^{n,0}$ and let $u,v\in\mathcal{A}_n^{m+1}$ be distinct words such that $x\in\psi_{n,u}^2(K^{n,0})\cap \psi_{n,v}^2(K^{n,0})$. By Lemma \ref{lem:claim1}, we may assume that $\{w\in\mathcal{A}_n^{m+1}:x\in\psi_{n,w}^2(K^{n,0})\}=\{u,v\}$. We prove that $\psi_{n,u}^2(K^{n,0})\cap \psi_{n,w}^2(K^{n,0})$ is an edge of $\psi_{n,u}^2(\sq)$. 

Let $u=a_1\dots a_{m+1}$, $v=b_1\dots b_{m+1}$. If $a_1=b_1$, then applying $(\psi_{n,a_1}^2)^{-1}$ returns us to the claim at $m$, hence by the inductive hypothesis we may assume that $a_1\neq b_1$. If $x$ is not a vertex of $\psi_{n,u}^2(\sq)$, then the claim holds by Lemma \ref{lem:capface}. Thus we may assume that $x$ is a vertex of $\psi_{n,u}^2(\sq)$ and of $\psi_{n,v}^2(\sq)$.  If there exists a word $w\in\mathcal{A}_n^m\setminus\{u(m),v(m)\}$ for which $x\in\psi_{n,w}^2(K^{n,0})$, then there exists a $j\in\mathcal{A}_n$ with $x\in\psi_{n,u}^2(K^{n,0})\cap \psi_{n,v}^2(K^{n,0})\cap\psi_{n,wj}^2(K^{n,0})$, and this is a contradiction. 

Therefore, we may further assume that $\{w\in\mathcal{A}_n^m:x\in\psi_{n,w}^2(K^{n,0})\}=\{v(m),u(m)\}$. Then by the inductive hypothesis, $\psi_{n,u(m)}^2(K^{n,0})\cap\psi_{n,v(m)}^2(K^{n,0})$ is an edge of $\psi_{n,u(m)}^2(\sq)$. The remainder of the proof is a case study on which edge of $\psi_{n,u(m)}^2(\sq)$ this intersection is equal to, but these cases are all essentially identical so we show only one of them and leave the remaining three to the reader. Assume that
\[\psi_{n,u(m)}^2(K^{n,0})\cap \psi_{n,v(m)}^2(K^{n,0})=\psi_{n,u(m)}^2([0,1]\times\{0\}),\]
and as the maps $\psi_{n,u(m)}^2$ and $\psi_{n,v(m)}^2$ are both rotation-free and reflection-free similarity maps with Lipschitz norms $n^{-m}$, by Remark \ref{rem:affine} this intersection must also be equal to the edge $\psi_{n,v(m)}^2([0,1]\times\{1\})$. Then 
\[ \psi_{n,u(m)}^2({\bf 0})-\psi_{n,v(m)}^2({\bf 0})=(0,n^{-m}), \quad \psi_{n,a_{m+1}}^2({\bf 0})=(\tfrac{k}{n},0), \quad \psi_{n,b_{m+1}}^2({\bf 0})=(\tfrac{k\pm 1}{n},1)\] 
with $k,k\pm 1\in\{0,1,\dots,n-1\}$. Then there is some $j\in\mathcal{A}_n$ such that $\psi_{n,j}^2({\bf 0})=(\frac{k}{n},1)$, thus by a straightforward calculation we obtain $x\in\psi_{n,v(m)j}^2(K^{n,0})$. As $a_1\neq b_1$ and necessarily $j\neq b_{m+1}$, $v(m)j\in\mathcal{A}_n^{m+1}\setminus\{v,u\}$, and this contradicts our assumption that $\{w\in\mathcal{A}_n^{m+1}:x\in\psi_{n,w}^2(K^{n,0})\}=\{u,v\}$, concluding the proof.
\end{proof}

The next lemma is the base case for the proof of Lemma \ref{lem:cut}.

\begin{lemma}\label{lem:cutzero}
The set $K^{n,0}$ does not contain local cut-points.
\end{lemma}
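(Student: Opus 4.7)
The plan is to argue by contradiction. Suppose some $x \in K^{n,0}$ is a local cut-point, so that there exists $r > 0$ with $C := C_{K^{n,0}}(x,r)\setminus\{x\} = A \sqcup B$ a disconnection into two nonempty relatively closed sets. Since $C_{K^{n,0}}(x,r)$ is connected, both $A$ and $B$ must accumulate at $x$: otherwise one of $A$, $B \cup \{x\}$ would be clopen in $C_{K^{n,0}}(x,r)$, contradicting its connectedness.

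For each $m \in \N$, set $W_m := \{w \in \mathcal{A}_n^m : x \in \psi_{n,w}^2(\sq)\}$ and $U_m := \bigcup_{w \in W_m} \psi_{n,w}^2(K^{n,0})$. Each $U_m$ is a connected subset of $K^{n,0}$ containing $x$ with diameter at most $2\sqrt{2}n^{-m}$. For $m$ large, $U_m \subset \overline{B}(x,r) \cap K^{n,0} \subset C_{K^{n,0}}(x,r)$, and one can choose $\delta > 0$ so that $B(x,\delta) \cap K^{n,0} \subset U_m$ (cells at level $m$ not containing $x$ are at positive distance from $x$); in particular $U_m \setminus \{x\}$ meets both $A$ and $B$. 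It therefore suffices to prove that $U_m \setminus \{x\}$ is connected, as this would contradict the separation $A \sqcup B$.

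By Lemma \ref{lem:claim1} the cells in $W_m$ are either a single cell or meet at edges of $\sq$ in a configuration (a 2-cell strip, an L-tromino, or a $2\times 2$ block) whose edge-adjacency graph is connected; any two edge-adjacent cells share a full segment of length $n^{-m}$ lying in $K^{n,0}$, and removing the single point $x$ from such a segment leaves infinitely many common points. Hence, provided each single $\psi_{n,w}^2(K^{n,0}) \setminus \{x\}$ is connected, the union $U_m \setminus \{x\}$ is connected via these shared-edge bridges. By self-similarity, this reduces to showing that $K^{n,0} \setminus \{y\}$ is connected for every $y \in K^{n,0}$. I would prove this by unfolding one level of the IFS: the adjacency graph on $\mathcal{A}_n$ for Model 2 --- consisting of the $4n-4$ boundary cells forming a closed cycle together with the $n-2$ middle-row cells forming a chord whose two endpoints attach to the leftmost and rightmost boundary columns --- is 2-vertex-connected, so after deleting the (at most four) cells containing $y$, the remainder is still connected through shared edges. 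The removed cells punctured at $y$ reattach via those same shared edges (each a segment containing many points other than $y$), and their internal connectedness follows from the same argument applied recursively, the recursion being well-founded since the diameters of cells containing $y$ contract by the factor $n^{-1}$ at each step.

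The main obstacle will be the case when $y$ lies on the exposed top edge of a middle-row cell at every scale (the one direction in Model 2 along which no neighboring cell sits), where no cell-boundary loop around $y$ avoids $y$ and the naive detour fails. To handle this, I would use an explicit detour one level deeper: inside the middle-row cell containing $y$, the top-row subcells of Model 2 have $K^{n,0}$-boundaries whose three non-top sides form a rectangular loop around $y$ in an arbitrarily small punctured neighborhood; chaining such detours across scales rescues the recursion and completes the contradiction.
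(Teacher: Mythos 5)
Your proposal takes a genuinely different route from the paper. The paper proves only that $K^{n,0}$ has no (global) cut-points --- via a chain of cells obtained from Lemma \ref{lem:claim2} together with \cite[Lemma 3.1]{BV2} and \cite[Proposition 3.2]{BV2} --- and then passes from ``no cut-points'' to ``no local cut-points'' by invoking the external result \cite[Theorem 1.6]{DLRW}, whose hypotheses are supplied by Lemmas \ref{lem:claim1} and \ref{lem:claim2}. You instead argue directly against local cut-points with a zooming/skeleton argument, reducing both the local statement and its recursive subproblems to the connectivity of $K^{n,0}\setminus\{y\}$. The structural reduction in your first two paragraphs (that $A$, $B$ must accumulate at $x$, that $U_m\setminus\{x\}$ meets both, and that connectivity of $U_m\setminus\{x\}$ gives a contradiction) is sound, provided you cite Lemma \ref{lem:claim2} rather than (or in addition to) Lemma \ref{lem:claim1}, since the latter only covers the case of three or more cells through $x$. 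What your approach buys, if completed, is a self-contained proof of the lemma that avoids the outside theorem of \cite{DLRW}; what it costs is that the bookkeeping which \cite{DLRW} hides must now be done by hand.

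There is a genuine gap in how you justify the key claim that $K^{n,0}\setminus\{y\}$ is connected. First, your phrase ``2-vertex-connected, so after deleting the (at most four) cells containing $y$, the remainder is still connected'' does not follow: 2-vertex-connectivity only controls deletion of one vertex. You in fact delete up to four, and the conclusion has to be checked against the finitely many admissible configurations (single cell, edge-adjacent pair, L-tromino, and the two $2\times 2$ blocks at the lower corners); the check succeeds, but it is a case analysis, not a consequence of 2-connectivity. Second and more seriously, the recursion you propose for the punctured cells is circular as stated: the connectivity of $\psi_j^2(K^{n,0})\setminus\{y\}$ is literally the statement $K^{n,0}\setminus\{\psi_j^{-1}(y)\}$ connected, which is the same claim for a new point. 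Saying the recursion is ``well-founded since diameters contract'' does not give a base case. The argument can be rescued, but not by your detour construction: rather, one should observe that if $K^{n,0}\setminus\{y\}=A\sqcup B$ with the skeleton (the union of all level-$m$ cells whose $K^{n,0}$-image does not contain $y$) landing entirely in $A$, then $B$ is trapped inside $\bigcup_{w\in\mathcal{A}_n^m,\,y\in\psi^2_{n,w}(K^{n,0})}\psi^2_{n,w}(K^{n,0})$, a set whose diameter tends to $0$ as $m\to\infty$; hence $B\subset\{y\}$, which is empty. That argument still needs the skeleton at every level $m$ to be connected, which again does not follow immediately from the level-one graph analysis and requires a chain-of-cells argument of the kind the paper carries out via \cite[Lemma 3.1]{BV2}. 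So the idea is salvageable, but the proposal as written does not close the circle.
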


\begin{proof}
It suffices to show that $K^{n,0}$ contains no cut-points. Assuming this to be true, the proof of the lemma follows from \cite[Theorem 1.6]{DLRW}, as Lemma \ref{lem:claim1} and Lemma \ref{lem:claim2} alongside the fact that $K^{n,0}$ is a self-similar set with no cut-points are sufficient to imply the hypotheses of this theorem.

To prove that $K^{n,0}$ contains no cut-points, fix $y\in K^{n,0}$ and distinct $p,q\in K^{n,0}\setminus\{y\}$. Let $m\in\N$ such that there exist distinct $u,v,w\in\mathcal{A}_n^m$ for which $\psi_{n,u}^2(K^{n,0})$, $\psi_{n,v}^2(K^{n,0})$, and $\psi_{n,w}^2(K^{n,0})$ are pairwise disjoint with $p\in\psi_{n,v}^2(K^{n,0})$, $q\in\psi_{n,u}^2(K^{n,0})$, and $y\in\psi_{n,w}^2(K^{n,0})$. Then by Lemma \ref{lem:claim2} and by \cite[Lemma 3.1]{BV2}, there exist distinct $u_1,\dots,u_k\in\mathcal{A}_n^m$ for which $p\in\psi_{n,u_1}^2(K^{n,0})$, $q\in\psi_{n,u_k}^2(K^{n,0})$, and for every $j\in\{1,\dots,k-1\}$, $\psi_{n,u_j}^2(K^{n,0})\cap \psi_{n,u_{j+1}}^2(K^{n,0})$ contains at least two points. Now let $z_0,\dots,z_k\in K^{n,0}$ be distinct points satisfying $z_0=p$, $z_k=q$, and $z_j\in(\psi_{n,u_j}^2(K^{n,0})\cap\psi_{n,u_{j+1}}^2(K^{n,0}))\setminus\{y\}$ for every $j\in\{1,\dots,k-1\}$ . Then since $\partial\sq\subset K^{n,0}$ by Lemma \ref{lem:fact}, for every $j\in\{1,\dots,k-2\}$ there is a continuous function $\g_j:[0,1]\to \psi_{n,u_{j+1}}^2(\partial\sq)\setminus\{y\}$ such that $\g_j(0)=z_j$, $\g_j(1)=z_{j+1}$ and necessarily $\g_j([0,1])\subset K^{n,0}\setminus\{y\}$. By \cite[Proposition 3.2]{BV2}, there exist continuous $\g_0:[0,1]\to \psi_{n,u_1}^2(K^{n,0})$ with $\g_0(0)=p$, $\g_0(1)=z_1$ and continuous $\g_{k-1}:[0,1]\to \psi_{n,u_k}^2(K^{n,0})$ with $\g_{k-1}(0)=z_{k-1}$, $\g_{k-1}(1)=q$. Concatenating these curves yields a path inside of $K^{n,0}$ connecting $p$ to $q$ and avoiding $y$, therefore $y$ cannot be a cut-point for $K^{n,0}$. 
\end{proof}

As a straightforward consequence, we obtain the following corollary.
\begin{corollary}\label{cor:cutedge}
If $f_1,\dots, f_j:\R^2\to \R^2$ with $j\in\{2,3,4\}$ is a collection of rotation-free and reflection-free similarities with a common Lipschitz norm which meet at edges of $\sq$, then $\bigcup_{i=1}^j f_i(K^{n,0})$ does not contain local cut-points.
\end{corollary}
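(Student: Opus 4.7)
The plan is to reduce the corollary to Lemma \ref{lem:cutzero} by a local analysis near each point of the union. As an initial observation, since each $f_i$ is a similarity (hence a homeomorphism onto its image), Lemma \ref{lem:cuthom} combined with Lemma \ref{lem:cutzero} tells us that each individual piece $f_i(K^{n,0})$ is free of local cut-points. Moreover, the argument in the proof of Lemma \ref{lem:fact} (adapted from $\mathcal{F}_n^1$ to $\mathcal{F}_n^2$) shows that $\partial\sq\subset K^{n,0}$; consequently, whenever two of our squares $f_l(\sq)$ and $f_m(\sq)$ share an edge $e$, this edge lies in $f_l(K^{n,0})\cap f_m(K^{n,0})$.

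Fix $p\in K:=\bigcup_{i=1}^j f_i(K^{n,0})$ and set $I=\{l:p\in f_l(\sq)\}$. If $|I|=1$, choose $r>0$ small enough that $\overline{B}(p,r)\cap f_m(\sq)=\emptyset$ for $m\notin I$; then $\overline{B}(p,r)\cap K$ agrees with a neighborhood of $p$ in $f_{l_0}(K^{n,0})$ (where $I=\{l_0\}$), and $p$ is not a local cut-point of $K$ by the opening observation. For the main case $|I|\geq 2$, shrink $r$ similarly and define $Y_l:=\overline{B}(p,r)\cap f_l(K^{n,0})$ and $C_l:=$ the component of $Y_l$ containing $p$, for $l\in I$. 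By Lemma \ref{lem:cutzero} each $C_l\setminus\{p\}$ is connected. For any $l,m\in I$ such that $f_l(\sq)\cap f_m(\sq)$ contains an edge $e_{lm}$ through $p$, the subarc $\overline{B}(p,r)\cap e_{lm}$ is a nondegenerate connected set through $p$ lying in $f_l(K^{n,0})\cap f_m(K^{n,0})$, and hence in $C_l\cap C_m$; in particular $(C_l\setminus\{p\})\cap(C_m\setminus\{p\})\neq\emptyset$.

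Next we argue that the ``edge-adjacency'' graph on $I$ (with $l\sim m$ iff $f_l(\sq)\cap f_m(\sq)$ contains an edge through $p$) is connected. The hypotheses that the $f_i$ meet at edges of $\sq$ and that $\bigcap_i f_i(\sq)\neq\emptyset$ restrict the possible configurations of the squares $\{f_l(\sq)\}_{l\in I}$ to a short enumerable list (two adjacent squares, an L-triple, a linear or L-triple, a $2\times 2$ block, etc.); in each such configuration a direct inspection shows that consecutive squares in the cyclic order around $p$ share an edge through $p$, so the edge-adjacency graph on $I$ is connected. Combined with the overlaps from the previous paragraph, this yields that $\bigcup_{l\in I}(C_l\setminus\{p\})$ is connected.

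Finally, let $C$ be the component of $\overline{B}(p,r)\cap K$ containing $p$; clearly $\bigcup_l C_l\subset C$. For the reverse, suppose $D$ is a component of some $Y_l$ not containing $p$. For every $m\neq l$, the intersection $Y_l\cap Y_m$ is contained either in $\{p\}$ (when $f_l(\sq)\cap f_m(\sq)=\{p\}$), or in the arc $\overline{B}(p,r)\cap e_{lm}\subset C_l$ (when the squares share an edge through $p$), or is empty (when $m\notin I$ and $r$ is small); in all three cases this intersection is disjoint from $D$. Thus $D$ is a component of $\overline{B}(p,r)\cap K$ disjoint from $C$, so $C\cap Y_l\subset C_l$ and $C=\bigcup_l C_l$. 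Consequently $C\setminus\{p\}=\bigcup_{l\in I}(C_l\setminus\{p\})$ is connected, and $p$ is not a local cut-point of $K$. The main obstacle will be the geometric verification in the third paragraph that the edge-adjacency graph on $I$ is always connected; once that is settled, the topological bookkeeping in the last paragraph (showing no ``phantom'' components of the $Y_l$ contaminate $C$) is the crucial technical point that lets us cleanly reduce to Lemma \ref{lem:cutzero}.
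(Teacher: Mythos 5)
Your proof is correct, and it takes a genuinely different (and in fact more self-contained) route than the paper's. The paper's argument is the single observation that, up to a similarity, every configuration of $\{f_i\}$ meeting at edges of $\sq$ already occurs as a sub-family of $\{\psi_{n,j}^2 : j\in\mathcal{A}_n\}$, so that $\bigcup_i f_i(K^{n,0})$ is a re-scaled copy of a union $\bigcup_i\psi^2_{n,j_i}(K^{n,0})\subset K^{n,0}$; the reader is then expected to check that this forces the union to inherit the absence of local cut-points from Lemma~\ref{lem:cutzero}. You instead bypass the ``embed into $K^{n,0}$'' step entirely and compute the component $C_K(p,r)$ directly for an arbitrary $p$: you reduce to the indices $I$ of squares actually meeting $p$, use Lemma~\ref{lem:cuthom} and Lemma~\ref{lem:cutzero} to see each $C_l\setminus\{p\}$ is connected, glue them along the boundary arcs $\overline{B}(p,r)\cap e_{lm}\subset f_l(\partial\sq)\cap f_m(\partial\sq)$, and then rule out extraneous components. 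This is a cleaner, more explicit route that makes the role of $\partial\sq\subset K^{n,0}$ transparent; the paper's phrasing buys brevity (seven cases, all visible in $\mathcal{F}_n^2$) but leaves the reader to supply exactly the local bookkeeping you carry out.

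Two small remarks. First, your ``consecutive squares in cyclic order share an edge through $p$'' is not literally true in every configuration (e.g.\ three of the four squares in a $2\times 2$ block can give a star rather than a path around $p$), but the intended conclusion --- that the edge-adjacency graph on $I$ is connected --- is correct in every admissible case; the decisive observation is that if two squares in $I$ shared only the corner $p$, then $p$ would be the common point $\bigcap_i f_i(\sq)$ and all squares of the $2\times 2$ block around $p$ lying in $I$ would bridge them, while the two-square diagonal configuration by itself violates the ``meet at edges'' hypothesis. Second, in the final paragraph the assertion ``$D$ is a component of $\overline{B}(p,r)\cap K$'' leans implicitly on local connectedness of $Y_l$; what is actually needed and easily available is weaker: since $Y_l$ is compact, $C_l$ is a quasi-component, so a clopen partition $Y_l=U\sqcup V$ with $C_l\subset U$ and $D\subset V$ extends to a clopen partition of $\bigcup_m Y_m$ by adjoining $\bigcup_{m\neq l}Y_m$ to $U$ (disjointness from $V$ uses exactly your observation that $Y_l\cap Y_m\subset C_l$). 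That gives $D\cap C=\emptyset$ without any local-connectedness hypothesis, and the rest of your argument goes through unchanged.
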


\begin{proof}
We leave most of this proof to the reader. A proof follows from the observation that, up to re-scaling and translating, there are only $7$ ways for such a collection of maps to meet at edges of $\sq$, and all of these arrangements are present in $\{\psi_{n,j}^2:j\in\mathcal{A}_n\}$.
\end{proof}

We are now finally ready to prove Lemma \ref{lem:cut}.

\begin{proof}[{Proof of Lemma \ref{lem:cut}}]
Denote by $N_k$ the number of local cut-points in $K^{n,k}$. By Lemma \ref{lem:cutzero} we have $N_0=0$. It remains to show that $N_{k}=(5n-6) N_{k-1}+1$ for every $k\in\N$. 

By definition of sets $K^{n,k}$,
\begin{align*}
K^{n,k}&=\bigcup\limits_{v\in \mathcal{A}_n^{k}}\psi_{n,v}^1\left(\bigcap\limits_{m\in\N}\bigcup\limits_{w\in \mathcal{A}_n^m}\psi_{n,w}^2(\sq)\right)\\
&=\bigcup\limits_{j\in \mathcal{A}_n} \psi_{n,j}^1\left(\bigcup\limits_{v\in \mathcal{A}_n^{k-1}}\psi_{n,v}^1\left(\bigcap\limits_{m\in\N}\bigcup\limits_{w\in \mathcal{A}_n^m}\psi_{n,w}^2(\sq)\right)\right)\\
&=\bigcup\limits_{j\in \mathcal{A}_n} \psi_{n,j}^1(K^{n,k-1}).
\end{align*}
By Lemma \ref{lem:OSC}, each $\psi_{n,j}^1(K^{n,k-1}\cap (0,1)^2)$ contributes $N_{k-1}$ many local cut-points, and there is one additional local cut-point at $(1/2,2/n)$. Then by the last part of Remark \ref{rem:edges}, every point of $K^{n,k}\setminus(\bigcup_{j\in\mathcal{A}_n}\psi_{n,j}^1(K^{n,k-1}\cap(0,1)^2)$ other than $(1/2,2/n)$ is either contained in exactly one image $\psi_{n,v}^1(K^{n,0})$ for some $v\in\mathcal{A}_n^{k}$ or in an intersection of at least two such images of maps which meet at edges of $\sq$, hence there are no other local cut-points of $K^{n,k}$ by Corollary \ref{cor:cutedge}. Therefore, $N_{k}=(5n-6) N_{k-1}+1$. 
\end{proof}

\subsection{Proof of Proposition \ref{prop:tangents}}\label{sec:tangentsofK^eta}

Fix for the rest of this subsection an integer $k\geq 0$, a random choice function $\eta \in \mathscr{C}_n$, and an injective word 
\[w=i_1i_2\dots \in \mathcal{A}^{\N}_n\]
such that $(\eta,w)$ satisfy (R1) and (R2) from the definition of a random choice function (see the paragraph before Lemma \ref{lem:injfull} for the definition of injective words). Set $x=\pi_{\eta}(w)$. Given $N\in\N$, let $\ell_N\in\N$ be the integer given in properties (R1) and (R2). For $N\in\N$, define 
\begin{align*} 
X_N &:=n^{\ell_N}(K^\eta-x),\\ 
y_N &:=(\phi^\eta _{w(\ell_N)})^{-1}\circ\phi^\eta_{w(\ell_N+N+k-1)}({\bf 0}),\\
x_N &:=n^{\ell_N}(x-\phi^\eta_{w(\ell_N)}({\bf 0})).
\end{align*} 
Intuitively, $x_N$ are the bottom-left corners of the blow-ups of pieces of $K^\eta$ that contain $x$, while the points $(y_N)_{N\in\N}$ form a sequence of points in $\sq$ which approximate the points $x_N$. Additionally, the sets $X_N$ are a sequence of blow-ups of $K^\eta$ centered at the point $x$ which we use to find the desired tangents in $\tang(K^\eta,x)$.

By (R2), $i_{\ell_N-N+1}>(4n-4)$, hence $\psi^{\eta(w(\ell_N-N))}_{n,i_{\ell_N-N+1}}(\sq)$ does not intersect $\partial\sq$. Thus, for every $N\geq 3$ we get 
\begin{equation}\label{eqn:dist}
 \dist\left(K^\eta_{w(\ell_N)},K^\eta\setminus K^\eta_{w(\ell_N-N)}\right)\geq n^{-\ell_N-1+N}. 
\end{equation}

The general strategy of the proof of Proposition \ref{prop:tangents} is to first establish estimates for the Hausdorff distance between $X_N$ and translates of $K^{n,k}$ in a region near $\sq$, then establish estimates for the Hausdorff distance between the rest of $X_N$ and some blown-up and translated image of $K^{n,0}$ out to some large ball containing the origin. Then, exploiting the triangle inequality for excess (as in Remark \ref{rem:excess}), we show that some subsequence of $X_N$ converges to a tangent set for which all local cut-points (see \textsection\ref{sec:cut-points}) lie inside a unit square near the origin, for which outside of this square the tangent ``looks like" a tangent of $K^{n,0}$, and for which inside this unit square the tangent has exactly $\frac{1}{5n-7}((5n-6)^k-1)$ many local cut-points. 

\begin{lemma}\label{lem:closepoint}
For each $N\in\N$, $|y_N-x_N|\leq n^{-N-k+3}$.
\end{lemma}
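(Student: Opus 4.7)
The plan is to exploit the fact that every map $\psi_{n,j}^i$ defined in Section \ref{sec:IFS} is a rotation-free, reflection-free similarity of the form $x\mapsto \tfrac{1}{n}x+c$, so that any composition $\phi_u^\eta$ is again a rotation-free, reflection-free similarity with $\mathrm{Lip}(\phi_u^\eta)=n^{-|u|}$. In particular, by Remark \ref{rem:affine}, $\phi_u^\eta(a)-\phi_u^\eta(b)=n^{-|u|}(a-b)$ for all $a,b\in\R^2$. The key structural observation is that since $w(\ell_N)$ is a prefix of $w(\ell_N+N+k-1)$, we can decompose
\[\phi^\eta_{w(\ell_N+N+k-1)}=\phi^\eta_{w(\ell_N)}\circ h,\]
where $h$ is the composition of the $N+k-1$ maps $\psi_{n,i_{\ell_N+j}}^{\eta(i_1\cdots i_{\ell_N+j-1})}$ for $j=1,\dots,N+k-1$. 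Each factor is a rotation-free, reflection-free similarity with Lipschitz norm $1/n$, so $h$ is a rotation-free, reflection-free similarity with $\mathrm{Lip}(h)=n^{-(N+k-1)}$ satisfying $h(\sq)\subset\sq$.

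With this decomposition, the definition of $y_N$ immediately yields
\[y_N=(\phi^\eta_{w(\ell_N)})^{-1}\circ\phi^\eta_{w(\ell_N+N+k-1)}(\mathbf{0})=h(\mathbf{0}).\]
For $x_N$, I would first observe that $x=\pi_\eta(w)\in\phi^\eta_{w(\ell_N+N+k-1)}(\sq)$, since $w\in\mathcal{A}^{\N}_{n,w(\ell_N+N+k-1)}$ implies that $\phi^\eta_{w(m)}(\mathbf{0})\in\phi^\eta_{w(\ell_N+N+k-1)}(\sq)$ for all $m\geq\ell_N+N+k-1$, and this set is compact. Hence one can write $x=\phi^\eta_{w(\ell_N)}(h(z))$ for some $z\in\sq$. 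Applying Remark \ref{rem:affine} to $\phi^\eta_{w(\ell_N)}$, which has Lipschitz norm $n^{-\ell_N}$, gives
\[x_N=n^{\ell_N}\bigl(x-\phi^\eta_{w(\ell_N)}(\mathbf{0})\bigr)=n^{\ell_N}\bigl(\phi^\eta_{w(\ell_N)}(h(z))-\phi^\eta_{w(\ell_N)}(\mathbf{0})\bigr)=h(z).\]

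Combining these identities and using Remark \ref{rem:affine} on $h$, we obtain
\[|y_N-x_N|=|h(\mathbf{0})-h(z)|=n^{-(N+k-1)}|z|\leq n^{-(N+k-1)}\sqrt{2},\]
since $z\in\sq$ implies $|z|\leq\sqrt{2}$. Finally, since $n\geq 4$ we have $n^2\geq 16>\sqrt{2}$, so $n^{-(N+k-1)}\sqrt{2}\leq n^{-N-k+3}$, which is the desired estimate. There is no real obstacle here: the whole argument is bookkeeping, and the only subtlety is carefully using that all the generating maps are orientation-preserving pure dilations-plus-translations so that the ``normalization maps'' $a\mapsto n^{\ell_N}(a-\phi^\eta_{w(\ell_N)}(\mathbf{0}))=(\phi^\eta_{w(\ell_N)})^{-1}(a)$ cleanly cancel the outer composition.
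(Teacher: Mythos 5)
Your argument is correct and matches the paper's proof in essence: both identify $y_N$ and $x_N$ as points of $(\phi^\eta_{w(\ell_N)})^{-1}\circ\phi^\eta_{w(\ell_N+N+k-1)}(\sq)$ via Remark \ref{rem:affine} and bound their distance by the diameter of that set, which is controlled by the Lipschitz norm $n^{-(N+k-1)}$ of the "inner" composition. Your bookkeeping is in fact slightly sharper (you get $n^{-N-k+1}\sqrt{2}$ rather than the paper's $n^{-N-k+2}\sqrt{2}$), but the conclusion is the same.
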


\begin{proof}
Since $x=\pi_\eta(w)$, the point $x_N$ is in the set
\begin{align*}
n^{\ell_N}(\phi_{w(\ell_N+N+k-1)}^\eta(\sq)&-\phi_{w(\ell_N)}^\eta({\bf 0}))\\
&=n^{\ell_N}\left(\phi^\eta_{w(\ell_N)}\circ(\phi_{w(\ell_N)}^\eta )^{-1}\circ\phi^\eta_{w(\ell_N+N+k-1)}(\sq)-\phi^\eta_{w(\ell_N)}({\bf 0})\right).
\end{align*}
Now by Remark \ref{rem:affine}, the latter set is equal to $(\phi_{w(\ell_N)}^\eta )^{-1}\circ\phi^\eta_{w(\ell_N+N+k-1)}(\sq)$, which has diameter no more than $n^{-N-k+2}\sqrt{2}$ and contains $y_N$, thus $|y_N-x_N|\leq n^{-N-k+3}$.
\end{proof}

In the following lemma, we show that the parts of the sets $X_N$ inside of some unit square containing the origin approximate translates of $K^{n,k}$ in terms of Hausdorff distance.
\begin{lemma}\label{lem:lowexc}
For each $N\in\N$, the Hausdorff distance
\[\disth(n^{\ell_N}(K^\eta_{w(\ell_N)}-x),K^{n,k}-y_N)\leq n^{-N-k+4}.\]
\end{lemma}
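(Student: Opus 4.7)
The strategy is to exploit property (R1), which forces $\eta$ to use model 1 at the first $k$ subdivisions of the $w(\ell_N)$-block and model 2 at the next $N$ subdivisions; this matches the structure of $K^{n,k}$ down to pieces of side $n^{-k-N}$, so after rescaling by $n^{\ell_N}$ the two sets agree up to Hausdorff distance $\approx n^{-k-N}$.

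Since $\phi^\eta_{w(\ell_N)}$ is a translation-dilation of scale $n^{-\ell_N}$, a direct computation yields
\[n^{\ell_N}(K^\eta_{w(\ell_N)}-x) \;=\; (\phi^\eta_{w(\ell_N)})^{-1}(K^\eta_{w(\ell_N)}) - x_N.\]
By translation invariance and the triangle inequality for $\disth$ combined with Lemma \ref{lem:closepoint}, the claim reduces to showing
\[\disth\bigl((\phi^\eta_{w(\ell_N)})^{-1}(K^\eta_{w(\ell_N)}),\; K^{n,k}\bigr)\leq n^{-k-N}\sqrt{2},\]
since $n^{-k-N}\sqrt{2} + n^{-N-k+3}\leq n^{-N-k+4}$ for $n\geq 4$.

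For each $u = u_1\cdots u_{k+N}\in\mathcal{A}_n^{k+N}$, I would unpack $\phi^\eta_{w(\ell_N) u}$ and apply (R1) to the prefixes that appear as arguments of $\eta$: these are extensions of $w(\ell_N)$ of lengths $\ell_N,\ldots,\ell_N+k+N-1$, which by (R1) yield $\eta$-value $1$ for lengths $\ell_N,\ldots,\ell_N+k-1$ and value $2$ for lengths $\ell_N+k,\ldots,\ell_N+k+N-1$. Consequently
\[\phi^\eta_{w(\ell_N) u} \;=\; \phi^\eta_{w(\ell_N)} \circ \Phi_u, \qquad \Phi_u \;:=\; \psi_{n,u_1}^1\circ\cdots\circ\psi_{n,u_k}^1\circ\psi_{n,u_{k+1}}^2\circ\cdots\circ\psi_{n,u_{k+N}}^2,\]
where $\Phi_u$ is a rotation-free, reflection-free similarity of scale $n^{-k-N}$.

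Setting $A := \bigcap_{m\in\N}\bigcup_{v\in\mathcal{A}_n^m}\psi_{n,v}^2(\sq)$, the decomposition $K^{n,k} = \bigcup_{u\in\mathcal{A}_n^{k+N}}\Phi_u(A)$ holds by iterating the defining relations. For the other set, $K^\eta_{w(\ell_N)} = \bigcup_{u\in\mathcal{A}_n^{k+N}} K^\eta_{w(\ell_N)u}$ follows from Lemma \ref{lem:OSC} applied at length $\ell_N$ (which forces any point of $K^\eta_{w(\ell_N)}$ to lie in some $\phi^\eta_{w(\ell_N)u}(\sq)$, with boundary exceptions handled by the inclusion $\partial\sq\subset\bigcup_u \Phi_u(\sq)$, valid because the boundary maps of $\mathcal{F}_n^1$ and $\mathcal{F}_n^2$ coincide and cover $\partial\sq$). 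Within each index $u$, both $(\phi^\eta_{w(\ell_N)})^{-1}(K^\eta_{w(\ell_N)u})$ and $\Phi_u(A)$ are nonempty subsets of $\Phi_u(\sq)$ — nonemptiness is witnessed by $\Phi_u(\partial\sq)$, since $\partial\sq\subset A$ by the proof of Lemma \ref{lem:fact} — hence a pigeonhole argument bounds the Hausdorff distance piece-by-piece by $\diam\Phi_u(\sq) = n^{-k-N}\sqrt{2}$. The main mild obstacle is the careful verification of the $\eta$-values dictated by (R1) and the matching of indices in the two decompositions; everything else is the triangle inequality.
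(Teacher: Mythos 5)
Your proof is correct and follows the same underlying strategy as the paper's: reduce via translation invariance, the triangle inequality for excess, and Lemma \ref{lem:closepoint} to comparing $(\phi^\eta_{w(\ell_N)})^{-1}(K^\eta_{w(\ell_N)})$ with $K^{n,k}$, then invoke (R1) to match these two sets piece-by-piece at scale $n^{-k-N}$. The paper compresses the cell-by-cell comparison into the single phrase ``by (R1) and by definition of $K^{n,k}$'' and settles for the looser intermediate bound $n^{-N-k+2}\sqrt{2}$, whereas you spell out the decomposition $K^{n,k}=\bigcup_u\Phi_u(A)$ and $K^\eta_{w(\ell_N)}=\bigcup_u K^\eta_{w(\ell_N)u}$ with $\Phi_u:=\psi_{n,u_1}^1\circ\cdots\circ\psi_{n,u_k}^1\circ\psi_{n,u_{k+1}}^2\circ\cdots\circ\psi_{n,u_{k+N}}^2$ and obtain the tighter estimate $n^{-k-N}\sqrt{2}$; that decomposition is precisely the detail the paper is eliding.
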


\begin{proof}
Recall from \textsection\ref{subsec:tangents} that excess is given by $\exc(A,B):=\sup_{x\in A}\inf_{y\in B}|x-y|$.
Thus to prove the lemma, it suffices to show
\[\exc(n^{\ell_N}(K^\eta_{w(\ell_N)}-x),K^{n,k}-y_N)\leq n^{-N-k+4},\]
\[\exc(K^{n,k}-y_N,n^{\ell_N}(K^\eta_{w(\ell_N)}-x))\leq n^{-N-k+4}.\]
For the first inequality, by Lemma \ref{lem:closepoint} and by the triangle inequality for excess as in Remark \ref{rem:excess}, it is enough to show that
\[\exc(n^{\ell_N}(K^\eta_{w(\ell_N)}-x),K^{n,k}-x_N)\leq n^{-N-k+3}.\]
Note that since excess is translation invariant (see Remark \ref{rem:excess}), this excess is equal to
\[\exc(n^{\ell_N}(K^\eta_{w(\ell_N)}-\phi^\eta_{w(\ell_N)}({\bf 0})),K^{n,k}),\]
which is at most $n^{-N-k+2}\sqrt{2}$ by (R1) and by definition of $K^{n,k}$. Via a similar argument, we obtain the other inequality
\[\exc(K^{n,k}-y_N,n^{\ell_N}(K^\eta_{w(\ell_N)}-x))\leq n^{-N-k+4}. \qedhere\]
\end{proof}

Now define
\begin{align*}
F_N&:=n^{\ell_N}(K^{n,0}-\phi_{w(\ell_N)}^\eta({\bf 0}))-y_N,\\
Y_N&:=(F_N\setminus(\sq-y_N))\cup(K^{n,k}-y_N).
\end{align*}
The sets $F_N$ are simply blown-up shifts of the self-similar set $K^{n,0}$. The sets $Y_N$ are constructed by removing the copy of $K^{n,0}$ in $F_N$ which contains the origin, then replacing it by a shift of $K^{n,k}$. Before continuing, note that for $k=0$, $F_N=Y_N$.

\begin{lemma}\label{lem:samelim}
For every $r>0$, 
\begin{align*}
\lim\limits_{N\to\infty}\exc(Y_N\cap \overline{B}({\bf 0},r),X_N)=0 \quad \text{and} \quad
\lim\limits_{N\to\infty}\exc(X_N\cap \overline{B}({\bf 0},r),Y_N)=0.
\end{align*}
\end{lemma}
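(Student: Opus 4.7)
The plan is to compare $X_N$ and $Y_N$ inside $\overline{B}({\bf 0},r)$ by separately controlling two regions: the unit square $\sq - y_N$ near the origin (where Lemma \ref{lem:lowexc} applies directly) and the annular region between this square and the blown-up big square $n^{\ell_N}(\phi^\eta_{w(\ell_N-N)}(\sq)-x)$ (where property (R1) forces $K^\eta$ to share the same $\mathcal{F}_n^2$-cellular structure as $K^{n,0}$).

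First I would localize the problem. For $N$ large enough that $n^{N-1}>r$, property (R2) gives $i_{\ell_N-N+1}>4n-4$, so $\psi^2_{n,i_{\ell_N-N+1}}(\sq)$ is disjoint from $\partial\sq$, and hence $x\in\phi^\eta_{w(\ell_N-N+1)}(\sq)$ lies at distance at least $n^{N-\ell_N-1}$ from $\partial\phi^\eta_{w(\ell_N-N)}(\sq)$. Blowing up by $n^{\ell_N}$ yields $\overline{B}({\bf 0},r)\subset n^{\ell_N}(\phi^\eta_{w(\ell_N-N)}(\sq)-x)$, and together with \eqref{eqn:dist}, this forces $X_N\cap\overline{B}({\bf 0},r)\subset n^{\ell_N}(K^\eta_{w(\ell_N-N)}-x)$. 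For the inner part, Lemma \ref{lem:lowexc} gives $\disth(n^{\ell_N}(K^\eta_{w(\ell_N)}-x),K^{n,k}-y_N)\le n^{-N-k+4}$, and since $K^{n,k}-y_N\subset Y_N$, this controls the inner contribution to both excesses; Lemma \ref{lem:closepoint} ensures that the small unit squares $n^{\ell_N}(\phi^\eta_{w(\ell_N)}(\sq)-x)$ and $\sq-y_N$ coincide up to a translation of magnitude at most $n^{-N-k+3}$.

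For the annular part, I would use property (R1) as follows. Every point of $K^\eta\cap(\phi^\eta_{w(\ell_N-N)}(\sq)\setminus\phi^\eta_{w(\ell_N)}(\sq))$ lies in some cell $\phi^\eta_v(\sq)$ where $v\in\mathcal{A}^{\ell_N+N+k-1}_n$ starts with $w(\ell_N-N)$ but not with $w(\ell_N)$. For such $v$, no prefix of $v$ lies in $\bigcup_{j=0}^{k-1}\mathcal{A}^{\ell_N+j}_{n,w(\ell_N)}$, so property (R1) forces $\eta$ to equal $2$ at every prefix of $v$ of length in $[\ell_N-N,\,\ell_N+N+k-2]$; hence
\[\phi^\eta_v=\phi^\eta_{w(\ell_N-N)}\circ\psi^2_{n,v_{\ell_N-N+1}}\circ\cdots\circ\psi^2_{n,v_{\ell_N+N+k-1}},\]
and each such cell has diameter at most $n^{-\ell_N-N-k+1}\sqrt{2}$. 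The identity $F_N=n^{\ell_N}(K^{n,0}-x)+(x_N-y_N)$ follows directly from the definitions, and since Lemma \ref{lem:closepoint} gives $|x_N-y_N|\le n^{-N-k+3}$, $F_N$ is a blow-up of $K^{n,0}$ centered at $x$ up to a negligible shift. Because $K^{n,0}$ is the attractor of $\mathcal{F}_n^2$, the same collection of $\psi^2$-iterates covers both the annular part of $X_N$ and the corresponding annular part of $F_N$, so the two annular parts agree within Hausdorff error $O(n^{-N-k+1})$ outside $\sq-y_N$.

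Combining the inner and annular matchings via the triangle inequality and subadditivity of excess from Remark \ref{rem:excess} yields that both $\exc(Y_N\cap\overline{B}({\bf 0},r),X_N)$ and $\exc(X_N\cap\overline{B}({\bf 0},r),Y_N)$ tend to $0$ as $N\to\infty$. The main obstacle is the annular matching: while (R1) guarantees that $K^\eta$ and $K^{n,0}$ share the same $\mathcal{F}_n^2$-cellular structure at the scales relevant to $\overline{B}({\bf 0},r)$, aligning the two cellular decompositions requires careful geometric bookkeeping of the offset $x_N-y_N$, which becomes negligible by Lemma \ref{lem:closepoint}.
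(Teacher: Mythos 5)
Your proposal is correct and follows essentially the same route as the paper's proof: localize to the blown-up square $n^{\ell_N}(\phi^\eta_{w(\ell_N-N)}(\sq)-x)$ using (R2) and \eqref{eqn:dist}, handle the inner unit square via Lemma \ref{lem:lowexc}, and handle the annulus by exploiting that (R1) forces $K^\eta$ to carry the same $\mathcal{F}_n^2$-cellular structure as $K^{n,0}$ at scales between $n^{-\ell_N-N}$ and $n^{-\ell_N-N-k+1}$, with the offset $x_N-y_N$ controlled by Lemma \ref{lem:closepoint}. Your reformulation $F_N = n^{\ell_N}(K^{n,0}-x)+(x_N-y_N)$ is a clean way to package what the paper does through the auxiliary sets $P = \bigcup_{v\in\mathcal{A}_n^{2N+k-1}}\psi_{n,v}^2(\partial\sq)$ and $Q = \bigcup_{v\in\mathcal{A}_n^{2N+k-1}}\psi_{n,v}^2(\sq)$, and your explicit invocation of (R1) for the composition identity $\phi^\eta_v = \phi^\eta_{w(\ell_N-N)}\circ\psi^2_{n,v_{\ell_N-N+1}}\circ\cdots\circ\psi^2_{n,v_{\ell_N+N+k-1}}$ is actually clearer than the paper's terse citation of (R2) alone at the analogous step. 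One small inaccuracy: you claim the annular parts agree within Hausdorff error $O(n^{-N-k+1})$, but the cell-diameter term $n^{-N-k+1}\sqrt{2}$ must be added to the offset $|x_N - y_N|\leq n^{-N-k+3}$, so the bound is $O(n^{-N-k+3})$; this is harmless for the limit but worth stating precisely if written out in full.
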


\begin{proof}
Fix $r>0$. By \eqref{eqn:dist} there exists $N_0\in\N$ such that for every integer $N\geq N_0$,
\begin{equation}\label{eqn:dist2}
\dist(K^\eta_{w(\ell_N)},K^\eta\setminus K^\eta_{w(\ell_N-N)})\geq n^{-\ell_N}2r.
\end{equation}
By (R2), $\dist(\partial\sq,(\phi_{w(\ell_N-N)}^\eta)^{-1}\circ\phi_{w(\ell_N)}^\eta({\bf 0}))\geq n^{-1}$. Since $y_N\in\sq$,
\[ \overline{B}({\bf 0},r)\subset n^N(\sq-(\phi_{w(\ell_N-N)}^\eta)^{-1}\circ\phi_{w(\ell_N)}^\eta({\bf 0}))-y_N.\]

Fix $y\in Y_N\cap \overline{B}({\bf 0},r)$. If $y\in\sq-y_N$, then by Lemma \ref{lem:lowexc} there exists some $z\in X_N$ such that $|y-z|\leq n^{-N-k+4}$. 

Assume now that $y\in Y_N\cap \overline{B}({\bf 0},r)\setminus(\sq-y_N)$. Let 
\[ Q=\bigcup_{v\in\mathcal{A}_n^{2N+k-1}}\psi_{n,v}^2(\sq) \quad \text{and} \quad P=\bigcup_{v\in\mathcal{A}_n^{2N+k-1}} \psi_{n,v}^2(\partial\sq).\] 
By (R2) and since $\phi_v^\eta(\partial\sq)\subset K^\eta$ for every finite word $v\in\mathcal{A}_n^*$ by Lemma \ref{lem:fact}, we have that 
\[ n^{\ell_N}((\phi_{w(\ell_N-N)}^\eta(P)-x)\setminus(\phi_{w(\ell_N)}^\eta(\sq)-x))\subset X_N.\] 
Since the maps $\psi_{n,j}^i$ are all similarities with Lipschitz norm $n^{-1}$, by Remark \ref{rem:affine} we have
\begin{align*}
n^{\ell_N}(\phi_{w(\ell_N-N)}^\eta(P)-x)&=n^{\ell_N}(\phi_{w(\ell_N-N)}^\eta(P)-\phi_{w(\ell_N-N)}^\eta\circ(\phi_{w(\ell_N-N)}^\eta)^{-1}(x))\\
&=n^N(P-(\phi_{w(\ell_N-N)}^\eta)^{-1}(x)).
\end{align*}
Similarly, we obtain 
$n^{\ell_N}(\phi_{w(\ell_N)}^\eta(\sq)-x)=\sq-(\phi_{w(\ell_N)}^\eta)^{-1}(x)$ and $x_N=(\phi_{w(\ell_N)}^\eta)^{-1}(x)$. Furthermore, $(\phi_{w(\ell_N-N)}^\eta)^{-1}(x)=(\phi_{w(\ell_N-N)}^\eta)^{-1}\circ\phi_{w(\ell_N)}^\eta(x_N)$, thus
\[(n^N(P-(\phi_{w(\ell_N-N)}^\eta)^{-1}\circ\phi_{w(\ell_N)}^\eta({\bf 0}))-x_N)\setminus(\sq-x_N)\subset X_N.\]

Since $y\in Y_N\cap \overline{B}({\bf 0},r)\setminus(\sq-y_N)$, by calculations similar to those above we obtain 
\begin{equation}\label{eqn:yN}
 y\in (n^N(Q-(\phi_{w(\ell_N-N)}^\eta)^{-1}\circ\phi_{w(\ell_N)}^\eta({\bf 0}))-y_N)\setminus(\sq-y_N).
 \end{equation}
Therefore, there exists $u\in \mathcal{A}_n^{2N+k-1}$ such that 

\[ y\in (n^N(\psi_{n,u}^2(\sq)-(\phi_{w(\ell_N-N)}^\eta)^{-1}\circ\phi_{w(\ell_N)}^\eta({\bf 0}))-y_N)\setminus(\sq-y_N),\]
which implies that there is a point 
\[p\in (n^N(\psi_{n,u}^2(\partial\sq)-(\phi_{w(\ell_N-N)}^\eta)^{-1}\circ\phi_{w(\ell_N)}^\eta({\bf 0}))-x_N)\setminus(\sq-x_N),\]
hence $p\in X_N$. Then by the triangle inequality and Lemma \ref{lem:closepoint} we have 
\begin{align*}
|p-y|\leq |x_N-y_N|+|(p+x_N)-(y+y_N)| \leq |x_N-y_N|+n^{-N-k+1}\sqrt{2} \leq n^{-N-k+4}
\end{align*} 
since $(p+x_N),(y+y_N)\in n^N\psi_{n,u}^2(\sq)$, and the latter set has diameter $n^{-N-k+1}\sqrt{2}$.

The other limit can be proven in the same way, yielding the result.\end{proof}

In the following two lemmas, we prove that the sequence $(Y_N)_{N\in\N}$ has a subsequence converging in the Attouch-Wets topology to a set with $\frac{1}{5n-7}((5n-6)^k-1)$ local cut-points and that the closeness of excesses from Lemma \ref{lem:samelim} implies that this set is also the Attouch-Wets limit of a subsequence of $(X_N)_{N\in\N}$, thus it is a tangent of $K^\eta$ at $x$.

\begin{lemma}\label{lem:tanY}
The sequence $(Y_N)_{N\in\N}$ has a subsequence converging with respect to the Attouch-Wets topology to a set $L_k\in\mathfrak{C}(\R^2;{\bf 0})$ that has exactly $\frac{1}{5n-7}((5n-6)^k-1)$ many local cut-points.
\end{lemma}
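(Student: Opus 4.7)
The plan is to extract a subsequential Attouch-Wets limit $L_k$ via sequential compactness, identify its local structure on and off a shifted unit square, and count local cut-points by combining the exact count in Lemma~\ref{lem:cut} with the cut-point-freeness of $K^{n,0}$ provided by Lemma~\ref{lem:cutzero}.

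First, unfolding the composition defining $y_N$ via property (R1), I would check that $y_N=\psi^{1}_{n,v}\circ\psi^{2}_{n,u}({\bf 0})$ for some $v\in\mathcal{A}_n^k$ and $u\in\mathcal{A}_n^{N-1}$, so $y_N\in K^{n,k}$ and in particular ${\bf 0}\in K^{n,k}-y_N\subset Y_N$. By the sequential compactness of $\mathfrak{C}(\R^2;{\bf 0})$ in the Attouch-Wets topology, a subsequence $Y_{N_j}\to L_k$; passing to a further subsequence using compactness of $\sq$, I assume also that $y_{N_j}\to y_\infty\in\sq$, which gives $K^{n,k}-y_{N_j}\to K^{n,k}-y_\infty$ in Hausdorff distance. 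The key local description I would then establish is
\[
L_k\cap U=(K^{n,k}-y_\infty)\cap U\quad\text{whenever }U\Subset\interior(\sq-y_\infty),
\]
which follows from the fact that $Y_N$ and $K^{n,k}-y_N$ agree on $\interior(\sq-y_N)$, together with the stability of Attouch-Wets convergence on compactly contained open sets.

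For the count of local cut-points of $L_k$, the inductive proof of Lemma~\ref{lem:cut} shows that every local cut-point of $K^{n,k}$ has the form $\psi^{1}_{n,v}(1/2,2/n)$ for some $v\in\bigcup_{m=0}^{k-1}\mathcal{A}_n^m$, and in particular lies in $\interior(\sq)$. After translation by $-y_\infty$ each such point lies in $\interior(\sq-y_\infty)$, and by the local description above it remains a local cut-point of $L_k$, producing the desired lower bound of $\tfrac{1}{5n-7}((5n-6)^k-1)$. For the matching upper bound I would verify (i) that any local cut-point of $L_k$ inside $\interior(\sq-y_\infty)$ must already be a local cut-point of $K^{n,k}-y_\infty$, (ii) that points of $L_k$ in the complement of $\sq-y_\infty$ are not local cut-points because in a neighborhood there $L_k$ locally agrees with an Attouch-Wets subsequential limit of blow-ups of the cut-point-free self-similar set $K^{n,0}$ (Lemma~\ref{lem:cutzero}), and (iii) that points on $\partial(\sq-y_\infty)$ are not local cut-points because $\partial\sq\subset K^{n,0}\cap K^{n,k}$ forces the two pieces of $L_k$ to glue along full line segments, preserving local connectivity.

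The main obstacle is item (ii): verifying that no local cut-points appear in the weak-tangent portion of $L_k$, regardless of where the blow-up centers $\phi^\eta_{w(\ell_N)}({\bf 0})+n^{-\ell_N}y_N$ accumulate in $\sq$. This is where the detailed combinatorial control over intersections of cells in $K^{n,0}$ furnished by Lemma~\ref{lem:claim1}, Lemma~\ref{lem:claim2}, and Corollary~\ref{cor:cutedge} is essential, as it lets one propagate the cut-point-free property of $K^{n,0}$ to all of its weak tangents.
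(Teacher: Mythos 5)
Your overall strategy is sound and parallels the paper's: extract a subsequential limit $L_k$ with $y_{N_j}\to y_\infty$, establish local agreement with $K^{n,k}-y_\infty$ on the translated unit square, and count local cut-points via Lemma \ref{lem:cut} for the inside and cut-point-freeness of $K^{n,0}$ for the outside. The unfolding of $y_N$ as $\psi^1_{n,v}\circ\psi^2_{n,u}({\bf 0})$ is correct and confirms ${\bf 0}\in K^{n,k}-y_N$. The lower bound via interior local cut-points $\psi^1_{n,v}(1/2,2/n)$ of $K^{n,k}$ is also fine.

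The genuine gap is exactly where you flag it, item (ii), and you have not filled it. The paper does not argue that the exterior portion is ``some weak tangent of $K^{n,0}$'' and then appeal to general cut-point-freeness of weak tangents. Instead it uses a lattice-stabilization argument: after shifting by $y_N$, the relevant portion of $Y_N+y_N$ inside $\overline{B}({\bf 0},r)$ is a union $\bigcup_{(m_1,m_2)\in D_{r,j}}((m_1,m_2)+K^{n,0})\cup K^{n,k}$ over \emph{integer} translates, and since the translate-index sets $D_{r,j}\subset\Z^2$ live in a discrete lattice while the sequence converges (via the Cauchy-type estimate of Lemma \ref{lem:excauchy}), these sets must eventually stabilize. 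This yields the exact equality $(Y_{N_j}+y_{N_j})\cap\overline{B}({\bf 0},r)=L\cap\overline{B}({\bf 0},r)$ for large $j$, giving an explicit, finite, edge-adjacent decomposition of the limit to which Corollary \ref{cor:cutedge} and Remark \ref{rem:edges} apply directly. Without that stabilization, you only have Attouch-Wets closeness, which is insufficient: ``$L_k$ locally agrees with an Attouch-Wets subsequential limit of blow-ups of $K^{n,0}$'' does not automatically inherit cut-point-freeness from $K^{n,0}$ (Lemma \ref{lem:cutzero} says nothing about weak tangents), and the machinery that would handle weak tangents of the sponge (Lemma \ref{lem:selfsimsponge}) requires a fixed base point in $(0,1)^2\cap K^{n,0}$, whereas here the blow-up centers $\psi^2_{n,v_N}({\bf 0})$ vary with $N$ and may lie on $\partial\sq$. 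Your item (iii) has the same problem: ``glue along full line segments, preserving local connectivity'' is precisely the content of Corollary \ref{cor:cutedge}, but that corollary demands the explicit finite union of similarities meeting at edges, which again needs the stabilization to produce.

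In short: replace the vague appeal to weak tangents in items (ii)–(iii) with the lattice-stabilization step showing $D_{r,j}$ is eventually constant, so that $L\cap\overline{B}({\bf 0},r)$ is literally a finite edge-adjacent union of integer translates of $K^{n,0}$ together with $K^{n,k}$, and then Lemma \ref{lem:cut}, Lemma \ref{lem:cutzero}, Corollary \ref{cor:cutedge}, and Remark \ref{rem:edges} yield the exact count.
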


\begin{proof}
Let $r>3$ and note by definition of $Y_N$, by (\ref{eqn:dist2}), and by (\ref{eqn:yN}) that there exists a minimal $N_{0,r}\in \N$ such that for every integer $N\geq N_{0,r}$, 
\[(Y_N+y_N)\cap \overline{B}({\bf 0},r)=\overline{B}({\bf 0},r)\cap (n^N(K^{n,0}-(\phi_{w(\ell_N-N)}^\eta)^{-1}\circ\phi_{w(\ell_N)}^\eta({\bf 0}))\setminus\sq)\cup K^{n,k}.\]
Furthermore, $N_{0,r}$ is nondecreasing in $r$. By (R1), for each $N\in\N$ there exists a word $v_N\in\mathcal{A}_n^N$ for which $(\phi_{w(\ell_N-N)}^\eta)^{-1}\circ\phi^\eta_{w(\ell_N)}=\psi_{n,v_N}^2$. Then by Remark \ref{rem:affine} we obtain
\[\sq\cap n^N(K^{n,0}-\psi_{n,v_N}^2({\bf 0}))=n^N(\psi_{n,v_N}^2(K^{n,0})-\psi_{n,v_N}^2({\bf 0}))= K^{n,0}.\]
Thus, 
\[(Y_N+y_N)\cap\overline{B}({\bf 0},r)=\overline{B}({\bf 0},r)\cap (n^N(((K^{n,0}\setminus\psi_{n,v_N}^2(\sq))\cup(\psi_{n,v_N}^2(K^{n,k})))-\psi_{n,v_N}^2({\bf 0}))).\]
Since $\mathfrak{C}(\R^2;{\bf 0})$ is sequentially compact in the Attouch-Wets topology, the sequence of sets 
$(Y_N+y_N)_{N\in\N}$ has a subsequence $(Y_{N_j}+y_{N_j})_{j\in\N}$ converging to a limit $L$ in the Attouch-Wets topology and further satisfying $(y_{N_j})_{j\in\N}$ converges to a point $y_0\in\sq$ as $j\to\infty$. 

For $j\in\N$, let 
\begin{align*}
D_{r,j}&:=\{(m_1,m_2)\in\Z^2\setminus\{{\bf 0}\}:[m_1,m_1+1]\times[m_2,m_2+1]\subset \overline{B}({\bf 0},r),\\
&\text{ and there exists }v\in\mathcal{A}_n^{N_j}\text{ with } (m_1,m_2)=n^{N_j}(\psi_{n,v}^2({\bf 0})-\psi_{n,v_{N_j}}^2({\bf 0}))\}.
\end{align*}
If $j\in\N$ such that $N_j\geq N_{0,r+2}$, then
\begin{align*}
\bigcup\limits_{(m_1,m_2)\in D_{r,j}}((m_1,m_2)+K^{n,0})\cup K^{n,k}&\subset(Y_{N_j}+y_{N_j})\cap\overline{B}({\bf 0},r)\\
&\subset \bigcup\limits_{(m_1,m_2)\in D_{r+2,j}}((m_1,m_2)+K^{n,0})\cup K^{n,k}
\end{align*}
and $D_{r,j}\subset D_{r+2,j}$. Then since for every $N\in\N$, every $v\in\mathcal{A}_n^N$ has $n^N(\psi_{n,v}^2({\bf 0})-\psi_{n,v_N}^2({\bf 0}))\in\Z^2$, and since $(Y_{N_j}+y_{N_j})_{j\in\N}$ converges, there exists $j_{0,r}\in\N$ such that for every integer $j\geq j_{0,r}$, $D_{r,j}=D_{r,j_{0,r}}$. Indeed, if $D_{r,j}\neq D_{r,j+1}$ then
\[\exc((Y_{N_j}+y_{N_j})\cap\overline{B}({\bf 0},r),(Y_{N_{j+1}}+y_{N_{j+1}}))\geq n^{-1}\]
or vice-versa, which cannot happen infinitely often by Lemma \ref{lem:excauchy}. Consequently, for every $r>3$ and every integer $j\geq j_{0,r+4}$, we have that $(Y_{N_j}+y_{N_j})\cap\overline{B}({\bf 0},r)=\overline{B}({\bf 0},r)\cap L$, hence by Lemma \ref{lem:cut}, Corollary \ref{cor:cutedge}, and Remark \ref{rem:edges}, $L$ has exactly $\frac{1}{5n-7}((5n-6)^k-1)$ many local cut-points inside $\sq$ and no other local cut-points. Finally, $L_k=L-y_0$ is the desired limit.\end{proof}

\begin{rem}\label{rem:measure}
The tangent $L_k$ of Lemma \ref{lem:tanY} is a countable union $\bigcup_{j\in\N}S_j(K^{n,0})$ of rescaled copies of $K^{n,0}$, and if $i,j\in\N$ are distinct, then $S_i((0,1)^2)\cap S_j((0,1)^2)=\emptyset$. Since $\dimh(K^{n,0})=\alpha_n$ and $\mathcal{H}^{\alpha_n}(K^{n,0})>0$, by countable stability of Hausdorff dimension we have that $\dimh(L_k)=\alpha_n$ and $\mathcal{H}^{\alpha_n}(L_k)>0$. 
\end{rem}

The next lemma completes the proof of Proposition \ref{prop:tangents} by showing that the closeness of the sequences $Y_{N_j}$ and $X_{N_j}$ from Lemma \ref{lem:samelim} implies that they share a common Attouch-Wets limit.

\begin{lemma}\label{lem:exacttan}
Let $(N_j)_j\subset \N$ and $L_k\in\mathfrak{C}(\R^2;{\bf 0})$ be the subsequence and limit set from Lemma \ref{lem:tanY}, respectively. Then $(X_{N_j})_{j\in\N}$ converges to $L_k$ with respect to the Attouch-Wets topology.
\end{lemma}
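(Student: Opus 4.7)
The plan is to combine the Attouch-Wets convergence $Y_{N_j} \to L_k$ from Lemma \ref{lem:tanY} with the asymptotic closeness of $X_{N_j}$ and $Y_{N_j}$ established in Lemma \ref{lem:samelim}, using the triangle inequality for excess from Remark \ref{rem:excess}. To verify Attouch-Wets convergence $X_{N_j} \to L_k$, I need to show that for every $r>0$,
\[\exc(X_{N_j}\cap \overline{B}({\bf 0},r),L_k)\to 0 \quad\text{and}\quad \exc(L_k\cap \overline{B}({\bf 0},r),X_{N_j})\to 0\]
as $j\to\infty$.

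The main technical point is that the bounds from Lemma \ref{lem:samelim} compare $X_N\cap\overline{B}({\bf 0},r)$ with all of $Y_N$ rather than with $Y_N$ intersected with a ball, so I will perform a small ``radius inflation'' trick identical to the one used in the proof of Lemma \ref{lem:excauchy}. Specifically, fix $r>0$ and $\varepsilon>0$, and first choose $j_1\in\N$ large enough that for every $j\geq j_1$, $\exc(X_{N_j}\cap\overline{B}({\bf 0},r),Y_{N_j})<\min\{\varepsilon/2,1\}$. Any point in $X_{N_j}\cap\overline{B}({\bf 0},r)$ is then within distance $1$ of a point of $Y_{N_j}$, which automatically lies in $\overline{B}({\bf 0},r+1)$; hence $\exc(X_{N_j}\cap\overline{B}({\bf 0},r),Y_{N_j}\cap\overline{B}({\bf 0},r+1))<\varepsilon/2$.

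Next, since $Y_{N_j}\to L_k$ in the Attouch-Wets topology, choose $j_2\geq j_1$ so that for every $j\geq j_2$, $\exc(Y_{N_j}\cap\overline{B}({\bf 0},r+1),L_k)<\varepsilon/2$. By the triangle inequality for excess,
\[\exc(X_{N_j}\cap \overline{B}({\bf 0},r),L_k)\leq \exc(X_{N_j}\cap \overline{B}({\bf 0},r),Y_{N_j}\cap\overline{B}({\bf 0},r+1))+\exc(Y_{N_j}\cap\overline{B}({\bf 0},r+1),L_k)<\varepsilon\]
for all $j\geq j_2$, establishing the first limit. For the second limit, I will run the symmetric argument: use that $\exc(L_k\cap\overline{B}({\bf 0},r),Y_{N_j})\to 0$ by Attouch-Wets convergence and apply the same radius-inflation trick to conclude $\exc(L_k\cap\overline{B}({\bf 0},r),Y_{N_j}\cap\overline{B}({\bf 0},r+1))<\varepsilon/2$ eventually, then combine with the Lemma \ref{lem:samelim} estimate $\exc(Y_{N_j}\cap\overline{B}({\bf 0},r+1),X_{N_j})<\varepsilon/2$ via the triangle inequality.

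I do not anticipate any serious obstacle: the lemma is essentially the statement that Attouch-Wets convergence is preserved under bounded-excess perturbations, and everything reduces to careful bookkeeping with the six properties of excess listed in Remark \ref{rem:excess}. Once both excess estimates are verified, the definition of Attouch-Wets convergence in the first lemma of \textsection\ref{subsec:tangents} immediately yields $X_{N_j}\to L_k$, so $L_k\in\tang(K^\eta,x)$, completing the proof of Proposition \ref{prop:tangents} via Lemma \ref{lem:tanY}.
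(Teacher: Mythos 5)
Your proof is correct, but it takes a different route from the paper's. You verify Attouch-Wets convergence directly through the excess characterization stated in the preliminary lemma of \textsection\ref{subsec:tangents} (two limits of excesses over balls), combining Lemma~\ref{lem:samelim} with the $Y_{N_j}\to L_k$ convergence via the triangle inequality and a radius-inflation step, exactly as in the proof of Lemma~\ref{lem:excauchy}. The paper instead invokes the Heine--Borel property of $\R^2$ to pass through an alternative characterization of Attouch-Wets convergence from Beer's book (\cite[Lemma 2.1.2 and Lemma 3.1.4]{Beer}), phrased in terms of open sets hitting the limit and open balls missing it; condition (i) there is handled by contradiction, condition (ii) directly. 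Both arguments ultimately reduce the problem to the same two ingredients (the $Y_{N_j}\to L_k$ convergence and the Lemma~\ref{lem:samelim} estimates plus the excess triangle inequality), but your version is arguably more economical: it stays entirely within the excess-based definition already quoted in the preliminaries, avoiding the extra appeal to the Heine--Borel property and the auxiliary Beer lemmas, and does not require a proof by contradiction.
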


\begin{proof}
Since $\R^2$ has the Heine-Borel property, in order to show that $\lim_{j\to\infty} X_{N_j}= L_k$ with respect to the Attouch-Wets topology, by \cite[Lemma 2.1.2]{Beer} and \cite[Lemma 3.1.4]{Beer} it is sufficient to verify the following two conditions.
\begin{enumerate}
\item[(i)] If $V\subset \R^2$ is an open set such that $V\cap L_k\neq\emptyset$, then there exists $j_0\in\N$ such that for every integer $j\geq j_0$, $X_{N_j}\cap V\neq\emptyset$ as well.
\item[(ii)] If $\delta>0$ and $z\in\R^2$ such that $B(z,\delta)\cap L_k=\emptyset$, then for every $\e\in (0,\delta)$ there exists $j_0\in\N$ such that for every integer $j\geq j_0$, $B(z,\e)\cap X_{N_j}=\emptyset$ as well.
\end{enumerate}

To verify (i), fix such an open set $V\subset\R^2$. Then there exist $z\in V$ and $\e>0$ such that $B(z,\e)\subset V$ and $B(z,\e/2)\cap L_k\neq\emptyset$. Assume for the sake of contradiction that (i) fails to hold for this $V$, and thus $X_{N_j}\cap B(z,\e)=\emptyset$ infinitely often. Now by Lemma \ref{lem:samelim} and since $\lim_{j\to\infty} Y_{N_j}=L_k$, there exists $j_0\in\N$ such that for every integer $j\geq j_0$, $\exc(Y_{N_j}\cap \overline{B}({\bf 0},|z|+\e),X_{N_j})<\e/2$ and $B(z,\e/2)\cap Y_{N_j}\neq\emptyset$. By our assumption, there exists an integer $j\geq j_0$ such that $X_{N_j}\cap B(z,\e)=\emptyset$ and there exists a point $y\in Y_{N_j}\cap B(z,\e/2)$. Then we have
\[\exc(Y_{N_j}\cap \overline{B}({\bf 0},|z|+\e),X_{N_j})\geq \dist(y,X_{N_j})\geq\e/2,\]
which is a contradiction.

Now to verify (ii), fix $\delta>0$ and $z\in\R^2$ satisfying $B(z,\delta)\cap L_k=\emptyset$ and let $\e\in(0,\delta)$. Since $Y_{N_j}\to L_k$ as $j\to\infty$ with respect to the Attouch-Wets topology, there is a $j_0\in\N$ such that for every integer $j\geq j_0$, $Y_{N_j}\cap B(z,\frac{\e+\delta}{2})=\emptyset$. Furthermore by Lemma \ref{lem:samelim}, choosing $j_0$ large enough, we may assume that for every integer $j\geq j_0$, $\exc(X_{N_j}\cap \overline{B}({\bf 0},|z|+\delta),Y_{N_j})<\frac{\delta-\e}{2}$. Then for every integer $j\geq j_0$ and for each $p\in X_{N_j}\cap B({\bf 0},|z|+\delta)$, $\dist(p,Y_{N_j})<\frac{\delta-\e}{2}$. Since $Y_{N_j}\cap B(z,\frac{\e+\delta}{2})=\emptyset$, if $p\in B(z,\e)$, then $\dist(p,Y_{N_j})\geq \frac{\delta-\e}{2}$ and $p\in B({\bf 0},|z|+\delta)$. Therefore for each integer $j\geq j_0$, $X_{N_j}\cap B(z,\e)=\emptyset$, as desired.
\end{proof}

\section{Proof of Theorem \ref{thm:main}}\label{sec:proof}

The proof of Theorem \ref{thm:main} in the special case that $s=\a_n := \frac{\log(5n-6)}{\log(n)}$ follows from several of the propositions we have established so far. For the general case, we require the following lemma which shows that bi-H\"older equivalence is hereditary. See \cite{Li21} for a quasisymmetric version of this lemma.

\begin{lemma}\label{lem:biliptan}
Let $f:X\to Y$ be a $(1/t)$-bi-H\"older homeomorphism between two closed sets $X\subset \R^d$ and $Y\subset \R^m$ for some $t>0$. Then for every point $x_0\in X$ and every tangent $T\in\tang(X,x_0)$, there exist a tangent $T'\in\tang(Y,f(x_0))$ and a $(1/t)$-bi-H\"older homeomorphism $g:T\to T'$.   
\end{lemma}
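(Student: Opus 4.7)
The plan is to realize both the tangent $T'$ and the map $g$ as simultaneous Attouch--Wets and pointwise limits of rescalings of $Y$ and of $f$. Let $(r_j)_{j\in\N}$ be a sequence of positive scales with $r_j\to 0$ and $r_j^{-1}(X-x_0)\to T$ in the Attouch--Wets topology, and let $C\geq 1$ be a constant such that
\[ C^{-1}|x-y|^{1/t}\leq |f(x)-f(y)|\leq C|x-y|^{1/t} \qquad \text{for every } x,y\in X.\]
Set $s_j:=r_j^{1/t}$, which also tends to zero, and define rescaled maps $f_j:r_j^{-1}(X-x_0)\to s_j^{-1}(Y-f(x_0))$ by
\[ f_j(z):=s_j^{-1}(f(x_0+r_jz)-f(x_0)).\]
A direct calculation shows that each $f_j$ is a bijection with $f_j({\bf 0})={\bf 0}$, and, crucially, that it satisfies the \emph{same} bi-H\"older estimate as $f$ with the same constant $C$; the choice $s_j=r_j^{1/t}$ is precisely what calibrates both inequalities simultaneously.

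Next, since $\mathfrak{C}(\R^m;{\bf 0})$ is sequentially compact in the Attouch--Wets topology, I would pass to a subsequence (still indexed by $j$) so that $s_j^{-1}(Y-f(x_0))$ converges to some $T'\in\mathfrak{C}(\R^m;{\bf 0})$; by definition $T'\in\tang(Y,f(x_0))$. To build $g:T\to T'$, fix a countable dense subset $D\subset T$ with ${\bf 0}\in D$ and, for each $z\in D$, choose $z_j\in r_j^{-1}(X-x_0)$ with $z_j\to z$ (possible by the Attouch--Wets convergence $r_j^{-1}(X-x_0)\to T$). The upper bi-H\"older bound yields $|f_j(z_j)|\leq C|z_j|^{1/t}$, so $\{f_j(z_j)\}_j$ is bounded; a diagonal extraction produces a further subsequence along which $f_j(z_j)$ converges for every $z\in D$ simultaneously. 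Passing the two-sided bi-H\"older estimate to these limits gives a $(1/t)$-bi-H\"older map $g_0:D\to \R^m$, which extends by uniform continuity to a $(1/t)$-bi-H\"older map $g:T\to\R^m$.

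Finally, I would verify that $g(T)=T'$. Containment $g(T)\subset T'$ follows from the standard fact that if $y_j\in s_j^{-1}(Y-f(x_0))$ and $y_j\to y$, then the limit $y$ lies in the Attouch--Wets limit $T'$. Surjectivity uses the lower bi-H\"older estimate in reverse: given $w\in T'$, pick $w_j\in s_j^{-1}(Y-f(x_0))$ with $w_j\to w$, write $w_j=f_j(z_j)$ for a unique $z_j$ in the domain of $f_j$, and note that $C^{-1}|z_j|^{1/t}\leq |w_j|$ forces $(z_j)_j$ to be bounded; extract a convergent subsequence $z_j\to z\in\R^d$, observe $z\in T$ by the Attouch--Wets convergence, and check by construction (first on $D$, then by density and continuity of $g$) that $g(z)=w$. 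The main obstacle is the subsequence bookkeeping: three nested subsequences are required (for the Attouch--Wets convergence of $s_j^{-1}(Y-f(x_0))$, for the diagonal limit defining $g_0$, and for the surjectivity argument), and the key technical point that makes the whole scheme work is that the choice $s_j=r_j^{1/t}$ keeps the bi-H\"older constants of $f_j$ uniform in $j$, so that both Arzel\`a--Ascoli-type compactness and the passage of the bi-H\"older estimates to the limit are legitimate.
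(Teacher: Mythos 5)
Your proof is correct, and it takes a genuinely different route from the paper. The paper first treats the case $t\geq 1$ by extending each rescaled map $f_j$ to all of $\R^d$ via McShane's extension theorem (which requires the H\"older exponent $1/t\leq 1$), applying the Arzel\`a--Ascoli theorem to the extensions, and showing that the locally uniform limit $F$ restricted to $T$ is the desired bi-H\"older homeomorphism onto $F(T)$; it then handles $t<1$ separately by running the same argument for $f^{-1}$, which is $t$-bi-H\"older with $t\leq 1$, and inverting at the end. You instead avoid McShane entirely: you do a Cantor diagonal over a countable dense subset $D\subset T$, pass the two-sided bi-H\"older estimate to the limit to get $g_0$ on $D$, and extend by uniform continuity. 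This sidesteps the constraint $1/t\leq 1$ that makes McShane applicable, so your argument treats all $t>0$ uniformly with no case split. The trade-off is that you must check by hand (via the lower bi-H\"older bound and the Attouch--Wets Cauchy property) that $g(T)=T'$, whereas the paper gets $T'=F(T)$ for free from locally uniform convergence. Both proofs hinge on the same calibration $s_j=r_j^{1/t}$ keeping the bi-H\"older constants of the $f_j$ uniform in $j$, and both implicitly use that the third (point-by-point) subsequence extraction in the surjectivity step cannot disturb the limits $g_0(z^{(m)})$ already defined along the earlier nested subsequence; you might spell out that last point, since you flag "subsequence bookkeeping" as the main hazard, and the reason it is harmless is precisely that sub-subsequences of a convergent sequence converge to the same limit.
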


\begin{proof}
Assume first that $t\geq 1$. Let $x_0\in X$, let $T\in\tang(X,x_0)$, and let $r_j\to 0$ be a sequence of scales for which $r_j^{-1}(X-x_0)\to T$ as $j\to\infty$ in the Attouch-Wets topology. For each $j\in\N$, define the function 
\[ f_j:r_j^{-1}(X-x_0)\to r_j^{-1/t}(Y-f(x_0)) \quad \text{with}\quad f_j(z)=r_j^{-1/t}(f(r_jz+x_0)-f(x_0)).\] 
It is easy to see that each $f_j$ is a $(1/t)$-bi-H\"older homeomorphism with a bi-H\"older coefficient $H$ independent of $j$. By McShane's extension theorem \cite[Corollary 1]{mcshane}, each $f_j$ extends to a $(1/t)$-H\"older map $F_j:\R^d\to \R^m$ with the H\"older coefficient equal to $H$. Passing to a subsequence, by the Arzel\'a-Ascoli theorem, we may assume that $\{F_j\}$ converges locally uniformly to a $(1/t)$-H\"older map $F:\R^d\to \R^m$. 

Since the maps $F_j|T$ are $(1/t)$-bi-H\"older with the same bi-H\"older coefficients, $F|_T:T \to F(T)$ is a $(1/t)$-bi-H\"older homeomorphism. We claim that $r_j^{-1/t}(Y-f(x_0))\to F(T)$ as $j\to\infty$ in the Attouch-Wets topology. First note that 
\[ r_j^{-1/t}(Y-f(x_0))=F_j(r_j^{-1}(X-x_0)).\] 
Let $R>0$, let $\e>0$, and let $j_0\in\N$ such that for every integer $j\geq j_0$, 
\begin{align*}
\|F_j-F\|_{\infty,\overline{B}({\bf 0},R)}&<\e\\
\exc(\overline{B}({\bf 0},R)\cap r_j^{-1}(X-x_0),T)&<\e\\
\exc(\overline{B}({\bf 0},R)\cap T,r_j^{-1}(X-x_0))&<\e.
\end{align*}
For an integer $j\geq j_0$, given $z\in \overline{B}({\bf 0},R)\cap r_j^{-1}(X-x_0)$ there exists $y\in T$ with $|z-y|<\e$, so
\[|F_j(z)-F(y)|\leq |F_j(y)-F_j(z)|+|F_j(z)-F(z)|< H \e^{1/t} +\e.\]
Therefore,
\[\lim_{j\to \infty} \exc(\overline{B}({\bf 0},H R^{1/t})\cap F_j(r_j^{-1}(X-x_0)),F(T))= 0.\] 
The other excess estimate follows via a similar calculation, thus $r_j^{-1/t}(Y-f(x_0))\to F(T)$ as $j\to\infty$ in the Attouch-Wets topology. This concludes the case $t\geq 1$.

Now let $t\in (0,1)$. Then $f^{-1}:Y\to X$ is a $t$-bi-H\"older homeomorphism. Fix $x_0\in X$, let $T\in\tang(X,x_0)$, and let $r_j\to 0$ be a sequence of scales such that $r_j^{-1}(X-x_0)\to T$ as $j\to\infty$ in the Attouch-Wets topology. Then $r_j^t\to 0$ is a sequence of scales, and there is a convergent subsequence $r_{j_k}^{-t}(Y-f(x_0))\to S$ as $k\to\infty$ in the Attouch-Wets topology. As in the case $t\geq 1$, we have $g_{j_k}:r_{j_k}^{-t}(Y-f(x_0))\to r_{j_k}^{-1}(X-x_0)$, a sequence of $t$-bi-H\"older homeomorphisms with uniform coefficients which extend to $t$-H\"older functions $G_{j_k}:\R^m\to \R^d$, and the maps $G_{j_k}$ converge locally uniformly to a $t$-H\"older map $G:\R^m\to \R^d$. Then 
\[ G_{j_k}(r_{j_k}^{-t}(Y-f(x_0)))\to G(S)\] 
as $j\to\infty$ in the Attouch-Wets topology, and since $G_{j_k}(r_{j_k}^{-t}(Y-f(x_0)))=r_{j_k}^{-1}(X-x_0)$ converges, it must converge to $T$. Therefore $G(S)=T$, and by locally uniform convergence $G|_S$ is a $t$-bi-H\"older homeomorphism onto $T$. Finally, $G^{-1}:T\to S$ is a $(1/t)$-bi-H\"older homeomorphism, as desired.
\end{proof}

We can now prove Theorem \ref{thm:main}.

\begin{proof}[Proof of Theorem \ref{thm:main}]
Fix $s>1$. Let $n\geq 4$ be an even integer such that $\alpha_n\in (1,s)$ and let $\eta \in \mathscr{C}_n$ be a random function. By Proposition \ref{thm:holder} there exists a $\frac1{\a_n}$-H\"older surjection $f:[0,1] \to K^{\eta}$ and by Proposition \ref{prop:reg}, $\mathcal{H}^{\alpha_n}(K^{\eta})>0$. Finally, by Proposition \ref{prop:tangents}, for $\mathcal{H}^{\a_n}$-a.e. point $x\in K^{\eta}$ and for all integers $k\geq 0$, there exists $T_{n,k} \in \tang(K^{\eta},x)$ such that $T_{n,k}$ has exactly $\frac{1}{5n-7}((5n-6)^k-1)$ many cut points. By Lemma \ref{lem:cuthom}, it follows that tangents $T_{n,0}, T_{n,1},\dots$ are pairwise topologically distinct.

By Assouad's embedding theorem \cite{Assouad}, there exists $N\in\N$ and a bi-Lipschitz embedding of the metric space $(K^{\eta},|\cdot|^{\a_n/s})$ into $\R^N$ and denote by $\Gamma$ the embedded image. This mapping produces an $(\a_n/s)$-bi-H\"older homeomorphism $g: K^{\eta} \to \Gamma$. Therefore, there exists some $C>1$ such that for every Borel $B \subset K^{\eta}$, 
\[ C^{-1}\mathcal{H}^{\a_n}(B) \leq \mathcal{H}^{s}(g(B)) \leq C\mathcal{H}^{\a_n}(B), \]
so $\mathcal{H}^s(\Gamma)>0$. Moreover, the map $g\circ f : [0,1] \to \Gamma$ is a $(1/s)$-H\"older surjection. By Lemma \ref{lem:biliptan}, we have that for every point $x\in K^{\eta}$ and every tangent $T\in\tang(K^{\eta},x)$ there exists a tangent $T'\in \tang(\Gamma,g(x))$ such that $T$ and $T'$ are $(\alpha_n/s)$-bi-H\"older equivalent; in particular, $T$ and $T'$ are homeomorphic to each other. Therefore, $\tang(\Gamma,x)$ contains infinitely many topologically distinct elements.
\end{proof}

\section{Tangents of self-similar sets}\label{sec:selfsim}
In this section we study tangents of self-similar sets. The main goals are the following proposition and the proof of Theorem \ref{thm:ss}.

\begin{proposition}\label{lem:selfsimtan}
Let $\{\phi_i:\R^N\to \R^N\}_{i=1}^m$ be an IFS of similarities satisfying the OSC for some open set $U$ with $U\cap Q\neq\emptyset$, where $Q$ is the attractor, and let $s= \dimh{Q}$. There exists $C_0>1$ depending only on the Lipschitz norms of $\phi_1,\dots,\phi_m$ such that for $\mathcal{H}^s$-almost every $x\in Q$, every $T\in\tang(Q,x)$, and every $R>0$, there exist similarities $\{f_i:Q\to T\}_{i=1}^{n}$ with $n\leq C_0$ with Lipschitz norms in $[C_0^{-1}R, R]$ such that
\[\overline{B}({\bf 0},R)\cap T\subset \bigcup\limits_{j=1}^{M_R} f_j^R(Q)\subset \overline{B}({\bf 0},2R)\cap T,\]
and for all distinct $i,j\in\{1,\dots,n\}$, $f_i^R(U)\cap f_j^R(U)=\emptyset$ and $f_i^R(U\cap Q)$ is an open subset of $T$. In particular, $\dimh(T) = \dimh(Q)$ and $\mathcal{H}^{\dimh(Q)}(T)>0$.
\end{proposition}

\begin{proof}
Without loss of generality, assume that $\diam{Q}=1$ and that $0<\Lip(\phi_1)\leq\cdots\leq\Lip(\phi_m)<1$. By \cite[Theorem 2.2]{Schief} and by \cite[Theorem 2.1, Lemma 2.5]{Fan}, we may assume that $\mathcal{H}^s(Q\cap U) = \mathcal{H}^s(Q)$. In particular, $Q\cap U \neq \emptyset$.

Fix $x\in Q\cap U$, $T \in \tang(Q,x)$, and $R>0$. Let $r_j\to 0$ be a decreasing sequence of positive scales such that $r_1<1/R$ and the sets $D_j:=(r_j)^{-1}(Q-x)$ converge to $T$ in the Attouch-Wets topology as $j\to\infty$. Define for each $j\in\N$ the similarity $g_j:\R^N\to \R^N$, given by $g_j(y)=(r_j)^{-1}(y-x)$, and note that $g_j(Q)=D_j$. Define the alphabet $A=\{1,\dots,m\}$ and for a word $w\in A^*$, define $L_w:=\Lip(\phi_w)$. Following \cite[\textsection2.3]{BV2}, for any $\delta\in (0,1)$ define
\[A^*(\delta):=\{i_1 \dots i_n\in A^*: n\geq 1\text{ and } L_{i_1 i_2\dots i_n}<\delta\leq L_{i_1 i_2\dots i_{n-1}}\}\]
and $A^*(1):=\{\eps\}$, the set containing only the empty word. 

Note that if $w\in A^*(r_j R)$, then
\begin{equation}\label{eq:A_w}
L_1r_j R \leq  L_w<r_j R.
\end{equation}
For each $j\in\N$ define 
\[ W^R_j:=\{w\in A^*: \phi_w(Q)\cap \overline{B}(x,r_j R)\neq\emptyset\} \quad \text{and}\quad C^R_j:= W^R_j\cap A^*(r_j R).\] 
Note that for each $j$, $\overline{B}(x,r_j R)\cap Q\subset \bigcup_{w\in C^R_j} \phi_w(Q)$. Additionally, by the OSC, if $v,w\in A^*$ are distinct words satisfying $\phi_w(U)\cap \phi_v(U)\neq \emptyset$, then there exists $u\in A^*\setminus\{\eps\}$ such that either $w=vu$ or $v=wu$, which further implies that $\frac{L_w}{L_v}\leq L_1$ or $\frac{L_v}{L_w}\leq L_1$. Moreover, if $j\in\N$ and $w,v\in C^R_j$, then $\frac{L_w}{L_v}>L_1$ (or vice-versa); thus if $w,v\in C^R_j$ are distinct, $\phi_w(U)\cap \phi_v(U)=\emptyset$. Roughly speaking, each set $\{\phi_w:w\in C^R_j\}$ respects the ``disjoint images" part of the open set condition for $U$. Observe that for each $j\in \N$ and each $w\in C^R_j$ by \eqref{eq:A_w}, the fact that $\diam(Q)=1$ and the fact that $\phi_w(Q)\cap \overline{B}(x,r_j R)\neq \emptyset$, we have $\phi_w(Q)\subset \overline{B}(x,2 r_j R)\cap Q$. Hence, for each $j\in\N$,
\begin{equation}\label{eq:localss}
\overline{B}(x,r_j R)\cap Q\subset \bigcup\limits_{w\in C^R_j} \phi_w(Q)\subset \overline{B}(x,2 r_j R)\cap Q.
\end{equation}

Next, since $Q$ is self-similar, it follows that $\mathcal{H}^s(Q)>0$ \cite[\textsection5.3]{Hutchinson} and, in fact, $Q$ is Ahlfors $s$-regular \cite[Lemma 2.4]{BV2}. That is, there exists $c_1\geq 1$ such that for every $y\in Q$ and $r\in (0,1)$,
\[(c_1)^{-1} r^s\leq \mathcal{H}^s(\overline{B}(y,r)\cap Q)\leq c_1 r^s.\]
Moreover, by \cite[Theorem 5.3.1]{Hutchinson}, for every $w\in A^*$ we have $\mathcal{H}^s(\phi_w(Q))=L_w^s$. 

We claim that $\card(C^R_j) \leq (2/L_1)^sc_1$ for all $j\in\N$. To this end, fix $j\in\N$. By \eqref{eq:localss},
\[\card(C^R_j) (L_1 r_j R)^s \leq \sum_{w\in C^R_j} \mathcal{H}^s(\phi_w(Q)) = \mathcal{H}^s(\bigcup\limits_{w\in C^R_j}\phi_w(Q))\leq\mathcal{H}^s(\overline{B}(x,2 r_j R)\cap Q)\leq (2R)^s c_1 (r_j)^s,\]
where the equality follows from the fact that sets $\phi_w(U)$ for $w\in C^R_j$ are mutually disjoint.
The claimed inequality follows.

By the preceding claim, the sequence $(\card (C^R_j))_{j\in\N}$ is a bounded sequence of positive integers, so passing to a subsequence, we may assume that it is constant. That is, there exists an integer $M_R \in [1,(2/L_1)^sc_1]$ such that $\card(C^R_j)=M_R$ for all $j\in\N$.
We write $C_j^R=\{w_{1,j}^R,\dots,w_{M_R,j}^R\}$. Note that for every $j\in\N$ and $i\in\{1,\dots,M_R\}$ the map $g_j\circ \phi_{w_{i,j}^R}:\R^N\to \R^N$ is a similarity with $L_1 R\leq \Lip(g_j\circ \phi_{w_{i,j}^R})\leq R$. Moreover,
\[D_j\cap \overline{B}({\bf 0},R) \subset \bigcup_{i=1}^{M_R}(g_j\circ \phi_{w_{i,j}^R}(Q)) \subset D_j\cap \overline{B}({\bf 0},2R),\] 
and if $i_1,i_2\in\{1,\dots,M_R\}$ are distinct, then $g_j\circ \phi_{w_{i_1,j}^R}(U)\cap g_j\circ \phi_{w_{i_2,j}^R}(U)=\emptyset$.

Since the similarities $g_j\circ \phi_{w_{i,j}}^R$ have uniformly bounded Lipschitz norms and are pointwise uniformly bounded, by the Arzel\`a-Ascoli theorem, and passing to a subsequence, we may assume that for each $i\in\{1,\dots,M_R\}$, $g_{j}\circ \phi_{w_{i,j}^R} \to f_i^R$ locally uniformly as $j\to\infty$. Since $D_j\to S$ as $j\to\infty$ in the Attouch-Wets topology, for each $i\in\{1,\dots,M_R\}$, $f_i^R(Q)\subset S$. Furthermore by the properties of the maps $g_j\circ \phi_{w_{i,j}^R}$ and by local uniform convergence, we obtain analogous properties for the $f_i^R$. Precisely, for each $i\in \{1,\dots,M_R\}$ the map $f_i^R:\R^N\to\R^N$ is a similarity with $L_1 R\leq \Lip(f_i^R)\leq R$. Moreover,
\[\overline{B}({\bf 0},R)\cap S\subset \bigcup_{i=1}^{M_R} f_i^R(Q)\subset \overline{B}({\bf 0},2R)\cap S \]
and if $i,j\in\{1,\dots,M_R\}$ are distinct, then $f_i^R(U)\cap f_j^R(U)=\emptyset$.

To complete the proof, we show that for every $i\in\{1,\dots,M_R\}$, $f_i^R(U\cap Q)$ is an open subset of $S$. Since $f_i^R$ is a similarity, $f_i^R(U)$ is an open subset of $\R^N$, which implies that $f_i^R(U)\cap S$ is an open subset of $S$. By properties of functions $f_i^R$ above we have that 
\[ f_i^R(U\cap Q)\cap B({\bf 0},R)=f_i^R(U)\cap S\cap B({\bf 0},R).\] 
Furthermore, if $\{f_1^{2R},\dots,f_{M_{2R}}^{2R}\}$ is constructed in a similar manner (replacing $R$ by $2R$), then for every $i\in\{1,\dots,M_R\}$ there exists unique $j\in\{1,\dots,M_{2R}\}$ such that $f_i^R(Q)\subset f_j^{2R}(Q)$. Indeed, the sets $C_j^R$ and $C_j^{2R}$ satisfy $\bigcup_{v\in C_j^R}\phi_v(Q)\subset \bigcup_{u\in C_j^{2R}} \phi_u(Q)$, thus for each $v\in C_j^R$, there exists a unique $u\in C_j^{2R}$ satisfying $\phi_v(U\cap Q)\subset \phi_u(U\cap Q)$. Then it must hold that $f_i^R(U\cap Q)$ is an open subset of $f_j^{2R}(U\cap Q)\cap B({\bf 0},2R)$ which is an open subset of $S\cap B({\bf 0},2R)$. Therefore, $f_i^R(U\cap Q)$ is an open subset of $S$.
\end{proof}

\subsection{Tangents of self-similar sponges}\label{sec:sponge}
We say that a set $F\subset \R^N$ is \emph{locally self-similar} if for every pair of points $x,y\in F$ and every pair of scales $\rho_1,\rho_2>0$, there exists a similarity $g:\R^N \to\R^N$ such that $g(B(x,\rho_1)\cap F)$ is a subset of $B(y,\rho_2)\cap F$ and is open relative to $F$. Note that if $F$ is locally self-similar, then it cannot have a positive and finite number of local cut-points.

\begin{lemma}\label{lem:selfsimsponge}
Let $K$ be a self-similar sponge. Then for every $x\in (0,1)^N\cap K$, every tangent $T\in\tang(K,x)$ is locally self-similar. 
\end{lemma}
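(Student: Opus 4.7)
The plan is to apply Proposition \ref{lem:selfsimtan} with open set $U = (0,1)^N$. Since the defining maps $S_i(y) = k^{-1}y + p_i$ of the sponge are all rotation-free and reflection-free with common contraction ratio $k^{-1}$, and since $x \in U \cap K$, the proposition yields, for each $R > 0$, rotation-free similarities $f_1^R, \dots, f_{M_R}^R$ with Lipschitz ratios in $[k^{-1}R, R]$ whose images $f_i^R(K)$ cover $T \cap \overline{B}({\bf 0}, R)$ and are contained in $T \cap \overline{B}({\bf 0}, 2R)$, with pairwise disjoint open sets $f_i^R(U)$ and each $f_i^R(U \cap K)$ open in $T$.

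The first step is to establish a refinement: for any $R, i$ and any word $u$ in the sponge's IFS alphabet, the composition $f_i^R \circ S_u$ is a rotation-free similarity and $f_i^R \circ S_u(U \cap K)$ is open in $T$. This follows from (i) $S_u(U \cap K) = S_u(U) \cap K$ being open in $K$ (using the OSC: $S_u(U) \subset U$ is disjoint from $S_v([0,1]^N) \supset S_v(K)$ for $v \neq u$), (ii) $S_u(U \cap K) \subset U \cap K$, and (iii) $f_i^R|_K$ being a homeomorphism that sends open subsets of $U \cap K$ onto open subsets of $f_i^R(U \cap K)$, which is itself open in $T$. This provides arbitrarily fine open sub-cells in $T$, each a rotation-free similarity image of $U \cap K$.

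Given $y_1, y_2 \in T$ and $\rho_1, \rho_2 > 0$, I would construct the required similarity $g$ as follows. First, choose $R_1$ large and $i_1$ so that $y_1 \in f_{i_1}^{R_1}(U \cap K)$ and $B(y_1, \rho_1) \subset f_{i_1}^{R_1}(U)$; this is generically possible for large $R_1$ since cells have Lipschitz ratios at least $R_1/k$, so their interiors grow with $R_1$. Setting $z_1 := (f_{i_1}^{R_1})^{-1}(y_1) \in U \cap K$ with $r_1 := \rho_1/\Lip(f_{i_1}^{R_1})$, this gives $B(y_1, \rho_1) \cap T = f_{i_1}^{R_1}(B(z_1, r_1) \cap K)$ with $r_1 < \dist(z_1, \partial[0,1]^N)$. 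Next, choose $R_2, i_2$ with $y_2 \in f_{i_2}^{R_2}(K)$, set $z_2 := (f_{i_2}^{R_2})^{-1}(y_2) \in K$, and pick a word $u$ of length $m$ large enough that $k^{-m}\sqrt{N} < \rho_2/(2\Lip(f_{i_2}^{R_2}))$ with $z_2 \in S_u([0,1]^N)$ (which exists since $z_2 \in K = \bigcup_{|u|=m}S_u(K)$). Define $g := f_{i_2}^{R_2} \circ S_u \circ (f_{i_1}^{R_1})^{-1}$, a rotation-free similarity of scale $\mu = \Lip(f_{i_2}^{R_2}) k^{-m}/\Lip(f_{i_1}^{R_1})$, made arbitrarily small by enlarging $m$. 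The identity $S_u(B(z_1, r_1) \cap K) = B(S_u(z_1), k^{-m}r_1) \cap K$ holds because $B(S_u(z_1), k^{-m}r_1) \subset S_u(U)$ is disjoint from other cells $S_v([0,1]^N)$, and combined with the containment $B(g(y_1), \mu \rho_1) \subset f_{i_2}^{R_2}(S_u(U))$ one obtains $g(B(y_1, \rho_1) \cap T) = B(g(y_1), \mu\rho_1) \cap T$, which is open in $T$ and contained in $B(y_2, \rho_2) \cap T$.

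The main obstacle is the localization step: ensuring for any $y_1 \in T$ and any $\rho_1 > 0$ there exist $R_1, i_1$ with $B(y_1, \rho_1) \subset f_{i_1}^{R_1}(U)$. This works when $y_1$ lies inside some cell with positive interior margin, but can fail for points $y_1$ on shared cell boundaries at every scale (analogous to $(1/3, 1/9)$ in the Sierpinski carpet, which always lies on a grid line between cells). For such exceptional points, I would invoke Lemma \ref{lem:capface} to classify the local cell-meeting configurations at $y_1$: cells meeting at $y_1$ always intersect in a face of $[0,1]^N$ (up to the similarity identifying them), so locally $T$ near $y_1$ is a union of finitely many cells meeting at a grid-aligned face of one of only boundedly many combinatorial types. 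This finite face-configuration can then be matched near $y_2$ by selecting an analogous arrangement of adjacent sub-cells at the appropriate scale within the sponge's grid, and $g$ is redefined to carry the multi-cell union at $y_1$ onto the matching union at $y_2$.
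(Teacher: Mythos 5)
Your proposal correctly identifies the core difficulty: the "generic" construction $g = f_{i_2}^{R_2} \circ S_u \circ (f_{i_1}^{R_1})^{-1}$ works only when $B(y_1,\rho_1)$ sits inside a single open cell $f_{i_1}^{R_1}(U)$, and this fails for points $y_1$ that lie on cell boundaries at every scale. That generic case is handled correctly (and broadly matches part of the paper's strategy). But your treatment of the exceptional case is where the real work is, and the fix you sketch does not carry the burden.

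Two specific gaps. First, to define a \emph{single} similarity on the multi-cell neighborhood of $y_1$, you must show that the finitely many cells $\{S_w : w\in W_j\}$ meeting $y_1$ at scale $j$ all share a common point (hence a common face). Lemma \ref{lem:capface} only controls \emph{pairwise} intersections; the paper must prove a nontrivial claim that $\bigcap_{w\in W_j}S_w([0,1]^N)\neq\emptyset$, using a coordinate bookkeeping argument. That shared corner is what forces the maps $f_i\circ S_{v_i^j}^{-1}$ to all coincide (the identity in the paper's display preceding the definition of $G$), and only then is there a well-defined similarity $G$ carrying a single open subset of $K\cap(0,1)^N$ onto $B(p,\rho_1)\cap T$. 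Your proposal cites Lemma \ref{lem:capface} but does not establish either the common-intersection claim or the compatibility identity. Second, your plan then tries to "match" the multi-cell configuration at $y_1$ with "an analogous arrangement of adjacent sub-cells" near $y_2$. This is exactly what the paper's argument is engineered to \emph{avoid}: there is no guarantee that the required face-configuration can be found at the right scale inside the covering cells near an arbitrary $y_2\in T$. The paper sidesteps this entirely by always taking the target to be a fresh copy of $K\cap(0,1)^N$ (available near every $y_2$ by Proposition \ref{lem:selfsimtan}), so the source need only be shown to sit as an open subset of $K\cap(0,1)^N$ — which is precisely what the compatibility identity delivers. Without that insight, the "matching" step is an unproven claim of comparable difficulty to the lemma itself.
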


\begin{proof}
Suppose that $K$ is the attractor of $\{S_i\}_{i=1}^m$ with $S_i(y) = k^{-1}y+p_i$ where $k$ and $\{p_1,\dots,p_m\}$ are as in \eqref{eq:defnsponge}. If $m=1$, then the result is trivial, hence we may assume that $m\geq 2$. Set $A=\{1,\dots,m\}$. Note that for each $n\in\N$, $v\in A^n$, and $i\in A$, 
\[ S_{iv}(y)=k^{-n-1}y+S_{iv}({\bf 0}) = k^{-n-1}y + k^{-1} S_v({\bf 0})+S_i({\bf 0})\] 
and that $S_{iv}({\bf 0})\in\{0,\frac{1}{k^{n+1}},\dots,1-\frac1{k^{n+1}}\}^N$. Thus by a simple inductive argument, for every $n\in\N$ and every $w\in A^n$, $S_w(y)=k^{-n}y+ S_w({\bf 0})$, where $S_w({\bf 0})\in \{0,\frac{1}{k^n},\dots,\frac{k^n-1}{k^n}\}^N$.

If $(0,1)^N\cap K=\emptyset$, then the result is trivial, so assume $(0,1)^N\cap K\neq\emptyset$. Fix $x\in (0,1)^N\cap K$, $T\in\tang(K,x)$, $p,q\in T$, $\rho_1,\rho_2>0$, and a sequence of positive scales $r_j\to 0$ such that the sets $X_j:=(r_j)^{-1}(K-x)$ converge to $T$ in the Attouch-Wets topology as $j\to\infty$.

We first show that there exist a set $U\subset K\cap (0,1)^N$ open relative to $K$, and a similarity map $G:U\to B(p,\rho_1)\cap T$ which is surjective. For $j\in\N$, define $g_j:K\to X_j$ by $g_j(y)=(r_j)^{-1}(y-x)$, and set $B_j:= B(r_j p+x,\rho_1 r_j)\cap K$. Since $X_j\to T$ as $j\to\infty$ in the Attouch-Wets topology, we have 
$$\lim_{j\to \infty}\exc(B(p,\rho_1/2)\cap T,g_j(B_j))= 0,$$ 
so passing to a subsequence, we may assume that $B_j\neq \emptyset$ for all $j\in\N$. Additionally, since $\lim_{j\to\infty}\diam(B_j)= 0$, since $x\in K\cap (0,1)^N$, and since $\lim_{j\to\infty}\dist(x,B_j)= 0$,
again passing to a subsequence, we may assume that $B_j\subset K\cap (0,1)^N$ for all $j\in\N$. 

For each $j\in\N$, let $n_j\in\N$ such that
\[2\rho_1 r_j <k^{-n_j}\leq 2\rho_1 k r_j\]
and define
\[ W_j:=\{v\in A^{n_j}: S_v([0,1]^N)\cap B_j\neq\emptyset\}.\]
The sets $W_j$ are nonempty and a simple computation shows that
\[B_j\subset \bigcup\limits_{v\in W_j} S_v(K)\subset K\cap B(r_j p+x,4k\sqrt{N}\rho_1 r_j).\]
Thus we may assume that for all $j\in\N$, $\bigcup_{v\in W_j} S_v(K)\subset K\cap (0,1)^N$. Furthermore, by the OSC, for every pair of distinct words $w,v\in W_j$, $S_v((0,1)^N)\cap S_w((0,1)^N)=\emptyset$. Therefore, by the doubling property of $\R^N$, there exists $M' \in\N$ such that $\card(W_j) \leq M'$ for all $j\in\N$.

We claim that $\bigcap_{w\in W_j} S_w([0,1]^N)\neq\emptyset$ for every $j\in\N$. We first show the claim for two words only. Fix distinct $v,w\in W_j$ and let $z_v,z_w\in [0,1]^N$ such that 
\[ |S_v(z_v)-(r_jp+x)|<\rho_1 r_j \quad \text{and}\quad |S_w(z_w)-(r_j p+x)|<\rho_1 r_j,\] 
thus by the triangle inequality we obtain 
\[\dist(S_v([0,1]^N),S_w([0,1]^N))<2\rho_1 r_j <k^{-n_j}.\] 
Therefore, $S_w([0,1]^N)\cap S_v([0,1]^N)\neq\emptyset$ and the claim in this special case holds.

For $l\in\{1,\dots,N\}$ and $w\in W_j$, denote by $S_w({\bf 0})^{(l)}$ the $l$-th coordinate of $S_w({\bf 0})$. Note that if $w,v\in W_j$ are distinct and $S_v({\bf 0})^{(l)}-S_w({\bf 0})^{(l)}>0$ for some $l\in\{1,\dots,N\}$, then $S_v({\bf 0})^{(l)}-S_w({\bf 0})^{(l)}= k^{-n_j}$. This further implies that if $u\in W_j \setminus \{w,v\}$, then $S_v({\bf 0})^{(l)}-S_u({\bf 0})^{(l)}\geq 0$. 

Fix now a word $v\in W_j$. For each $w\in W_j\setminus\{v\}$ and $l\in\{1,2,\dots,N\}$, let 
\[
I_w^l := 
\begin{cases}
[0,1], &\text{ if $S_v({\bf 0})^{(l)} - S_w({\bf 0})^{(l)} = 0$}\\
\{0\}, &\text{ if $S_v({\bf 0})^{(l)} - S_w({\bf 0})^{(l)} = k^{-n_j}$}\\
\{1\}, &\text{ if $S_v({\bf 0})^{(l)} - S_w({\bf 0})^{(l)} = -k^{-n_j}$.}\\
\end{cases}
\] 
For each $l\in\{1,\dots,N\}$, if there exists $w\in W_j\setminus\{v\}$ with $I_w^l=\{0\}$, then for every $u\in W_j\setminus\{v\}$ either $I_u^l=\{0\}$ or $I_u^l=[0,1]$, and similarly in the case that $I_w^l=\{1\}$. In particular, $I_w^l\cap I_u^l\neq\emptyset$ for every pair of words $w,u\in W_j\setminus\{v\}$. Therefore $\bigcap_{w\in W_j\setminus\{v\}} (I_w^1\times\dots\times I_w^N)\neq\emptyset$. By Lemma \ref{lem:capface}, $S_v(I_w^1\times\dots\times I_w^N)=S_v([0,1]^N)\cap S_w([0,1]^N)$ for each $w\in W_j\setminus\{v\}$, and by the special case this intersection is nonempty. Thus we obtain $\bigcap_{w\in W_j} S_w([0,1]^N)\neq\emptyset$, proving the claim.

Since the sequence $(\card(W_j))_j$ is bounded, passing to a subsequence, we may assume that $\card(W_j) = M$ for all $j\in\N$ for some $M\in\N$. Write $W_j=\{v^j_1,\dots,v^j_M\}$ and note that for every $i\in\{1,\dots,M\}$ and $j\in\N$, $g_j\circ S_{v_i^j}:K\to B(p,4k\sqrt{N}\rho_1)\cap X_j$ is a similarity with $\Lip(g_j\circ S_{v_i^j}) \in (2\rho_1,2\rho_1k]$. By the Arzel\`a-Ascoli theorem and passing to a subsequence, we may assume that that for every $i\in\{1,\dots,M\}$, the maps $g_j\circ S_{v_i^j}$ converge uniformly to a similarity $f_i:K\to T$ as $j\to\infty$. Moreover, for some $L \in (2\rho_1,2\rho_1k]$, we have $\Lip(f_i) = L$ for all $i\in\{1,2,\dots,M\}$. Additionally, 
\[ B(p,\rho_1)\cap T\subset \bigcup_{i=1}^M f_i(K),\] 
and by the OSC, if $i,j\in\{1,\dots,M\}$ are distinct, then $f_i((0,1)^N)\cap f_j((0,1)^N)=\emptyset$.

By Lemma \ref{lem:capface}, for every $j\in\N$ there exists a face $C_j$ of $[0,1]^N$ such that $\bigcap_{i=1}^M S_{v_i^j}([0,1]^N)=S_{v_1^j}(C_j)$. Thus, for every $j\in\N$ there exist distinct $e_{j,1},\dots,e_{j,M}\in\{0,1\}^N$ satisfying $S_{v_i^j}(e_{j,i})=S_{v_1^j}(e_{j,1})$ for every $i\in \{1,\dots,M\}$. Passing to a subsequence, we may assume that there exist distinct $e_1,\dots,e_M\in\{0,1\}^N$ such that $e_{j,i}=e_i$ for every $j\in\N$ and $i\in\{1,\dots,M\}$. Then $f_i(e_i)=f_1(e_1)$ and $S_{v_i^j}(e_i) = S_{v_1^j}(e_1)$ for every $j\in\N$ and $i\in\{1,\dots,M\}$. Therefore, 
\begin{align}\label{eq:f_j}
f_i\circ S_{v_i^j}^{-1}(y) = L(S_{v_i^j}^{-1}(y) - e_i) + f_i(e_i) = L k^{-n_j}(y-S_{v_i^j}(e_i)) &= L(S_{v_1^j}^{-1}(y) - e_1) + f_1(e_1)\\
&= f_1\circ S_{v_1^j}^{-1}(y). \notag
\end{align}

Define now $G=f_1\circ(S_{v_1^1})^{-1}$. By \eqref{eq:f_j}, $G$ is a similarity satisfying 
\[B(p,\rho_1)\cap T\subset G\left(\bigcup_{i=1}^M S_{v_i^1}(K)\right)\subset T.\]
Further, since $\bigcup_{i=1}^M S_{v_i^1}(K)\subset K\cap (0,1)^N$, $G$ maps the open subset $G^{-1}(B(p,\rho_1)\cap T)$ of $K\cap (0,1)^N$ onto $B(p,\rho_1)\cap T$.

By Proposition \ref{lem:selfsimtan}, there exists a similarity $f:\R^N\to\R^N$ such that $q\in f(K)\subset T$ and such that $f((0,1)^N\cap K)\cap B(q,\rho_2)$ is an open subset of $T$. Let $z\in f((0,1)^N\cap K)\cap B(q,\rho_2)$ and let $w\in A^*$ satisfy $f^{-1}(z)\in S_w(K)\subset (0,1)^N\cap K$. Then $f\circ S_w$ is a similarity for which $f\circ S_w((0,1)^N\cap K)$ is an open subset of $f((0,1)^N\cap K)\cap B(q,\rho_2)$, and therefore $f\circ S_w((0,1)^N\cap K)$ is an open subset of $T\cap B(q,\rho_2)$. Thus, 
\[ (f\circ S_w)\circ G^{-1}:B(p,\rho_1)\cap T\to B(q,\rho_2)\cap T\] 
is a similarity such that $(f\circ S_w)\circ G^{-1}(B(p,\rho_1)\cap T)$ is an open subset of $T\cap B(q,\rho_2)$.
\end{proof}

\begin{rem}
Lemma \ref{lem:selfsimsponge} can be easily extended to show that at typical points of a self-similar sponge, only locally self-similar tangents are admitted, even if the sponge does not intersect $(0,1)^N$. To argue this, we would need only make the simple observation that in such a case, the sponge would be contained in an $M$-face of $[0,1]^N$ for some $M<N$ for which the sponge intersects the interior of that face. Then we can repeat the argument in the lower dimension $M$.
\end{rem}

\subsection{Proof of Theorem \ref{thm:ss}}

Let $s>1$, and given an even integer $n\geq 4$ recall numbers $\a_n$ from Section \ref{sec:measure}, sets $K^{n,0}$ from \eqref{eq:setK}, and tangents $\{T_{n,k}\}_{k\in\N}$ of Proposition \ref{prop:tangents}. Fix an even integer $n\geq 4$ such that $\a_n<s$ and a random choice function $\eta\in\mathscr{C}_n$. Let $I:K^\eta\to \G_s$ be the snowflake map composed with the bi-Lipschitz embedding given in Section \ref{sec:proof}. Recall that $I$ is an $(\a_n/s)$-bi-H\"older homeomorphism onto its image $\G_s$, and $\G_s\subset \R^d$ for some $d\in\N$. Furthermore, $\G_s$ is a $(1/s)$-H\"older curve with $\mathcal{H}^s(\G_s)\in(0,\infty)$. Fix a point $x_0\in K^\eta$ such that for every $k\in\N$, $T_{n,k}\in\wtan(K^\eta,x_0)$, and recall by Lemma \ref{lem:biliptan} for every $k\in\N$ there exists a tangent $T_k^*\in\wtan(\G_s,I(x_0))$ and an $(\a_n/s)$-bi-H\"older homeomorphism $I^*_k:T_{n,k}\to T^*_k$ induced by the map $I$. The next proposition completes the proof of Theorem \ref{thm:ss}.

\begin{proposition}\label{prop:structuretan}
Let $k\in\N$, $\{S_i:\R^{d'}\to \R^{d'}\}_{i=1}^m$ be an IFS of similarities satisfying the OSC for some open set $U\subset \R^{d'}$, and let $Q$ be the attractor. Further assume $Q\cap U\neq\emptyset$. Let $Q_k$ be the set of $z\in Q\cap U$ such that $T^*_k$ is bi-Lipschitz equivalent to some $T\in\tang(Q,z)$. If $Q_k \neq \emptyset$, then $\dimh(Q)=s$ and for $\mathcal{H}^{s}$-almost every $y\in Q$, each $S\in\tang(Q,y)$ is bi-H\"older equivalent to a locally self-similar subset of $\R^2$. In particular, $\mathcal{H}^{s}(Q_k)=0$.
\end{proposition}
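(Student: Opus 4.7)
The overall plan is to derive a topological contradiction via the number of local cut-points: any tangent bi-Lipschitz to $T^*_k$ inherits exactly $\tfrac{1}{5n-7}((5n-6)^k-1)$ local cut-points (a positive finite number for $k\geq 1$), whereas the structure of a self-similar set will force its tangents at typical points to exhibit a strict dichotomy of zero or infinitely many local cut-points. First I would dispose of the dimension claim. Assuming $Q_k\neq \emptyset$, fix $z\in Q_k$ and a tangent $T\in \tang(Q,z)$ bi-Lipschitz to $T^*_k$. The map $I^*_k\colon T_{n,k}\to T^*_k$ is $(\alpha_n/s)$-bi-H\"older by Lemma \ref{lem:biliptan}, and $T_{n,k}$ is a countable union of disjoint similar rescalings of $K^{n,0}$ with $\dimh K^{n,0}=\alpha_n$ (Remark \ref{rem:measure}), so the snowflake scaling of Hausdorff dimension gives $\dimh T^*_k=s$. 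Bi-Lipschitz equivalences preserve Hausdorff dimension, so $\dimh T=s$, and Proposition \ref{lem:selfsimtan} applied at $z\in Q\cap U$ together with Remark \ref{rem:regularity} forces $\dimh Q=\dimh T=s$.

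The core of the argument is the structural claim: for $\mathcal{H}^s$-almost every $y\in Q$, every $S\in\tang(Q,y)$ is bi-H\"older equivalent to a locally self-similar set. Starting from Proposition \ref{lem:selfsimtan}, which covers $\overline{B}({\bf 0},R)\cap S$ by a uniformly bounded number of similar copies of $Q$ with pairwise disjoint images of $U$, the plan is to stabilize the gluing data (number of pieces, relative positions, and contact loci) along a subsequence of scales $R\to \infty$ via the pigeonhole principle on finite combinatorial data, and then build a model set carrying an honest local self-similarity together with a canonical bi-H\"older identification with $S$. Assouad's embedding theorem then realizes this model as a subset of a Euclidean space. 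This is the main obstacle of the proof, since it must promote the finite covering data of Proposition \ref{lem:selfsimtan} to a globally rigid self-similar structure, in the spirit of Lemma \ref{lem:selfsimsponge} but without the planar grid rigidity of a sponge.

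To close the argument I would assemble a cut-point dichotomy. A locally self-similar set $F$ must satisfy the following: either $F$ has no local cut-points, or it has infinitely many. Indeed, if $p\in F$ is a local cut-point, then for every $q\in F$ and every scale $\rho>0$ local self-similarity provides a similarity mapping a small neighborhood of $p$ in $F$ onto an open subset of $B(q,\rho)\cap F$, and similarities preserve local cut-points, so every open ball in $F$ contains a local cut-point, yielding infinitely many of them. Homeomorphisms preserve local cut-points by Lemma \ref{lem:cuthom}, so the dichotomy transfers to any tangent $S$ addressed by the structural step. At $z\in Q_k$ with $k\geq 1$, composing the bi-Lipschitz $T\to T^*_k$ with the bi-H\"older $T^*_k\to T_{n,k}$ is a homeomorphism, so $T$ has exactly $\tfrac{1}{5n-7}((5n-6)^k-1)\geq 1$ local cut-points by Proposition \ref{prop:tangents}, a positive finite number. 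Hence every $z\in Q_k$ is non-typical, so $Q_k$ lies in the exceptional $\mathcal{H}^s$-null set and $\mathcal{H}^s(Q_k)=0$.
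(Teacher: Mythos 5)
Your dimension step and your cut-point dichotomy are both correct and in fact match the paper's logic. The gap is in the structural step, and it is a serious one.

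Notice that the structural claim -- that $\mathcal{H}^s$-a.e.\ tangent of $Q$ is bi-H\"older to a locally self-similar subset of $\R^2$ -- is stated \emph{conditionally on} $Q_k\neq\emptyset$, and the fact that it produces sets in $\R^2$ (not merely in some $\R^N$ via Assouad) is a strong hint. Your plan tries to extract local self-similarity of tangents directly from Proposition \ref{lem:selfsimtan} for an arbitrary self-similar set $Q$, by stabilizing the finite covering data along scales; you describe this as ``the main obstacle,'' and it is: Proposition \ref{lem:selfsimtan} does not pin down the relative positions of the pieces $f_i^R(Q)$ in the way the grid rigidity of Lemma \ref{lem:capface} does for sponges, so there is no route to Lemma \ref{lem:selfsimsponge}-type rigidity from that proposition alone for a general IFS. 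The paper does not attempt this. Instead, it \emph{uses} the hypothesis $Q_k\neq\emptyset$ as follows: fix $z\in Q_k$ and a bi-Lipschitz $g:T^*_k\to T$ for some $T\in\tang(Q,z)$. Composing $g$ with $I^*_k$ and a piece map $\phi_j$ (using Remark \ref{rem:measure}, that $T_{n,k}=\bigcup_j\phi_j(K^{n,0})$) yields an $(\alpha_n/s)$-bi-H\"older embedding of a copy of $K^{n,0}$ into $T$, sending $K^{n,0}\cap(0,1)^2$ to an open subset of $T$. Intersecting with the covering $\{f_i^R(Q\cap U)\}$ from Proposition \ref{lem:selfsimtan} and applying $f_j^{-1}$ produces a nonempty set $D\subset Q\cap U$, open in $Q$, that is $(\alpha_n/s)$-bi-H\"older to $\psi^2_{n,u}(K^{n,0}\cap(0,1)^2)$ for some word $u$. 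Pushing $D$ around by the IFS $\{S_i\}$ gives $B=\bigcup_{w}S_w(D)$, an open subset of $Q\cap U$ for which the OSC holds, so $\mathcal{H}^s\res Q=\mathcal{H}^s\res B$ by the Schief/Fan equivalence. Finally, tangents at any $y\in B$ live in $\tang(S_w(D),y)$ for some $w$, and Lemma \ref{lem:biliptan} transfers them bi-H\"older to tangents of $K^{n,0}$ at interior points, which are locally self-similar by Lemma \ref{lem:selfsimsponge}. Without this transfer mechanism there is no way to conclude, and the rest of your argument (the dichotomy, Lemma \ref{lem:cuthom}, and the positive finite count $\frac{1}{5n-7}((5n-6)^k-1)$) has nothing to hang on.
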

\begin{proof}
First, if $Q_k\neq\emptyset$, then by Proposition \ref{lem:selfsimtan}, $\dimh(Q)=s$. Let $z\in Q_k$, let $T\in\wtan(Q,z)$ be bi-Lipschitz homeomorphic to $T^*_k$ for some $k\in\N$, and let $g:T^*_k\to T$ be a bi-Lipschitz homeomorphism. Then by Remark \ref{rem:measure}, there exists a countable collection of similarities $\{\phi_j:\R^2\to \R^2\}_{j\in\N}$ such that $T_{n,k}=\bigcup_{j\in\N}\phi_j(K^{n,0})$ and such that for every pair of distinct $i,j\in\N$, $\phi_i((0,1)^2)\cap\phi_j((0,1)^2)=\emptyset$. Then for each $j\in\N$, the composition $(g\circ I^*_k\circ \phi_j):K^{n,0}\to T$ is an $(\a_n/s)$-bi-H\"older homeomorphism onto its image, further satisfying $(g\circ I^*_k\circ \phi_j)(K^{n,0}\cap (0,1)^2)$ is an open subset of $T$. Additionally, by Lemma \ref{lem:selfsimtan} and its proof, for every $R>0$ there is an integer $M_R\in\N$ and a collection of similarities $f_1,\dots,f_{M_R}:\R^{d'}\to \R^{d'}$ such that
\[\overline{B}({\bf 0},R)\cap T\subset\bigcup_{j=1}^{M_R}f_j(Q)\subset \overline{B}({\bf 0},2R)\cap T,\]
further satisfying for every pair of distinct $i,j\in\{1,\dots,M_R\}$, $f_i(U)\cap f_j(U)=\emptyset$ and $f_i(U\cap Q)$ is an open subset of $T$.

Now let $i\in\N$ and $j\in\{1,\dots,M_R\}$ such that
\[V:=(g\circ I^*_k\circ \phi_i)(K^{n,0}\cap (0,1)^2)\cap f_j(Q\cap U)\neq\emptyset,\]
and note that $V$ is an open subset of $T$. Then $f_j^{-1}(V)$ is an open subset of $Q\cap U$ and the composition
\[(f_j^{-1}\circ g\circ I^*_k\circ \phi_i):(\phi_i^{-1}\circ (I_k^*)^{-1}\circ g^{-1})(V)\to f_j^{-1}(V)\]
is an $(\a_n/s)$-bi-H\"older homeomorphism. Recall the similarities $\psi^2_{n,1},\dots \psi_{n,5n-6}^2:\R^2\to \R^2$ from Section \ref{sec:IFS}, the similarities generating the self-similar sponge $K^{n,0}$. Recall that $K^{n,0}\cap (0,1)^2\neq\emptyset$ and recall from the definition of a self-similar sponge that the system $\{\psi_{n,1}^2,\dots,\psi_{n,5n-6}^2\}$ satisfies the OSC via the open set $(0,1)^2$. Then there is a word $u\in\mathcal{A}_n^*$ satisfying \[\psi_{n,u}^2(K^{n,0})\subset (\phi_i^{-1}\circ (I_k^*)^{-1}\circ g^{-1})(V)\]
and such that the composition
\[(f_j^{-1}\circ g\circ I^*_k\circ \phi_i):\psi_{n,u}^2(K^{n,0})\to Q\cap U\]
is an $(\a_n/s)$-bi-H\"older homeomorphism onto its image. Let now
\[D:=(f_j^{-1}\circ g\circ I^*_k\circ \phi_i)(\psi^2_{n,u}(K^{n,0}\cap(0,1)^2)).\]
Note that $D$ is a nonempty open subset of $Q\cap U$ and for every word $w\in\{1,\dots,m\}^*$, $S_w(D)$ is a nonempty open subset of $Q\cap U$ as well. Therefore, letting $B:=\bigcup_{w\in\{1,\dots,m\}^*}S_w(D)$, we have that $B$ is a nonempty open subset of $Q\cap U$, further satisfying for every pair of distinct $s,t\in\{1,\dots,m\}$, $S_t(B)\subset B$ and $S_t(B)\cap S_s(B)=\emptyset$. Then since $B$ is an open subset of $Q\cap U$, there exists an open set $B_0\subset \R^{d'}$ such that $B_0\cap Q=B$ and $B_0\subset U$. Let $B':=\bigcup_{w\in\{1,\dots,m\}^*}S_w(B_0)$. Then it is easy to verify that the OSC is obtained by the open set $B'$ for the system of maps $\{S_1,\dots,S_m\}$ and $B'\cap Q=B$ is nonempty, therefore by \cite[Theorem 2.1, Lemma 2.5]{Fan} $\mathcal{H}^s\res Q=\mathcal{H}^s\res B$. We claim that for every point $y\in B$ and every tangent $S\in \wtan(Q,y)$, there is a point $\tilde{y}\in K^{n,0}\cap (0,1)^2$ and some $\tilde{S}\in\wtan (K^{n,0},\tilde{y})$ such that $\tilde{S}$ is $(\a_n/s)$-bi-H\"older homeomorphic to $S$. Assuming the claim, we can conclude the proof by recalling that by Lemma \ref{lem:selfsimsponge}, the set $\tilde{S}$ is locally self-similar, in particular has either $0$ or infinitely-many local cut-points, hence by Lemma \ref{lem:cuthom} the tangent $S$ cannot be bi-Lipschitz homeomorphic to (or homeomorphic to at all) any set $T^*_k$ for any $k\in\N$.

All that remains is to prove the claim, and to this end let $y\in B$ and let $S\in\wtan(Q,y)$. Since $B$ is an open subset of $Q$, $S\in\wtan(B,y)$, thus there exists a word $w\in\{1,\dots,m\}^*$ with $S\in\wtan(S_w(D),y)$. Note that the composition
\[(S_w\circ f_j^{-1}\circ g\circ I_k^*\circ \phi_i)^{-1}:S_w(D)\to \psi^2_{n,u}(K^{n,0}\cap (0,1)^2)\]
is an $(\a_n/s)$-bi-H\"older homeomorphism, therefore by Lemma \ref{lem:biliptan} the tangents $S$ and $\tilde{S}$ are $(\a_n/s)$-bi-H\"older homeomorphic.
\end{proof}

\appendix

\section{A Lipschitz curve and a point with extreme tangent space}\label{sec:liptan2}

Let $\mathfrak{C}_U(\R^N;{\bf 0})$ denote the collection of all closed sets $X\subset \R^N$ that contain the origin ${\bf 0}$ such that every component of $X$ is unbounded. In Lemma \ref{lem:unbounded} we showed that for every non-constant Lipschitz curve in $\R^N$, the tangent space at every point is a subset of $\mathfrak{C}_U(\R^N;{\bf 0})$. In this appendix we prove that the tangent space can in fact be all of $\mathfrak{C}_U(\R^N;{\bf 0})$.

\begin{theorem}\label{thm:Liptan}
For each $N\in\{2,3,\dots\}$ there exists a Lipschitz curve $f:[0,1] \to \R^N$ such that $f(0)={\bf 0}$ and $\tang(f([0,1]),{\bf 0}) = \mathfrak{C}_U(\R^N;{\bf 0})$.
\end{theorem}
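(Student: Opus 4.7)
My plan is to show the two inclusions separately. The containment $\tang(f([0,1]), {\bf 0}) \subseteq \mathfrak{C}_U(\R^N; {\bf 0})$ is handled by Lemma~\ref{lem:unbounded} as soon as $f$ is non-constant. For the reverse containment I will rely on two general observations. First, $\tang(X,x)$ is itself closed in the Attouch-Wets topology: given $T_k \to T$ with each $T_k$ realized along scales $r_{k,j} \to 0$, a diagonal choice $r_k := r_{k, j(k)}$ with $j(k)$ sufficiently large yields $r_k \to 0$ and $r_k^{-1}(X-x) \to T$. Second, $\mathfrak{C}_U(\R^N; {\bf 0})$ is separable, with a countable dense family $\mathcal{D}$ consisting of sets of the form ``connected finite geometric graph in $\R^N$ with rational vertices together with finitely many rays in rational directions,'' all containing ${\bf 0}$. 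Density of $\mathcal{D}$ follows by approximating $T \in \mathfrak{C}_U$ first by a connected set (gluing components with bridges at very large radius, which are invisible to Attouch-Wets on any fixed ball), then by a rational $\e$-net graph on a large ball, and finally adjoining rays in rational directions to recover unboundedness.

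My construction of $f$ then proceeds as follows. Enumerate $\mathcal{D}$ as $\{T^{(k)}\}_{k \in \N}$ so that every element appears infinitely often. Choose parameters $s_k \to 0$, $R_k \to \infty$, and $\e_k, \delta_k \to 0$ subject to the constraints $s_{k+1} R_{k+1} = s_k \delta_k$, $\delta_k \le \e_k$, and $\sum_k s_k \ell_k < \infty$, where $\ell_k$ is the length of the tracing defined below. The nested annuli $A_k = \{s_{k+1} R_{k+1} \le |x| \le s_k R_k\} = s_k \cdot \{\delta_k \le |x| \le R_k\}$ then partition $B({\bf 0}, s_1 R_1) \setminus \{{\bf 0}\}$. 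For each $k$, I will approximate $T^{(k)} \cap \{\delta_k \le |x| \le R_k\}$ within Hausdorff distance $\e_k$ by a finite graph $\widehat{G}_k$, made connected by adjoining short arcs along the inner sphere $\partial B({\bf 0}, \delta_k)$ linking the components produced by removing the origin neighborhood. Let $\gamma_k : [0, \ell_k] \to \R^N$ be a unit-speed Eulerian tracing of $s_k \widehat{G}_k \subset A_k$ (after doubling edges), with prescribed endpoints on the inner and outer boundary spheres of $A_k$ arranged so that the inner endpoint of $\gamma_k$ matches the outer endpoint of $\gamma_{k+1}$. Define $f : [0, 1] \to \R^N$ by placing $\gamma_k$ on the subinterval $I_k = [t_k, t_{k-1}]$ of length $|I_k| = s_k \ell_k / C$, where $C = \sum_j s_j \ell_j$ (so that $t_k \downarrow 0$), and setting $f(0) := {\bf 0}$. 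The finite-length condition ensures $f$ is $C$-Lipschitz.

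The verification of the tangent condition reduces to showing that at each scale $s_k$, the blow-up $s_k^{-1}(f([0,1]) - {\bf 0})$ restricted to $\overline{B}({\bf 0}, R_k)$ is close in Hausdorff distance to $T^{(k)} \cap \overline{B}({\bf 0}, R_k)$. Only the scale-$k$ tracing $s_k^{-1} \gamma_k = \widehat{G}_k$ and deeper tracings contribute inside $\overline{B}({\bf 0}, R_k)$: the former is within $\e_k$ of $T^{(k)}$ on $\{\delta_k \le |x| \le R_k\}$, while the latter lies in $\overline{B}({\bf 0}, \delta_k)$ and thus within $\delta_k \le \e_k$ of ${\bf 0} \in T^{(k)}$, so the overall Hausdorff distance is $O(\e_k)$. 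Since each $T^* \in \mathcal{D}$ appears as $T^{(k_j)}$ along some subsequence with $R_{k_j} \to \infty$ and $\e_{k_j} \to 0$, the blow-ups $s_{k_j}^{-1}(f([0,1]) - {\bf 0})$ converge to $T^*$ in Attouch-Wets, giving $T^* \in \tang(f([0,1]), {\bf 0})$; closedness of the tangent set then yields $\tang(f([0,1]), {\bf 0}) = \mathfrak{C}_U(\R^N; {\bf 0})$. The principal obstacle is the connectivity issue in constructing $\widehat{G}_k$: the annular restriction of even a connected $T^{(k)}$ may be disconnected (consider a star of rays from ${\bf 0}$), forcing connecting arcs in the tracing that are not a priori close to $T^{(k)}$. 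Routing them along $\partial B({\bf 0}, \delta_k)$ confines them to a $\delta_k$-neighborhood of ${\bf 0} \in T^{(k)}$, absorbing the resulting error into the $\e_k$-budget provided the parameter hierarchy is chosen with appropriate slack.
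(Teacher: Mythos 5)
Your proposal is correct in outline and takes a genuinely different route from the paper, though both share the core architecture of packing shrinking annular shells around the origin, each shell carrying a rescaled graph approximation of one member of a countable collection. The main structural difference is how you get from a countable family to all of $\mathfrak{C}_U(\R^N;{\bf 0})$: you choose a countable \emph{dense} subfamily $\mathcal{D}$, realize each of its members infinitely often as a tangent, and then invoke closedness of $\tang(X,x)$ (via a diagonal argument) plus Lemma~\ref{lem:unbounded} for the reverse inclusion. The paper avoids both the separability/density argument and the closedness lemma entirely: it fixes once and for all a countable family $\mathcal{G}_k$ of lattice graphs in $2^{-k}\Z^N \cap [-2^k,2^k]^N$ that is \emph{universal} in the sense that for any $T\in\mathfrak{C}_U(\R^N;{\bf 0})$ one can directly extract from $T$ a lattice approximation $X_j$ and then identify it (via Lemma~\ref{lem:kyoobs}) with the interior of some $G_{n_j}\in\mathcal{G}_{2j}$; the blow-ups $\rho_j H$ then converge to the chosen $T$ directly, with no density or closure step. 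Your second structural divergence is the connectivity repair: you route bridging arcs along $\partial B({\bf 0},\delta_k)$ to reconnect the annular restriction of $T^{(k)}$, while the paper builds this into the design of $\mathcal{G}_k$ (every graph contains the origin and all boundary lattice points, and Lemma~\ref{lem:boundary} shows each component of $X_j$ reaches that boundary, so the extension is automatically connected). Finally, you produce the Lipschitz parametrization by explicit Eulerian tracings, whereas the paper establishes $\mathcal{H}^1(H)<\infty$ and invokes the parametrization theorem of \cite{AO}. Both buy roughly the same thing; the paper's route is more self-contained in that it does not need density of a concretely described family nor closedness of the tangent space, while yours isolates those two general facts and reduces the geometric construction to the cleaner task of realizing a countable family of targets. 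The parts of your write-up that would need the most care if expanded are the density of $\mathcal{D}$ in $\mathfrak{C}_U(\R^N;{\bf 0})$ (the bridges and appended rays must keep the approximant connected and, crucially, must not perturb the two excesses on the fixed ball), and the simultaneous satisfiability of the parameter constraints $s_{k+1}R_{k+1}=s_k\delta_k$, $\delta_k\le\e_k$, $\sum_k s_k\ell_k<\infty$; both are doable with the slack you mention, but they are exactly the kind of bookkeeping the paper's rigid lattice setup is designed to short-circuit.
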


Fix, for the rest of the appendix, an integer $N\geq 2$. For each $k\in\N$ denote by $\mathcal{G}_k$ the collection of all connected graphs $G = (V,E)$ such that 
\begin{enumerate}
\item $\{{\bf 0}\}\cup  \{ 2^{-k}\textbf{m} : \textbf{m} \in\partial [-2^k,2^k]^N \cap \mathbb{Z}^N\} \subset V \subset \{ 2^{-k}\textbf{m} : \textbf{m} \in[-2^k,2^k]^N \cap \mathbb{Z}^N\}$,
\item for $v,v' \in V$ we have $\{v,v'\}\in E$, if and only if $|v-v'|=2^{-k}$.
\end{enumerate}
We can enumerate $\bigcup_{k\in\N}\mathcal{G}_k = \{G_1,G_2, \dots\}$
and for $n\in\N$, we define $k_n\in\N$ to be the unique positive integer with $G_n \in \mathcal{G}_{k_n}$.
Note that for all $n\in\N$ we have $\mathcal{H}^1(\text{Im}(G_n)) \leq (2^{k_n+1}+1)^N$.

Let $(r_n)_{n\geq 0}$ be a decreasing sequence of positive numbers such that $r_0=1$ and for each $n\in\N$
\[ r_n \leq \min\left\{ \frac1{2^{n+1}\mathcal{H}^1(\text{Im}(G_n))} ,  \frac{r_{n-1}}{2^{n+k_{n+1}+1}} \right\}.\]

For each $n\in\N$ let $\phi_n : \R^N \to \R^N$ be the map $\phi_n(x) = r_n x$, and let 
\[ H_n=\phi_n(\text{Im}(G_n))\setminus( (-r_{n+1},r_{n+1})^N).\] 

Since $\text{Im}(G_n)$ is a connected set and it contains ${\bf 0}$, the set $H_n\cap [-r_{n+1} ,r_{n+1} ]^N$ is nonempty; in fact, every point therein is the center of an $(N-1)$-face of the cube $[-r_{n+1} ,r_{n+1} ]^N$. Therefore, 
\[ H_n\cap [-r_{n+1} ,r_{n+1} ]^N = H_n\cap \partial[-r_{n+1} ,r_{n+1} ]^N \subset H_{n+1}\cap  \partial[-r_{n+1} ,r_{n+1} ]^N \subset \phi_{n+1}(\text{Im}(G_{n+1})). \]

Define
\[H=\{{\bf 0}\} \cup \bigcup_{n=1}^\infty H_n.\]
We show below that there exists a Lipschitz surjection $f:[0,1] \to H$ with $f(0)={\bf 0}$ and that $\tang(H,{\bf 0}) = \mathfrak{C}_U(\R^N;{\bf 0})$. We split the proof in several steps.

\begin{lemma}
There exists a Lipschitz surjection $f:[0,1] \to H$ with $f(0)={\bf 0}$.
\end{lemma}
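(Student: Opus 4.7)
The plan is to show that $H$ is a compact connected continuum with $\mathcal{H}^1(H) < \infty$, and then to appeal to a classical parametrization theorem which represents every such set as a Lipschitz image of $[0,1]$ from any prescribed basepoint.

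Compactness and the bound on $\mathcal{H}^1(H)$ are immediate from the construction. Each $H_n$ is closed as a closed set minus an open set, and $H_n \subset \phi_n([-1,1]^N) = [-r_n,r_n]^N$. Since $r_n \leq r_{n-1}/2$, the sets $H_n$ accumulate only at ${\bf 0}$, so $H$ is closed and bounded, hence compact. Subadditivity of $\mathcal{H}^1$ combined with the choice $r_n \leq 1/(2^{n+1}\mathcal{H}^1(\text{Im}(G_n)))$ yields
\[
\mathcal{H}^1(H) \leq \sum_{n=1}^\infty \mathcal{H}^1(\phi_n(\text{Im}(G_n))) = \sum_{n=1}^\infty r_n\,\mathcal{H}^1(\text{Im}(G_n)) \leq \sum_{n=1}^\infty 2^{-n-1} < \infty.
\]

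The main obstacle is verifying connectedness of $H$, since the individual sets $H_n$ need not be connected. The key step I would prove is that every component $C$ of $H_n$ meets $\partial[-r_{n+1},r_{n+1}]^N$. Indeed, if $C$ were disjoint from this boundary, then $C$ would also be disjoint from the closed cube $[-r_{n+1},r_{n+1}]^N$; since $H_n$ is locally connected (being a finite union of compact line segments) and coincides with $\phi_n(\text{Im}(G_n))$ in a neighborhood of $C$, this would make $C$ a proper clopen subset of the connected set $\phi_n(\text{Im}(G_n))$ (proper because ${\bf 0}\in \phi_n(\text{Im}(G_n))\setminus H_n$), a contradiction. Combined with the already established inclusion $H_n \cap \partial[-r_{n+1},r_{n+1}]^N \subset H_{n+1}$, this shows that every component of $H_n$ shares a point with some component of $H_{n+1}$. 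Iterating, for each $x\in H_n$ we obtain a chain of components $C \subset H_n$, $C_1\subset H_{n+1}$, $C_2\subset H_{n+2},\ldots$ with consecutive members intersecting and with $\diam(C_k)\leq 2r_{n+k}\sqrt{N}\to 0$; the closure of this chain is a connected subset of $H$ containing both $x$ and ${\bf 0}$, so $H$ is connected.

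Once $H$ is known to be a compact connected continuum with $\mathcal{H}^1(H)<\infty$, a classical parametrization result (e.g.\ \cite[Theorem 4.4]{AO}, as used in the proof of Proposition~\ref{prop:liptan}) yields an essentially two-to-one Lipschitz surjection $[0,1]\to H$ with Lipschitz constant at most $2\mathcal{H}^1(H)\leq 1$. Since the initial and terminal points of this parametrization coincide, a cyclic shift produces the desired $f:[0,1]\to H$ with $f(0)={\bf 0}$.
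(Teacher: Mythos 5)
Your proof is correct and follows the same strategy as the paper: establish that $H$ is a compact continuum with finite $\mathcal{H}^1$ measure, then invoke \cite[Theorem 4.4]{AO}. Your treatment of connectedness (the clopen argument showing every component of $H_n$ meets $\partial[-r_{n+1},r_{n+1}]^N$) and of the basepoint (via cyclic shift) simply makes explicit what the paper's proof leaves implicit.
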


\begin{proof}
First note that $H$ is the countable union of compact sets $H_n$ which converge in Hausdorff distance to $\{{\bf 0}\}$ which is contained in $H$. Therefore, $H$ is compact. 

Now we claim that every point of $H$ can be connected to ${\bf 0}$. To see that, fix $n\in\N$ and $p\in H_n$. Since $\text{Im}(G_n)$ is connected, there exists 
$p_1 \in H_n\cap H_{n+1}$ and a path $\gamma_1$ in $H_n$ that joins $p$ with $p_1$. Assume that for some $k\in\N$ we have defined a point $p_k \in H_{n+k}\cap H_{n+k+1}$. By connectedness of $\text{Im}(G_{n+k+1})$, there exists $p_{k+1} \in H_{n+k+1}\cap H_{n+k+2}$ and a path $\gamma_{k+2}$ in $H_{n+k+1}$ that connects $p_k$ to $p_{k+1}$. The concatenation of paths $\gamma_1,\gamma_2,\dots$ produces a path that connects $p$ to ${\bf 0}$. Therefore, $H$ is connected.

Finally, by the choice of scales $r_n$ we get 
\[\mathcal{H}^1(H)=\sum_{n=1}^\infty\mathcal{H}^1(H_n)\leq \sum_{n=1}^\infty r_n\mathcal{H}^1(\text{Im}(G_n)) \leq  1.\]

Therefore, $H$ is a continuum with $\mathcal{H}^1(H)\leq 1$ and by \cite[Theorem 4.4]{AO} there exists a Lipschitz surjection $f:[0,1] \to H$ such that $\Lip(f) \leq 2$ and $f(0)={\bf 0}$. 
\end{proof}

It remains to show that $\tang(H,{\bf 0}) = \mathfrak{C}_U(\R^N;{\bf 0})$. To this end, fix for the rest of this section a set $T \in \mathfrak{C}_U(\R^N;{\bf 0})$.

For each $j\in \N$ set 
\begin{align*} 
Q_j &=\{ 2^{-j}\textbf{m} : \textbf{m} \in [1-4^j,4^j-1]^N \cap \mathbb{Z}^N\},\\
W_j &= \{v \in Q_j : \dist(v,T\cap [2^{-j}-2^j,2^j-2^{-j}]^N) \leq 2^{2-j}\sqrt{N}\},\\
X_j &= \bigcup_{\substack{v,v' \in W_j \\ |v-v'| \leq 2^{-j}}} [v,v'].
\end{align*}
Since ${\bf 0}\in T$ we have that ${\bf 0}\in W_j$. 

\begin{lemma}\label{lem:boundary}
Every component of $X_j$ intersects the boundary $\partial [2^{-j}-2^j,2^j-2^{-j}]^N$.
\end{lemma}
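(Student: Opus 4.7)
The plan is to show, for every component $C$ of $X_j$ and every vertex $v\in W_j\cap C$ with $v\notin\partial I$ (where $I:=[2^{-j}-2^j,\,2^j-2^{-j}]^N$ and $\delta:=2^{-j}$), that there is a chain of axis-aligned unit lattice edges inside $W_j$ from $v$ to a lattice vertex on $\partial I$; every such edge belongs to $X_j$, so this certifies $C\cap\partial I\neq\emptyset$.

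\emph{Step 1 (iterative descent).} While $\dist(u,T\cap I)>\delta\sqrt{N}/2$, fix a nearest $p\in T\cap I$ to $u$ and a coordinate $k$ with $|p^{(k)}-u^{(k)}|\geq |u-p|/\sqrt{N}>\delta/2$, and replace $u$ by its axis-aligned neighbor $u':=u+\delta\,\mathrm{sgn}(p^{(k)}-u^{(k)})e_k$. The identity
\[|u'-p|^{2}=|u-p|^{2}-\delta\bigl(2|p^{(k)}-u^{(k)}|-\delta\bigr)<|u-p|^{2}\]
gives $u'\in W_j$; the containment $p^{(k)}\in[-(2^j-2^{-j}),\,2^j-2^{-j}]$ forces $u^{(k)}\leq 2^j-2^{-j}-\delta$ and so keeps $u'\in Q_j$; and $\{u,u'\}\in X_j$. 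These distances to $T\cap I$ strictly decrease inside the finite set $W_j$, so starting from $v$ the descent terminates in finitely many axis-aligned edges at some $u^*\in W_j$ with $\dist(u^*,T\cap I)\leq \delta\sqrt{N}/2$.

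\emph{Step 2 ($\eta$-chain in the unbounded component).} Pick $p^*\in T\cap I$ with $|u^*-p^*|\leq\delta\sqrt{N}/2$, and let $K$ be the component of $T$ through $p^*$. Since $K$ is closed, connected, and unbounded, it contains a point $q\notin I$. A standard clopen argument (the set of points $\eta$-chain-joinable to $p^*$ inside $K$ is nonempty, open, and closed in $K$) supplies, for any $\eta>0$, a sequence $p^*=c_0,c_1,\dots,c_l=q$ in $K$ with $|c_{i+1}-c_i|<\eta$. Fix $\eta<\delta/4$ and let $i^*$ be the largest index with $c_0,\dots,c_{i^*}\in I$; then $c_{i^*+1}\notin I$ places some coordinate of $c_{i^*}$ within $\eta<\delta/2$ of $\pm(2^j-2^{-j})$. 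Let $\Phi:\R^N\to\delta\Z^N$ be a nearest-lattice-point map (with ties broken inside $I$) and set $v_i:=\Phi(c_i)\in Q_j$; then $\dist(v_i,T\cap I)\leq \delta\sqrt{N}/2<4\delta\sqrt{N}$, so every $v_i\in W_j$, while the preceding coordinate bound forces $v_{i^*}\in\partial I$.

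\emph{Step 3 (Manhattan assembly).} Join each consecutive pair $(v_i,v_{i+1})$, together with $(u^*,v_0)$, by a standard coordinate-by-coordinate (``Manhattan'') lattice path. The endpoints satisfy $|v_{i+1}-v_i|\leq\delta\sqrt{N}+\eta$ and $|u^*-v_0|\leq\delta\sqrt{N}$. Any intermediate lattice point $z$ has coordinates between those of the endpoints, so $z\in Q_j$ by convexity of $I$, and $|z-w|\leq |w'-w|$ for the two endpoints $w,w'$ of the sub-path, whence
\[\dist(z,T\cap I)\leq |z-w|+\dist(w,T\cap I)\leq (\delta\sqrt{N}+\eta)+\tfrac{\delta\sqrt{N}}{2}<2\delta\sqrt{N}\leq 2^{2-j}\sqrt{N},\]
placing $z\in W_j$; each Manhattan sub-edge has length $\delta$ and hence lies in $X_j$. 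Concatenating the descent with these Manhattan paths yields the desired chain from $v$ to $v_{i^*}\in\partial I$. The main difficulty is Step 1: the generous tolerance $4\delta\sqrt{N}$ in the definition of $W_j$ permits $v$ to sit up to $4\delta\sqrt{N}$ from $T$, and a direct Manhattan path from $v$ to a rounded $T$-point could stray $\sim 9\delta\sqrt{N}$ from $T$ and so exit $W_j$; the one-coordinate-at-a-time descent provably stays in $W_j$ at every stage, reducing the remainder of the argument to the small-tolerance setting handled in Step 3.
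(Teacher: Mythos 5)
Your proof is correct, and it reaches the conclusion by a genuinely different route than the paper's. The paper fixes the component $K$ of $T\cap[2^{-j}-2^j,2^j-2^{-j}]^N$ through a point $p$ of $T$ near $v$, asserts (implicitly via a boundary-bumping argument, justified by unboundedness of the components of $T$) that $K$ reaches $\partial I$, and then builds a connected subset of $X_j$ by gluing tube neighborhoods $Y_q=\bigcup\{[w,w']:w,w'\in W_j,\,|w-q|,|w'-q|\leq 2^{2-j}\sqrt{N},\,|w-w'|\leq 2^{-j}\}$ along an $\eta$-chain in $K$; the heart of that proof is the one-coordinate-at-a-time argument showing each $Y_q$ is connected, together with the overlap $Y_q\cap Y_u\neq\emptyset$ for $|q-u|\leq 2^{-j}$. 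You instead (i) descend greedily from $v$ to a vertex $u^*$ at distance $\leq\delta\sqrt N/2$ from $T\cap I$ by a monotone one-coordinate move (your Step 1 is the structural analogue of the paper's $Y_q$-connectivity lemma, but run just once, from $v$), (ii) take an $\eta$-chain in the \emph{full} component $K$ of $T$ (not $T\cap I$) out to a point $q\notin I$, which cleanly avoids the paper's reliance on boundary bumping, and (iii) round chain points to the lattice and join them by Manhattan paths with explicit tolerance estimates keeping every intermediate vertex in $W_j$. The two approaches have the same $\eta$-chain core, but your version is more self-contained (no boundary bumping) and more constant-explicit, at the cost of the extra preparatory descent in Step 1 that the paper's tube-overlap trick makes unnecessary.
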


\begin{proof}
Let $B$ be a component of $X_j$. Fix a vertex $v\in B\cap V_j$ and note that there is a point $p\in T\cap [2^{-j}-2^j,2^j-2^{-j}]^N$ such that $|v-p|\leq 2^{2-j}\sqrt{N}$. Let $K\subset T$ be the component of $T\cap [2^{-j}-2^j,2^j-2^{-j}]^N$ which contains $p$. Since every component of $T$ is unbounded, $K\cap\partial [2^{-j}-2^j,2^j-2^{-j}]^N\neq\emptyset$. Now for any point $q\in K$, let $Z_q=\{w\in W_j: |w-q|\leq 2^{2-j}\sqrt{N}\}$ and let 
\[Y_q=\bigcup_{\substack{w,w' \in Z_q \\ |w-w'| \leq 2^{-j}}} [w,w'].\]
We claim that $Y_q$ is connected. To this end, let $z_0\in Z_q$ be a point in $Z_q$ with minimal distance to $q$, and let $z\in Z_q$ be any other vertex. We now construct a chain of points $z_0,z_1,\dots,z_M\in Z_q$ such that $z_M=z$, and such that for every $i\in\{0,1,\dots,M-1\}$, $[z_i,z_{i+1}]\subset Y_q$. We proceed by induction, noting that the base vertex $z_0$ has already been established. Fix some integer $i\geq 0$ and assume we have already some point $z_i\in Z_q$, furthermore satisfying that if $i\geq 1$ then $[z_{i-1},z_i]\subset Y_q$. Let $z_i=(z_i^1,\dots,z_i^N)$. If $z_i=z$, then we terminate the process. If not, then let $z=(z^1,\dots,z^N)$ and let $m\in \{1,\dots,N\}$ be the first index such that $z_i^m\neq z^m$. If $z_i^m<z^m$, then let $z_{i+1}^m=z_i^m+2^{-j}$, and if instead $z_i^m>z^m$, then let $z_{i+1}^m=z_i^m-2^{-j}$. For each $\ell\in\{1,\dots,N\}$ other than $\ell=m$, let $z_{i+1}^\ell=z_i^\ell$, and let $z_{i+1}=(z_{i+1}^1,\dots,z_{i+1}^N)$. Note that since $|z_0-q|$ is minimal, we have that for every $\ell\in\{1,\dots,N\}$, the difference in coordinates $|z_0^\ell-q^\ell|$ is also minimal among points in $Z_q$. Furthermore, by the construction of $z_{i+1}$, we have that for every $\ell\in\{1,\dots,N\}$, $z_{i+1}^\ell$ is between $z_0^\ell$ and $z^\ell$, possibly equal to one of these. Then $|z_i-z_{i+1}|=2^{-j}$ and $|z_{i+1}-q|\leq |z-q|\leq 2^{2-j}\sqrt{N}$, hence $z_{i+1}\in Z_q$ and $[z_i,z_{i+1}]\subset Y_q$. This process terminates after finitely many steps, completing the construction, and therefore $Y_q$ is connected.

Note that for points $q,u\in K$ which have $|q-u|\leq 2^{-j}$, we have that $Y_q\cap Y_u\neq\emptyset$. Now let $q\in K\cap \partial[2^{-j}-2^j,2^j-2^{-j}]^N$, and since $K$ is connected there exists finite chain of points $\{p_1,\dots,p_M\}\subset K$ such that $p_1=p$, $p_M=q$, and for every $i\in\{1,\dots,M-1\}$, $|p_i-p_{i+1}|\leq 2^{-j}$. Then $\bigcup_{i=1}^M Y_{p_i}\subset X_j$ is a connected set containing $v$ and intersecting with $\partial[2^{-j}-2^j,2^j-2^{-j}]^N$, thus it is contained in $B$, and therefore $B\cap \partial[2^{-j}-2^j,2^j-2^{-j}]^N\neq \emptyset$.
\end{proof}

\begin{lemma}\label{lem:kyoobs}
For each $j\in\N$ there exists $n_j \in \N$ such that $G_{n_j} \in \mathcal{G}_{2j}$ and that $X_j$ is a re-scaled copy of $\text{Im}(G_{n_j})\cap [-1+4^{-j},1-4^{-j}]^N$.
\end{lemma}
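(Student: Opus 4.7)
The plan is to construct explicitly a connected graph $G^\star \in \mathcal{G}_{2j}$ satisfying $\text{Im}(G^\star)\cap [-1+4^{-j},1-4^{-j}]^N = 2^{-j}X_j$, and then take $n_j$ to be the index of $G^\star$ in the enumeration $\{G_1,G_2,\dots\}$. The rescaling $y\mapsto 2^j y$ will then exhibit $X_j$ as the required rescaled copy. Concretely, I would set
\[V^\star:=(2^{-j}W_j)\cup\bigl(\partial[-1,1]^N\cap 4^{-j}\Z^N\bigr),\qquad E^\star:=\{\{u,v\}\subset V^\star:|u-v|=4^{-j}\}.\]

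First I would verify the three structural axioms for $\mathcal{G}_{2j}$. Since ${\bf 0}\in T$ lies at zero distance from $T$ and inside the appropriate box, ${\bf 0}\in W_j$, hence ${\bf 0}\in V^\star$; the boundary lattice $\partial[-1,1]^N\cap 4^{-j}\Z^N$ is included by construction. The containment $V^\star\subset 4^{-j}\Z^N\cap [-1,1]^N$ follows from $W_j\subset 2^{-j}\Z^N\cap [2^{-j}-2^j,2^j-2^{-j}]^N$, which rescales to $2^{-j}W_j\subset 4^{-j}\Z^N\cap[-1+4^{-j},1-4^{-j}]^N$, and the edge relation $|u-v|=4^{-j}$ is built in. For connectedness of $G^\star$, I would combine two ingredients: (i) the outer shell $\partial[-1,1]^N\cap 4^{-j}\Z^N$ together with its axis-aligned length-$4^{-j}$ edges forms a connected subgraph — a standard fact for the 1-skeleton of the cube's boundary lattice when $N\geq 2$; and (ii) by Lemma~\ref{lem:boundary} applied to $X_j$ and then rescaled by $2^{-j}$, every component of $2^{-j}X_j$ must contain a vertex of $\partial[-1+4^{-j},1-4^{-j}]^N$; such a vertex has some coordinate equal to $\pm(1-4^{-j})$, and translating by $\pm 4^{-j}$ in that coordinate yields a vertex on $\partial[-1,1]^N$ at distance exactly $4^{-j}$, providing an edge in $E^\star$ that attaches the component to the shell.

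The final task is to establish the identity $\text{Im}(G^\star)\cap [-1+4^{-j},1-4^{-j}]^N=2^{-j}X_j$. The inclusion $\supseteq$ is immediate because $2^{-j}W_j\subset V^\star$, the edge rules on the two sides agree, and $2^{-j}X_j$ is entirely contained in the closed inner box. For the reverse inclusion, any edge of $E^\star$ not already between two vertices of $2^{-j}W_j$ has at least one endpoint on $\partial[-1,1]^N$; a short case check on axis-aligned length-$4^{-j}$ segments shows that such an edge either lies entirely in some face $\{x_k=\pm 1\}$ of $\partial[-1,1]^N$ (hence disjoint from the closed inner box, since $1-4^{-j}<1$) or runs perpendicular to the boundary from a face vertex to an adjacent vertex on $\partial[-1+4^{-j},1-4^{-j}]^N$, meeting the inner box only at that inner endpoint, which, by membership in $V^\star$, already lies in $2^{-j}W_j\subset 2^{-j}X_j$. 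The hard part will be this final case analysis: one must rule out any subarc of a boundary-incident edge from leaking into the inner box, and simultaneously confirm that the vertices forced into $V^\star$ by membership in $\mathcal{G}_{2j}$ do not contribute anything extraneous. Both checks are elementary but must be carried out carefully.
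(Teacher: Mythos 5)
Your proposal is correct and takes essentially the same approach as the paper: the vertex set $V^j=4^{-j}\bigl(2^j W_j\cup(\partial[-4^j,4^j]^N\cap\Z^N)\bigr)$ defined there is precisely your $V^\star=(2^{-j}W_j)\cup(\partial[-1,1]^N\cap 4^{-j}\Z^N)$, the edge rule is the same, connectedness is deduced from Lemma~\ref{lem:boundary}, and the rescaling by $2^j$ gives the claimed identity. The only difference is that the paper states these facts tersely while you spell out the shell-attachment step and the boundary-edge case analysis, which are indeed the routine verifications the paper leaves implicit.
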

 Before proceeding to the proof, note that while every $\text{Im}(G_{n_j})\subset [-1,1]^N$ is connected, intersecting with $[-1+4^{-j},1-4^{-j}]^N$ in effect removes the boundary, which may disconnect the image. Essentially, this lemma states that every $X_j$ extends to some $\text{Im}(G_{n_j})$ (up to re-scaling) by attaching the components of $X_j$ along the boundary $\partial[-1,1]^N$.
\begin{proof}[Proof of Lemma \ref{lem:kyoobs}]
If we define
\begin{align*}
V^j&=4^{-j}\left(2^j W_j\cup(\partial [-4^j,4^j]^N\cap \Z^N)\right),\\
E^j&=\{\{v,v'\}:v,v'\in V^j,|v-v'|=4^{-j}\},
\end{align*}
then $(V^j,E^j)$ is connected by Lemma \ref{lem:boundary}, hence it is equal to $G_{n_j} \in \mathcal{G}_{2j}$ for some $n_j\in\N$. Furthermore, $\text{Im}(G_{n_j})\cap [-1+4^{-j},1-4^{-j}]^N=2^{-j} X_j$, as desired.
\end{proof}

It follows that
\begin{align*}
\exc(T\cap [2^{-j}-2^j,2^j-2^{-j}]^N,2^j\text{Im}(G_{n_j})) &\leq 2^{2-j}\sqrt{N}\\ 
\exc(2^j\text{Im}(G_{n_j})\cap [2^{-j}-2^j,2^j-2^{-j}]^N,T) &\leq 2^{2-j}(\sqrt{N}+1/4).
\end{align*}
This implies that for every $R>0$,
\begin{equation}\label{eqn:exc}
\lim_{j\to\infty}\exc(T\cap \overline{B}({\bf 0},R),2^j \text{Im}(G_{n_j}))=0, \quad \lim_{j\to\infty}\exc(2^j\text{Im}(G_{n_j})\cap \overline{B}({\bf 0},R),T)=0.
\end{equation}

\begin{proof}[{Proof of Theorem \ref{thm:Liptan}}]
Let $\rho_j=2^j(r_{n_j})^{-1}>0$. We claim that $\lim_{j\to\infty} \rho_j H= T$, with respect to the Attouch-Wets topology. 

Recall that $k_{n_j}=2j$ for each $j\in\N$, thus $r_{n_j+1}\rho_j\leq 2^{-n_j-j+1}$ and $\rho_j r_{n_j}=2^j$. Hence, for every $j\in\N$,
\[(\rho_j H_{n_j})\setminus(2^{-n_j-j+1}[-1,1]^N)=(2^j\text{Im}(G_{n_j}))\setminus(2^{-n_j-j+1}[-1,1]^N).\]
Fix $R>0$, and let $j_0\in\N$ such that $2^{j_0-1}\geq R$. For every integer $j\geq j_0$, we have 
\begin{align*}
\rho_j H\cap \overline{B}({\bf 0},R)&\subset \left( (\rho_j H_{n_j}) \cup (2^{-n_j-j+1} [-1,1]^N)\right)\cap \overline{B}({\bf 0},R)\\
&=\left( 2^j \text{Im}(G_{n_j}))\cup(2^{-n_j-j+1}[-1,1]^N)\right)\cap \overline{B}({\bf 0},R).
\end{align*}
Thus by design of $G_{n_j}$ and by the monotonicity and subadditivity properties of excess (see Remark \ref{rem:excess}) , we have
\begin{align*}
\exc(\rho_j H\cap \overline{B}({\bf 0},R),T)&\leq \exc(\left( 2^j \text{Im}(G_{n_j}))\cup(2^{-n_j-j+1}[-1,1]^N)\right)\cap \overline{B}({\bf 0},R),T)\\
&\leq \exc(2^j\text{Im}(G_{n_j})\cap \overline{B}({\bf 0},R),T)+\exc(2^{-n_j-j+1} [-1,1]^N,T).
\end{align*}
As $j\to\infty$, by \eqref{eqn:exc} the first term in the latter sum approaches $0$. Furthermore, since ${\bf 0}\in T$ and as $j\to\infty$ $\diam(2^{-n_j-j+1}[-1,1]^N)$ approaches $0$, as $j\to\infty$ the second term in the latter sum approaches $0$ as well.

We now show that $\lim_{j\to\infty}\exc(T\cap \overline{B}({\bf 0},R),\rho_j H)=0$. Note that 
\[ (\rho_j H_{n_j})\setminus(\rho_j r_{n_j+1} [-1,1]^N)\subset \rho_j H,\] 
so by monotonicity and triangle inequality for excess (as in Remark \ref{rem:excess}), we have
\begin{align*}
\exc(T\cap \overline{B}({\bf 0},R),\rho_j H)&\leq \exc(T\cap \overline{B}({\bf 0},R),(\rho_j H_{n_j})\setminus(\rho_j r_{n_j+1} [-1,1]^N)\\
&=\exc(T\cap \overline{B}({\bf 0},R), (2^j \text{Im}(G_{n_j}))\setminus(\rho_j r_{n_j+1}[-1,1]^N))\\
&\leq \exc(T\cap \overline{B}({\bf 0},R), 2^j \text{Im}(G_{n_j}))\\
&+\exc(2^j \text{Im}(G_{n_j}),(2^j \text{Im}(G_{n_j}))\setminus(\rho_j r_{n_j+1}[-1,1]^N)).
\end{align*}
As $j\to\infty$, the first term of the latter sum approaches $0$ by \eqref{eqn:exc} and by design of the sets $G_{n_j}$. In the second term of the latter sum, the sets differ only inside $\rho_j r_{n_j+1}[-1,1]^N$, therefore the excess is at most $\diam(\rho_j r_{n_j+1}[-1,1]^N)\leq 2\rho_j r_{n_j+1} \sqrt{N}$, which approaches $0$ as $j\to\infty$. This completes the proof, by definition of convergence $\lim_{j\to\infty} \rho_j H$ in the Attouch-Wets topology.
\end{proof}

\bibliography{qssbib}

\newcommand{\etalchar}[1]{$^{#1}$}
\providecommand{\bysame}{\leavevmode\hbox to3em{\hrulefill}\thinspace}
\providecommand{\MR}{\relax\ifhmode\unskip\space\fi MR }
\providecommand{\MRhref}[2]{%
  \href{http://www.ams.org/mathscinet-getitem?mr=#1}{#2}
}
\providecommand{\href}[2]{#2}
\begin{thebibliography}{DLR{\etalchar{+}}23}

\bibitem[AO17]{AO}
Giovanni Alberti and Martino Ottolini, \emph{On the structure of continua with finite length and {G}olab's semicontinuity theorem}, Nonlinear Anal. \textbf{153} (2017), 35--55. \MR{3614660}

\bibitem[Ass83]{Assouad}
Patrice Assouad, \emph{Plongements lipschitziens dans {${\bf R}\sp{n}$}}, Bull. Soc. Math. France \textbf{111} (1983), no.~4, 429--448. \MR{763553}

\bibitem[Bee93]{Beer}
Gerald Beer, \emph{Topologies on closed and closed convex sets}, Mathematics and its Applications, vol. 268, Kluwer Academic Publishers Group, Dordrecht, 1993. \MR{1269778}

\bibitem[Bes44]{Besi}
A.~S. Besicovitch, \emph{On the existence of tangents to rectifiable curves}, J. London Math. Soc. \textbf{19} (1944), 205--207. \MR{14413}

\bibitem[BET17]{BET}
Matthew Badger, Max Engelstein, and Tatiana Toro, \emph{Structure of sets which are well approximated by zero sets of harmonic polynomials}, Anal. PDE \textbf{10} (2017), no.~6, 1455--1495. \MR{3678494}

\bibitem[BH04]{BH}
Mario Bonk and Juha Heinonen, \emph{Smooth quasiregular mappings with branching}, Publ. Math. Inst. Hautes \'Etudes Sci. (2004), no.~100, 153--170. \MR{2102699}

\bibitem[BHR01]{BHR}
Mario Bonk, Juha Heinonen, and Steffen Rohde, \emph{Doubling conformal densities}, J. Reine Angew. Math. \textbf{541} (2001), 117--141. \MR{1876287}

\bibitem[BL15]{BL}
Matthew Badger and Stephen Lewis, \emph{Local set approximation: {M}attila-{V}uorinen type sets, {R}eifenberg type sets, and tangent sets}, Forum Math. Sigma \textbf{3} (2015), Paper No. e24, 63. \MR{3482273}

\bibitem[BNV19]{BNV}
Matthew Badger, Lisa Naples, and Vyron Vellis, \emph{H\"{o}lder curves and parameterizations in the analyst's traveling salesman theorem}, Adv. Math. \textbf{349} (2019), 564--647. \MR{3941391}

\bibitem[BR06]{Buczolich2006}
Zolt\'an Buczolich and Csaba R\'ati, \emph{Micro tangent sets of typical continuous functions}, Atti Semin. Mat. Fis. Univ. Modena Reggio Emilia \textbf{54} (2006), no.~1-2, 135--166. \MR{2356953}

\bibitem[BS25]{BS24}
Matthew Badger and Raanan Schul, \emph{Square packings and rectifiable doubling measures}, Discrete Anal. (2025), Paper No. 3, 40. \MR{4904253}

\bibitem[Buc03]{Buczolich2003}
Zolt\'an Buczolich, \emph{Micro tangent sets of continuous functions}, Math. Bohem. \textbf{128} (2003), no.~2, 147--167. \MR{1995569}

\bibitem[BV19]{BV1}
Matthew Badger and Vyron Vellis, \emph{Geometry of measures in real dimensions via {H}\"older parameterizations}, J. Geom. Anal. \textbf{29} (2019), no.~2, 1153--1192. \MR{3935254}

\bibitem[BV21]{BV2}
Matthew Badger and Vyron Vellis, \emph{H\"{o}lder parameterization of iterated function systems and a self-affine phenomenon}, Anal. Geom. Metr. Spaces \textbf{9} (2021), no.~1, 90--119. \MR{4289517}

\bibitem[BZ20]{BZ}
Zolt\'{a}n~M. Balogh and Roger Z\"{u}st, \emph{Box-counting by {H}\"{o}lder's traveling salesman}, Arch. Math. (Basel) \textbf{114} (2020), no.~5, 561--572. \MR{4088553}

\bibitem[Cal51]{Calderon51}
Alberto~P. Calder\'{o}n, \emph{On the differentiability of absolutely continuous functions}, Rivista Mat. Univ. Parma \textbf{2} (1951), 203--213. \MR{0045200}

\bibitem[DLR{\etalchar{+}}23]{DLRW}
Xin-Rong Dai, Jun Luo, Huo-Jun Ruan, Yang Wang, and Jian-Ci Xiao, \emph{Connectedness and local cut points of generalized {S}ierpinski carpets}, Asian J. Math. \textbf{27} (2023), no.~4, 529--570. \MR{4775558}

\bibitem[DRZ19]{RZ2}
Xin-Rong Dai, Hui Rao, and Shu-Qin Zhang, \emph{Space-filling curves of self-similar sets ({II}): edge-to-trail substitution rule}, Nonlinearity \textbf{32} (2019), no.~5, 1772--1809. \MR{3942600}

\bibitem[DS97]{dreams}
Guy David and Stephen Semmes, \emph{Fractured fractals and broken dreams}, Oxford Lecture Series in Mathematics and its Applications, vol.~7, The Clarendon Press, Oxford University Press, New York, 1997, Self-similar geometry through metric and measure. \MR{1616732}

\bibitem[EG15]{EvansG}
Lawrence~C. Evans and Ronald~F. Gariepy, \emph{Measure theory and fine properties of functions}, revised ed., Textbooks in Mathematics, CRC Press, Boca Raton, FL, 2015. \MR{3409135}

\bibitem[Fal86]{falconerfracgeo}
K.~J. Falconer, \emph{The geometry of fractal sets}, Cambridge Tracts in Mathematics, vol.~85, Cambridge University Press, Cambridge, 1986. \MR{867284}

\bibitem[Fed69]{Federer}
Herbert Federer, \emph{Geometric measure theory}, Die Grundlehren der mathematischen Wissenschaften, vol. Band 153, Springer-Verlag New York, Inc., New York, 1969. \MR{257325}

\bibitem[FL99]{Fan}
Ai~Hua Fan and Ka-Sing Lau, \emph{Iterated function system and {R}uelle operator}, J. Math. Anal. Appl. \textbf{231} (1999), no.~2, 319--344. \MR{1669203}

\bibitem[Fur14]{Furstenberg}
Hillel Furstenberg, \emph{Ergodic theory and fractal geometry}, CBMS Regional Conference Series in Mathematics, vol. 120, American Mathematical Society, Providence, RI, 2014. \MR{3235463}

\bibitem[Gro81]{Gromov1}
Mikhael Gromov, \emph{Groups of polynomial growth and expanding maps}, Inst. Hautes \'{E}tudes Sci. Publ. Math. (1981), no.~53, 53--73. \MR{623534}

\bibitem[Gro99]{Gromov2}
Misha Gromov, \emph{Metric structures for {R}iemannian and non-{R}iemannian spaces}, Progress in Mathematics, vol. 152, Birkh\"{a}user Boston, Inc., Boston, MA, 1999, Based on the 1981 French original [MR0682063 (85e:53051)], With appendices by M. Katz, P. Pansu and S. Semmes, Translated from the French by Sean Michael Bates. \MR{1699320}

\bibitem[Hut81]{Hutchinson}
John~E. Hutchinson, \emph{Fractals and self-similarity}, Indiana Univ. Math. J. \textbf{30} (1981), no.~5, 713--747. \MR{625600}

\bibitem[Jon90]{jonesatsp}
Peter~W. Jones, \emph{Rectifiable sets and the traveling salesman problem}, Invent. Math. \textbf{102} (1990), no.~1, 1--15. \MR{1069238}

\bibitem[Li21]{Li21}
Wenbo Li, \emph{Quasisymmetric embeddability of weak tangents}, Ann. Fenn. Math. \textbf{46} (2021), no.~2, 909--944. \MR{4307009}

\bibitem[McS34]{mcshane}
E.~J. McShane, \emph{Extension of range of functions}, Bull. Amer. Math. Soc. \textbf{40} (1934), no.~12, 837--842. \MR{1562984}

\bibitem[MM93]{MM93}
Miguel~\'{A}ngel Mart\'{\i}n and Pertti Mattila, \emph{Hausdorff measures, {H}\"{o}lder continuous maps and self-similar fractals}, Math. Proc. Cambridge Philos. Soc. \textbf{114} (1993), no.~1, 37--42. \MR{1219912}

\bibitem[MM00]{MM00}
Miguel~Angel Mart\'{\i}n and Pertti Mattila, \emph{On the parametrization of self-similar and other fractal sets}, Proc. Amer. Math. Soc. \textbf{128} (2000), no.~9, 2641--2648. \MR{1664402}

\bibitem[MT10]{Mackay}
John~M. Mackay and Jeremy~T. Tyson, \emph{Conformal dimension}, University Lecture Series, vol.~54, American Mathematical Society, Providence, RI, 2010, Theory and application. \MR{2662522}

\bibitem[Nad92]{Nad}
Sam~B. Nadler, Jr., \emph{Continuum theory}, Monographs and Textbooks in Pure and Applied Mathematics, vol. 158, Marcel Dekker, Inc., New York, 1992, An introduction. \MR{1192552}

\bibitem[Oki92]{oki}
Kate Okikiolu, \emph{Characterization of subsets of rectifiable curves in {${\bf R}^n$}}, J. London Math. Soc. (2) \textbf{46} (1992), no.~2, 336--348. \MR{1182488}

\bibitem[Rem98]{Remes}
Marko Remes, \emph{H\"{o}lder parametrizations of self-similar sets}, Ann. Acad. Sci. Fenn. Math. Diss. (1998), no.~112, 68. \MR{1623926}

\bibitem[RV17]{RV}
Matthew Romney and Vyron Vellis, \emph{Bi-{L}ipschitz embedding of the generalized {G}rushin plane into {E}uclidean spaces}, Math. Res. Lett. \textbf{24} (2017), no.~4, 1177--1203. \MR{3723808}

\bibitem[RZ16]{RZ1}
Hui Rao and Shu-Qin Zhang, \emph{Space-filling curves of self-similar sets ({I}): iterated function systems with order structures}, Nonlinearity \textbf{29} (2016), no.~7, 2112--2132. \MR{3521641}

\bibitem[Sch94]{Schief}
Andreas Schief, \emph{Separation properties for self-similar sets}, Proc. Amer. Math. Soc. \textbf{122} (1994), no.~1, 111--115. \MR{1191872}

\bibitem[Sch07]{raanan}
Raanan Schul, \emph{Analyst's traveling salesman theorems. {A} survey}, In the tradition of {A}hlfors-{B}ers. {IV}, Contemp. Math., vol. 432, Amer. Math. Soc., Providence, RI, 2007, pp.~209--220. \MR{2342818}

\bibitem[Str93]{Stroock}
Daniel~W. Stroock, \emph{Probability theory, an analytic view}, Cambridge University Press, Cambridge, 1993. \MR{1267569}

\bibitem[SV24]{SV}
Eve Shaw and Vyron Vellis, \emph{Parametrizability of infinitely generated attractors}, Ann. Fenn. Math. \textbf{49} (2024), no.~1, 81--97. \MR{4710543}

\bibitem[Why35]{whyburn}
G.~T. Whyburn, \emph{Concerning continua of finite degree and local separating points}, American Journal of Mathematics \textbf{57} (1935), no.~1, 11--16.

\bibitem[Wu15]{JMW}
Jang-Mei Wu, \emph{Bilipschitz embedding of {G}rushin plane in {$\Bbb R^3$}}, Ann. Sc. Norm. Super. Pisa Cl. Sci. (5) \textbf{14} (2015), no.~2, 633--644. \MR{3410474}

\bibitem[Zyg02]{Zygmund}
A.~Zygmund, \emph{Trigonometric series. {V}ol. {I}, {II}}, third ed., Cambridge Mathematical Library, Cambridge University Press, Cambridge, 2002, With a foreword by Robert A. Fefferman. \MR{1963498}

\end{thebibliography}
\bibliographystyle{amsbeta}

\end{document}